\definecolor{myred}{rgb}{0.75,0,0}
\definecolor{mygreen}{rgb}{0,0.5,0}
\definecolor{myblue}{rgb}{0,0,0.65}
\newcommand{\mgmainthm}{Theorem~1.1}
\newcommand{\mgthmcusp}{Theorem~1.5}
\newcommand{\mglsylow}{\S 4.3}
\newcommand{\mgmincuspdatum}{Proposition~5.1}
\newcommand{\mgindepk}{Remark~1.2}
\newcommand{\mgrathergood}{Lemmas~6.2--6.4}
\newcommand{\mgeasyrathergood}{Lemma~6.3}
\newcommand{\mgeqnweylisom}{Eq.~(4.1)}
\newcommand{\mgdisjoint}{Theorem~2.5}
\newcommand{\mgmackey}{Theorem~2.2}
\newcommand{\mgbigenough}{Proposition~3.2}
\newcommand{\mgsemisimple}{Remark~7.2}
\newcommand{\C}{\mathbb{C}}
\newcommand{\Ql}{\mathbb{Q}_\ell}
\newcommand{\Qlb}{\overline{\mathbb{Q}}_\ell}
\newcommand{\Z}{\mathbb{Z}}
\newcommand{\K}{\mathbb{K}}
\newcommand{\F}{\mathbb{F}}
\renewcommand{\O}{\mathbb{O}}
\newcommand{\E}{\mathbb{E}}
\newcommand{\bk}{\Bbbk}
\newcommand{\fg}{\mathfrak{g}}
\newcommand{\fz}{\mathfrak{z}}
\newcommand{\cN}{\mathscr{N}}
\newcommand{\cO}{\mathscr{O}}
\newcommand{\fL}{\mathfrak{L}}
\newcommand{\fN}{\mathfrak{N}}
\newcommand{\fM}{\mathfrak{M}}
\newcommand{\Db}{D^{\mathrm{b}}}
\newcommand{\Perv}{\mathsf{Perv}}
\newcommand{\Rep}{\mathsf{Rep}}
\newcommand{\For}{\mathrm{For}}
\newcommand{\bT}{\mathbb{T}}
\newcommand{\Res}{\mathbf{R}}
\newcommand{\Ind}{\mathbf{I}}
\newcommand{\cL}{\mathcal{L}}
\newcommand{\IC}{\mathcal{IC}}
\newcommand{\cF}{\mathcal{F}}
\newcommand{\cG}{\mathcal{G}}
\newcommand{\cH}{\mathcal{H}}
\newcommand{\ubk}{\underline{\bk}}
\newcommand{\cE}{\mathcal{E}}
\newcommand{\cD}{\mathcal{D}}
\newcommand{\simto}{\xrightarrow{\sim}}
\DeclareMathOperator{\End}{End}
\DeclareMathOperator{\Hom}{Hom}
\DeclareMathOperator{\Ext}{Ext}
\DeclareMathOperator{\Irr}{Irr}
\newcommand{\id}{\mathrm{id}}
\newcommand{\cusp}{\mathrm{cusp}}
\newcommand{\reg}{\mathrm{reg}}
\def\lotimes{\@ifnextchar_{\@lotimessub}{\@lotimesnosub}}
\def\@lotimessub_#1{\mathchoice{\mathbin{\mathop{\otimes}^L}_{#1}}%
  {\otimes^L_{#1}}{\otimes^L_{#1}}{\otimes^L_{#1}}}
\def\@lotimesnosub{\mathbin{\mathop{\otimes}^L}}
\newcommand{\GL}{\mathrm{GL}}
\newcommand{\SL}{\mathrm{SL}}
\newcommand{\Sp}{\mathrm{Sp}}
\newcommand{\Spin}{\mathrm{Spin}}
\newcommand{\SO}{\mathrm{SO}}
\newcommand{\fS}{\mathfrak{S}}
\newcommand{\Part}{\mathrm{Part}}
\newcommand{\Bipart}{\mathrm{Bipart}}
\newcommand{\sm}{\mathsf{m}}
\newcommand{\uBipart}{\underline{\Bipart}}
\newcommand{\zs}{0\text{-}}
\newcommand{\zcusp}{{\zs\cusp}}
\renewcommand{\sup}{{\text{\rm sup}}}
\newcommand{\super}{\text{\rm super-}}
\newcommand{\supercusp}{\text{\rm s-cusp}}
\newcommand{\pt}{\mathrm{pt}}
\newcommand{\Serre}{\mathfrak{S}}
\newtheorem*{thm*}{Theorem}
\numberwithin{equation}{section}
\newtheorem{thm}{Theorem}[section]
\newtheorem{lem}[thm]{Lemma}
\newtheorem{prop}[thm]{Proposition}
\newtheorem{cor}[thm]{Corollary}
\newtheorem{conj}[thm]{Conjecture}
\theoremstyle{definition}
\theoremstyle{remark}
\newtheorem{rmk}[thm]{Remark}
\DeclareFontFamily{U}{mathx}{\hyphenchar\font45}
\DeclareFontShape{U}{mathx}{m}{n}{
      <5> <6> <7> <8> <9> <10>
      <10.95> <12> <14.4> <17.28> <20.74> <24.88>
      mathx10
      }{}
\DeclareSymbolFont{mathx}{U}{mathx}{m}{n}
\DeclareMathAccent{\widebar}{0}{mathx}{"73}
\title{Constructible sheaves on nilpotent cones\\ in rather good characteristic}
\author{Pramod N. Achar}
\address{Department of Mathematics\\
  Louisiana State University\\
  Baton Rouge, LA 70803\\
  U.S.A.}
\email{pramod@math.lsu.edu}
\author{Anthony Henderson}
\address{School of Mathematics and Statistics\\
  University of Sydney, NSW 2006\\
  Australia}
\email{anthony.henderson@sydney.edu.au}
\author{Daniel Juteau}
\address{Laboratoire de Math\'ematiques Nicolas Oresme\\
  Universit\'e de Caen, BP 5186\\
  14032 Caen Cedex\\ 
  France}
\email{daniel.juteau@unicaen.fr}
\author{Simon Riche}
\address{Universit{\'e} Blaise Pascal - Clermont-Ferrand II, Laboratoire de Math{\'e}matiques, CNRS, UMR 6620, Campus universitaire des C{\'e}zeaux, F-63177 Aubi{\`e}re Cedex, France
}
\email{simon.riche@math.univ-bpclermont.fr}
\subjclass[2010]{Primary 17B08, 20G05}
\thanks{P.A. was supported by NSA Grant No.~H98230-15-1-0175.  A.H. was supported by ARC Future Fellowship Grant No.~FT110100504. D.J. and S.R. were supported by ANR Grant 
No.~
ANR-13-BS01-0001-01. 
}
\begin{document}

\begin{abstract}
We study some aspects of modular generalized Springer theory for a complex reductive group $G$ with coefficients in a field $\bk$ under the assumption that the characteristic $\ell$ of $\bk$ is rather good for $G$, i.e., $\ell$ is good and does not divide the order of the component group of the centre of $G$.  We prove a comparison theorem relating the characteristic-$\ell$ generalized Springer correspondence to the characteristic-$0$ version.  We also consider Mautner's characteristic-$\ell$ `cleanness conjecture'; we prove it in some cases; and we deduce several consequences, including a classification of supercuspidal sheaves and  an orthogonal decomposition of the equivariant derived category of the nilpotent cone.
\end{abstract}

\maketitle

\section{Introduction}
\label{sec:intro}

\subsection{Modular generalized Springer correspondence}

Let $G$ be a connected complex reductive group. Let $Z(G)$ denote its centre, and let $Z(G)^\circ \subset Z(G)$ be the identity component.  A prime number $\ell$ is said to be \emph{rather good} for $G$ if it is good for $G$ and does not divide the order of the finite group $Z(G)/Z(G)^\circ$.

In this paper, which follows the series~\cite{genspring1, genspring2, genspring3} on the modular generalized Springer correspondence, we consider $G$-equivariant constructible sheaves on the nilpotent cone $\cN_G$ of $G$ with coefficients in a field $\bk$ of rather good characteristic. We will study not only the category of $G$-equivariant perverse sheaves $\Perv_G(\cN_G,\bk)$, as in~\cite{genspring1, genspring2, genspring3}, but also the $G$-equivariant bounded derived category $\Db_G(\cN_G,\bk)$. The main idea is that in rather good characteristic, it is possible to make meaningful comparisons with characteristic-$0$ generalized Springer theory, a subject that has been extensively studied for over thirty years since the work of Lusztig~\cite{lusztig}.  (In contrast, many of the statements in this paper are meaningless or false when the characteristic is not rather good.)

Let us briefly review the statement of the modular generalized Springer correspondence.  Let $\bk$ be a field of any characteristic that is `big enough' in the sense of~\eqref{eqn:definitely-big-enough} below. (Algebraically closed fields are always big enough.)  Let $\fN_{G,\bk}$ be the set of pairs $(\cO,\cE)$ where $\cO \subset \cN_G$ is a nilpotent orbit and $\cE$ is an irreducible $G$-equivariant $\bk$-local system on $\cO$ (up to isomorphism). Recall that $(\cO,\cE)$ is said to be \emph{cuspidal} if the corresponding simple perverse sheaf $\IC(\cO,\cE)$ does not occur as a quotient of any perverse sheaf induced from a proper Levi subgroup. According to~\cite[\mgmainthm]{genspring3}, if $\fL$ is a system of representatives for the $G$-conjugacy classes of Levi subgroups of $G$, there is a partition
\begin{equation}\label{eqn:genspring}
\fN_{G,\bk} = \bigsqcup_{\substack{\text{$L \in \fL$}\\ \text{$(\cO_L,\cE_L) \in \fN_{L,\bk}$ a cuspidal pair}}}
\fN_{G,\bk}^{(L,\cO_L,\cE_L)}
\end{equation}
into subsets called \emph{induction series}.  Moreover, for each induction series, there is a natural bijection
\begin{equation}\label{eqn:genspring-series}
\fN_{G,\bk}^{(L,\cO_L,\cE_L)} \overset{\sim}{\longleftrightarrow} \Irr(\bk[N_G(L)/L]).
\end{equation}

\subsection{$\ell$-cuspidal pairs and induction $\ell$-series}

The observation that gets us started is that when the characteristic $\ell$ of $\bk$ is rather good for $G$, there is a canonical bijection (see \S\ref{ss:0cusp-0series})
\begin{equation}\label{eqn:fn-indep}
\fN_{G,\bk} \overset{\sim}{\longleftrightarrow} \fN_{G,\C}.
\end{equation} 
Thus, the left-hand side of~\eqref{eqn:genspring} is `independent of $\bk$'.
In the introduction, for brevity, we will identify the two sides of~\eqref{eqn:fn-indep}, and simply write $\fN_G$. The set of cuspidal pairs only depends on $\ell$; therefore we call the corresponding elements of $\fN_G$ \emph{$\ell$-cuspidal} pairs. Similarly, the partition of $\fN_G$ given by~\eqref{eqn:genspring} depends only on $\ell$~\cite[\mgindepk]{genspring3}; we call the subsets on the right-hand side of~\eqref{eqn:genspring} \emph{induction $\ell$-series}.

A natural question is: how are the induction $\ell$-series related to the induction $0$-series?  The first half of the paper is devoted to the proof of the following result.

\begin{thm}\label{thm:main-intro}
Let $\ell$ be a rather good prime for $G$.  The partition of $\fN_G$ into induction $\ell$-series is a refinement of the partition into induction $0$-series.  In other words, each induction $0$-series is a union of induction $\ell$-series.
\end{thm}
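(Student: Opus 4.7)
The plan is to reformulate the theorem as a statement about cuspidal supports. Each $(\cO,\cE)\in\fN_G$ is assigned an $\ell$-cuspidal support $(L,\cO_L,\cE_L)$ (the label of the unique induction $\ell$-series containing it) and a $0$-cuspidal support $(M,\cO_M,\cE_M)$, and the theorem is equivalent to the assertion that the $0$-cuspidal support is determined by the $\ell$-cuspidal support. A natural candidate for the rule is the following: regard $(\cO_L,\cE_L)$ as an element of $\fN_L$ via the analog of~\eqref{eqn:fn-indep} for $L$, take its $0$-cuspidal support inside $L$, and obtain a Levi $M\subset L\subset G$ together with a $0$-cuspidal pair $(\cO_M,\cE_M)\in\fN_M$. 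It thus suffices to prove: if $(L,\cO_L,\cE_L)$ is the $\ell$-cuspidal support of $(\cO,\cE)$ in $G$ and $(M,\cO_M,\cE_M)$ is the $0$-cuspidal support of $(\cO_L,\cE_L)$ in $L$, then $(M,\cO_M,\cE_M)$ is the $0$-cuspidal support of $(\cO,\cE)$ in $G$.

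I would establish this reduced claim in two steps. The first step transfers the $\ell$-series occurrence to characteristic $0$: knowing that $\IC(\cO,\cE)$ occurs as a quotient of $\Ind_L^G\IC(\cO_L,\cE_L)$ over $\bk$, I aim to show that $\IC_\K(\cO,\cE)$ occurs as a composition factor of $\Ind_L^G\IC_\K(\cO_L,\cE_L)$ over a characteristic-$0$ coefficient field $\K$. The tool is a mixed-characteristic coefficient ring $\O$, a complete discrete valuation ring with residue field $\bk$ and fraction field $\K$: take an integral form $\IC_\O(\cO_L,\cE_L)$, apply $\Ind_L^G$, and compare the specializations $\bk\lotimes_\O(-)$ and $\K\lotimes_\O(-)$. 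The rather-good hypothesis, together with the inputs collected in \mgrathergood{} and the canonical bijection~\eqref{eqn:fn-indep}, controls the decomposition matrix enough that an occurrence of $\IC_\bk(\cO,\cE)$ in the reduction forces an occurrence of $\IC_\K(\cO,\cE)$ in the generic fibre.

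The second step applies transitivity of parabolic induction, $\Ind_M^G\cong\Ind_L^G\circ\Ind_M^L$. Since $(M,\cO_M,\cE_M)$ is the $0$-cuspidal support of $(\cO_L,\cE_L)$ in $L$, the simple perverse sheaf $\IC_\K(\cO_L,\cE_L)$ is a direct summand of $\Ind_M^L\IC_\K(\cO_M,\cE_M)$ (in characteristic $0$, induced perverse sheaves from cuspidal data are semisimple, by Lusztig's theory). Combining with the first step, $\IC_\K(\cO,\cE)$ appears as a summand of $\Ind_M^G\IC_\K(\cO_M,\cE_M)$; the disjointness statement \mgdisjoint{} then uniquely identifies the $0$-cuspidal support of $(\cO,\cE)$ in $G$ as $(M,\cO_M,\cE_M)$, as desired.

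The main obstacle is clearly the first step: comparing the occurrence of a simple perverse sheaf in an induction from a cuspidal over $\bk$ versus over $\K$. This is genuinely characteristic-dependent, and it is precisely here that the rather-good hypothesis does its work. I would expect the key technical inputs to be the Mackey formula \mgmackey{} (controlling how $\Ind_L^G$ restricts to Levi subgroups), big-enough-field considerations \mgbigenough{} (ensuring that the bijection~\eqref{eqn:genspring-series} behaves well over both $\bk$ and $\K$), and the Sylow-theoretic analysis of relative Weyl groups $N_G(L)/L$ from \mglsylow{} (which keeps decomposition numbers for these Weyl groups tractable in rather good characteristic). Outside rather good characteristic, the bijection~\eqref{eqn:fn-indep} can fail, and the entire comparison with characteristic $0$ breaks down.
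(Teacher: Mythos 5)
Your overall architecture (reduce to a statement about cuspidal supports, then compare characteristics via an $\O$-form) is reasonable, but your ``first step'' contains the entire difficulty of the theorem and is not actually proved; as stated, it does not follow from the modular-reduction formalism you invoke. If $\IC(\cO,\cE)$ occurs as a quotient (or composition factor) of $\Ind_{L\subset P}^G(\bk\lotimes_\O\IC(\cO_L,\cE_L^\O))\cong\bk\lotimes_\O\Ind_{L\subset P}^G(\IC(\cO_L,\cE_L^\O))$, then all you can conclude is that $\IC(\cO,\cE)$ occurs in the modular reduction of \emph{some} composition factor $\IC(\cO'',\cE''^\K)$ of $\Ind_{L\subset P}^G(\IC(\cO_L,\cE_L^\K))$. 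Since the decomposition matrix is not the identity --- a given $\bk$-simple generally appears in the reductions of several $\K$-simples supported on larger orbit closures --- you cannot deduce that $(\cO'',\cE'')=\theta_G^{-1}(\cO,\cE)$, which is exactly what you need in order to say anything about the induction $0$-series of $(\cO,\cE)$ (that series is \emph{defined} via $\theta_G$ and the characteristic-$0$ position of the lift). Asserting that ``the rather-good hypothesis controls the decomposition matrix enough'' is not an argument here; controlling these decomposition numbers is the hard open part of modular Springer theory.

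The paper's proof is structured precisely to avoid having to prove your Step 1. What modular reduction \emph{does} give cleanly is a one-directional inequality: using the Fourier--Sato transform and the fact that supports can only shrink under reduction, one shows (Lemma~\ref{lem:0-series-ineq}) that if $(\cO,\cE)$ lies in the $\ell$-series of $(M,\cO_M,\cE_M)$ and the $0$-series of $(L,\cO_L,\cE_L)$, then $(L,\cO_L,\cE_L)\preceq_G(M,\cO_M,\cE_M)$ for the partial order of Section~\ref{sec:partialorder}. This only bounds the $0$-cuspidal support from below; to close the gap the paper reduces (via Lemma~\ref{lem:cogood-reduction}) to type $A$ and to quasi-simple simply connected groups, shows the relevant partial order is total on $0$-cuspidal data with fixed central character (Lemma~\ref{lem:0-series-total}, using the explicit classification of cuspidal pairs), and then in types $C$ and $D$ runs an explicit counting argument comparing $|\Bipart(n-k(k+1)/2)|$ with $\sum_\nu|\uBipart_\ell(\sm(\nu))|$ to force the containment of~\eqref{eqn:0-series-compare} to be an equality. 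If you want to salvage your approach, you must either supply a genuine proof of your Step 1 (which I do not believe can be done by soft arguments) or replace it with a weaker inequality plus a counting or classification input, which is in effect what the paper does.
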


The proof of Theorem~\ref{thm:main-intro} uses some observations obtained from the explicit classification of cuspidal pairs, but no explicit description of the partition~\eqref{eqn:genspring} (which is unknown in many cases).

\subsection{Cleanness conjecture}

The second half of the paper is devoted to the study of \emph{cleanness}.  A pair $(\cO,\cE) \in \fN_G$ is called \emph{$\ell$-clean} if the corresponding simple perverse sheaf $\IC(\cO,\cE)$ 
has vanishing stalks on $\overline{\cO} \smallsetminus \cO$.  An important feature of characteristic-$0$ generalized Springer theory is that all cuspidal pairs are $0$-clean. (This was first proved by Lusztig~\cite{charsh5}; see also~\cite{rr}.) 
The following is part of a series of (unpublished) conjectures by C.~Mautner.

\begin{conj}[Mautner's cleanness conjecture]\label{conj:clean-intro}
Let $\ell$ be a rather good prime for $G$. Then every $0$-cuspidal pair $(\cO,\cE)  \in \fN_G$ is $\ell$-clean.
\end{conj}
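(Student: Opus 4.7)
The natural strategy is to lift to integral coefficients and reduce modular cleanness to Lusztig's $0$-cleanness theorem, using Theorem~\ref{thm:main-intro} as an auxiliary constraint. Let $\O$ be a finite extension of $\Zl$ with residue field $\bk$ and fraction field $\K$ of characteristic zero. Since $\ell$ is rather good, the $G$-equivariant fundamental group of $\cO$ has order prime to $\ell$, so the irreducible $\bk$-local system $\cE$ lifts uniquely to an irreducible integral local system $\cE_\O$. Its generic fiber $\cE_\K$ corresponds, via the bijection~\eqref{eqn:fn-indep}, to the $0$-cuspidal pair at hand, and Lusztig's theorem gives
\[
\IC(\cO,\cE_\K) \;\simeq\; j_!\bigl(\cE_\K[\dim\cO]\bigr),
\]
where $j\colon \cO \hookrightarrow \overline{\cO}$ is the open inclusion.

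The first step is to promote this to integral cleanness, i.e.\ to show that $\IC(\cO,\cE_\O) \simeq j_!\bigl(\cE_\O[\dim\cO]\bigr)$. A Nakayama-style argument should suffice: the canonical map $j_!(\cE_\O[\dim\cO]) \to \IC(\cO,\cE_\O)$ is an isomorphism after inverting $\ell$, so once one knows that the boundary cohomology sheaves of $\IC(\cO,\cE_\O)$ are $\ell$-torsion-free, they vanish integrally. Granting this, base change to $\bk$ yields
\[
\IC(\cO,\cE_\O) \otimes^L_\O \bk \;\simeq\; j_!\bigl(\cE_\bk[\dim\cO]\bigr),
\]
whose stalks on $\overline{\cO} \smallsetminus \cO$ vanish. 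The complex on the left has $\IC(\cO,\cE_\bk)$ as its perverse head, so $\ell$-cleanness is equivalent to the absence of additional perverse composition factors supported on the boundary.

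To rule such factors out, I would invoke Theorem~\ref{thm:main-intro}: since $(\cO,\cE)$ is $0$-cuspidal, the singleton $0$-series $\{(\cO,\cE)\}$ must itself be an $\ell$-series by the refinement, so in particular $(\cO,\cE)$ is $\ell$-cuspidal. Any hypothetical composition factor $\IC(\cO',\cE')$ with $\cO'$ a smaller orbit in $\overline{\cO}$ would have to be compatible with this structure---belonging to an $\ell$-induction series from a proper Levi---while simultaneously arising as a quotient of the modular reduction of a cuspidal integral IC sheaf. Combining the classification of $\ell$-cuspidal pairs from~\cite{genspring1, genspring2, genspring3} with Lusztig's classification of $0$-cuspidal pairs should eliminate these possibilities case by case.

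The main obstacle is that both the integral-cleanness step and the ruling out of spurious composition factors appear hard to push through uniformly, especially in exceptional types where $0$-cuspidal IC sheaves lack convenient geometric constructions from which modular stalks can be computed. This is presumably why the paper announces a proof only in some cases: the classical types, where cuspidal pairs have concrete combinatorial descriptions, and those exceptional instances where explicit resolutions or parity-sheaf arguments are available, with a uniform general verification left open.
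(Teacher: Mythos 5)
Your proposal does not establish the conjecture (which, to be fair, the paper itself only proves in the cases listed in Theorem~\ref{thm:clean-intro}), and the central step is circular. The reduction to integral cleanness is correct in spirit: since $\bk \lotimes_\O \IC(\cO,\cE^\O) \cong \IC(\cO,\cE)$ (this follows from Proposition~\ref{prop:unicity-cuspidal} and the central-character argument, as in~\eqref{eqn:mod-reduction-0-cusp}), cleanness of $\IC(\cO,\cE)$ is indeed equivalent to the vanishing, equivalently torsion-freeness, of the boundary stalks of $\IC(\cO,\cE^\O)$. But your ``Nakayama-style argument'' runs the wrong way: Nakayama lets you deduce vanishing of a finitely generated $\O$-module from vanishing of its reduction mod $\ell$, whereas here the mod-$\ell$ vanishing is precisely what you are trying to prove. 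The possible $\ell$-torsion in the boundary stalks of the integral $\IC$ is exactly the content of the conjecture, and nothing in your sketch controls it; the example of $\GL(2)$ with $\ell=2$ (Remark following Theorem~\ref{thm:clean-intro}) shows that such torsion genuinely occurs for cuspidal pairs that are not $0$-cuspidal, so some input specific to $0$-cuspidality is indispensable. Your second step is also moot: if integral cleanness held, the reduction would literally be $j_{\cO!}\cE[\dim\cO]$, whose boundary stalks vanish by definition, so there would be no ``spurious composition factors'' left to rule out.

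The paper's actual arguments are of a different nature and do not pass through stalk computations of the integral $\IC$. The key mechanism is adjunction against induction: $\IC(\cO,\cE)$ is clean if and only if $\Hom^\bullet(\IC(\cO,\cE), i_*\cF)=0$ for a generating family $\cF$ of $\Db_G(\overline{\cO}\smallsetminus\cO,\bk)$, and such $\Hom$'s vanish automatically when $\cF$ is a direct summand of a perverse sheaf induced from a proper Levi, since $\Res$ kills the cuspidal object. When $\ell \nmid |N_G(L)/L|$ for all relevant cuspidal data (Proposition~\ref{prop:clean-lusztig}), induced perverse sheaves are semisimple and every non-cuspidal simple object with the right central character is such a summand; this handles $\ell\nmid|W|$, types $C_3$, $D_5$, $E_6$, $E_7$, and type $A$ is handled by central characters alone. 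The two remaining cases ($E_8$, $\ell=7$ and $B_4$, $\ell=3$) are treated in Section~\ref{sec:e8-clean} by showing that the boundary category is generated by non-simple direct summands of induced sheaves, cut out by block idempotents of $\bk[W]$ acting through the modular Springer correspondence --- not by explicit resolutions or parity sheaves as you suggest. If you want to salvage your approach, you would need an independent computation of the decomposition numbers (equivalently, of the torsion in the integral $\IC$), which is not easier than the conjecture itself.
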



So far, we have been able to prove the cleanness conjecture in the following cases. (Here, $W$ denotes the Weyl group of $G$.) 

\begin{thm}\label{thm:clean-intro}
Conjecture~{\rm \ref{conj:clean-intro}} holds if $\ell \nmid |W|$, or if every irreducible factor of the root system of $G$ is either of type $A$, of type $B_4$, of type $C_3$, of type $D_5$, or of exceptional type.
\end{thm}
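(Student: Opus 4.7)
The plan is to split the proof along the two disjoint hypotheses.

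Under the hypothesis $\ell \nmid |W|$, I would first argue that modular reduction of characteristic-$0$ $\IC$-sheaves on $\cN_G$ preserves simplicity on the relevant cuspidal strata, exploiting the fact that the decomposition matrices that appear in the generalized Springer correspondence degenerate when $\bk[W]$ is semisimple. Granting this, the modular simple $\IC(\cO, \cE)$ attached to a $0$-cuspidal pair is identified with the modular reduction of the characteristic-$0$ simple $\IC(\cO, \cE_\C)$, which is $0$-clean by Lusztig~\cite{charsh5} (see also~\cite{rr}). Since vanishing of stalks on $\overline{\cO}\smallsetminus\cO$ is preserved by modular reduction once the integral stalks are known to be torsion-free, $\ell$-cleanness follows. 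The real work in this case is thus the torsion-freeness of an $\ell$-adic integral form of the cuspidal $\IC$-sheaf; this is the step into which the hypothesis $\ell \nmid |W|$ must be fed, presumably via an equivariant-cohomology computation in which $W$-invariants appear as denominators.

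Under the hypothesis on root system types, I would proceed case by case, using the explicit classification of $0$-cuspidal pairs summarised in~\mgthmcusp. In type $A$, every $0$-cuspidal pair is supported on the regular nilpotent orbit of a suitable quotient of $\SL_n$; the corresponding $\IC$-sheaf can be described concretely, and cleanness can be verified directly, for instance by examining transverse slices to boundary orbits and using that the relevant central character is primitive. For the small classical types $B_4$, $C_3$ and $D_5$, only a handful of cuspidal pairs occur, and one can combine the refinement Theorem~\ref{thm:main-intro} (which sharply constrains the $\ell$-induction series a given $0$-cuspidal pair can lie in) with an explicit inspection of orbit closures. For each exceptional type the list of cuspidal pairs is finite and tabulated, so that the verification reduces to a bounded number of geometric computations, supplemented where possible by Levi restriction to reduce to already-treated cases.

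The principal obstacle will be the exceptional-type verification, where the cuspidal pairs are most numerous and the surrounding orbit-closure geometry most intricate; the main tools there should be Fourier-transform arguments in the spirit of Lusztig's characteristic-zero proof, combined with Levi restriction and Theorem~\ref{thm:main-intro}. A secondary but more conceptual difficulty is to make the torsion-freeness claim in the $\ell \nmid |W|$ case fully rigorous: although extremely plausible under such a strong hypothesis, it is not automatic and will likely require either a resolution-based cohomological input or a general no-torsion statement for $\IC$-sheaves on nilpotent orbit closures in rather good characteristic. The restriction to the listed small classical ranks $B_4$, $C_3$, $D_5$ signals, I expect, the present limit of a purely case-by-case approach before a genuinely new idea is needed to cover arbitrary classical types.
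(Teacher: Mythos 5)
Your proposal diverges from the paper's proof in essentially every case, and in several places it leaves genuine gaps.

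For $\ell \nmid |W|$, you propose reducing $\ell$-cleanness to $0$-cleanness via a torsion-freeness argument for an $\O$-form of $\IC(\cO,\cE)$, and you acknowledge that establishing torsion-freeness is the real work. The paper does not go this route at all. It proves a general criterion (Proposition~\ref{prop:clean-lusztig}): if for every cuspidal datum $(L,\cO_L,\cE_L)$ with the same central character one has $\ell \nmid |N_G(L)/L|$, then $\IC(\cO,\cE)$ is clean. The proof is a direct $\Hom$-vanishing argument — every competing pair with the same central character is a direct summand of an induced sheaf (by semisimplicity from \cite[\mgsemisimple]{genspring3}), and adjunction plus ${}'\Res(\IC(\cO,\cE))=0$ kills all $\Ext$s. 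No integral form, no modular reduction, no torsion question arises. Your torsion-freeness step is not merely an inelegance: it is a nontrivial claim you give no mechanism to prove, and as stated it is not clear how $\ell \nmid |W|$ would feed into it.

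For the small classical and exceptional cases you under-estimate the difficulty of the residual cases. Proposition~\ref{prop:clean-lusztig} already disposes of type $A$ (the paper uses a one-line central-character argument there too), of $C_3$, $D_5$, $G_2$, $F_4$, $E_6$, $E_7$, of $B_4$ with $\ell>3$, and of $E_8$ with $\ell > 7$, because in those situations the relevant $|N_G(L)/L|$ is coprime to $\ell$. The genuinely hard cases are exactly $E_8$ with $\ell=7$ and $B_4$ with $\ell=3$, and there the paper's argument is qualitatively different from anything in your sketch: it shows that the triangulated category $\Db_G(X,\bk)$ (for $X$ the boundary of $\overline{\cO}$) is generated by direct summands of induced perverse sheaves, using the determination of the modular Springer correspondence, the decomposition of $W$-representations into $\ell$-blocks, the defect-$0$ block criterion, and a compatibility lemma between block idempotents and modular reduction of $\Psi_\O(V)$. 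Your appeal to "a bounded number of geometric computations," "transverse slices," "Fourier-transform arguments" and Theorem~\ref{thm:main-intro} does not identify this block-theoretic mechanism, and I do not see how to replace it; in particular the refinement theorem and orbit-closure inspection alone will not produce the needed $\Hom$-vanishing on the boundary, because the pairs in question are Springer-type and not separated by central characters.

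In short: the outline is not wrong so much as incomplete. The $\ell \nmid |W|$ branch needs a concrete replacement for the torsion-freeness claim (the paper's adjunction argument is the right tool), and the $E_8/\ell=7$ and $B_4/\ell=3$ cases need the block-decomposition technique, which is the genuinely new ingredient in this part of the paper.
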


\noindent
The ranks in types $B$/$C$/$D$ mentioned here are the smallest for which there exist $0$-cuspidal pairs (see Remark~\ref{rmk:bcd-cuspidal}); of course, the conjecture is vacuously true if there are no $0$-cuspidal pairs for $G$.

\begin{rmk}
Conjecture~\ref{conj:clean-intro} was also independently checked by Mautner in type $A$ and in types $B_4$ and $C_3$ (using arguments different from ours, although also based on some explicit computations).
\end{rmk}

\begin{rmk}
Conjecture~\ref{conj:clean-intro} would be false in general if we replaced `$0$-cuspidal pair' with `$\ell$-cuspidal pair'. For example, when $G=\GL(2)$ and $\ell=2$ (which is rather good for $\GL(2)$), the unique $2$-cuspidal pair is not $2$-clean; see~\cite[Remark 2.5]{genspring1}. This $2$-cuspidal pair is not $0$-cuspidal for $\GL(2)$. (It is the modular reduction of the unique characteristic-$0$ cuspidal pair for $\SL(2)$, but note that $2$ is not rather good for $\SL(2)$.) However, Theorem~\ref{thm:clean-intro} and Lemma~\ref{lem:cogood-reduction} below reduce the proof of Conjecture~\ref{conj:clean-intro} to the case where $G$ is quasi-simple and simply connected of type $B$, $C$, or $D$. For such $G$, $\ell$ is rather good for $G$ if and only if $\ell>2$, and in this case the $\ell$-cuspidal pairs and the $0$-cuspidal pairs coincide by~\cite[Theorems 7.2, 8.3 and 8.4]{genspring2}. 
\end{rmk}

\subsection{Consequences}

The cleanness conjecture has a number of consequences for the structure of the equivariant derived category $\Db_G(\cN_G,\bk)$, recorded below.   
In this statement, a pair $(\cO,\cE) \in \fN_G$ is said to be \emph{$\ell$-supercuspidal} if the corresponding simple perverse sheaf does not occur as a composition factor of any induced sheaf in characteristic $\ell$.  (In characteristic~$0$, the induction functor takes semisimple perverse sheaves to semisimple perverse sheaves, so a pair is $0$-supercuspidal if and only if it is $0$-cuspidal; but when $\ell>0$ the two notions differ.) If $P \subset G$ is a parabolic subgroup of $G$ with Levi factor $L$ and if $(\cO_L, \cE_L)$ is an $\ell$-supercuspidal pair for $L$, the corresponding \emph{induction $\ell$-superseries} is the subset of $\fN_G$ consisting of pairs $(\cO',\cE')$ such that $\IC(\cO',\cE')$ is a composition factor of $\Ind_{L \subset P}^G \IC(\cO_L, \cE_L)$. (One can show that this subset does not depend on the choice of $P$; see \S\ref{ss:supercuspidal} below.)

\begin{thm}\label{thm:conseq-intro}
Let $\ell$ be a rather good prime for $G$, and assume that the cleanness conjecture holds for all Levi subgroups of $G$.  Then
\begin{enumerate}
\item A pair $(\cO,\cE) \in \fN_G$ is $\ell$-supercuspidal if and only if it is $0$-cuspidal.
\item The induction $\ell$-superseries coincide with the induction $0$-series.  In particular, the induction $\ell$-superseries are disjoint and give a partition of $\fN_G$.
\item 
\label{it:decom-DG}
There is an orthogonal decomposition
\[
\Db_G(\cN_G,\bk) \cong
\bigoplus_{\substack{\text{\normalfont $L \in \fL$} \\ \text{\normalfont $(\cO_L,\cE_L) \in \fN_L$ an $\ell$-supercuspidal pair}}} \Db_G(\cN_G,\bk)^{\super(L,\cO_L,\cE_L)},
\]
where $\Db_G(\cN_G,\bk)^{\super(L,\cO_L,\cE_L)}$ is the full triangulated subcategory of $\Db_G(\cN_G,\bk)$ generated by simple perverse sheaves in the induction $\ell$-superseries associated to $(L,\cO_L,\cE_L)$.
\end{enumerate}
\end{thm}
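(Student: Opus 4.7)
My approach is to derive all three parts from a single vanishing statement. Namely, if $(\cO_L,\cE_L) \in \fN_L$ is a $0$-cuspidal pair for a Levi subgroup $L$, then the cleanness hypothesis gives
\[
\IC(\cO_L,\cE_L) \cong j_!\cE_L[\dim\cO_L], \qquad j\colon \cO_L \hookrightarrow \overline{\cO_L},
\]
and I expect this form to force
\[
\uRes^L_M \IC(\cO_L,\cE_L) = 0 \qquad \text{for every proper Levi $M \subsetneq L$.}
\]
The justification is that in characteristic~$0$, cuspidality is by definition this vanishing, and cleanness ensures that the computation in the correspondence diagram defining $\uRes^L_M$ depends only on the local system $\cE_L$ on a single orbit, so the characteristic-$\ell$ computation reproduces the characteristic-$0$ one. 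This step is the main technical obstacle, and it is precisely here that the hypothesis of cleanness for \emph{all} Levi subgroups of $G$ is needed, because $\uRes^L_M$ outputs equivariant sheaves on the nilpotent cone of a smaller group and the argument must be applied iteratively.

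\smallskip

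\textbf{Part (1).} If $(\cO,\cE)$ is not $0$-cuspidal, then by Theorem~\ref{thm:main-intro} it lies in an $\ell$-series attached to some proper Levi $L'$, so $\IC(\cO,\cE)$ is a quotient of $\Ind^G_{L'}\IC(\cO_{L'},\cE_{L'})$ and a fortiori a composition factor; hence $(\cO,\cE)$ is not $\ell$-supercuspidal. Conversely, if $(\cO,\cE)$ is $0$-cuspidal, the restriction vanishing with $L=G$ combined with the adjunction between $\Ind^G_M$ and $\uRes^G_M$ gives
\[
\Ext^*(\Ind^G_M \cF, \IC(\cO,\cE)) = 0
\]
for every proper Levi $M$ and every $\cF \in \Db_M(\cN_M,\bk)$; a parallel computation using the opposite adjunction (or Verdier duality together with the self-duality of clean cuspidal IC sheaves in rather good characteristic) yields the dual vanishing. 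A d\'evissage along a composition series then shows that $\IC(\cO,\cE)$ cannot occur as a composition factor of any induced perverse sheaf, so $(\cO,\cE)$ is $\ell$-supercuspidal.

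\smallskip

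\textbf{Parts (2) and (3).} By~(1), the indexing sets for $\ell$-superseries and $0$-series coincide. To prove~(2), I would show that each $\ell$-superseries is contained in the corresponding $0$-series: any element of the $\ell$-superseries of $(L,\cO_L,\cE_L)$ lies in some $\ell$-series, which by Theorem~\ref{thm:main-intro} is contained in some $0$-series, and a Mackey-formula computation combined with the key restriction vanishing matches this $0$-series to $(L,\cO_L,\cE_L)$ up to $G$-conjugacy. Since both families are partitions of $\fN_G$ indexed by the same set, inclusion forces equality. Part~(3) is then formal: for non-conjugate $\ell$-supercuspidal data $(L,\cO_L,\cE_L)$ and $(L',\cO_{L'},\cE_{L'})$, the Mackey formula reduces
\[
\Ext^*\bigl(\Ind^G_L\IC(\cO_L,\cE_L),\,\Ind^G_{L'}\IC(\cO_{L'},\cE_{L'})\bigr)
\]
to summands each involving a restriction of a cuspidal sheaf to a strictly smaller Levi, all of which vanish by the key restriction vanishing; this gives orthogonality of the subcategories. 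Exhaustiveness follows from~(2), since simple perverse sheaves generate $\Db_G(\cN_G,\bk)$ as a triangulated category and every simple perverse sheaf lies in exactly one induction $\ell$-superseries.
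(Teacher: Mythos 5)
Your ``key restriction vanishing'' $\uRes^L_M\IC(\cO_L,\cE_L)=0$ is not the crux of the matter: it is just ordinary cuspidality of the $0$-cuspidal pair, which the paper already obtains \emph{without} cleanness by a modular-reduction argument (Lemma~\ref{lem:comparison-cusp}). The genuine gap is in the step you dispose of with ``a d\'evissage along a composition series.'' Vanishing of $\Hom^k(\Ind^G_M\cF,\IC(\cO,\cE))$ and $\Hom^k(\IC(\cO,\cE),\Ind^G_M\cF)$ for all $\cF$ shows only that $\IC(\cO,\cE)$ is not a \emph{quotient} or a \emph{subobject} of an induced perverse sheaf; it does not exclude $\IC(\cO,\cE)$ occurring in the middle of a composition series. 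If $\cP_1\subset\cP_2\subset\cP=\Ind^G_M\cF$ with $\cP_2/\cP_1\cong\IC(\cO,\cE)$, the long exact sequence only tells you that $\Ext^1(S,\IC(\cO,\cE))\neq 0$ for some composition factor $S$ of $\cP/\cP_2$, and such an $S$ need not itself be a quotient or subobject of any induced sheaf. What is actually needed is that $\IC(\cO,\cE)$ is projective and injective in $\Perv_G(\cN_G,\bk)$, i.e.\ $\Ext^1$-vanishing against \emph{every} simple perverse sheaf; this is Proposition~\ref{prop:zcusp-vanish}, and it is there --- not in the vanishing of $\uRes$ --- that cleanness does its work: one computes $\Hom^k(\IC(\cO,\cE),\cF)\cong\Hom^k(\cE[\dim\cO],j_\cO^!\cF)$ via the identification $\IC(\cO,\cE)\cong j_{\cO!}\cE[\dim\cO]$, invokes semisimplicity of $\Db_G(\cO,\bk)$ for the distinguished orbit $\cO$ (Lemma~\ref{lem:dist-semis}), and runs a downward induction over induction series. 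Your proposal contains no substitute for this argument, and without it part~(1) (the ``if'' direction), and hence parts~(2) and~(3), do not follow.

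There is a second, related gap in part~(3). The Mackey computation (Lemma~\ref{lem:Hom-vanishing-Ind}) gives $\Hom$-vanishing between the \emph{induced objects} $\Ind^G_L\IC(\cO_L,\cE_L)$ and $\Ind^G_{L'}\IC(\cO_{L'},\cE_{L'})$, but the subcategories in the orthogonal decomposition are generated by the simple \emph{composition factors} of these objects, not by their direct summands. Vanishing of $\Hom^\bullet(\cP,\cQ)$ says nothing about $\Hom^\bullet$ between composition factors of $\cP$ and of $\cQ$ in general. The paper bridges this with the short exact sequences exhibiting each simple in a series as a quotient and as a subobject of the relevant induced sheaf, a double downward induction on pairs of induction series, and the abelian-category decomposition (Theorem~\ref{thm:zseries-decomp}) already in hand; see the proof of Theorem~\ref{thm:decomposition-D-0series}. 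Your ``this gives orthogonality of the subcategories'' elides exactly this passage. Finally, in part~(2) the assertion that ``both families are partitions'' assumes the disjointness of the $\ell$-superseries, which is part of what is to be proved; the containment of each superseries in the matching $0$-series must be established first (via the fact that induction preserves $0$-series, Corollary~\ref{cor:ind-0-series-po-new}, which again rests on the Serre decomposition and hence on Proposition~\ref{prop:zcusp-vanish}).
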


\begin{rmk}
Mautner also proved that
a decomposition similar to that in Theorem~\ref{thm:conseq-intro}\eqref{it:decom-DG} follows from Conjecture~\ref{conj:clean-intro} together with some other conjectures of his that we do not consider in this paper.
\end{rmk}

The decomposition of $\Db_G(\cN_G,\bk)$ in the last part of this theorem implies a corresponding decomposition of the abelian category $\Perv_G(\cN_G,\bk)$.  (In fact, the proof is structured in such a way that we obtain the abelian category version first.) The analogous decomposition of $\Db_G(\cN_G,\C)$ was proved by Rider and Russell~\cite[Theorem 3.5]{rr} following arguments of Lusztig.

\begin{rmk}
Thanks to Theorem~\ref{thm:clean-intro}, the assumption in the statement of Theorem~\ref{thm:conseq-intro} certainly holds if $\ell \nmid |W|$, or if every irreducible factor of the root system of $G$ is either of type $A$, of type $B_n$ for $n<7$, of type $C_n$ for $n<6$, of type $D_n$ for $n<8$, or of exceptional type (see Remark~\ref{rmk:bcd-cuspidal}).
\end{rmk}

\subsection{Organization of the paper}

The paper is organized as follows.  Section~\ref{sec:notation} contains a review of relevant terminology, notation, and results from~\cite{genspring1,genspring2,genspring3}, along with a few easy lemmas.  Section~\ref{sec:partialorder} concerns a certain partial order on the set of cuspidal data, defined using the Lusztig stratification of the Lie algebra $\fg$.  Theorem~\ref{thm:main-intro} is proved in Section~\ref{sec:comparison}.

The remainder of the paper is devoted to cleanness.  Section~\ref{sec:clean} introduces the cleanness conjecture and proves all but two cases of Theorem~\ref{thm:clean-intro}.  The remaining cases, those of a group of type $E_8$ with $\ell = 7$ and of a group of type $B_4$ with $\ell=3$, are handled in Section~\ref{sec:e8-clean}.  Finally, Section~\ref{sec:consequences} contains the proof of Theorem~\ref{thm:conseq-intro}, along with a number of ancillary results.

\subsection{Acknowledgements}

We thank Carl Mautner for discussions concerning his conjectures, which motivated some of the results in the second half of the paper.
We used the development version  \cite{chevie-dev} of the GAP Chevie package \cite{chevie}.

\section{Conventions and preliminaries}
\label{sec:notation}

\subsection{Coefficients}

We retain the meanings of $G$, $\cN_G$, and $\fN_{G,\bk}$ from the introduction. (However, the notation `$\fN_G$' with the coefficients suppressed will not be used again.)   Let $\Db_G(\cN_G,\bk)$ be the $G$-equivariant bounded derived category of $\cN_G$ with coefficients in $\bk$, and let $\Perv_G(\cN_G,\bk) \subset \Db_G(\cN_G,\bk)$ be the abelian category of $G$-equivariant perverse sheaves on $\cN_G$.  The set of isomorphism classes of simple objects in $\Perv_G(\cN_G,\bk)$ is naturally in bijection with $\fN_{G,\bk}$.

Let $\ell$ denote the characteristic of $\bk$.  Throughout the paper (unless otherwise specified), we assume that $\ell$ is a rather good prime for $G$.  As in~\cite{genspring3}, we also assume that $\bk$ satisfies the following condition:
\begin{equation} \label{eqn:definitely-big-enough}
\begin{array}{c}
\text{for every Levi subgroup $L$ of $G$ and pair $(\cO_L,\cE_L)\in\fN_{L,\bk}$,}\\
\text{the irreducible $L$-equivariant local system $\cE_L$ is absolutely irreducible.}
\end{array}
\end{equation}
See~\cite[\mgbigenough]{genspring3} for a more explicit translation of this condition.
Furthermore, we assume throughout that there exists a finite extension $\K$ of $\Ql$ that satisfies~\eqref{eqn:definitely-big-enough}, and such that $\bk$ is the residue field of the ring $\O$ of integers of $\K$.  Thus, the triple $(\K,\O,\bk)$ constitutes an $\ell$-modular system.  With this set-up, we can invoke the machinery of \emph{modular reduction} from~\cite[\S 2.7]{genspring1}.  

The lemmas below, which are restatements of~\cite[\mgrathergood]{genspring3}, collect some basic facts about rather good primes.  Here, $A_G(x)$ denotes the component group of the stabilizer of a point $x \in \cN_G$.

\begin{lem}\label{lem:rgood-defn}
The following conditions on a prime $\ell$ are equivalent:
\begin{enumerate}
\item $\ell$ does not divide $|A_G(x)|$ for any $x \in \cN_G$.\label{it:rgood-ag}
\item $\ell$ is good for $G$ and does not divide $|Z(G)/Z(G)^\circ|$.\label{it:rgood-zg}
\end{enumerate}
\end{lem}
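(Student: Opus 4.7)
The plan is to relate $A_G(x)$ to the analogous component group $A_{G_\ad}(x)$ for the adjoint quotient $\pi\colon G \twoheadrightarrow G_\ad$, whose kernel is $Z(G)$. Since the adjoint action factors through $G_\ad$, one checks that $\pi$ restricts to a surjection $Z_G(x) \twoheadrightarrow Z_{G_\ad}(x)$ with kernel $Z(G)$, and that $Z_G(x)^\circ$ surjects onto $Z_{G_\ad}(x)^\circ$ (its image is a closed connected subgroup of finite index in $Z_{G_\ad}(x)$). Applying the snake lemma to this pair of short exact sequences yields the basic comparison
\[
1 \to Z(G)/(Z(G) \cap Z_G(x)^\circ) \to A_G(x) \to A_{G_\ad}(x) \to 1.
\]

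For the implication \eqref{it:rgood-zg}$\Rightarrow$\eqref{it:rgood-ag}, I would bound both outer terms. Since $Z(G)^\circ \subseteq Z_G(x)^\circ$, the kernel $Z(G)/(Z(G) \cap Z_G(x)^\circ)$ is a quotient of the finite group $Z(G)/Z(G)^\circ$, so its order is coprime to $\ell$ by hypothesis. For the quotient $A_{G_\ad}(x)$, I would invoke the classical tables of component groups of complex nilpotent orbits in adjoint simple groups (as in Collingwood--McGovern or Carter): type by type, the primes dividing $|A_{G_\ad}(x)|$ lie among the bad primes of the root system, which coincide with the bad primes for $G$. Combining both bounds gives $\ell\nmid|A_G(x)|$.

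For the converse \eqref{it:rgood-ag}$\Rightarrow$\eqref{it:rgood-zg} I would argue contrapositively in two cases. If $\ell$ divides $|Z(G)/Z(G)^\circ|$, a regular nilpotent element $x$ is a witness: a standard computation shows $Z_G(x) = Z(G) \cdot U$ with $U$ a connected unipotent group, and $Z(G)\cap U = 1$ because $Z(G)$ is of multiplicative type, so $Z_G(x)^\circ = Z(G)^\circ\cdot U$ and $A_G(x) \cong Z(G)/Z(G)^\circ$. If instead $\ell$ is bad for $G$, one exhibits, type by type, a complex nilpotent orbit whose adjoint component group has order divisible by $\ell$ (subregular orbits suffice in the classical types and in $G_2, F_4$, while appropriate orbits in $E_8$ account for $\ell=5$); the surjection $A_G(x)\twoheadrightarrow A_{G_\ad}(x)$ then lifts the witness back to $A_G(x)$.

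The only real obstacle is the case-by-case input from the classification of component groups of nilpotent orbits, which is entirely tabulated in the literature and requires no new computation; the group-theoretic bookkeeping around the snake-lemma sequence is routine.
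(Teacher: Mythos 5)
Your argument is essentially correct, and it is worth noting that the paper itself offers no proof of this lemma: it is explicitly presented as a restatement of Lemmas~6.2--6.4 of the companion paper [AHJR4], so there is nothing in this text to compare against line by line. Your route --- the exact sequence $1 \to Z(G)/(Z(G)\cap Z_G(x)^\circ) \to A_G(x) \to A_{G_\ad}(x) \to 1$ obtained from the adjoint quotient, combined with (i) the standard fact that $A_G(x)\cong Z(G)/Z(G)^\circ$ for $x$ regular and (ii) the tabulated fact that the primes dividing component groups of nilpotent centralizers in adjoint simple groups are exactly the bad primes --- is the natural one and is sound. The snake-lemma bookkeeping is fine: $\pi^{-1}(Z_{G_\ad}(x))=Z_G(x)$ because $Z(G)=\ker(\Ad)$ and $\cN_G$ sits in $[\fg,\fg]$, which meets $\fz$ trivially, and your finite-index argument correctly identifies $\pi(Z_G(x)^\circ)$ with $Z_{G_\ad}(x)^\circ$.

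Two small inaccuracies in your parenthetical list of witnesses for the converse, neither of which damages the proof since the existence statement you need is standard and tabulated. First, in adjoint type $D$ the subregular orbit does \emph{not} work: for $\PSO(8)$ the subregular partition $(5,3)$ has $A_{\SO(8)}(x)\cong\Z/2$ generated precisely by $-I$, so $A_{\PSO(8)}(x)$ is trivial; one must instead use an orbit such as $(3,3,1,1)$, where $-I$ lies in the identity component of the centralizer. Second, for $F_4$ the subregular orbit $F_4(a_1)$ has component group $\Z/2$, which witnesses $\ell=2$ but not $\ell=3$; for the latter you need an orbit such as $F_4(a_3)$ with $A(x)\cong\fS_4$ (just as you need $E_8(a_7)$ with $A(x)\cong\fS_5$ for $\ell=5$ in $E_8$). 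With those witnesses corrected, the case-by-case input is exactly the standard one and the proof is complete.
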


(In~\cite{genspring3}, part~\eqref{it:rgood-ag} was taken to be the definition of \emph{rather good}, rather than part~\eqref{it:rgood-zg}.)

\begin{lem}\label{lem:rgood-basic}
Let $\ell$ be a prime.
\begin{enumerate}
\item If $\ell$ is rather good for $G$, it is rather good for every Levi subgroup of $G$.\label{it:rgood-levi}
\item If $\ell$ does not divide $|W|$, then it is rather good for $G$.\label{it:rgood-easy}
\item If $G$ is quasi-simple and not of type $A$, then $\ell$ is rather good for $G$ if and only if it is good for $G$.\label{it:rgood-simple}
\end{enumerate}
\end{lem}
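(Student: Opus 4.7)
The plan is to prove each part by reducing to one of the two equivalent characterizations in Lemma~\ref{lem:rgood-defn} and then reading off the answer from the classification of root data together with standard orders of Weyl groups and fundamental groups. No real obstacle is expected; the argument is essentially a case check.

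For part~\eqref{it:rgood-levi}, I would use condition~\eqref{it:rgood-ag} of Lemma~\ref{lem:rgood-defn}. Given $x \in \cN_L \subseteq \cN_G$, we have $C_L(x) = C_G(x) \cap L$, and the inclusion $C_L(x) \hookrightarrow C_G(x)$ induces an injection $A_L(x) \hookrightarrow A_G(x)$ on component groups, since $C_L(x)^\circ = C_G(x)^\circ \cap L$ (a standard Bala--Carter fact, which follows from the fact that $C_G(x)^\circ \cap L$ is connected). Hence $|A_L(x)|$ divides $|A_G(x)|$, and the hypothesis $\ell \nmid |A_G(x)|$ for all $x \in \cN_G$ transfers to $L$.

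For part~\eqref{it:rgood-easy}, I would work with condition~\eqref{it:rgood-zg} and check the two requirements separately. The bad primes depend only on the root system and are $\{2\}$ in types $B$, $C$, $D$; $\{2,3\}$ in types $G_2$, $F_4$, $E_6$, $E_7$; and $\{2,3,5\}$ in type $E_8$. Direct inspection of Weyl-group orders (e.g.\ $|W(B_n)| = 2^n n!$, $|W(E_8)| = 2^{14}\cdot 3^5\cdot 5^2\cdot 7$) shows that each bad prime divides $|W|$, so $\ell \nmid |W|$ forces $\ell$ to be good. For the centre condition, one notes that $|Z(G)/Z(G)^\circ|$ divides the order of the centre of the simply connected cover $G_\sc$ of $G_{\mathrm{der}}$ (this follows from $Z(G) = Z(G_{\mathrm{der}})\cdot Z(G)^\circ$), and a further case check shows $|Z(G_\sc)|$ divides $|W|$ in every type (e.g.\ $(n+1) \mid (n+1)!$ in type $A_n$, $4 \mid 2^{n-1} n!$ in type $D_n$ with $n \geq 2$, $3 \mid |W(E_6)|$). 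So $\ell \nmid |W|$ also gives $\ell \nmid |Z(G)/Z(G)^\circ|$.

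For part~\eqref{it:rgood-simple}, the forward direction is immediate from the definition. For the converse, when $G$ is quasi-simple and not of type $A$, the primes dividing $|Z(G_\sc)|$ are precisely $2$ in types $B$, $C$, $D$, $E_7$; $3$ in type $E_6$; and none in types $E_8$, $F_4$, $G_2$. Each such prime is bad in the corresponding root system, so $\ell$ good forces $\ell \nmid |Z(G_\sc)|$, hence $\ell \nmid |Z(G)/Z(G)^\circ|$, and $\ell$ is rather good. The only mildly non-trivial input anywhere in the proof is the component-group injection used in part~\eqref{it:rgood-levi}, which is a standard consequence of $C_L(x)^\circ = C_G(x)^\circ \cap L$; everything else is a direct inspection of the classification.
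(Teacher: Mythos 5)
Your proof is correct. Note that the paper itself does not prove this lemma: it is stated as a restatement of results from the earlier paper in the series (\cite[Lemmas~6.2--6.4]{genspring3}), so there is no in-paper argument to compare against. Your reductions to the two characterizations of Lemma~\ref{lem:rgood-defn} are sound: for part~\eqref{it:rgood-levi}, the key point that $C_G(x)^\circ \cap L$ is connected does hold (writing $L = C_G(S)$ with $S = Z(L)^\circ$ a torus contained in $C_G(x)^\circ$, this intersection is the centralizer of a torus in a connected group, hence connected), which gives $C_L(x)^\circ = C_G(x)^\circ \cap L$ and the injection $A_L(x) \hookrightarrow A_G(x)$; parts~\eqref{it:rgood-easy} and~\eqref{it:rgood-simple} are the straightforward case checks you describe, using that $|Z(G)/Z(G)^\circ|$ divides the order of the centre of the simply connected cover of the derived group. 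The only place where I would ask for one more sentence is the connectedness claim in part~\eqref{it:rgood-levi}, which you assert rather than justify; with that filled in as above, the argument is complete.
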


\subsection{Cuspidal and supercuspidal data}
\label{ss:supercuspidal}

Recall that for a parabolic subgroup $P\subset G$ with Levi factor $L$, we have an induction functor~\cite[\S 2.1]{genspring1}
\[
\Ind_{L \subset P}^G : \Perv_{L}(\cN_{L},\bk) \to \Perv_G(\cN_G,\bk).
\]
A pair $(\cO,\cE) \in \fN_{G,\bk}$ is said to be \emph{cuspidal} (resp.~\emph{supercuspidal}) if the simple perverse sheaf $\IC(\cO,\cE)$ does not occur as a quotient (resp.~composition factor) of any induced perverse sheaf $\Ind_{L \subset P}^G(\cF)$ with $L \subsetneq G$.  The set of cuspidal, resp. supercuspidal, pairs in $\fN_{G,\bk}$ is denoted
\[
\fN^\cusp_{G,\bk}, 
\qquad\text{resp.}\qquad
\fN^\supercusp_{G,\bk}.
\]
Obviously, every supercuspidal pair is cuspidal.

A \emph{cuspidal datum} (resp.~\emph{supercuspidal datum}) is a triple $(L,\cO_L,\cE_L)$, where $L \subset G$ is a Levi subgroup, and $(\cO_L,\cE_L)$ belongs to $\fN^\cusp_{L,\bk}$ (resp.~$\fN^\supercusp_{L,\bk}$).  Let $\fM_{G,\bk}$ denote the set of $G$-orbits of cuspidal data.  Let $\fM^\sup_{G,\bk} \subset \fM_{G,\bk}$ be the subset consisting of $G$-orbits of supercuspidal data.  In a minor abuse of language, we will often call elements of $\fM_{G,\bk}$ `cuspidal data', omitting any mention of the $G$-action.

Recall that the \emph{induction series} associated to a cuspidal datum $(L,\cO_L,\cE_L)$ is the set
\[
\fN_{G,\bk}^{(L,\cO_L,\cE_L)} = \{ (\cO',\cE') \in \fN_{G,\bk} \mid
\text{$\IC(\cO',\cE')$ is a quotient of $\Ind_{L \subset P}^G \IC(\cO_L,\cE_L)$} \}.
\]
Here $P$ is a parabolic subgroup of $G$ with Levi factor $L$; the set $\fN_{G,\bk}^{(L,\cO_L,\cE_L)}$ does not depend on the choice of $P$ (see~\cite[\S 2.2]{genspring2}), and depends only on the $G$-conjugacy class of $(L,\cO_L, \cE_L)$.

Similarly, if $(L,\cO_L,\cE_L)$ is a supercuspidal datum, we define the corresponding \emph{induction superseries} to be the set
\[
\fN_{G,\bk}^{\super (L,\cO_L,\cE_L)} = \{ (\cO',\cE') \in \fN_{G,\bk} \mid
\text{$\IC(\cO',\cE')$ is a composition factor of $\Ind_{L \subset P}^G \IC(\cO_L,\cE_L)$} \}.
\]
Here again $P$ is a parabolic subgroup of $G$ with Levi factor $L$;
it follows from Lemma~\ref{lem:comparison-cusp} below (and~\cite[\S 2.2]{genspring2} again) that the set $\fN_{G,\bk}^{\super (L,\cO_L,\cE_L)}$ does not depend on the choice of $P$, and depends only on the $G$-conjugacy class of $(L,\cO_L, \cE_L)$.

With characteristic-$0$ coefficients, the Decomposition Theorem implies that the induction fuctor $\Ind_{L \subset P}^G$ takes semisimple perverse sheaves to semisimple perverse sheaves, so in that case, the notions of `cuspidal' and `supercuspidal' coincide, as do the notions of `induction series' and `induction superseries.' These properties no longer hold in positive characteristic; see~\cite[Remark~3.2]{genspring1}.

\subsection{0-cuspidal pairs and 0-series}
\label{ss:0cusp-0series}

By Lemma~\ref{lem:rgood-defn}\eqref{it:rgood-ag}, if $\ell$ is rather good, then there is a canonical bijection $\Irr(\bk[A_G(x)]) \cong \Irr(\K[A_G(x)])$ for each $x \in \cN_G$, and hence a canonical bijection
\begin{equation}\label{eqn:fn-bij}
\theta_G: \fN_{G,\K} \simto \fN_{G,\bk},
\end{equation}
of which we saw an incarnation in~\eqref{eqn:fn-indep}.  (Although it was convenient in the introduction to identify the two sides of~\eqref{eqn:fn-indep}, the proofs in this paper are clearer when we retain the distinction.)  Furthermore, because $\K$ satisfies~\eqref{eqn:definitely-big-enough}, Lusztig's results~\cite{lusztig} on the generalized Springer correspondence with coefficients in $\Qlb$ hold over $\K$ as well.  We will freely make use of this observation throughout the paper.

By Lemma~\ref{lem:rgood-basic}\eqref{it:rgood-levi}, there are also similar bijections
\[
\theta_L : \fN_{L,\K} \simto \fN_{L,\bk}
\]
for any Levi subgroup $L \subset G$.

A pair $(\cO,\cE) \in \fN_{G,\bk}$ is \emph{$0$-cuspidal} if it lies in $\theta_G(\fN^\cusp_{G,\K})$.  The set of $0$-cuspidal pairs is denoted by
\[
\fN^\zcusp_{G,\bk} = \theta_G(\fN^\cusp_{G,\K}).
\]
A \emph{$0$-cuspidal datum} is a triple $(L,\cO_L,\cE_L)$ where $L \subset G$ is a Levi subgroup, and $(\cO_L,\cE_L)$ belongs to $\fN_{L,\bk}^\zcusp$.  Let $\fM_{G,\bk}^0$ denote the set of $G$-orbits of $0$-cuspidal data.  The \emph{induction $0$-series} associated to a $0$-cuspidal datum $(L,\cO_L,\cE_L)$ is the subset $\fN_{G,\bk}^{\zs(L,\cO_L,\cE_L)} \subset \fN_{G,\bk}$ given by
\[
\fN^{\zs(L,\cO_L,\cE_L)}_{G,\bk} = \theta_G \Bigl( \fN^{(L,\theta_L^{-1}(\cO_L,\cE_L))}_{G,\K} \Bigr).
\]
In other words, an induction $0$-series is simply the image under~\eqref{eqn:fn-bij} of a character\-istic-$0$ induction series. 

Finally, the notion of $0$-cuspidal pairs gives rise to another kind of series, but in $\fM_{G,\bk}$ rather than in $\fN_{G,\bk}$.  Given $(L,\cO_L,\cE_L) \in \fM_{G,\bk}^0$, let
\begin{equation}\label{eqn:data-0-series}
\fM_{G,\bk}^{\zs(L,\cO_L,\cE_L)} = \{ (M, \cO_M, \cE_M) \mid \text{
$L \subset M \subset G$ and $(\cO_M,\cE_M) \in \fN_{M,\bk}^\cusp \cap \fN_{M,\bk}^{\zs(L,\cO_L,\cE_L)}$} \}/(\text{$G$-conjugacy}).
\end{equation}
A set of this form is called a \emph{$0$-series of cuspidal data}.  It is manifestly a subset of $\fM_{G,\bk}$.

\begin{lem}
\label{lem:comparison-cusp}
We have
$
\fN^\supercusp_{G,\bk} \subset \fN^\zcusp_{G,\bk} \subset \fN^{\cusp}_{G,\bk}
$.
\end{lem}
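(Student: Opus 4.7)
The plan is to prove both inclusions by lifting through the modular system $(\K,\O,\bk)$, using: (i) that $\ell$ is rather good for every Levi subgroup by Lemma~\ref{lem:rgood-basic}\eqref{it:rgood-levi}, so every simple local system appearing in $\fN_{L,\bk}$ admits a canonical $\O$-form with generic fibre corresponding under $\theta_L^{-1}$; (ii) the compatibility $\bk \lotimes_\O \Ind^G_{L \subset P}(-) \cong \Ind^G_{L \subset P}(\bk \lotimes_\O -)$ recalled in \cite[\S2.7]{genspring1}; and (iii) the equivalence of cuspidality and supercuspidality in characteristic~$\K$ (from the decomposition theorem), as already noted in \S\ref{ss:supercuspidal}.

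For $\fN^{\supercusp}_{G,\bk} \subset \fN^{\zcusp}_{G,\bk}$, I argue by contraposition. Suppose $(\cO_\K,\cE_\K) := \theta_G^{-1}(\cO,\cE)$ is not $\K$-cuspidal. By (iii), $\IC(\cO_\K,\cE_\K)$ is a composition factor of $\Ind^G_{L \subset P}\IC(\cO_L^\K,\cE_L^\K)$ for some proper Levi $L \subsetneq G$. Let $\cE_L^\O$ be the $\O$-form of $\cE_L := \theta_L(\cO_L^\K,\cE_L^\K)$; applying $\K \otimes_\O (-)$ and $\bk \lotimes_\O (-)$ to $\Ind^G_{L \subset P}\IC(\cO_L,\cE_L^\O)$ and using (ii), the class $[\IC(\cO,\cE)]$ appears in the class of $\Ind^G_{L \subset P}(\bk \lotimes_\O \IC(\cO_L,\cE_L^\O))$ in the Grothendieck group of $\Perv_G(\cN_G,\bk)$. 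A d\'evissage on the Jordan--H\"older factors of the perverse sheaf $\bk \lotimes_\O \IC(\cO_L,\cE_L^\O)$, together with exactness of $\Ind^G_{L \subset P}$, then yields a simple $\cH \in \Perv_L(\cN_L,\bk)$ such that $\IC(\cO,\cE)$ is a composition factor of $\Ind^G_{L \subset P}\cH$, contradicting supercuspidality.

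For $\fN^{\zcusp}_{G,\bk} \subset \fN^{\cusp}_{G,\bk}$, I again argue contrapositively. If $\IC(\cO,\cE)$ is a quotient of $\Ind^G_{L \subset P}\IC(\cO_L,\cE_L)$ with $L \subsetneq G$, lift $(\cO_L,\cE_L)$ via $\theta_L^{-1}$ and exploit that $\IC(\cO_L,\cE_L)$ is a quotient of $\bk \lotimes_\O \IC(\cO_L,\cE_L^\O)$. Applying the exact functor $\Ind^G_{L \subset P}$, the simple $\IC(\cO,\cE)$ appears as a quotient of $\bk \lotimes_\O \Ind^G_{L \subset P}\IC(\cO_L,\cE_L^\O)$. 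Since the head of $\bk \lotimes_\O \IC(X_\O)$ contains $\IC(\theta_G X_\K)$ with multiplicity one and the decomposition matrix is upper triangular with respect to the closure order, the occurrence of $\IC(\cO,\cE)$ in this modular reduction pulls back to an occurrence of $\IC(\cO_\K,\cE_\K)$ as a composition factor of $\Ind^G_{L \subset P}\IC(\cO_L^\K,\cE_L^\K)$ in characteristic $\K$, contradicting $\K$-cuspidality of $(\cO_\K,\cE_\K)$.

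The main obstacle is the final Brauer-theoretic step of the second inclusion: rigorously extracting from a $\bk$-quotient occurrence in the modular reduction of an integral induced sheaf a matching $\K$-composition factor in the $\K$-fibre. This relies on the unitriangularity of the decomposition matrix (with $1$'s on the $\theta_G$-diagonal), itself a consequence of rather goodness combined with \eqref{eqn:definitely-big-enough}. The first inclusion, by contrast, follows in a straightforward manner from the compatibility between induction and modular reduction alone.
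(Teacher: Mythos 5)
Your first inclusion $\fN^\supercusp_{G,\bk} \subset \fN^\zcusp_{G,\bk}$ is proved by essentially the same contrapositive argument the paper uses (lift the non-$0$-cuspidal pair to $\K$, where it is a summand of an induced sheaf, and apply the compatibility of $\Ind^G_{L\subset P}$ with modular reduction). Your extra d\'evissage to a simple $\cH$ is unnecessary — the definition of supercuspidality quantifies over arbitrary perverse sheaves $\cF$, not just simple ones — but it is harmless.

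The second inclusion $\fN^\zcusp_{G,\bk} \subset \fN^\cusp_{G,\bk}$ is where your proposal diverges from the paper, and where it has a real gap — one you yourself flag. You argue contrapositively and try to ``pull back'' a quotient occurrence of $\IC(\cO,\cE)$ in $\bk \lotimes_\O \Ind^G_{L\subset P}\IC(\cO_L,\cE_L^\O)$ to a summand occurrence of $\IC(\cO_\K,\cE_\K)$ in $\Ind^G_{L\subset P}\IC(\cO_L^\K,\cE_L^\K)$, invoking unitriangularity of the decomposition matrix. But unitriangularity is a Grothendieck-group statement: it tells you $[\bk\lotimes_\O\Ind^\O]$ equals $\sum_\lambda m_\lambda[\bk\lotimes_\O\IC_\lambda^\O]$ with each $[\bk\lotimes_\O\IC_\lambda^\O]=[\IC(\theta_G\lambda)]+\text{lower}$; it does not rule out that $\IC(\cO,\cE)$ enters as one of the ``lower'' terms coming from some $\lambda$ with $\cO\subsetneq\overline{\cO_\lambda}$, in which case $\IC(\cO_\K,\cE_\K)$ need not appear in the $\K$-fibre at all. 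To close this gap one would have to pass to the restriction to $\cO$ in the top perverse degree and invoke the support condition for $\O$-perverse sheaves (to kill the $\mathrm{Tor}_1$ contribution in universal coefficients) together with the fact that rather-goodness makes the decomposition matrix for $G$-equivariant local systems on the single orbit $\cO$ the identity — none of which is in your sketch, and all of which is more delicate than ``unitriangularity.'' Likewise, your assertion that $\IC(\cO_L,\cE_L)$ is a \emph{quotient} (not merely a composition factor) of $\bk\lotimes_\O\IC(\cO_L,\cE_L^\O)$ needs a Verdier-duality argument that you do not supply.

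The paper avoids all of this by arguing \emph{directly}: if $(\cO,\cE)$ is $0$-cuspidal, then $\IC(\cO,\cE)$ is a composition factor of $\bk\lotimes_\O\IC(\cO,\cE^\O)$ with $\IC(\cO,\cE^\K)$ $\K$-cuspidal, and \cite[Proposition~2.22]{genspring1} states precisely that every composition factor of the modular reduction of a cuspidal perverse sheaf is cuspidal. That proposition (proved there via the fact that ${}'\Res^G_{L\subset P}$ commutes with modular reduction) is the clean tool you were missing; using it, the second inclusion is a one-line argument rather than a decomposition-matrix chase.
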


This lemma implies that we have natural embeddings $\fM_{G,\bk}^\sup \subset \fM_{G,\bk}^0 \subset \fM_{G,\bk}$.

\begin{proof}
We begin with the second inclusion. Let $(\cO,\cE) \in \fN^\zcusp_{G,\bk}$, and let $\cE^{\K}$ be the equivariant $\K$-local system on $\cO$ such that $\theta_G(\cO,\cE^\K)=(\cO, \cE)$. Then $\IC(\cO,\cE)$ occurs in the modular reduction of $\IC(\cO,\cE^\K)$, so it is cuspidal by~\cite[Proposition~2.22]{genspring1}.

On the other hand,
suppose that $(\cO,\cE) \in \fN_{G,\bk}$ is not $0$-cuspidal.  As above, let $\cE^{\K}$ be the equivariant $\K$-local system on $\cO$ such that $\theta_G(\cO,\cE^\K)=(\cO, \cE)$, so that $\IC(\cO,\cE)$ occurs in the modular reduction of $\IC(\cO,\cE^\K)$.  Since $(\cO,\cE)$ is not $0$-cuspidal, $\IC(\cO,\cE^\K)$ occurs as a direct summand of some $\Ind_{L \subset P}^G (\IC(\cO_L, \cE^\K_L))$ with $L \neq G$.  Since induction commutes with modular reduction (see~\cite[Remark~2.23]{genspring1}), $\IC(\cO,\cE)$ occurs as a composition factor in the perverse sheaf $\Ind_{L \subset P}^G (\bk \lotimes_\O \IC(\cO_L, \cE^\O_L))$, where $\cE^\O_L$ is any $\O$-form of $\cE^\K_L$.  In particular, $(\cO,\cE)$ is not supercuspidal.
\end{proof}

In~\cite[\mgthmcusp]{genspring3} we classified cuspidal pairs in good characteristic. As an immediate consequence of this classification, we have the following result in our current setting of rather good characteristic:

\begin{prop} \label{prop:simply-conn}
If $G$ is semisimple and simply connected, then every cuspidal pair is $0$-cuspidal, so $\fN^\zcusp_{G,\bk}=\fN^{\cusp}_{G,\bk}$ in this case.
\end{prop}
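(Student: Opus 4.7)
By Lemma~\ref{lem:comparison-cusp} we already have $\fN^\zcusp_{G,\bk}\subseteq \fN^\cusp_{G,\bk}$, so the content of the proposition is the reverse inclusion. The strategy is to combine a product reduction with a direct inspection of the classification of cuspidal pairs from \cite[\mgthmcusp]{genspring3}.

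First I would reduce to the case where $G$ is quasi-simple and simply connected. A semisimple simply connected complex group factors as $G = G_1 \times \cdots \times G_k$ with each $G_i$ quasi-simple and simply connected. Parabolic and Levi subgroups decompose compatibly with this product, so the induction functor $\Ind_{L \subset P}^G$ is the external tensor product of the induction functors on the factors. Consequently a pair $(\cO, \cE) = (\cO_1,\cE_1) \boxtimes \cdots \boxtimes (\cO_k, \cE_k) \in \fN_{G,\bk}$ is cuspidal if and only if every $(\cO_i, \cE_i)$ is cuspidal in $\fN_{G_i,\bk}$; the same statement for $0$-cuspidality follows because the bijection $\theta_G$ of~\eqref{eqn:fn-bij} is multiplicative with respect to this decomposition and because $\ell$ is rather good for each $G_i$ by Lemma~\ref{lem:rgood-basic}\eqref{it:rgood-levi}.

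Then for $G$ quasi-simple and simply connected, I would invoke the explicit classification \cite[\mgthmcusp]{genspring3}. Under our hypotheses, the rather-good condition unfolds type-by-type: in type $A$ it becomes $\ell \nmid n$ for $G=\SL(n)$, in types $B$, $C$, $D$ it becomes $\ell > 2$ (since rather good coincides with good by Lemma~\ref{lem:rgood-basic}\eqref{it:rgood-simple} outside type $A$), and in exceptional types it is just the good-prime condition. In each case the tables in \cite[\mgthmcusp]{genspring3} exhibit every cuspidal pair as the image under $\theta_G$ of a classical (characteristic-zero) Lusztig cuspidal pair, so it is $0$-cuspidal by definition. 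For types $B$, $C$, $D$ this matching is recorded via \cite[Thms.~7.2, 8.3, 8.4]{genspring2}, as noted in the remark following Theorem~\ref{thm:clean-intro}.

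The only real obstacle is the finite case-by-case inspection in the last step, but since the classification is already completed in the cited references, the check is mechanical and justifies the word ``immediate'' in the statement. No new argument about induction, modular reduction, or the structure of local systems is required beyond what is already packaged into \cite{genspring2,genspring3}.
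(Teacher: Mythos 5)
Your proof is correct and takes essentially the same route as the paper: both derive the proposition from the explicit classification of cuspidal pairs in good characteristic given in \cite[\mgthmcusp]{genspring3}. The paper simply asserts the result is ``an immediate consequence of this classification,'' whereas you spell out the (correct) product reduction to quasi-simple, simply connected factors and the type-by-type check. One minor remark: the product reduction is not strictly necessary, since the classification in \cite[\mgthmcusp]{genspring3} is already stated for general reductive $G$; and citing Lemma~\ref{lem:rgood-basic}\eqref{it:rgood-levi} for the factors $G_i$ is a slight misfit (they are not Levi subgroups of $G$), though the needed fact that $\ell$ is rather good for each $G_i$ is immediate from $A_G(x)\cong\prod_i A_{G_i}(x_i)$.
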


\noindent
Of course, the property of being semisimple and simply connected is not inherited by Levi subgroups, so we cannot conclude that $\fM_{G,\bk}^0=\fM_{G,\bk}$ for such $G$ (and indeed this is false in general). 

\subsection{A `reduction lemma'}

The following lemma will be used below to reduce the proof of some statements to the case where $G$ is semisimple of type $A$, or simply connected and quasi-simple not of type $A$. (Here and throughout this section, a `semisimple group of type $A$' means a semisimple group whose root system is a product of root systems of type $A$.)

\begin{lem}
\label{lem:cogood-reduction}
Given a reductive group $G$, there exists a semisimple group $G'$ with the following properties:
\begin{enumerate}
\item There is an isogeny $G' \twoheadrightarrow G/Z(G)^\circ$.  Thus, $\cN_G$ can be identified with $\cN_{G'}$, and there is a fully faithful functor $\Perv_G(\cN_G,\bk) \to \Perv_{G'}(\cN_{G'},\bk)$.
\item We have $G' \cong G'_1 \times G'_2$, where $G'_1$ is a semisimple group of type $A$, and $G'_2$ is a product of quasi-simple, simply connected groups that are \emph{not} of type $A$.
\item A prime number $\ell$ is rather good for $G$ if and only if it is rather good for $G'$. If it is rather good for both, we have
\[
\Perv_{G'}(\cN_{G'},\bk) \cong \Perv_G(\cN_G,\bk) \oplus \Perv_G(\cN_G,\bk)^\perp,
\]
where $\Perv_G(\cN_G,\bk)^\perp$ denotes the full subcategory of $\Perv_{G'}(\cN_{G'},\bk)$ consisting of objects with no quotient or subobject in $\Perv_G(\cN_G,\bk)$.\label{it:cogood-reduc-sum}
\end{enumerate}
\end{lem}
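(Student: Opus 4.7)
The plan is to construct $G'$ by separating the semisimple group $H := G/Z(G)^\circ$ into its type-$A$ and non-type-$A$ parts and adjusting only the latter. Write $H = H_1 \cdot H_2$ as an almost direct product, where $H_1$ is generated by the quasi-simple factors of type $A$ and $H_2$ by those not of type $A$. Set $G_1' := H_1$, let $G_2' \to H_2$ be the simply connected cover with kernel $K_2$, and put $G' := G_1' \times G_2'$. The multiplication map $G' \twoheadrightarrow H$ has finite central kernel
\[
K \;=\; \bigl\{\, (z^{-1}, \tilde z) \,\bigm|\, z \in H_1 \cap H_2,\ \tilde z \in G_2',\ \tilde z \mapsto z \,\bigr\},
\]
so it is an isogeny, and property (2) holds by construction.

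For (1), the projection $G \twoheadrightarrow H$ has connected kernel $Z(G)^\circ$, a central torus that acts trivially on $\cN_G$; pullback therefore yields an equivalence $\Perv_H(\cN_H, \bk) \simto \Perv_G(\cN_G, \bk)$. The isogeny $G' \to H$ identifies the two Lie algebras and hence gives $\cN_{G'} \simto \cN_H$, together with a fully faithful pullback $\Perv_H(\cN_H, \bk) \hookrightarrow \Perv_{G'}(\cN_{G'}, \bk)$ whose essential image consists of those objects on which $K$ (regarded via its image in each component group $A_{G'}(x)$) acts trivially.

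For (3), I would apply the component-group characterisation of rather-goodness in Lemma~\ref{lem:rgood-defn}. Since $Z(G)^\circ$ is connected, $A_G(x) = A_H(x)$ for every $x \in \cN_G$, and the isogeny $G' \to H$ produces a short exact sequence $1 \to K_x \to A_{G'}(x) \to A_H(x) \to 1$ with $K_x$ a subquotient of $K$. The key computation is that $|K|$ is coprime to $\ell$: one has $|K| = |H_1 \cap H_2| \cdot |K_2|$; the factor $H_1 \cap H_2$ is contained in $Z(H) = Z(G)/Z(G)^\circ$, whose order is coprime to $\ell$ by rather-goodness of $G$; and $|K_2|$ divides the order of $Z(G_2')$, which is the product of the connection indices of non-type-$A$ quasi-simple simply connected groups and is therefore supported on the bad primes of $G$. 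Thus $\ell \nmid |K|$, and the equivalence of rather-goodness for $G$ and $G'$ follows from the exact sequence.

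Under this hypothesis $\bk[K]$ is semisimple. Since $K$ is central in $G'$ and acts trivially on $\cN_{G'}$, it acts on any $G'$-equivariant perverse sheaf through a character of $K$ that is constant on each simple constituent, giving an orthogonal decomposition
\[
\Perv_{G'}(\cN_{G'}, \bk) \;=\; \bigoplus_{\chi \colon K \to \bk^\times} \Perv_{G'}(\cN_{G'}, \bk)_\chi,
\]
whose trivial-character summand coincides with the essential image of $\Perv_G(\cN_G, \bk)$ identified in (1), while the sum of the other summands provides the orthogonal complement. The main obstacle is to set up the product decomposition $G' = G_1' \times G_2'$ and its isogeny to $H$ cleanly enough to make the order of $K$ visibly coprime to $\ell$; once this is in hand, the orthogonal decomposition follows formally from the semisimplicity of $\bk[K]$.
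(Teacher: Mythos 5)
Your proposal is correct and follows essentially the same route as the paper: isolate the type-$A$ part of $G/Z(G)^\circ$, replace the remaining quasi-simple factors by their simply connected covers, check that the kernel of the resulting isogeny has order prime to $\ell$, and then split $\Perv_{G'}(\cN_{G'},\bk)$ by characters of that kernel. The only differences are cosmetic: the paper constructs $G'$ as a quotient of the universal cover of $G/Z(G)^\circ$ by the prime-to-$2$ (or prime-to-$2$-and-$3$) part of the covering kernel, and compares rather-good primes via orders of centres rather than via the component groups $A_G(x)$, but both verifications rest on the same observation that the centre of a simply connected quasi-simple group not of type $A$ is supported on bad primes.
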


\begin{proof}
First we claim that, for any field $\bk$, the natural functor $\Perv_{G/Z(G)^\circ}(\cN_G,\bk) \to \Perv_G(\cN_G,\bk)$ is an equivalence of categories. Indeed the category $\Perv_{G/Z(G)^\circ}(\cN_G,\bk)$, resp.~$\Perv_G(\cN_G,\bk)$, is equivalent to the full subcategory of the category $\Perv(\cN_G,\bk)$ whose objects are the perverse sheaves $\cF$ such that the pullbacks of $\cF$ to $G/Z(G)^\circ \times \cN_G$, resp.~to $G \times \cN_G$, under the morphisms given the action and the projection are isomorphic. Now, since the projection $G \times \cN_G \to G/Z(G)^\circ \times \cN_G$ is smooth with connected fibres, the (shifted) pullback functor $\Perv(G/Z(G)^\circ \times \cN_G) \to \Perv(G \times \cN_G)$ is fully faithful (see~\cite[Proposition~4.2.5]{bbd}), so that an object of $\Perv(\cN_G,\bk)$ belongs to $\Perv_{G/Z(G)^\circ}(\cN_G,\bk)$ iff it belongs to $\Perv_{G}(\cN_G,\bk)$.
Using this claim, we can (and will) assume that $G$ is semisimple.

Let $\tilde G$ be the universal cover of $G$, and let $K_0$ be the kernel of $\tilde G \twoheadrightarrow G$.  Define a subgroup $K \subset K_0$ as follows:
\begin{equation}\label{eqn:cogood-reduction}
K = 
\begin{cases}
K_0 & \text{if $\tilde G$ is a product of groups of type $A$,} \\
(K_0)_{2'} & \text{if $\tilde G$ contains factors of type $B$, $C$, or $D$,} \\
  & \quad\text{but not of exceptional type,} \\
(K_0)_{2',3'} & \text{if $\tilde G$ contains factors of exceptional type.}
\end{cases}
\end{equation}
Here, $(K_0)_{2'}$ (resp.~$(K_0)_{2',3'}$) denotes the subgroup of $K_0$ consisting of elements of order coprime to $2$ (resp.~coprime to $2$ and $3$).  Let $G' = \tilde G/K$.  Also, let $\tilde G_1$ (resp.~$G'_2$) be the product of all quasi-simple factors of $\tilde G$ of type $A$ (resp.~not of type $A$).  Thus, $\tilde G \cong \tilde G_1 \times G'_2$.

Now, the centre of a quasi-simple group of type $B$, $C$, or $D$ is a $2$-group, and the center of a quasi-simple exceptional group may have order $1$, $2$, or $3$.  Therefore, the subgroup $K \subset Z(\tilde G) = Z(\tilde G_1) \times Z(G'_2)$ must be of the form $K_1 \times \{1\}$ for some $K_1 \subset Z(\tilde G_1)$.
Then we have $G' \cong (\tilde G_1/K_1) \times G'_2$. Let $G'_1 = \tilde G_1/K_1$; this is a semisimple group of type $A$.  Since $G$ is a quotient of $G'$, there is a natural fully faithful functor $\Perv_G(\cN_G,\bk) \hookrightarrow \Perv_{G'}(\cN_{G'},\bk)$.

We now show that $G$ and $G'$ have the same set of rather good primes.  If $\tilde G = \tilde G_1$, then $G' = G$ and there is nothing to prove. If $\tilde G$ contains factors of type $B$, $C$, or $D$, but not of exceptional type, then $G$ and $G'$ have the same good primes, and $Z(G)$ is a quotient of $Z(G')$ by a $2$-group. Thus, $|Z(G)|$ and $|Z(G')|$ have the same odd prime divisors, so $G$ and $G'$ have the same rather good primes.
Similar reasoning applies when $\tilde G$ has exceptional factors.

Finally, assume that $\ell$ is rather good for $G$ and $G'$, and let $Z \cong K_0/K$ be the kernel of the projection $G' \twoheadrightarrow G$. Then by Lemma~\ref{lem:Perv-trivial-character} the fully faithful functor  $\Perv_G(\cN_G,\bk) \to \Perv_{G'}(\cN_{G'},\bk)$ identifies $\Perv_G(\cN_G,\bk)$ with the subcategory of $\Perv_{G'}(\cN_{G'},\bk)$ whose objects have trivial $Z$-character. Then the direct sum in part~\eqref{it:cogood-reduc-sum} comes from Lemma~\ref{lem:centralchar-decomp}; $\Perv_G(\cN_G,\bk)^\perp$ is the direct sum of all $\Perv_{G'}(\cN_G,\bk)_\chi$ where $\chi$ is \emph{not} trivial on $K$.
\end{proof}

\subsection{Central characters}
\label{ss:central-char}

As explained in Appendix~\ref{app:central-char}, for any pair $(\cO,\cE) \in \fN_{G,\bk}$, the local system $\cE$ determines a central character $\chi: Z(G)/Z(G)^\circ \to \bk^\times$.  If $L$ is a Levi subgroup of $G$ and $(\cO_L, \cE_L) \in \fN_{L, \bk}$, then $\cE_L$ has a central character $Z(L)/Z(L)^\circ \to \bk^\times$.  In a slight abuse of language (following~\cite[\S 5.1]{genspring2}), we will also refer to the composition $Z(G)/Z(G)^\circ \twoheadrightarrow Z(L)/Z(L)^\circ \to \bk^\times$ as the `central character' of $\cE_L$.



The following proposition is contained in~\cite[\mgthmcusp]{genspring3}.

\begin{prop}
\label{prop:unicity-cuspidal}
Any two distinct cuspidal pairs for $G$ have distinct central characters.
\end{prop}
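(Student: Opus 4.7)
The plan is to deduce this proposition from the explicit classification of cuspidal pairs given in \mgthmcusp of \cite{genspring3}, together with the reduction machinery established in this section. First I would apply Lemma~\ref{lem:cogood-reduction} to replace $G$ by the isogenous $G' = G_1' \times G_2'$, where $G_1'$ is semisimple of type $A$ and $G_2'$ is a product of quasi-simple simply connected groups not of type $A$. Under the fully faithful functor $\Perv_G(\cN_G,\bk) \hookrightarrow \Perv_{G'}(\cN_{G'},\bk)$ from that lemma, cuspidality is preserved (the Levi subgroups match up through the isogeny), and the central character of a pair for $G$ agrees with the central character of its image under the induced map on $\fN$'s, via the description in Appendix~\ref{app:central-char}. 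So it suffices to treat $G'$ itself, and since cuspidal pairs and central characters are both compatible with direct products, I would further reduce to the two cases: $G$ semisimple of type $A$, and $G$ quasi-simple simply connected of a type other than $A$.

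In the quasi-simple simply connected non-type-$A$ case, Proposition~\ref{prop:simply-conn} tells us that every cuspidal pair is already $0$-cuspidal, i.e.\ lies in $\theta_G(\fN_{G,\K}^\cusp)$. Since $\theta_G$ is compatible with central characters (both sides being parametrized by characters of the same component group $A_G(x)$, whose central character component depends only on which $Z(G)/Z(G)^\circ$-character the point lies over), the statement reduces to the analogous statement over $\K$. That follows directly from Lusztig's classification~\cite{lusztig}: one reads off the list of characteristic-$0$ cuspidal pairs type by type (types $B$, $C$, $D$ parametrized by symbols, plus the finite exceptional lists) and verifies that their central characters on $Z(G)$ are pairwise distinct.

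For the remaining case of $G$ semisimple of type $A$, the proof is by inspection of the explicit list of cuspidal pairs in \mgthmcusp. For $\SL_n$ there is at most one cuspidal pair, and for a general semisimple type-$A$ group, cuspidal pairs are obtained from those on a product of $\SL$-factors by imposing triviality of the central character on a specific central subgroup; verification that distinct surviving pairs have distinct central characters is a direct combinatorial check.

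The main obstacle is not conceptual but organizational: carefully tracking the central character of a pair through the isogeny in Lemma~\ref{lem:cogood-reduction}, through the bijection $\theta_G$ of~\eqref{eqn:fn-bij}, and through a product decomposition of $G'$. The type $A$ case is the most delicate, precisely because $\ell$-cuspidal pairs there need not be $0$-cuspidal (cf.\ the $\GL(2)$ example after Theorem~\ref{thm:clean-intro}), so we cannot outsource that case to Lusztig's characteristic-$0$ tables and must rely on the modular classification of \mgthmcusp directly.
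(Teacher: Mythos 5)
The paper's ``proof'' of this proposition is simply the observation that it is already contained in the classification of cuspidal pairs in rather good characteristic, namely \mgthmcusp{} of~\cite{genspring3}; that theorem asserts in particular that, on each of the Levi subgroups relevant here, cuspidal pairs are determined by their central characters. Your proposal is, in the end, attempting to reconstruct (a portion of) that classification rather than cite it, so it is more roundabout than what the paper does. Parts of your outline are fine: the reduction to a product of a semisimple type-$A$ factor and quasi-simple simply connected non-type-$A$ factors is exactly in the spirit of Lemma~\ref{lem:cogood-reduction}, and the non-type-$A$ case does reduce, via Proposition~\ref{prop:simply-conn} and $\theta_G$, to Lusztig's characteristic-$0$ lists.

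There is, however, a real gap in your treatment of the type-$A$ case. You propose to lift cuspidal pairs for a semisimple type-$A$ group to the simply connected cover $\tilde G_1 = \prod_i \SL_{n_i}$ and quote ``for $\SL_n$ there is at most one cuspidal pair.'' That count is only available (via Proposition~\ref{prop:simply-conn} plus the characteristic-$0$ classification) when $\ell$ is rather good for $\SL_{n_i}$, i.e.\ when $\ell \nmid n_i$. But $\ell$ being rather good for $G$ does \emph{not} imply $\ell$ is rather good for $\tilde G_1$: for example $\ell$ is rather good for $\PGL(\ell)$, whose simply connected cover is $\SL(\ell)$, for which $\ell$ is not rather good. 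In such cases $\SL_{n_i}$ can admit several cuspidal pairs, and the ``direct combinatorial check'' you invoke is precisely the content of \mgthmcusp{} of~\cite{genspring3} in type $A$ (building on the $\GL$/$\SL$ classifications of~\cite{genspring1,genspring2}). So the type-$A$ step cannot be sourced from the tools set up in this section; it genuinely requires the earlier classification theorem, which is exactly what the paper cites to dispose of the whole proposition at once.
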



%
%


Recall (see~\cite[Corollary~2.12]{genspring1}) that if $(\cO, \cE) \in \fN_{G,\bk}^{\cusp}$, then there exists a unique pair $(\cO', \cE') \in \fN_{G,\bk}^{\cusp}$ such that $\bT_{\fg}(\IC(\cO,\cE)) \cong \IC(\cO' + \fz_G, \cE' \boxtimes \underline{\bk})$, where $\bT_{\fg}$ is the Fourier--Sato transform and $\fz_G$ is the center of $\fg$. The map $\fN_{G,\bk}^{\cusp} \to \fN_{G,\bk}^{\cusp}$ sending $(\cO,\cE)$ to $(\cO', \cE')$ is an involution, which is often the identity. (In fact, we don't know any example where $(\cO', \cE') \neq (\cO,\cE)$.)
In particular this is the case under our assumption that $\ell$ is rather good.

\begin{cor}
\label{cor:involution=id}
For any $(\cO, \cE) \in \fN_{G,\bk}^{\cusp}$ we have $(\cO', \cE')=(\cO,\cE)$.
\end{cor}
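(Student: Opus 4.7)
The plan is to combine the central-character rigidity of cuspidal pairs (Proposition~\ref{prop:unicity-cuspidal}) with the observation that $\bT_\fg$ preserves central characters. Since $(\cO,\cE)$ and $(\cO',\cE')$ are both cuspidal pairs for $G$, it suffices to show that they share the same central character $Z(G)/Z(G)^\circ \to \bk^\times$; the proposition will then force the two pairs to coincide.

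First I would verify the compatibility of $\bT_\fg$ with central characters. The key point is that $Z(G)$ acts trivially on $\fg$ via the adjoint representation, so the $Z(G)/Z(G)^\circ$-action on any $G$-equivariant perverse sheaf on $\fg$ is realised through functorial endomorphisms, and the resulting decomposition into central-character isotypic components commutes with the $G$-equivariant endofunctor $\bT_\fg$. Applied to $\IC(\cO,\cE)$ (extended by zero from $\cN_G$ to $\fg$), this shows that $\bT_\fg(\IC(\cO,\cE))$ has the same central character $\chi_\cE$ as $(\cO,\cE)$.

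Next I would identify the central character of $\IC(\cO' + \fz_G, \cE' \boxtimes \underline{\bk})$ with the central character $\chi_{\cE'}$ of $(\cO',\cE')$. Since $\fz_G$ is pointwise fixed by the adjoint action, the inclusion $x \mapsto (x,0)$ identifies the $G$-stabilizer of $x \in \cO'$ with the $G$-stabilizer of $(x,0) \in \cO' + \fz_G$, hence identifies their component groups; and the restriction of $\cE' \boxtimes \underline{\bk}$ to this slice is $\cE'$. So the central character of the bigger simple perverse sheaf, read off as in Appendix~\ref{app:central-char}, is exactly $\chi_{\cE'}$.

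Combining these two observations, $\chi_{\cE'} = \chi_\cE$, and Proposition~\ref{prop:unicity-cuspidal} yields $(\cO',\cE') = (\cO,\cE)$. The main obstacle — or more accurately, the only step requiring any genuine verification — is the compatibility of $\bT_\fg$ with the central-character decomposition; but this is a formal consequence of the trivial $Z(G)$-action on $\fg$ and should pose no real difficulty.
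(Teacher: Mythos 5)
Your argument is correct and is essentially the paper's own: the proof given there simply defers to \cite[Corollary~6.6]{genspring2}, which is exactly this argument that $\bT_\fg$ preserves central characters (because $Z(G)$ acts trivially on $\fg$) combined with Proposition~\ref{prop:unicity-cuspidal}. Your verification of the two compatibility points (for $\bT_\fg$ and for the passage from $\cE'\boxtimes\underline{\bk}$ on $\cO'+\fz_G$ back to $\cE'$ on $\cO'$) is sound.
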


\begin{proof}
As in~\cite[Corollary~6.6]{genspring2}, the claim is a consequence of Proposition~\ref{prop:unicity-cuspidal}.
\end{proof}

\begin{cor}
\label{cor:comparison-0series-superseries}
Let $(L,\cO_L,\cE_L)$ be a supercuspidal datum. Then we have
\[
\fN_{G,\bk}^{\zs(L,\cO_L,\cE_L)} \subset \fN^{\super(L,\cO_L,\cE_L)}_{G,\bk}.
\]
\end{cor}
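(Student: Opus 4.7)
The plan is to lift to characteristic zero via the bijections~\eqref{eqn:fn-bij}, pass through an $\O$-form to reduce modulo $\ell$, and compare classes in the Grothendieck group of $\Perv_G(\cN_G,\bk)$. Concretely, set $(\cO_L,\cE_L^\K) = \theta_L^{-1}(\cO_L,\cE_L)$ and $(\cO',\cE'^\K) = \theta_G^{-1}(\cO',\cE')$. Since $(L,\cO_L,\cE_L)$ is supercuspidal it is $0$-cuspidal by Lemma~\ref{lem:comparison-cusp}, so $(\cO_L,\cE_L^\K)$ is cuspidal over $\K$, and by definition of the $0$-series $\IC(\cO',\cE'^\K)$ appears as a direct summand of $\Ind_{L\subset P}^G\IC(\cO_L,\cE_L^\K)$ with some multiplicity $m \ge 1$.

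Fix an $\O$-form $\cE_L^\O$ of $\cE_L^\K$. Since induction commutes with modular reduction (\cite[Remark~2.23]{genspring1}), the class of $\bk\lotimes_\O\Ind_L^G\IC(\cO_L,\cE_L^\O)$ in the Grothendieck group can be computed in two ways. Expanding first via the char-$0$ summand decomposition and using that in rather good characteristic the diagonal decomposition numbers are equal to $1$, the coefficient of $[\IC(\cO',\cE')]$ is at least $m \geq 1$. Expanding instead through the composition series of $\bk\lotimes_\O\IC(\cO_L,\cE_L^\O)$ and applying the exact functor $\Ind_L^G$, the same class equals
\[
\bigl[\Ind_L^G \IC(\cO_L,\cE_L)\bigr] + \sum_{k>0} c_k\bigl[\Ind_L^G \IC(\cO_L^{(k)},\cE_L^{(k)})\bigr],
\]
where the $(\cO_L^{(k)},\cE_L^{(k)})$ are the composition factors of $\bk\lotimes_\O\IC(\cO_L,\cE_L^\O)$ distinct from $(\cO_L,\cE_L)$, occurring with multiplicities $c_k$. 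By Appendix~\ref{app:central-char} each such pair shares the central character of $(\cO_L,\cE_L)$, so Proposition~\ref{prop:unicity-cuspidal} forces every $(\cO_L^{(k)},\cE_L^{(k)})$ with $k>0$ to be non-cuspidal for $L$; consequently each $\IC(\cO_L^{(k)},\cE_L^{(k)})$ is a composition factor of some $\Ind_{N_k\subset Q_k}^L\IC(\cO_{N_k},\cE_{N_k})$ with $N_k\subsetneq L$ and a cuspidal pair for $N_k$, and by transitivity the composition factors of $\Ind_L^G \IC(\cO_L^{(k)},\cE_L^{(k)})$ are included among those of $\Ind_{N_k}^G\IC(\cO_{N_k},\cE_{N_k})$.

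The main obstacle is then to show that $\IC(\cO',\cE')$ is \emph{not} a composition factor of any $\Ind_N^G\IC(\cO_N,\cE_N)$ with $N\subsetneq L$ and $(\cO_N,\cE_N)\in\fN^\cusp_{N,\bk}$; once this vanishing is established, the multiplicity of $[\IC(\cO',\cE')]$ must come entirely from the $[\Ind_L^G\IC(\cO_L,\cE_L)]$ summand and the corollary follows. To achieve this I would iterate the non-cuspidality reduction until reaching a supercuspidal (hence $0$-cuspidal, by Lemma~\ref{lem:comparison-cusp}) datum $(N^\natural,\cO_{N^\natural},\cE_{N^\natural})$ with $N^\natural\subsetneq L$, and then combine Theorem~\ref{thm:main-intro} with the description of $\fM_{G,\bk}^{\zs(L,\cO_L,\cE_L)}$ in~\eqref{eqn:data-0-series}: the $\ell$-series contained in a given $0$-series are labelled by cuspidal data whose Levi contains a $G$-conjugate of $L$, which is incompatible with the strictly smaller Levi $N^\natural$. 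It is precisely here that the supercuspidal rather than merely cuspidal nature of $(L,\cO_L,\cE_L)$ must be used in an essential way, and this comparison step is the delicate part of the argument.
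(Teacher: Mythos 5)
Your overall strategy (lift via $\theta$, pass to an $\O$-form, use that induction commutes with modular reduction) is the paper's, but you miss the one observation that makes the proof close, and as a result you end with an acknowledged gap. The missing ingredient is \cite[Proposition~2.22]{genspring1}: \emph{every} composition factor of the modular reduction of a cuspidal simple perverse sheaf is again cuspidal. Combined with the fact that all composition factors of $\bk\lotimes_\O\IC(\cO_L,\cE_L^\O)$ share the central character of $\cE_L$ and with Proposition~\ref{prop:unicity-cuspidal} (at most one cuspidal pair per central character), this forces
\[
\bk\lotimes_\O\IC(\cO_L,\cE_L^\O)\cong\IC(\cO_L,\cE_L),
\]
i.e.\ the modular reduction of the characteristic-$0$ cuspidal $\IC$ is already simple. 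Your sum $\sum_{k>0}c_k[\Ind_L^G\IC(\cO_L^{(k)},\cE_L^{(k)})]$ is therefore empty, and the corollary follows immediately: $\IC(\cO,\cE)$ occurs in $\bk\lotimes_\O\Ind_{L\subset P}^G\IC(\cO_L,\cE_L^\O)\cong\Ind_{L\subset P}^G\IC(\cO_L,\cE_L)$. You used only half of this dichotomy --- you correctly deduced from Proposition~\ref{prop:unicity-cuspidal} that the hypothetical extra factors would be non-cuspidal, but never invoked the complementary fact that they must be cuspidal, which is what kills them.

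Because of this, the ``main obstacle'' you identify (showing $\IC(\cO',\cE')$ is not a composition factor of inductions from Levis strictly smaller than $L$) is a problem you created for yourself, and your sketch for resolving it does not work as written: it appeals to Theorem~\ref{thm:main-intro} and the description of $\fM_{G,\bk}^{\zs(L,\cO_L,\cE_L)}$, which are only established in Section~4, whereas this corollary is proved in Section~2 from elementary central-character considerations and is meant to be available before the comparison theorem. Even granting that machinery, knowing that the relevant $\ell$-series are labelled by cuspidal data whose Levi contains a conjugate of $L$ does not by itself rule out that $\IC(\cO',\cE')$ occurs as a composition factor (rather than a quotient) of an induction from a smaller Levi, which is exactly the cuspidal/supercuspidal distinction in positive characteristic. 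So the final step is a genuine gap, not merely a delicate point.
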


\begin{proof}
First we note that $(L,\cO_L,\cE_L)$ is a $0$-cuspidal datum by Lemma~\ref{lem:comparison-cusp}, so that the notation $\fN_{G,\bk}^{\zs(L,\cO_L,\cE_L)}$ makes sense.  Let $\cE_L^{\K}$ be the equivariant $\K$-local system on $\cO_L$ such that $\theta_L(\cO_L,\cE_L^\K)=(\cO_L, \cE_L)$, and let $\cE_L^\O$ be an $\O$-form of $\cE_L^\K$. We claim that
\begin{equation}
\label{eqn:mod-reduction-0-cusp}
\bk \lotimes_\O \IC(\cO_L, \cE_L^\O) \cong \IC(\cO_L, \cE_L).
\end{equation}
Indeed, by~\cite[Proposition~2.22]{genspring1}, all the composition factors of the perverse sheaf $\bk \lotimes_\O \IC(\cO_L, \cE_L^\O)$ are cuspidal. Since they have the same central character as $\cE_L$, Proposition~\ref{prop:unicity-cuspidal} implies that the only possible composition factor is $\IC(\cO_L, \cE_L)$, proving~\eqref{eqn:mod-reduction-0-cusp}.

Now, let $(\cO,\cE) \in \fN_{G,\bk}^{\zs(L,\cO_L,\cE_L)}$, and let $\cE^{\K}$ be the equivariant $\K$-local system on $\cO$ such that $\theta_G(\cO,\cE^\K)=(\cO, \cE)$. By assumption, $\IC(\cO,\cE^\K)$ is a composition factor of $\Ind_{L \subset P}^G(\IC(\cO_L, \cE_L^\K))$ (where $P \subset G$ is any parabolic subgroup with Levi factor $L$). Since $\IC(\cO,\cE)$ appears in the modular reduction of $\IC(\cO,\cE^\K)$, this perverse sheaf is a composition factor of the perverse sheaf
\[
\bk \lotimes_\O \Ind_{L \subset P}^G(\IC(\cO_L, \cE_L^\O)) \cong \Ind_{L \subset P}^G(\bk \lotimes_\O \IC(\cO_L, \cE_L^\O)) \overset{\eqref{eqn:mod-reduction-0-cusp}}{\cong} \Ind_{L \subset P}^G (\IC(\cO_L, \cE_L)).
\]
This proves that $(\cO,\cE) \in \fN^{\super(L,\cO_L,\cE_L)}_{G,\bk}$, finishing the proof.
\end{proof}

Finally, we will need the following result about the type-$A$ case.

\begin{prop}\label{prop:0-series-A}
Let $G$ be a semisimple group of type $A$.
\begin{enumerate}
\item 
\label{it:0-cusp-char}
If $(L,\cO_L,\cE_L)$ and $(L',\cO_{L'},\cE_{L'})$ are $0$-cuspidal data which are not $G$-conjugate, then $\cE_L$ and $\cE_{L'}$ have distinct central characters.
\item 
\label{it:0-ser-char}
A pair $(\cO,\cE) \in \fN_{G,\bk}$ lies in $\fN^{\zs(L,\cO,\cE_L)}_{G,\bk}$ if and only if $\cE$ has the same central character as $\cE_L$.
\item
\label{it:0-ser-sum}
We have
\[
\Perv_G(\cN_G,\bk) \cong \bigoplus_{(L,\cO_L,\cE_L) \in \fM_{G,\bk}^0}
\Serre(\fN^{\zs(L,\cO_L,\cE_L)}_{G,\bk}),
\]
where $\Serre(\fN^{\zs(L,\cO_L,\cE_L)}_{G,\bk})$ denotes the Serre subcategory of $\Perv_G(\cN_G,\bk)$ generated by the simple objects $\IC(\cO,\cE)$ with $(\cO,\cE) \in \fN^{\zs(L,\cO_L,\cE_L)}_{G,\bk}$.
\end{enumerate}
\end{prop}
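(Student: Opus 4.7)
My plan is to deduce all three parts from an explicit analysis of $0$-cuspidal data in type~$A$: Part~\eqref{it:0-cusp-char} by a direct computation in $\SL_n$, Part~\eqref{it:0-ser-char} from Part~\eqref{it:0-cusp-char} together with the general principle that induction preserves central characters, and Part~\eqref{it:0-ser-sum} from Part~\eqref{it:0-ser-char} combined with the central character block decomposition of $\Perv_G(\cN_G,\bk)$ discussed in \S\ref{ss:central-char}. Since $\ell$ does not divide $|Z(G)/Z(G)^\circ|$, the bijection $\theta_G$ matches central characters on the two sides, so the whole question transfers to $\K$-coefficients, where Lusztig's classical theory applies.

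For Part~\eqref{it:0-cusp-char}, I would first pull back along the isogeny $\widetilde G \twoheadrightarrow G$ from the simply connected cover. Reasoning as in the proof of Lemma~\ref{lem:cogood-reduction}, this produces a fully faithful embedding of equivariant perverse-sheaf categories matching the $0$-cuspidal data of $G$ with those $0$-cuspidal data of $\widetilde G$ whose central character is trivial on $\ker(\widetilde G \to G)$. Since $\widetilde G \cong \prod_i \SL_{n_i}$ and both cuspidal data and central characters decompose multiplicatively over these factors, the injectivity question reduces to the case of a single $\SL_n$. For $\SL_n$, Lusztig's classification in characteristic zero says that the Levis carrying a cuspidal pair are the conjugates of $L_d := (\GL_d)^r \cap \SL_n$ (one for each divisor $d$ of $n$, with $r = n/d$), that the cuspidal pair is supported on the regular nilpotent orbit of $L_d$, and that the corresponding local systems $\cE_\chi$ are indexed by primitive characters $\chi$ of $Z(L_d)/Z(L_d)^\circ \cong \mu_d$. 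A direct calculation shows that the natural map $\mu_n = Z(\SL_n) \to Z(L_d)/Z(L_d)^\circ = \mu_d$ is $\zeta \mapsto \zeta^r$, so the central character of $(L_d, \cO_{L_d}, \cE_\chi)$ is $\zeta \mapsto \chi(\zeta^r)$, which has order exactly $d$. Hence $d$ is recovered from the order of the central character and $\chi$ from the character itself, yielding injectivity.

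Part~\eqref{it:0-ser-char} is then immediate. The ``only if'' direction is the general fact that every composition factor of $\Ind_{L \subset P}^G \IC(\cO_L, \cE_L)$ shares the central character of $\cE_L$, via the surjection $Z(G)/Z(G)^\circ \twoheadrightarrow Z(L)/Z(L)^\circ$. For the converse, any $(\cO, \cE) \in \fN_{G,\bk}$ lies in some induction $0$-series $\fN^{\zs(L', \cO_{L'}, \cE_{L'})}_{G,\bk}$ by the characteristic-zero generalized Springer correspondence, and the central character of $\cE_{L'}$ equals both that of $\cE$ (by the ``only if'' direction) and that of $\cE_L$ (by hypothesis), so Part~\eqref{it:0-cusp-char} forces $(L, \cO_L, \cE_L)$ and $(L', \cO_{L'}, \cE_{L'})$ to be $G$-conjugate. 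Part~\eqref{it:0-ser-sum} then follows by comparing with the central character decomposition from \S\ref{ss:central-char}: each nonzero block $\Perv_G(\cN_G,\bk)_\chi$ is, by Part~\eqref{it:0-ser-char}, the Serre subcategory generated by the unique induction $0$-series with central character $\chi$, and summing over $\chi$ gives the claimed decomposition.

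The main obstacle lies inside Part~\eqref{it:0-cusp-char}: the reduction from a general semisimple type~$A$ group to $\SL_n$ requires checking that the pullback correspondence on cuspidal data respects $G$-conjugacy and central characters, and the explicit unraveling of Lusztig's parametrization for $\SL_n$ demands some care, though the computations themselves are elementary.
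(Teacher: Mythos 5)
Your proof is correct and takes essentially the same approach as the paper: transfer parts~\eqref{it:0-cusp-char} and~\eqref{it:0-ser-char} to characteristic zero via $\theta_G$ and the matching of characters of $Z(G)$, then deduce part~\eqref{it:0-ser-sum} from the central-character block decomposition of Lemmas~\ref{lem:central-char-Perv} and~\ref{lem:centralchar-decomp}. The only difference is that the paper simply cites Lusztig~\cite[\S 10.3]{lusztig} for the characteristic-zero statements, whereas you unwind his $\SL_n$ classification and compute the central characters explicitly; your computation (the map $\mu_n \to \mu_d$ is $\zeta \mapsto \zeta^r$, and a primitive $\chi$ on $\mu_d$ gives a character of order exactly $d$) is correct.
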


\begin{proof}
By our assumptions on $\ell$ and $\bk$, the $\bk$-characters of $Z(G)$ are in natural bijection with the $\Qlb$-characters of $Z(G)$. Therefore, statements~\eqref{it:0-cusp-char} and~\eqref{it:0-ser-char} are equivalent to their counterpart for the $\Qlb$-generalized Springer correspondence. These counterparts are well known; see~\cite[\S 10.3]{lusztig}. The decomposition in~\eqref{it:0-ser-sum} then
follows from Lemmas~\ref{lem:central-char-Perv} and~\ref{lem:centralchar-decomp}.
\end{proof}

\section{The partial order on cuspidal data}
\label{sec:partialorder}

\subsection{Definition of the order}

There is a natural partial order on the set $\fM_{G,\bk}$, defined as follows:
\[
(L,\cO_L,\cE_L) \preceq_G (M,\cO_{M}, \cE_{M})
\qquad\text{if}\qquad
\begin{array}{c}
\text{$\cE_L$ and $\cE_{M}$ have the same central} \\
\text{character, and $Y_{(M,\cO_{M})} \subset \overline{Y_{(L,\cO_L)}}$.}
\end{array}
\]
Here $Y_{(L,\cO_L)}$ is the Lusztig stratum associated to $(L,\cO_L)$; see~\cite[\S 2.6]{genspring1} or~\cite[\S 2.1]{genspring2}. (When necessary, below we will add a superscript `$G$' to the notation.) Note that if $(L,\cO_L,\cE_L) \preceq_G (M,\cO_{M}, \cE_{M})$ and $(M,\cO_M,\cE_M) \preceq_G (L,\cO_{L}, \cE_{L})$, then
$Y_{(M,\cO_{M})} = Y_{(L,\cO_L)}$, so that $M$ and $L$ are $G$-conjugate. Since, under our assumptions, these groups have at most one cuspidal pair with a given central character (see Proposition~\ref{prop:unicity-cuspidal}), this implies that $(L,\cO_L,\cE_L)$ and $(M,\cO_{M}, \cE_{M})$ are $G$-conjugate, so that $\preceq_G$ is indeed an order.  The following alternative description of this order is due to Lusztig. (Note that $\preceq$ has a different meaning in~\cite{lusztig-cusp2}: it is a refinement of the opposite of the partial order denoted $\preceq_G$ here.) 

\begin{prop}[{\cite[Proposition~6.5]{lusztig-cusp2}}]
\label{prop:lusztig-po}
Let $(L,\cO_L,\cE_L)$ and $(M,\cO_M,\cE_M)$ be two cuspidal data.  We have $(L,\cO_L,\cE_L) \preceq_G (M,\cO_{M}, \cE_{M})$ if and only if the following three conditions all hold:
\begin{enumerate}
\item $\cE_L$ and $\cE_M$ have the same central character.
\item There exists an element $g \in G$ such that $g L g^{-1} \subset M$.
\item The orbit $\cO_M$ is contained in the closure of the induced orbit $\mathrm{Ind}_{g L g^{-1}}^M (g \cdot \cO_L)$.
\end{enumerate}
\end{prop}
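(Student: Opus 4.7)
The plan is to reinterpret both sides of the claimed equivalence geometrically via the Lusztig stratification of $\fg$, and then prove each direction directly. Condition~(1) is identical on both sides, so the real content is the equivalence
\[
Y_{(M,\cO_M)} \subset \overline{Y_{(L,\cO_L)}} \iff \text{conditions (2) and (3) hold.}
\]
First I would record the Jordan-decomposition description of the Lusztig stratum: an element $\xi \in \fg$ belongs to $Y_{(L,\cO_L)}$ iff, writing $\xi = \xi_s + \xi_n$ for its Jordan decomposition, $C_G(\xi_s)$ is $G$-conjugate to $L$ (by a conjugator carrying $\xi_s$ into $\fz_L^{\reg} := \{z \in \fz_L : C_G(z) = L\}$) and $\xi_n$, viewed inside this copy of $L$, lies in $\cO_L$. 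This is immediate from the definition $Y_{(L,\cO_L)} = G \cdot (\cO_L + \fz_L^{\reg})$.

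For the direction ($\Leftarrow$), after $G$-conjugation I may assume $L \subset M$. Pick $\xi = y + w \in Y_{(M,\cO_M)}$ with $y \in \cO_M$ and $w \in \fz_M^{\reg}$; note that $\fz_M \subset \fz_L$. Using (3) and the description of $\mathrm{Ind}_L^M(\cO_L)$ as the dense $M$-orbit in $M \cdot (\cO_L + \fu_P^M)$ for some parabolic $P \subset M$ with Levi $L$, I can approximate $y$ by $M$-conjugates of elements $x_n + v_n$ with $x_n \in \cO_L$ and $v_n \in \fu_P^M$, reducing to showing each $x_n + v_n + w \in \overline{Y_{(L,\cO_L)}}$. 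The key perturbation step is to choose $z' \in \fz_L^{\reg}$ and let $\epsilon>0$ be generic and small, so that $w + \epsilon z' \in \fz_L^{\reg}$ (the line $\{w + \epsilon z'\}$ meets each root hyperplane of $G$ transversally). Pick a parabolic $R \subset G$ with Levi $L$ such that $\fu_R^G \supset \fu_P^M$ (e.g.\ $R = P \cdot U_Q$ for $Q$ a parabolic of $G$ with Levi $M$). Then $\xi_\epsilon := x_n + v_n + w + \epsilon z'$ lies in $\mathfrak{r} = \fl \oplus \fu_R^G$ with image $x_n + w + \epsilon z' \in \cO_L + \fz_L^{\reg}$ in the Levi quotient $\fl$. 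Since $\mathrm{ad}(w + \epsilon z')$ acts invertibly on $\fu_R^G$ (all relevant roots are non-vanishing on $w + \epsilon z'$), a weight-filtration argument shows that the orbit map $u \mapsto \mathrm{Ad}(u)(x_n + w + \epsilon z')$ identifies $U_R^G$ with the affine subspace $x_n + w + \epsilon z' + \fu_R^G$. Hence $\xi_\epsilon$ is $G$-conjugate to $x_n + w + \epsilon z' \in Y_{(L,\cO_L)}$, and letting $\epsilon \to 0$ yields $x_n + v_n + w \in \overline{Y_{(L,\cO_L)}}$.

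For the direction ($\Rightarrow$), assume $Y_{(M,\cO_M)} \subset \overline{Y_{(L,\cO_L)}}$ and write a point $\xi = y + w$ of $Y_{(M,\cO_M)}$ as a limit $\mathrm{Ad}(g_n)(x_n + z_n) \to \xi$ with $x_n \in \cO_L$ and $z_n \in \fz_L^{\reg}$. Since the Chevalley map $\fg \to \fh/W$ sees only the semisimple part, the images of the $z_n$ converge to that of $w$; using the properness of the natural map from the total space of Cartan-like subspaces to the semisimple locus, I pass to a subsequence with $\mathrm{Ad}(g_n)(z_n) \to w$ and extract $g \in G$ and $z_0 \in \fz_L$ with $\mathrm{Ad}(g)(z_0) = w$. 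Then $C_G(w) = \mathrm{Ad}(g)(C_G(z_0)) \supset \mathrm{Ad}(g)(L)$, so $gLg^{-1} \subset M$, giving (2). For (3), I analyze the nilpotent parts: decomposing $g_n = g \cdot h_n \cdot u_n$ with $h_n$ bounded and $u_n$ lying in the unipotent radical of a parabolic whose Levi degenerates $z_0$ to $w$, the elements $\mathrm{Ad}(g_n)(x_n)$ converge in $M$ to a point of $\overline{M \cdot (g \cdot \cO_L + \fu_{gPg^{-1}}^M)} = \overline{\mathrm{Ind}_{gLg^{-1}}^M(g \cdot \cO_L)}$ by the parabolic description of induction, yielding condition~(3).

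The main obstacle will be the ($\Rightarrow$) direction, and in particular the passage from the closure relation among Lusztig strata to the induced-orbit closure in $M$. The technical heart is controlling the limit behaviour of the group elements $g_n$ when the semisimple parts $\mathrm{Ad}(g_n)(z_n)$ converge while the $g_n$ themselves may diverge in $G$: one must track the unipotent components of $g_n$ along a parabolic whose Levi degenerates from $gLg^{-1}$ up to $M$, as these are exactly what translate the degeneration of the semisimple side into the induced-orbit structure on the nilpotent side.
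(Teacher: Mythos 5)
The paper offers no proof of this proposition---it simply cites Lusztig \cite[Prop.~6.5]{lusztig-cusp2} and elsewhere points to a description of $\overline{Y_{(L,\cO_L)}}$ recalled in \cite[\S 2.6]{genspring1}---so there is no internal argument to compare against, and your attempt stands or falls on its own. Your ($\Leftarrow$) direction is essentially sound. But the ($\Rightarrow$) direction has a genuine gap at the step where you claim to ``pass to a subsequence with $\Ad(g_n)(z_n) \to w$.'' From $\chi(z_n) \to \chi(w)$ and properness of $\fz_L \to \fh/W$ you may extract a subsequence with $z_n \to z_0 \in \fz_L$, but this says nothing about $\Ad(g_n)(z_n)$: the Jordan decomposition $\xi \mapsto \xi_s$ is discontinuous, the semisimple locus is not closed, and the $g_n$ may diverge in $G$, so convergence of the full elements $\Ad(g_n)(x_n + z_n)$ does not force convergence of their semisimple parts. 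Your closing paragraph acknowledges that controlling the $g_n$ is the crux, but the proposed factorisation $g_n = g h_n u_n$ with ``$h_n$ bounded'' is not actually carried out, and it is unclear how to make it work without circularity.

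A cleaner route bypasses the sequential analysis entirely. First prove the closure formula $\overline{Y_{(L,\cO_L)}} = G \cdot (\overline{\cO_L} + \fz_L + \fu_P)$: the set $\overline{\cO_L} + \fz_L + \fu_P$ is a $P$-stable closed subset of $\fp$, so $G \times^P (\overline{\cO_L} + \fz_L + \fu_P) \to \fg$ is proper with closed image, and $Y_{(L,\cO_L)}$ is dense in that image by exactly your $U_P$-sweeping argument applied over $\cO_L + \fz_L^{\reg}$. Then for $\xi = \Ad(g)(x + z + v)$ with $x \in \overline{\cO_L}$, $z \in \fz_L$, $v \in \fu_P$, the semisimple part of $x + z + v$ is $U_P$-conjugate to $z$ (Jordan decomposition inside a parabolic); after absorbing that conjugation into $g$, the nilpotent part lies in $\fl' := \Lie C_G(z)$ and belongs to $\overline{\cO_L} + (\fu_P \cap \fl')$. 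Applying $\Ad(g)$ and identifying $M = C_G(\xi_s)$ with $gC_G(z)g^{-1}$ yields (2) and (3) directly, using $\overline{\mathrm{Ind}_{gLg^{-1}}^M(g \cdot \cO_L)} = M \cdot (g \cdot \overline{\cO_L} + \fu_{gPg^{-1}\cap M})$. This replaces the limit analysis you were attempting by algebraic properness, and is presumably close to Lusztig's own argument.
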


An immediate consequence is that if $M \subset G$ is a Levi subgroup that contains both $L$ and $L'$, then 
\begin{equation}
\label{eqn:order-Levi}
(L,\cO_L,\cE_L) \preceq_M (L',\cO_{L'}, \cE_{L'})
\qquad\text{implies}\qquad
(L,\cO_L,\cE_L) \preceq_G (L',\cO_{L'}, \cE_{L'}).
\end{equation}
(This can also be deduced from the description of $\overline{Y_{(L,\cO_L)}}$ recalled in~\cite[\S 2.6]{genspring1}.)  The opposite implication is certainly false in general; for example, it can happen that $L$ and $L'$ are $G$-conjugate but not $M$-conjugate.  Nevertheless, we will see a partial converse in Corollary~\ref{cor:order-0-Levi} below.

\subsection{Induction series and the order $\preceq_G$}

\begin{lem}
\label{lem:0-series-ineq}
If $(\cO,\cE) \in \fN_{G,\bk}$ lies in the induction series $\fN^{(M,\cO_M, \cE_M)}_{G,\bk}$ and in the induction $0$-series $\fN^{\zs(L,\cO_L,\cE_L)}_{G,\bk}$, then $(L,\cO_L,\cE_L) \preceq_G (M, \cO_M, \cE_M)$.
\end{lem}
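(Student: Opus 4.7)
The plan is to verify the two conditions in Proposition~\ref{prop:lusztig-po} for $(L, \cO_L, \cE_L) \preceq_G (M, \cO_M, \cE_M)$: equality of the central characters of $\cE_L$ and $\cE_M$, and the Lusztig stratum containment $Y_{(M, \cO_M)} \subseteq \overline{Y_{(L, \cO_L)}}$.

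The first condition is straightforward: since parabolic induction preserves central characters in both characteristics and since the bijection $\theta$ preserves them by its construction from modular reduction of local systems, the central character of $\cE$ agrees with that of $\cE_M$ (from the assumption on the $\bk$-induction series) and also with that of $\cE_L$ (from the assumption on the $0$-series, passing through $\cE^\K$). Hence $\cE_L$ and $\cE_M$ share a central character.

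For the stratum containment, the key tool will be the Fourier--Sato transform $\bT_\fg$ on $\fg$, combined with modular reduction. I would extend from $\cN_G$ to $\fg$ by the standard $\boxtimes\ubk$ construction of this series: set $\cF_M := \IC(\cO_M + \fz_M, \cE_M \boxtimes \ubk)$ on $\fm$ and $\cG := \IC(\cO + \fz_G, \cE \boxtimes \ubk)$ on $\fg$. The quotient map $\Ind_M^G \IC(\cO_M, \cE_M) \twoheadrightarrow \IC(\cO, \cE)$ on $\cN_G$ translates, via this extension and the standard compatibility between nilpotent-cone and Lie-algebra induction, into a quotient map $\Ind_M^G \cF_M \twoheadrightarrow \cG$ in $\Perv_G(\fg, \bk)$. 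Since $(\cO_M, \cE_M)$ is cuspidal, Corollary~\ref{cor:involution=id} gives $\bT_\fm \cF_M \cong \cF_M$ (up to a Tate twist); combined with the standard commutation $\bT_\fg \circ \Ind_M^G \cong \Ind_M^G \circ \bT_\fm$, this yields $\bT_\fg \Ind_M^G \cF_M \cong \Ind_M^G \cF_M$. Applying $\bT_\fg$ to the quotient map then exhibits $\bT_\fg \cG$ as another simple quotient of $\Ind_M^G \cF_M$. The main obstacle here will be identifying $\mathrm{supp}(\bT_\fg \cG)$ precisely: the generalized Springer parametrization of the head of $\Ind_M^G \cF_M$ realizes every simple quotient as the $\IC$-extension to $\overline{Y_{(M, \cO_M)}}$ of a simple quotient of the induced local system on the open stratum $Y_{(M,\cO_M)}$, so every simple quotient has full support $\overline{Y_{(M,\cO_M)}}$; in particular, $\mathrm{supp}(\bT_\fg \cG) = \overline{Y_{(M,\cO_M)}}$.

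To finish, I would repeat the construction in characteristic $\K$. Letting $(\cO, \cE^\K) := \theta_G^{-1}(\cO, \cE)$ and $\cG^\K := \IC(\cO + \fz_G, \cE^\K \boxtimes \ubk)$, the assumption $(\cO, \cE^\K) \in \fN_{G,\K}^{(L, \cO_L, \cE_L^\K)}$ together with Lusztig's characteristic-$0$ theory yields $\mathrm{supp}(\bT_\fg \cG^\K) = \overline{Y_{(L,\cO_L)}}$. Since $\cG$ appears as a composition factor of the modular reduction of $\cG^\K$ (via any $\O$-form of $\cE^\K$) and $\bT_\fg$ commutes with modular reduction, $\bT_\fg \cG$ appears as a composition factor of the modular reduction of $\bT_\fg \cG^\K$, forcing $\mathrm{supp}(\bT_\fg \cG) \subseteq \overline{Y_{(L,\cO_L)}}$. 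Combined with the preceding paragraph, this gives $\overline{Y_{(M,\cO_M)}} \subseteq \overline{Y_{(L,\cO_L)}}$, hence $Y_{(M,\cO_M)} \subseteq \overline{Y_{(L,\cO_L)}}$, and together with the central character equality Proposition~\ref{prop:lusztig-po} yields $(L, \cO_L, \cE_L) \preceq_G (M, \cO_M, \cE_M)$.
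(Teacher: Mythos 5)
Your high-level strategy matches the paper's: use the Fourier--Sato transform to pass from nilpotent orbits to Lusztig strata, and modular reduction to compare the $\bk$- and $\K$-pictures. The central-character part is correct. However, the stratum-containment argument contains several substantive errors centered on the auxiliary object $\cG$.

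The construction $\cG := \IC(\cO + \fz_G, \cE \boxtimes \ubk)$ is \emph{not} the Fourier--Sato transform of $\IC(\cO,\cE)$ when $(\cO,\cE)$ is not cuspidal. That description of $\bT_\fg$ applies only to cuspidal pairs (this is exactly the content of Corollary~\ref{cor:involution=id}). For $(\cO,\cE) \in \fN_{G,\bk}^{(M,\cO_M,\cE_M)}$, the correct statement (from~\cite[Lemma~2.1]{genspring2} together with Corollary~\ref{cor:involution=id}) is that $\bT_\fg(\IC(\cO,\cE))$ is the intersection cohomology extension of a local system on the Lusztig stratum $Y_{(M,\cO_M)}$, hence has support exactly $\overline{Y_{(M,\cO_M)}}$; it is not $\cG$. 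Relatedly, your claim $\bT_\fm \cF_M \cong \cF_M$ is backwards: since $\cF_M = \bT_\fm(\IC(\cO_M,\cE_M))$ and $\bT_\fm$ is an involution up to twist, $\bT_\fm \cF_M \cong \IC(\cO_M,\cE_M)$, a sheaf supported on $\cN_M$, not $\cF_M$. In particular $\bT_\fg \Ind_M^G \cF_M \cong \Ind_M^G \IC(\cO_M,\cE_M)$ is supported on $\cN_G$, so the claimed isomorphism $\bT_\fg \Ind_M^G \cF_M \cong \Ind_M^G \cF_M$ cannot hold unless $M=G$. As a result, neither the asserted quotient $\Ind_M^G \cF_M \twoheadrightarrow \cG$ nor the subsequent identification $\mathrm{supp}(\bT_\fg \cG) = \overline{Y_{(M,\cO_M)}}$ is justified by the given reasoning.

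The fix is also a simplification: drop $\cG$ altogether and work directly with $\bT_\fg(\IC(\cO,\cE))$. Since $(\cO,\cE) \in \fN_{G,\bk}^{(M,\cO_M,\cE_M)}$, you get $\mathrm{supp}(\bT_\fg^\bk \IC(\cO,\cE)) = \overline{Y_{(M,\cO_M)}}$ for free from~\cite[Lemma~2.1]{genspring2} and Corollary~\ref{cor:involution=id}, with no need to induce up from $M$. Then apply your modular-reduction idea to an $\O$-form: $\bT_\fg^\K(\IC(\cO,\cE^\K))$ is supported on $\overline{Y_{(L,\cO_L)}}$ (characteristic-$0$ theory), hence so is $\bT_\fg^\O(\IC(\cO,\cE^\O))$ (this lattice step, using~\cite[Proposition~2.8]{juteau}, is needed to pass from the $\K$-support statement to the $\O$-support statement; your proof glosses over it), and then so is the modular reduction $\bT_\fg^\bk(\bk \lotimes_\O \IC(\cO,\cE^\O))$; finally $\bT_\fg^\bk(\IC(\cO,\cE))$ is a composition factor of the latter, giving $\overline{Y_{(M,\cO_M)}} \subset \overline{Y_{(L,\cO_L)}}$.
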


\begin{proof}
Let $\cE^\K$ be the equivariant $\K$-local system on $\cO$ such that $\theta_G(\cO,\cE^\K)=(\cO, \cE)$, and let $\cE^\O$ be an $\O$-form of $\cE^\K$.  Then we have
\begin{equation}
\label{eqn:isom-0-series-ineq}
\K \otimes_\O \bT_\fg^\O(\IC(\cO,\cE^\O)) \cong \bT_\fg^\K(\IC(\cO,\cE^\K)) \cong \IC(Y_{(L,\cO_L)}, \cD^\K)
\end{equation}
for some local system $\cD^\K$ on $Y_{(L,\cO_L)}$, where $\bT^\K_\fg$, resp.~$\bT^\O_\fg$, is the Fourier--Sato transform on $\fg$ with coefficients $\K$, resp.~$\O$. Indeed, the first isomorphism follows from the compatibility of Fourier--Sato transform with extension of scalars (and the fact that $\K \otimes_\O \IC(\cO,\cE^\O) \cong \IC(\cO,\cE^\K)$), and the second isomorphism follows from~\cite[Lemma~2.1]{genspring2} and Corollary~\ref{cor:involution=id}.

From~\eqref{eqn:isom-0-series-ineq} we deduce, using~\cite[Proposition~2.8]{juteau}, that the perverse sheaf $\bT_\fg^\O(\IC(\cO,\cE^\O))$ is supported on $\overline{Y_{(L,\cO_L)}}$. Therefore, the same property holds for
\[
\bk \lotimes_\O \bT_\fg^\O(\IC(\cO,\cE^\O)) \cong \bT_\fg^\bk(\bk \lotimes_\O \IC(\cO, \cE^\O)).
\]
Now $\IC(\cO,\cE)$ is a composition factor of $\bk \lotimes_\O \IC(\cO, \cE^\O)$, and hence $\bT_\fg^\bk(\IC(\cO, \cE))$ is a composition factor of $\bT_\fg^\bk(\bk \lotimes_\O \IC(\cO, \cE^\O))$. In particular, we deduce that this perverse sheaf is supported on $\overline{Y_{(L,\cO_L)}}$. Using again~\cite[Lemma~2.1]{genspring2} and Corollary~\ref{cor:involution=id} (now with coefficients $\bk$) it follows that $Y_{(M,\cO_M)} \subset \overline{Y_{(L,\cO_L)}}$, proving that $(L,\cO_L,\cE_L) \preceq_G (M, \cO_M, \cE_M)$. (The condition on central characters is clear by~\cite[Lemma~5.1]{genspring2}.)
\end{proof}

\begin{lem}
\label{lem:ind-series-po}
Let $L \subset M \subset G$ be Levi subgroups, and let $P \subset Q \subset G$ be corresponding parabolic subgroups.  If $(\cO,\cE) \in \fN^{(L,\cO_L,\cE_L)}_{M,\bk}$, then every composition factor of $\Ind_{M\subset Q}^G (\IC(\cO,\cE))$ lies in some series $\fN^{(N,\cO_{N},\cE_{N})}_{G,\bk}$ with $(N,\cO_{N},\cE_{N}) \succeq_G (L,\cO_L,\cE_L)$.
\end{lem}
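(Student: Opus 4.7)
The plan is to reduce the statement to a claim about composition factors of $\Ind_{L \subset P}^G \IC(\cO_L,\cE_L)$ for a well-chosen parabolic $P$, and then use the Fourier--Sato transform to control the induction series they lie in. First, I would choose a parabolic $P'$ of $M$ with Levi $L$ such that $\IC(\cO,\cE)$ is a quotient of $\Ind_{L \subset P'}^M \IC(\cO_L,\cE_L)$ (possible by the series assumption), and set $P := P' \cdot U_Q$, which is a parabolic of $G$ with Levi $L$ contained in $Q$. The standard transitivity isomorphism $\Ind_{L \subset P}^G \cong \Ind_{M \subset Q}^G \circ \Ind_{L \subset P'}^M$, combined with exactness of the induction functor on perverse sheaves, turns the surjection $\Ind_{L \subset P'}^M \IC(\cO_L,\cE_L) \twoheadrightarrow \IC(\cO,\cE)$ into a surjection $\Ind_{L \subset P}^G \IC(\cO_L,\cE_L) \twoheadrightarrow \Ind_{M \subset Q}^G \IC(\cO,\cE)$. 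Thus every composition factor of $\Ind_{M \subset Q}^G \IC(\cO,\cE)$ occurs among those of $\Ind_{L \subset P}^G \IC(\cO_L,\cE_L)$.

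It then suffices to show that any composition factor $\IC(\cO'',\cE'')$ of $\Ind_{L \subset P}^G \IC(\cO_L,\cE_L)$ lying in a series $\fN^{(N,\cO_N,\cE_N)}_{G,\bk}$ satisfies $(L,\cO_L,\cE_L) \preceq_G (N,\cO_N,\cE_N)$. The central-character condition is immediate from \cite[Lemma~5.1]{genspring2}: both $\cE_L$ and $\cE_N$ share the common central character of $\cE''$, since $\IC(\cO'',\cE'')$ arises (as a composition factor, respectively a quotient) from induction of $\IC(\cO_L,\cE_L)$, respectively $\IC(\cO_N,\cE_N)$. For the support condition, I would apply the Fourier--Sato transform $\bT_\fg$. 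As in the proof of Lemma~\ref{lem:0-series-ineq} (which invokes \cite[Lemma~2.1]{genspring2} and Corollary~\ref{cor:involution=id}), the simple object $\bT_\fg \IC(\cO'',\cE'')$ is the IC-extension of a local system on the open Lusztig stratum $Y_{(N,\cO_N)}$; on the other hand, $\bT_\fg \Ind_{L \subset P}^G \IC(\cO_L,\cE_L)$ is supported on $\overline{Y_{(L,\cO_L)}}$. Since $\bT_\fg$ is an equivalence, $\bT_\fg \IC(\cO'',\cE'')$ remains a composition factor of $\bT_\fg \Ind_{L \subset P}^G \IC(\cO_L,\cE_L)$, forcing $Y_{(N,\cO_N)} \subset \overline{Y_{(L,\cO_L)}}$.

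The main obstacle is establishing the support statement $\mathrm{supp}\bigl(\bT_\fg \Ind_{L \subset P}^G \IC(\cO_L,\cE_L)\bigr) \subset \overline{Y_{(L,\cO_L)}}$ in positive characteristic. In characteristic zero this is a standard feature of Lusztig's parabolic induction on the nilpotent cone (the Fourier transform of an induced cuspidal IC is supported on the closure of the corresponding Lusztig stratum). In our modular setting, I would derive it by lifting $\cE_L$ to an $\O$-form $\cE_L^\O$, applying the characteristic-zero result to $\IC(\cO_L,\cE_L^\K) \cong \K \otimes_\O \IC(\cO_L,\cE_L^\O)$, and then using compatibility of both induction and Fourier--Sato transform with modular reduction $\bk \lotimes_\O (-)$, exactly in the manner of the proof of Lemma~\ref{lem:0-series-ineq}.
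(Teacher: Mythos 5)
Your reduction to the composition factors of $\Ind_{L \subset P}^G \IC(\cO_L,\cE_L)$ (via transitivity and exactness of induction) is exactly the paper's first step, and your use of \cite[Lemma~5.1]{genspring2} for central characters and of \cite[Lemma~2.1]{genspring2} together with Corollary~\ref{cor:involution=id} to read off $Y_{(N,\cO_N)}$ from $\bT_\fg\IC(\cO'',\cE'')$ is also sound. The problem is your last paragraph: the support statement $\mathrm{supp}\bigl(\bT_\fg \Ind_{L \subset P}^G \IC(\cO_L,\cE_L)\bigr) \subset \overline{Y_{(L,\cO_L)}}$ cannot in general be obtained by modular reduction from characteristic zero. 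The characteristic-zero input you invoke (``the Fourier transform of an induced cuspidal $\IC$ is supported on the closure of the corresponding Lusztig stratum'') requires the pair $(\cO_L,\cE_L^\K)$ to be cuspidal over $\K$, and by Lemma~\ref{lem:comparison-cusp} an $\ell$-cuspidal pair need not be $0$-cuspidal even in rather good characteristic. For a concrete failure, take $L=M=G=\GL(2)$ and $\ell=2$: the unique $2$-cuspidal pair is $(\cO_\reg,\ubk)$, its lift $(\cO_\reg,\underline{\K})$ lies in the principal $0$-series, so $\bT_\fg\IC(\cO_\reg,\underline{\K})$ is the $\IC$-extension of a local system on $\fg_{\mathrm{rs}}$ and is supported on all of $\fgl_2$, not on $\overline{Y_{(G,\cO_\reg)}}=\cN_G+\fz_G$. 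Reduction mod $\ell$ then only yields support in all of $\fg$, which gives no information. (This is why the argument of Lemma~\ref{lem:0-series-ineq}, which you cite as a model, works there: in that lemma the characteristic-zero object genuinely lives in the $0$-series of $(L,\cO_L)$, whereas here it need not.)

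The correct route, and the one the paper takes, is to use the modular-coefficient identity directly: for an $\ell$-cuspidal pair $(\cO_L,\cE_L)$ one has $\bT_\fg\bigl(\Ind_{L\subset P}^G \IC(\cO_L,\cE_L)\bigr) \cong \IC(\overline{Y_{(L,\cO_L)}},\,\cdot\,)$ with $\bk$-coefficients, which is \cite[Eq.~(2.2)]{genspring2} (resting on the modular version of $\bT_{\fl}(\IC(\cO_L,\cE_L)) \cong \IC(\cO_L+\fz_L,\,\cdot\,)$ for cuspidal pairs, i.e.\ \cite[Corollary~2.12]{genspring1}, not on any characteristic-zero statement). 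With that substitution the rest of your argument goes through.
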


\begin{proof}
Let $R = M \cap P$; then by assumption $\IC(\cO,\cE)$ is a quotient of $\Ind_{L \subset R}^M (\IC(\cO_L,\cE_L))$.  Since $\Ind_{M \subset Q}^G$ is exact (\cite[\S 2.1]{genspring1}) and kills no nonzero perverse sheaf (\cite[Corollary~2.15]{genspring1}), and since $\Ind_{M \subset Q}^G \Ind_{L \subset R}^M \cong \Ind_{L \subset P}^G$ (\cite[Lemma~2.6]{genspring1}), it suffices to prove that all composition factors of $\Ind_{L \subset P}^G (\IC(\cO_L,\cE_L))$ lie in series obeying the desired inequality. This fact follows from~\cite[Eq.~(2.2) and Lemmas~2.1 and~5.1]{genspring2}.
\end{proof}

\subsection{$0$-cuspidal data dominated by a cuspidal datum}

\begin{lem}\label{lem:0-series-total}
Assume that $G$ is semisimple of type $A$, or that $G$ is quasi-simple, simply connected, and not of type $A$.  Fix a central character $\chi$.  On the set $\fM_{G,\bk,\chi}^0$ of $0$-cuspidal data with central character $\chi$, the partial order $\preceq_G$ is a total order. Moreover, this total order can be described simply as follows: for $(L,\cO_L,\cE_L),(M,\cO_M,\cE_M)\in\fM_{G,\bk,\chi}^0$, we have $(L,\cO_L,\cE_L)\preceq_G(M,\cO_M,\cE_M)$ if and only if there is an element $g\in G$ such that $g L g^{-1} \subset M$.
\end{lem}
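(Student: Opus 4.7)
The forward direction of the claimed alternative description---that $(L,\cO_L,\cE_L)\preceq_G(M,\cO_M,\cE_M)$ implies the existence of $g\in G$ with $gLg^{-1}\subset M$---is immediate from condition~(2) of Proposition~\ref{prop:lusztig-po}, independently of the hypothesis on $G$. The substantive work is therefore the reverse implication together with totality of $\preceq_G$.

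If $G$ is semisimple of type $A$, then by Proposition~\ref{prop:0-series-A}\eqref{it:0-cusp-char} any two distinct $0$-cuspidal data have distinct central characters, so $\fM_{G,\bk,\chi}^0$ contains at most one $G$-orbit and both assertions of the lemma hold trivially.

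Suppose now that $G$ is quasi-simple simply connected and not of type $A$. Given Levi subgroups $L,M$ with $gLg^{-1}\subset M$ for some $g\in G$, conjugate so that $L\subset M$. By~\eqref{eqn:order-Levi} it suffices to prove the stronger relation $(L,\cO_L,\cE_L)\preceq_M(M,\cO_M,\cE_M)$; applying Proposition~\ref{prop:lusztig-po} inside $M$, this reduces to the orbit-closure condition $\cO_M\subset\overline{\Ind_L^M(\cO_L)}$ in $\cN_M$. For totality, one additionally needs the structural fact that any two Levi subgroups of $G$ admitting $0$-cuspidal pairs with a common central character are $G$-conjugate to nested Levis.

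Both of these claims I expect to establish by an explicit case analysis based on Lusztig's classification of characteristic-$0$ cuspidal pairs, as recalled in \mgthmcusp\ of~\cite{genspring3}. For the classical simply connected types $B_n$, $C_n$, $D_n$ the Levi subgroups carrying $0$-cuspidal pairs take the form of a quasi-simple factor of the same classical type as $G$---of smaller rank, and itself admitting a cuspidal pair---times a central torus, with the central character constraint fixing a parity on the admissible ranks; such Levis are then naturally nested up to $G$-conjugation, and the orbit-closure inclusion $\cO_M\subset\overline{\Ind_L^M(\cO_L)}$ can be read off from the standard explicit formulas for induced nilpotent orbits in $\Sp_{2n}$ and $\SO_n$. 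For exceptional types, the $0$-cuspidal data per central character are few and can be tabulated directly. The main obstacle is the case-intensive orbit-closure verification, though I do not anticipate that it will require any new conceptual input beyond the known classification.
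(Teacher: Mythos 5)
Your strategy is essentially the paper's: handle type $A$ via the central-character argument of Proposition~\ref{prop:0-series-A}\eqref{it:0-cusp-char}, then in the remaining types reduce the order relation via Proposition~\ref{prop:lusztig-po} to an orbit-closure containment of the form $\cO_M\subset\overline{\mathrm{Ind}_L^M(\cO_L)}$ and verify it (together with the nesting of the relevant Levi subgroups) by appealing to the explicit classification of characteristic-$0$ cuspidal data. You have correctly identified all the logical steps and what needs checking.

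The gap, which you flag yourself, is that the case-by-case verification is sketched rather than carried out. Since that verification constitutes the entire mathematical content of the lemma beyond the formal reduction, the proposal is a correct plan rather than a proof. For reference, the actual check is not as diffuse as your description suggests. For the exceptional types one observes from Lusztig's tables that $\fM_{G,\bk,\chi}^0$ has at most two elements, and when it has two the smaller one $(L,\cO_L,\cE_L)$ has $\cO_L$ the regular orbit of $L$, so $\mathrm{Ind}_L^G\cO_L=\cO_\reg$ dominates everything, settling the orbit containment without any closure computation. For $\Sp(2n)$ and $\Spin(n)$ the Levis carrying $0$-cuspidal pairs with a fixed central character form a one-parameter chain $L_k\supset L_{k-1}\supset\cdots$ (e.g.\ $L_k=\GL(1)^{n-k(k+1)/2}\times\Sp(k(k+1))$ with $\cO_k=\cO_0\times\cO_{(2k,2(k-1),\dots,2)}$), so the nesting claim is immediate from the parametrization; the closure containment then reduces, via \cite[Theorem~7.3.3]{cm}, to the elementary dominance
\[
\bigl(2n-k(k+1)+2k,\ 2(k-1),\ \dots,\ 2\bigr)\ \trianglerighteq\ \bigl(2k',\ 2(k'-1),\ \dots,\ 2\bigr)
\]
when $n=k'(k'+1)/2$ and $k<k'$, and analogously for $\Spin$. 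Filling in these three short computations (exceptional, $\Sp$, two $\chi$-cases of $\Spin$) would complete your argument along exactly the lines you intend.
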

\begin{proof}
If $G$ is semisimple of type $A$, then Proposition~\ref{prop:0-series-A}\eqref{it:0-cusp-char} tells us that $\fM_{G,\bk,\chi}^0$ is a singleton, so the lemma is trivial in this case.

If $G$ is quasi-simple, simply connected, and of exceptional type, then, from the classification in~\cite[\S 15]{lusztig}, we see that $\fM_{G,\bk,\chi}^0$ contains one or two elements, and that when it contains two elements, one of them is of the form $(G,\cO_G,\cE_G)$; i.e., it is actually a $0$-cuspidal pair for $G$ itself.  Let $(L,\cO_L,\cE_L)$ be the other $0$-cuspidal datum with the same central character.  In every case, the classification shows that $\cO_L$ is the regular nilpotent orbit in $L$, so the induced orbit $\mathrm{Ind}_L^G \cO_L$ is the regular orbit $\cO_\reg$ for $G$. 
By Proposition~\ref{prop:lusztig-po} this implies that
$(L,\cO_L,\cE_L) \preceq_G (G,\cO_G,\cE_G)$, as desired.  

Suppose now that $G = \Sp(2n)$.  The pairs $(L,\cO_L)$ admitting cuspidal local systems are of the form $(L_k,\cO_k)$ where
\[
L_k =\GL(1)^{n - k(k+1)/2} \times \Sp(k(k+1)),
\qquad
\cO_{k} = \cO_0 \times \cO_{(2k,2(k-1),\cdots,4,2)},
\]
and $k$ is a nonnegative integer such that $n\geq k(k+1)/2$.
So to prove the claim in this case, it suffices to show that if $k<k'$ then $\cO_{k'}$ is contained in the closure of $\mathrm{Ind}_{L_k}^{L_{k'}}(\cO_{k})$. We can assume that $n=k'(k'+1)/2$, so that $L_{k'}=G$. According to~\cite[Theorem~7.3.3]{cm}, the induced orbit $\mathrm{Ind}_{L_k}^G \cO_{k}$ corresponds to the partition
\[
(2n - k(k+1) + 2k, 2(k-1), 2(k-2), \cdots, 4, 2),
\]
which does indeed dominate the partition $(2k',2(k'-1),2(k'-2),\cdots,4,2)$.

Finally, suppose $G = \Spin(n)$.  For any Levi subgroup $L \subset G$, let $\overline{L}$ be its image in $\SO(n)$.  Note that $L$ is determined by $\overline{L}$.  Consider first the case where $\chi$ is trivial.  The pairs $(L,\cO_L)$ admitting cuspidal local systems with trivial central character are of the form $(L_k,\cO_k)$ where
\[
\overline{L}_k = \GL(1)^{(n - k^2)/2} \times \SO(k^2),
\qquad
\cO_k = \cO_0 \times \cO_{(2k-1, 2k-3, \cdots, 3, 1)},
\]
and $k$ is a nonnegative integer such that $n\geq k^2$. So to prove the claim in this case, it suffices to show that if $k<k'$ then $\cO_{k'}$ is contained in the closure of $\mathrm{Ind}_{L_k}^{L_{k'}}(\cO_{k})$. We can assume that $n=(k')^2$, so that $L_{k'}=G$. 
According to~\cite[Theorem~7.3.3]{cm}, the induced orbit $\mathrm{Ind}_{L_k}^G \cO_k$ corresponds to the partition
\[
(n - k^2 + 2k-1, 2k-3, 2k-5, \cdots, 3, 1),
\]
which does indeed dominate the partition $(2k'-1,2k'-3,2k'-5,\cdots,3,1)$.
For the case where $\chi$ is nontrivial, we must look at the pairs $(L_j,\cO_j)$ where
\[
\overline{L}_j = \GL(1)^{(2n - j(j+1))/4} \times \SO(j(j+1)/2),
\qquad
\cO_j = \cO_0 \times \cO_{(2j-1, 2j-5, \cdots)},
\]
and $j$ is a nonnegative integer such that $n\geq j(j+1)/2$. The argument in this case is entirely similar.
\end{proof}

Now we revert to considering a general connected reductive group $G$.

\begin{lem}\label{lem:0-cuspidal-order}
Let $(L,\cO_L,\cE_L)$ and $(M,\cO_M,\cE_M)$ be two $0$-cuspidal data.  Then $(L,\cO_L,\cE_L) \preceq_G (M,\cO_{M}, \cE_{M})$ if and only if the following conditions both hold:
\begin{enumerate}
\item $\cE_L$ and $\cE_M$ have the same central character.
\item There exists an element $g \in G$ such that $g L g^{-1} \subset M$.
\end{enumerate}
\end{lem}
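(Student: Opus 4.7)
The forward direction is immediate: Proposition~\ref{prop:lusztig-po} characterizes $(L,\cO_L,\cE_L)\preceq_G(M,\cO_M,\cE_M)$ by three conditions, and the two listed in the statement are the first two of them.

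For the converse, assume (1) and (2), and after replacing $L$ by a $G$-conjugate, assume $L\subset M$. The plan is to reduce to the situation covered by Lemma~\ref{lem:0-series-total}, whose stronger conclusion characterizes $\preceq_G$ among $0$-cuspidal data with a fixed central character purely in terms of Levi containment. Concretely, I would first apply Lemma~\ref{lem:cogood-reduction} to replace $G$ by a semisimple group $G'\cong G_1'\times G_2'$ isogenous to $G/Z(G)^\circ$, with $G_1'$ semisimple of type $A$ and $G_2'$ a product of quasi-simple, simply connected groups not of type $A$. Since $\cN_G=\cN_{G'}$, the Lusztig strata, $0$-cuspidal data, central characters, and the partial order $\preceq$ all transfer. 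Decomposing further along the product factors of $G'$ (Lusztig strata and cuspidal data factor as products, central characters multiply, and $\preceq$ becomes the coordinatewise order), it is enough to prove the claim when $G$ is either semisimple of type $A$ or quasi-simple, simply connected, and not of type $A$.

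In the type-$A$ case, Proposition~\ref{prop:0-series-A}\eqref{it:0-cusp-char} states that two $0$-cuspidal data with the same central character are automatically $G$-conjugate, so the conclusion is trivial. In the remaining case, Lemma~\ref{lem:0-series-total} asserts precisely that $(L,\cO_L,\cE_L)\preceq_G(M,\cO_M,\cE_M)$ if and only if $gLg^{-1}\subset M$ for some $g\in G$, which is exactly our hypothesis~(2), so the result follows.

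The main obstacle I anticipate is not at the level of ideas but in the bookkeeping of the reduction step: one must verify that the isogeny $G'\twoheadrightarrow G/Z(G)^\circ$ and the product decomposition preserve $G$-conjugacy of Levi subgroups, central characters, and the closure order on Lusztig strata in a compatible way, so that the problem genuinely decouples into the factors treated by Lemma~\ref{lem:0-series-total}. The essential content of the lemma is already contained in Lemma~\ref{lem:0-series-total}, whose proof relied on the explicit classification of $0$-cuspidal pairs in the classical and exceptional quasi-simple cases.
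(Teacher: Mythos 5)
Your proposal is correct and follows essentially the same route as the paper: the forward direction is read off from Proposition~\ref{prop:lusztig-po}, and the converse is reduced (via Lemma~\ref{lem:cogood-reduction}) to the type-$A$ and quasi-simple simply connected cases, where Proposition~\ref{prop:0-series-A} and Lemma~\ref{lem:0-series-total} respectively give the conclusion. The paper's proof is exactly this, phrased as showing that the third condition of Proposition~\ref{prop:lusztig-po} is automatic for $0$-cuspidal data.
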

\begin{proof}
In view of Proposition~\ref{prop:lusztig-po}, it suffices to show that for $0$-cuspidal data, the two conditions above imply that $\cO_M \subset \overline{\mathrm{Ind}_{gLg^{-1}}^M(g \cdot \cO_L)}$.  As usual, we may reduce to the case where $G$ is either semisimple of type $A$, or quasi-simple, simply connected, and not of type $A$.  Then the claim follows from Lemma~\ref{lem:0-series-total}.
\end{proof}

\begin{cor}
\label{cor:order-0-Levi}
Let $M \subset G$ be a Levi subgroup, and let $(L,\cO_L,\cE_L)$ and $(L',\cO_{L'}, \cE_{L'})$ be two $0$-cuspidal data for $G$.  If $M$ contains both $L$ and $L'$, then
\begin{equation}
\label{eqn:order-0-Levi}
(L,\cO_L,\cE_L) \preceq_M (L',\cO_{L'}, \cE_{L'})
\qquad\text{if and only if}\qquad
(L,\cO_L,\cE_L) \preceq_G (L',\cO_{L'}, \cE_{L'}).
\end{equation}
\end{cor}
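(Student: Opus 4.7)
The ``only if'' direction is immediate from~\eqref{eqn:order-Levi}. For the ``if'' direction, suppose $(L,\cO_L,\cE_L) \preceq_G (L',\cO_{L'}, \cE_{L'})$. By Lemma~\ref{lem:0-cuspidal-order} this furnishes an element $g \in G$ with $gLg^{-1} \subset L'$ together with an equality of $G$-central characters. The strategy is to verify the same two clauses of Lemma~\ref{lem:0-cuspidal-order} for $\preceq_M$: an equality of $M$-central characters, and the existence of $m \in M$ with $mLm^{-1} \subset L'$.

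First I would apply Lemma~\ref{lem:cogood-reduction} to assume $G \cong G_1 \times G_2$ with $G_1$ semisimple of type~$A$ and $G_2$ a product of quasi-simple simply connected groups not of type~$A$. Under this product decomposition the Levi $M$, the two $0$-cuspidal data, and the partial orders $\preceq_G$, $\preceq_M$ all split compatibly, so the two cases may be treated separately.

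For the central-character clause in the simply connected case $G_2$, I would use the surjectivity of $Z(G) \twoheadrightarrow Z(H)/Z(H)^\circ$ for every Levi $H$ of $G$, which is a standard consequence of $Z(H) = Z(G)\cdot Z(H)^\circ$ when $G$ is simply connected semisimple. This gives compatible surjections $Z(G)/Z(G)^\circ \twoheadrightarrow Z(M)/Z(M)^\circ \twoheadrightarrow Z(L)/Z(L)^\circ$ (and similarly with $L'$), so the $G$-central characters of $\cE_L$ and $\cE_{L'}$ coincide if and only if their $M$-central characters coincide. In the type-$A$ case, Proposition~\ref{prop:0-series-A}\eqref{it:0-cusp-char} applied to $G_1$ and to the derived subgroup of $M$ (itself semisimple of type~$A$, since $M$ is a Levi of a type-$A$ semisimple group, and nilpotent orbits together with equivariant local systems factor through the derived part) pins down each datum up to the corresponding conjugacy, which again delivers the central-character equality.

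For the conjugacy clause, I would leverage the explicit classification of Levi subgroups of $M$ admitting a $0$-cuspidal pair, as tabulated in the proof of Lemma~\ref{lem:0-series-total}: case-by-case, any two $G$-conjugate such Levis sitting inside $M$ turn out to form a single $M$-conjugacy class, which yields the required $m$ conjugating $L$ to $gLg^{-1} \subset L'$. This is the principal obstacle, because $G$-conjugate Levi subgroups of $M$ need \emph{not} be $M$-conjugate in general (for instance, the two standard embeddings of $\GL(1)^2 \times \GL(2)$ into $\GL(2) \times \GL(2) \subset \GL(4)$ are $\GL(4)$-conjugate but not $(\GL(2)\times\GL(2))$-conjugate); the argument must exploit the rigidity coming from the $0$-cuspidal pair structure to rule out such pathologies. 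Once both clauses are verified, Lemma~\ref{lem:0-cuspidal-order} applied inside $M$ concludes the proof.
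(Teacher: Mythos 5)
Your overall structure is sound: the ``only if'' direction is indeed immediate from~\eqref{eqn:order-Levi}, and by Lemma~\ref{lem:0-cuspidal-order} (applied in both $M$ and $G$, which is legitimate since $\ell$ is rather good for $M$ by Lemma~\ref{lem:rgood-basic}\eqref{it:rgood-levi}), the ``if'' direction reduces to showing (a) that equality of $G$-central characters forces equality of $M$-central characters, and (b) that if $L$ is $G$-conjugate to a subgroup of $L'$ then it is $M$-conjugate to a subgroup of $L'$. This is the same reduction the paper makes, modulo the observation that (a) is in fact automatic: the $G$-central character of $\cE_L$ is the $M$-central character composed with the surjection $Z(G)/Z(G)^\circ \twoheadrightarrow Z(M)/Z(M)^\circ$, and surjectivity alone gives the equivalence. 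Your more elaborate argument via $Z(H) = Z(G)\cdot Z(H)^\circ$ and Proposition~\ref{prop:0-series-A}\eqref{it:0-cusp-char} is unnecessary and, in the type-$A$ branch, not even aimed at the right statement: that proposition controls when two $0$-cuspidal data have the same character, not the relation between $G$- and $M$-central characters.

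The real gap is in (b). You correctly identify that $G$-conjugacy of Levi subgroups of $M$ does not in general imply $M$-conjugacy, and you correctly intuit that the $0$-cuspidal structure must be the source of rigidity, but your proposal stops at the assertion that a case-by-case inspection of the tables in Lemma~\ref{lem:0-series-total} ``turns out'' to work. That inspection is not carried out, and it is genuinely non-trivial: $M$ is an arbitrary Levi of $G$, so in the classical cases you would need to analyse, for each $M \cong \GL(a_1)\times\cdots\times\GL(a_k)\times \Sp(2m)$ (or the orthogonal analogue), the $M$-conjugacy classes of $0$-cuspidal Levi subgroups and verify they are separated by their $G$-conjugacy classes. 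The paper avoids all of this with a short conceptual argument: by Lusztig (\cite[Theorem~9.2(a)]{lusztig}) a Levi subgroup admitting a $0$-cuspidal pair is \emph{self-opposed} in the sense of Bonnaf\'e, and \cite[Proposition~1.12(d)]{bonnafe1} says precisely that for self-opposed Levis, $G$-conjugacy inside $M$ to a subgroup of $L'$ can be improved to $M$-conjugacy. That is the missing ingredient, and without it (or an actually executed case check) your proof is incomplete.
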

\begin{proof}
The `only if' direction was discussed in~\eqref{eqn:order-Levi}.  By Lemma~\ref{lem:0-cuspidal-order}, the `if' direction reduces to the claim that if $L$ is $G$-conjugate to a subgroup of $L'$, then it is $M$-conjugate to a subgroup of $L'$.  Recall that a Levi subgroup that admits a $0$-cuspidal pair is \emph{self-opposed} in the sense of~\cite[\S1.E]{bonnafe1}, by~\cite[Theorem~9.2(a)]{lusztig}.  The claim about $M$-conjugacy then follows from~\cite[Proposition~1.12(d)]{bonnafe1}.
\end{proof}

\begin{cor}
\label{cor:0-series-max}
For each cuspidal datum $(M,\cO_M,\cE_M)$, 
there is (up to conjugacy) a unique maximal $0$-cuspidal datum $(L,\cO_L,\cE_L)$ such that $(L,\cO_L,\cE_L) \preceq_G (M,\cO_M,\cE_M)$. 
\end{cor}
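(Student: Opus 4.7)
The plan is to establish existence and uniqueness of the maximum $0$-cuspidal datum separately.

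For existence, I note that the induction $\ell$-series $\fN^{(M,\cO_M,\cE_M)}_{G,\bk}$ is nonempty: any simple quotient of the nonzero induced perverse sheaf $\Ind_{M\subset P}^G \IC(\cO_M,\cE_M)$ belongs to it. By Theorem~\ref{thm:main-intro}, this $\ell$-series is contained in a unique induction $0$-series $\fN^{\zs(L,\cO_L,\cE_L)}_{G,\bk}$, which defines a $0$-cuspidal datum $(L,\cO_L,\cE_L)$ up to $G$-conjugacy. Lemma~\ref{lem:0-series-ineq} applied to any pair in this $\ell$-series then yields $(L,\cO_L,\cE_L) \preceq_G (M,\cO_M,\cE_M)$.

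For uniqueness of the maximum, I would show that this $(L,\cO_L,\cE_L)$ dominates every other $0$-cuspidal datum $(L',\cO_{L'},\cE_{L'}) \preceq_G (M,\cO_M,\cE_M)$. By Lemma~\ref{lem:0-cuspidal-order}, since both have the same $G$-central character as $(M,\cO_M,\cE_M)$, it suffices to show that $L'$ is $G$-conjugate to a subgroup of $L$. After conjugating so that $L,L'\subset M$ (using the self-opposed Levi argument from the proof of Corollary~\ref{cor:order-0-Levi}), Corollary~\ref{cor:order-0-Levi} lets me work with $\preceq_M$ in place of $\preceq_G$. I would then apply Lemma~\ref{lem:cogood-reduction} to $M$ to reduce to a group $M'\cong M'_1\times M'_2$, with $M'_1$ semisimple of type $A$ and $M'_2$ a product of quasi-simple simply connected groups not of type $A$. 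Since $0$-cuspidal data and the order $\preceq$ decompose factor-wise under this product, and in each simple factor the $0$-cuspidal data with a fixed central character form a totally ordered set by Proposition~\ref{prop:0-series-A}\eqref{it:0-cusp-char} and Lemma~\ref{lem:0-series-total}, the set of $0$-cuspidal data in $M'$ with a fixed central character has a unique maximum (the product of factor maxima).

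The main obstacle will be central-character bookkeeping: the factor-wise comparison in $M'$ requires $(L,\cO_L,\cE_L)$ and $(L',\cO_{L'},\cE_{L'})$ to share the same $M'$-central character, whereas a priori only their $G$-central characters coincide. This is resolved by the standard surjectivity of the natural map $Z(G)/Z(G)^\circ\twoheadrightarrow Z(M)/Z(M)^\circ$ for $M$ a Levi of $G$ (which holds because $\Z\Phi_G/\Z\Phi_M$ is torsion-free, so that $Z(M)/Z(G)$ is a torus), combined with the compatible transfer of central characters through the isogeny $M'\to M/Z(M)^\circ$ supplied by Lemma~\ref{lem:cogood-reduction}.
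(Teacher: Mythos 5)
Your proof has the right core ideas (Lemma~\ref{lem:0-series-ineq} for existence, Lemma~\ref{lem:0-series-total} and Lemma~\ref{lem:cogood-reduction} for uniqueness), but the existence argument has a genuine circularity. You appeal to Theorem~\ref{thm:main-intro}, which is established by Theorem~\ref{thm:0-series} in Section~4 --- and the statement of Theorem~\ref{thm:0-series}, as well as the partition $\fM_{G,\bk}=\bigsqcup C^{(L,\cO_L,\cE_L)}$ used in its proof, rely on Corollary~\ref{cor:0-series-max}, the very statement you are trying to prove. The appeal is also unnecessary: you do not need the fact that the whole $\ell$-series lies in a single $0$-series. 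Simply take any element of the $\ell$-series $\fN^{(M,\cO_M,\cE_M)}_{G,\bk}$ (nonempty, as you note); since the $0$-series partition $\fN_{G,\bk}$ by the $\K$-version of~\eqref{eqn:genspring} transported via $\theta_G$, that element lies in some $0$-series $\fN^{\zs(L,\cO_L,\cE_L)}_{G,\bk}$, and Lemma~\ref{lem:0-series-ineq} immediately gives $(L,\cO_L,\cE_L)\preceq_G(M,\cO_M,\cE_M)$. This is exactly the paper's existence argument.

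On uniqueness, your route via Corollary~\ref{cor:order-0-Levi} to pass to $\preceq_M$, followed by a reduction of $M$ via Lemma~\ref{lem:cogood-reduction}, is more elaborate than what is actually needed, and it is precisely this detour that creates the central-character bookkeeping you flag at the end. The paper instead applies Lemma~\ref{lem:cogood-reduction} \emph{directly to $G$}, transferring the whole partially ordered set of $0$-cuspidal data (with a fixed central character, dominated by $(M,\cO_M,\cE_M)$) to the group $G'\cong G'_1\times G'_2$, where Lemma~\ref{lem:0-series-total} gives a total order in each factor and hence a unique maximum in the product. That direct reduction dodges the transfer of central characters from $G$ to $M$ entirely, since the isogeny supplied by Lemma~\ref{lem:cogood-reduction} handles the bookkeeping for $G$ itself. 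Also note a minor presentation issue: you phrase the uniqueness step as showing that the specific $(L,\cO_L,\cE_L)$ produced in the existence step is the maximum, but your argument actually establishes the unique maximum abstractly from the poset structure, without identifying it with that particular datum; either is fine, but the write-up should be consistent about which is being claimed.
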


\begin{proof}
Lemma~\ref{lem:0-series-ineq} implies that there exists at least one $0$-cuspidal datum $(L,\cO_L,\cE_L)$ such that $(L,\cO_L,\cE_L) \preceq_G (M,\cO_M,\cE_M)$.   If $G$ is semisimple of type $A$ or quasi-simple, simply connected, and not of type $A$, then the existence of a unique maximal such $0$-cuspidal datum follows from Lemma~\ref{lem:0-series-total}.  We then deduce the result for general $G$ using Lemma~\ref{lem:cogood-reduction}.
\end{proof}

\section{Comparing induction series with induction $0$-series}
\label{sec:comparison}

In this section, we prove the first main result of the paper: that each induction $0$-series is a union of induction series.  

\subsection{Disjointness of $0$-series of cuspidal data}

A step towards this result is the following lemma, which says that $0$-series of cuspidal data (defined in~\eqref{eqn:data-0-series}) form a partition of the set of all cuspidal data.

\begin{lem}
\label{lem:cuspidal-data-0-series}
We have
\begin{equation}
\label{eqn:representatives-cusp}
\fM_{G,\bk} = \bigsqcup_{(L,\cO_L, \cE_L) \in \fM_{G,\bk}^0} \fM_{G,\bk}^{\zs(L,\cO_L,\cE_L)}.
\end{equation}
\end{lem}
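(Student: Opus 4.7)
My plan is to prove the two parts of the claimed partition separately: covering (every cuspidal datum belongs to at least one $0$-series of cuspidal data) and disjointness (no cuspidal datum belongs to two distinct $0$-series).

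For covering, I would start from a given cuspidal datum $(M, \cO_M, \cE_M) \in \fM_{G,\bk}$ and apply to the Levi subgroup $M$ the existing partition of $\fN_{M,\bk}$ into $M$-induction $0$-series. This partition is obtained from Lusztig's characteristic-$0$ generalized Springer correspondence for $M$ (i.e.,~\eqref{eqn:genspring} applied to $M$ over $\K$) transported via the bijection $\theta_M$, and is indexed by $M$-conjugacy classes of $0$-cuspidal data for $M$. It produces a $0$-cuspidal datum $(L_0, \cO_{L_0}, \cE_{L_0})$ for $M$ (unique up to $M$-conjugacy) with $L_0 \subset M$ and $(\cO_M, \cE_M) \in \fN_{M,\bk}^{\zs(L_0, \cO_{L_0}, \cE_{L_0})}$. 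Since Levi subgroups of $M$ are Levi subgroups of $G$, this triple also represents an element of $\fM^0_{G,\bk}$, and then $(M, \cO_M, \cE_M) \in \fM_{G,\bk}^{\zs(L_0, \cO_{L_0}, \cE_{L_0})}$ directly from definition~\eqref{eqn:data-0-series}.

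For disjointness, I would suppose that a single $G$-orbit $[M, \cO_M, \cE_M]$ lies in both $\fM_{G,\bk}^{\zs(L_0, \cO_{L_0}, \cE_{L_0})}$ and $\fM_{G,\bk}^{\zs(L_0', \cO_{L_0'}, \cE_{L_0'})}$. Picking representatives realizing each membership and conjugating one representative to the other by an element of $G$, I would arrange, after replacing $(L_0, \cO_{L_0}, \cE_{L_0})$ by a $G$-conjugate (which does not affect $\fM_{G,\bk}^{\zs(L_0, \cO_{L_0}, \cE_{L_0})}$), that $L_0$ and $L_0'$ are both contained in a common Levi $M$ and that $(\cO_M, \cE_M)$ lies simultaneously in $\fN_{M,\bk}^{\zs(L_0, \cO_{L_0}, \cE_{L_0})}$ and $\fN_{M,\bk}^{\zs(L_0', \cO_{L_0'}, \cE_{L_0'})}$. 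The same $\fN_{M,\bk}$-partition used in the first step then forces $(L_0, \cO_{L_0}, \cE_{L_0})$ and $(L_0', \cO_{L_0'}, \cE_{L_0'})$ to be $M$-conjugate, and in particular $G$-conjugate.

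The main (and really only) obstacle is bookkeeping around $G$-conjugacy: the sets $\fM_{G,\bk}^{\zs(\cdots)}$ are defined via the \emph{existence} of a representative with $L_0 \subset M$, so ensuring that two memberships can be realized by representatives inside a single Levi $M$ requires carefully passing between representatives of a common $G$-orbit before invoking the $\fN$-partition. Once this is arranged, both parts of the argument follow immediately from the $M$-level partition, which is already in hand for every Levi $M$ as a consequence of the classical characteristic-$0$ generalized Springer correspondence and the bijection $\theta_M$.
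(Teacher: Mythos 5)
Your proposal is correct and follows essentially the same route as the paper: both establish covering immediately from the definitions (the $M$-level partition of $\fN_{M,\bk}$ into $0$-series supplies a $0$-cuspidal datum below $(M,\cO_M,\cE_M)$), and both prove disjointness by conjugating so that two representatives share a common Levi $M'$ and then invoking the $M'$-level partition of $\fN_{M',\bk}$ to get $M'$-conjugacy, hence $G$-conjugacy. The only difference is presentational — you call out the conjugacy bookkeeping more explicitly — but the underlying argument is identical to the one in the paper.
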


\begin{proof}
It is immediate from the definitions that every cuspidal datum $(M,\cO_M,\cE_M)$ lies in some series $\fM_{G,\bk}^{\zs(L,\cO_L,\cE_L)}$; our task is to show that the various $\fM_{G,\bk}^{\zs(L,\cO_L,\cE_L)}$ are disjoint.  Let $(L,\cO_L,\cE_L)$ and $(L',\cO_{L'},\cE_{L'})$ be two $0$-cuspidal data.  Let $(M,\cO_M,\cE_M)$ and $(M',\cO_{M'}, \cE_{M'})$ be two cuspidal data such that
\[
L \subset M, \quad
(\cO_M,\cE_M) \in \fN_{M,\bk}^{\zs(L,\cO_L,\cE_L)}
\qquad\text{and}\qquad
L' \subset M', \quad
(\cO_{M'},\cE_{M'}) \in \fN_{M',\bk}^{\zs(L,\cO_{L'},\cE_{L'})}.
\]
In other words, we have
\[
(M,\cO_M,\cE_M) \in \fM_{G,\bk}^{\zs(L,\cO_L,\cE_L)}
\qquad\text{and}\qquad
(M',\cO_{M'},\cE_{M'}) \in \fM_{G,\bk}^{\zs(L',\cO_{L'},\cE_{L'})}.
\]
Assume that $(M,\cO_M,\cE_M)$ and $(M',\cO_{M'}, \cE_{M'})$ are $G$-conjugate.    We must show that $(L,\cO_L,\cE_L)$ and $(L',\cO_{L'},\cE_{L'})$ are $G$-conjugate as well.  

Let $g \in G$ be such that $g \cdot (M,\cO_M,\cE_M) = (M',\cO_{M'},\cE_{M'})$. Then $g \cdot (L,\cO_L,\cE_L)$ and $(L',\cO_{L'},\cE_{L'})$ are $0$-cuspidal data for $M'$, and the pair $(\cO_{M'},\cE_{M'})$ belongs to both $\fN_{M',\bk}^{\zs(g \cdot (L,\cO_L,\cE_L))}$ and $\fN_{M',\bk}^{\zs(L',\cO_{L'},\cE_{L'})}$. It follows that $g \cdot (L,\cO_L,\cE_L)$ and $(L',\cO_{L'},\cE_{L'})$ are $M'$-conjugate, and hence that $(L,\cO_L,\cE_L)$ and $(L',\cO_{L'},\cE_{L'})$ are $G$-conjugate, as desired.
\end{proof}

\subsection{Induction $0$-series are unions of induction series}

\begin{thm}
\label{thm:0-series}
Each induction $0$-series for $G$ is a union of induction series. Specifically, for $(L,\cO_L,\cE_L) \in \fM^0_{G,\bk}$ we have
\begin{equation*}
\fN^{\zs(L,\cO_L,\cE_L)}_{G,\bk} = \bigsqcup_{(M,\cO_M,\cE_M) \in \fM_{G,\bk}^{\zs(L,\cO_L, \cE_L)}}
\fN^{(M,\cO_M,\cE_M)}_{G,\bk}.
\end{equation*}
Moreover, we have
\[
\fM_{G,\bk}^{\zs(L,\cO_L, \cE_L)} = \left\{ (M,\cO_M,\cE_M) \in \fM_{G,\bk} \,\Big|\, 
\begin{array}{c}
\textup{$(L,\cO_L,\cE_L)$ is the unique maximal $0$-cuspidal} \\
\textup{datum such that $(L,\cO_L,\cE_L) \preceq_G (M,\cO_M,\cE_M)$}
\end{array} \right\},
\]
where the uniqueness was shown in Corollary~\ref{cor:0-series-max}.
\end{thm}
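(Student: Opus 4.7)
My plan is to prove both assertions together in three steps.

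\emph{Step 1 (basic inequality).} For any $(M,\cO_M,\cE_M) \in \fM^{\zs(L,\cO_L,\cE_L)}_{G,\bk}$, we may assume $L \subset M$; setting $\cE_M^\K := \theta_M^{-1}\cE_M$ and $\cE_L^\K := \theta_L^{-1}\cE_L$, we have $(\cO_M,\cE_M^\K) \in \fN^{(L,\cO_L,\cE_L^\K)}_{M,\K}$. Hence $\IC(\cO_M,\cE_M^\K)$ is a direct summand of $\Ind_{L\subset R}^M\IC(\cO_L,\cE_L^\K)$, whose support is $\overline{\mathrm{Ind}_L^M\cO_L}$; so $\cO_M \subset \overline{\mathrm{Ind}_L^M\cO_L}$. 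With matching central characters (cf.~\cite[Lemma~5.1]{genspring2}), Proposition~\ref{prop:lusztig-po} yields $(L,\cO_L,\cE_L) \preceq_G (M,\cO_M,\cE_M)$.

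\emph{Step 2 (maximality).} I show $(L,\cO_L,\cE_L)$ is the unique maximal $0$-cuspidal datum below $(M,\cO_M,\cE_M)$ in $\preceq_G$; uniqueness is Corollary~\ref{cor:0-series-max}. For maximality, reduce via Lemma~\ref{lem:cogood-reduction} to the case where $G$ is semisimple of type $A$ or simply connected, quasi-simple, and not of type $A$. Then Lemma~\ref{lem:0-series-total} provides a total order on the $0$-cuspidal data of $M$ with fixed central character (agreeing with $\preceq_G$ on such data by Corollary~\ref{cor:order-0-Levi}). Invoking the explicit cuspidal-pair classifications from the proof of Lemma~\ref{lem:0-series-total} (and Proposition~\ref{prop:0-series-A}\eqref{it:0-cusp-char} in type $A$), I verify case by case that the $0$-cuspidal datum whose induction $0$-series in $M$ contains $(\cO_M,\cE_M)$ is the maximum of this chain, establishing the second equality.

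\emph{Step 3 (partition).} For the first equality, I show $\fN^{(M,\cO_M,\cE_M)}_{G,\bk} \subseteq \fN^{\zs(L,\cO_L,\cE_L)}_{G,\bk}$. Using~\eqref{eqn:mod-reduction-0-cusp} and the fact that $\IC(\cO_M,\cE_M^\K)$ is a direct summand of $\Ind_{L\subset R}^M\IC(\cO_L,\cE_L^\K)$, the simple $\IC(\cO_M,\cE_M)$ occurs as a composition factor of $\Ind_{L\subset R}^M\IC(\cO_L,\cE_L)$; by exactness and transitivity of induction, any $(\cO,\cE) \in \fN^{(M,\cO_M,\cE_M)}_{G,\bk}$ is a composition factor of $\Ind_{L\subset RU_Q}^G\IC(\cO_L,\cE_L) \cong \bk\lotimes_\O\Ind_{L\subset RU_Q}^G\IC(\cO_L,\cE_L^\O)$ for an $\O$-form $\cE_L^\O$. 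If $(L^{**},\cO_{L^{**}},\cE_{L^{**}})$ denotes the $0$-cuspidal datum of $(\cO,\cE)$, Lemma~\ref{lem:0-series-ineq} combined with Step 2 gives $(L^{**},\cdots) \preceq_G (L,\cO_L,\cE_L)$; a Fourier--Sato analysis (using~\cite[Lemma~2.1]{genspring2}, \cite[Proposition~2.8]{juteau}, and the characteristic-$0$ decomposition of the above induced sheaf into simple summands with cuspidal datum $(L,\cO_L,\cE_L^\K)$) together with Step 2 reapplied to the modular cuspidal datum of $(\cO,\cE)$ yields the reverse inequality, forcing equality $(L^{**},\cdots) = (L,\cO_L,\cE_L)$. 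Combining with Lemma~\ref{lem:cuspidal-data-0-series} and the partition of $\fN_{G,\bk}$ into modular induction series then yields the claimed disjoint-union decomposition.

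The main obstacle is the maximality assertion in Step 2, which appears to genuinely require the case-by-case inspection of cuspidal-pair classifications invoked in Lemma~\ref{lem:0-series-total} rather than admitting a uniform argument; the Fourier--Sato bookkeeping in Step 3 is also delicate, since modular reduction of characteristic-$0$ IC sheaves can a priori introduce composition factors straying from the original $0$-series unless controlled via the cuspidal unicity in Proposition~\ref{prop:unicity-cuspidal}.
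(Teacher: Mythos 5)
Your Steps 1 and 2 are sound in substance: Step 1 is essentially Lemma~\ref{lem:0-series-ineq} applied inside $M$ together with~\eqref{eqn:order-Levi}, and the case-by-case maximality check in Step 2 (the ``next $0$-cuspidal Levi does not embed into $M$'' argument) is exactly how the paper establishes the inclusion $\fM_{G,\bk}^{\zs(L,\cO_L,\cE_L)} \subset C^{(L,\cO_L,\cE_L)}$. But Step 3 has a genuine gap at the ``reverse inequality.'' Your Fourier--Sato analysis only ever produces statements of the form ``(some $0$-cuspidal datum) $\preceq_G$ (some cuspidal datum)'': the support computations give $Y_{(M,\cO_M)} \subset \overline{Y_{(L,\cO_L)}}$ and $Y_{(M,\cO_M)} \subset \overline{Y_{(L^{**},\cO_{L^{**}})}}$, i.e.\ both $(L,\cO_L,\cE_L) \preceq_G (M,\cO_M,\cE_M)$ and $(L^{**},\ldots) \preceq_G (M,\cO_M,\cE_M)$; combined with Step 2 this yields only $(L^{**},\ldots) \preceq_G (L,\cO_L,\cE_L)$. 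Nothing in your listed ingredients produces $(L,\cO_L,\cE_L) \preceq_G (L^{**},\ldots)$ --- that statement is essentially equivalent to the theorem you are proving. What you actually obtain is the downward-closed inclusion $B^{(L,\cO_L,\cE_L)} \subset \bigsqcup_{(L'',\ldots)\preceq_G(L,\ldots)} \fN^{\zs(L'',\ldots)}_{G,\bk}$, and the paper's Lemma~\ref{lem:0-series-ineq} gives the complementary upward-closed inclusion $\fN^{\zs(L,\ldots)}_{G,\bk} \subset \bigsqcup_{(L'',\ldots)\succeq_G(L,\ldots)} B^{(L'',\ldots)}$; but one checks easily that when three or more $0$-cuspidal data with the same central character are comparable (as happens for $\Sp(2n)$ and $\Spin(n)$ of large rank), these two families of inclusions together with the global partition of $\fN_{G,\bk}$ do \emph{not} force the term-by-term equalities.

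The missing ingredient is the cardinality comparison that the paper uses to close exactly this gap in types $B$, $C$, $D$: one computes $|\fN^{\zs(L_k,\cO_k,\cE_k)}_{G,\bk}| = |\Bipart(n-k(k+1)/2)|$ from the characteristic-$0$ correspondence and $|B^{(L_k,\cO_k,\cE_k)}| = \sum_{\nu} |\uBipart_\ell(\sm(\nu))|$ from the modular correspondence, and invokes the combinatorial identity~\cite[Eq.~(7.4)]{genspring2} to conclude these are equal; only then does the one-sided inclusion (in either direction) upgrade to an equality by downward induction along the total order of Lemma~\ref{lem:0-series-total}. (In type $A$ and in the exceptional types the chains have length at most two, with one term a singleton series, so your argument does close there.) To repair your proof you would need either to supply this counting, or to find a genuinely new argument for the reverse inequality; as written, Step 3 does not go through for the classical types.
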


\begin{proof}
In view of Lemma~\ref{lem:cogood-reduction}, it suffices to consider the cases where $G$ is either semisimple of type $A$, or else quasi-simple, simply connected, and not of type $A$.  

We introduce some notation for the sets appearing in the statement:
\begin{align*}
B^{(L,\cO_L,\cE_L)} &:= \bigsqcup_{(M,\cO_M,\cE_M) \in \fM_{G,\bk}^{\zs(L,\cO_L,\cE_L)}} \fN_{G,\bk}^{(M,\cO_M,\cE_M)}, \\
C^{(L,\cO_L,\cE_L)} &:= \left\{ (M,\cO_M,\cE_M) \in \fM_{G,\bk} \,\Big|\, 
\begin{array}{c}
\text{$(L,\cO_L,\cE_L)$ is the unique maximal $0$-cuspidal} \\
\text{datum such that $(L,\cO_L,\cE_L) \preceq_G (M,\cO_M,\cE_M)$}
\end{array} \right\}.
\end{align*}
Thus, the theorem asserts that
\begin{equation}\label{eqn:0-series-restate}
\fN_{G,\bk}^{\zs(L,\cO_L,\cE_L)} = B^{(L,\cO_L,\cE_L)}
\qquad\text{and}\qquad
\fM_{G,\bk}^{\zs(L,\cO_L, \cE_L)} = C^{(L,\cO_L,\cE_L)}.
\end{equation}
By the generalized Springer correspondence~\eqref{eqn:genspring} and Lemma~\ref{lem:cuspidal-data-0-series} we have
\begin{equation}\label{eqn:0-series-equality}
\begin{gathered}
\bigsqcup_{(L,\cO_L,\cE_L) \in \fM^0_{G,\bk}}
\fN^{\zs(L,\cO_L,\cE_L)}_{G,\bk} = 
\fN_{G,\bk} = 
\bigsqcup_{(L,\cO_L,\cE_L) \in \fM^0_{G,\bk}} 
B^{(L,\cO_L,\cE_L)},
\\
\bigsqcup_{(L,\cO_L,\cE_L) \in \fM^0_{G,\bk}}
\fM^{\zs(L,\cO_L,\cE_L)}_{G,\bk} = 
\fM_{G,\bk} = 
\bigsqcup_{(L,\cO_L,\cE_L) \in \fM^0_{G,\bk}}
C^{(L,\cO_L,\cE_L)},
\end{gathered}
\end{equation}
so it suffices to show the inclusions
\begin{equation}\label{eqn:0-series-compare-single}
\fN_{G,\bk}^{\zs(L,\cO_L,\cE_L)} \subset B^{(L,\cO_L,\cE_L)}
\qquad\text{and}\qquad
\fM_{G,\bk}^{\zs(L,\cO_L, \cE_L)} \subset C^{(L,\cO_L,\cE_L)}.
\end{equation}

Let us also define
\begin{equation}\label{eqn:0-series-union}
\begin{aligned}
\fN_{G,\bk,+}^{\zs(L,\cO_L,\cE_L)} &:=
\bigsqcup_{\substack{(L',\cO_{L'},\cE_{L'}) \in \fM_{G,\bk}^0 \\
(L,\cO_L,\cE_L) \preceq_G (L',\cO_{L'},\cE_{L'})}}
\fN_{G,\bk}^{\zs(L',\cO_{L'},\cE_{L'})},
&
B_+^{(L,\cO_L,\cE_L)} &:=
\bigsqcup_{\substack{(L',\cO_{L'},\cE_{L'}) \in \fM_{G,\bk}^0 \\
(L,\cO_L,\cE_L) \preceq_G (L',\cO_{L'},\cE_{L'})}}
B^{(L',\cO_{L'},\cE_{L'})},
\\
\fM_{G,\bk,+}^{\zs(L,\cO_L, \cE_L)} &:= 
\bigsqcup_{\substack{(L',\cO_{L'},\cE_{L'}) \in \fM_{G,\bk}^0 \\
(L,\cO_L,\cE_L) \preceq_G (L',\cO_{L'},\cE_{L'})}}
\fM_{G,\bk}^{\zs(L',\cO_{L'},\cE_{L'})},
&
C_+^{(L,\cO_L,\cE_L)} &:= 
\bigsqcup_{\substack{(L',\cO_{L'},\cE_{L'}) \in \fM_{G,\bk}^0 \\
(L,\cO_L,\cE_L) \preceq_G (L',\cO_{L'},\cE_{L'})}}
C^{(L',\cO_{L'},\cE_{L'})}.
\end{aligned}
\end{equation}
Note that
\[
C_+^{(L,\cO_L,\cE_L)}
= \{ (M,\cO_M,\cE_M) \in \fM_{G,\bk} \mid (L,\cO_L,\cE_L) \preceq_G (M,\cO_M,\cE_M) \}.
\]
Lemma~\ref{lem:0-series-ineq} and~\eqref{eqn:order-Levi} imply that 
\begin{equation}\label{eqn:0-series-compare}
\fN_{G,\bk,+}^{\zs(L,\cO_L,\cE_L)} \subset B_+^{(L,\cO_L,\cE_L)}
\qquad\text{and}\qquad
\fM_{G,\bk,+}^{\zs(L,\cO_L, \cE_L)} \subset C_+^{(L,\cO_L,\cE_L)}.
\end{equation}

Suppose first that $G$ is semisimple of type $A$. By Proposition~\ref{prop:0-series-A}\eqref{it:0-cusp-char}, any two $0$-cuspidal data are incomparable, so the disjoint unions in~\eqref{eqn:0-series-union} each contain a single term, and~\eqref{eqn:0-series-compare} reduces to the desired~\eqref{eqn:0-series-compare-single}.

Suppose now that $G$ is quasi-simple, simply connected, and of exceptional type.  From the classification of $0$-cuspidal data in~\cite[\S 15]{lusztig}, we observe that if we fix a central character $\chi$, then there are at most two induction $0$-series with central character $\chi$.  
If $(L,\cO_L,\cE_L) \in \fM_{G,\bk}^0$ is the sole $0$-cuspidal datum with its central character, then it satisfies~\eqref{eqn:0-series-compare-single} as in type $A$.  On the other hand, if there are two $0$-cuspidal data with a given central character, then one of them consists only of a ($0$-)cuspidal pair, so~\eqref{eqn:0-series-compare} has the form
\begin{gather*}
\fN_{G,\bk}^{\zs(L,\cO_L,\cE_L)} \sqcup
\fN_{G,\bk}^{\zs(G,\cO_G,\cE_G)}
\subset 
B^{(L,\cO_L,\cE_L)} \sqcup
B^{(G,\cO_G,\cE_G)},
\\
\fM_{G,\bk}^{\zs(L,\cO_L,\cE_L)} \sqcup
\fM_{G,\bk}^{\zs(G,\cO_G,\cE_G)}
\subset 
C^{(L,\cO_L,\cE_L)} \sqcup
C^{(G,\cO_G,\cE_G)}.
\end{gather*}
By Lemma~\ref{lem:comparison-cusp}, an induction $0$-series consisting only of a $0$-cuspidal pair is also an induction series.  As a consequence, we have
$
\fN_{G,\bk}^{\zs(G,\cO_G,\cE_G)}
=
B^{(G,\cO_G,\cE_G)}
$
and
$
\fM_{G,\bk}^{\zs(G,\cO_G,\cE_G)}
=
C^{(G,\cO_G,\cE_G)}
$.
We conclude that~\eqref{eqn:0-series-compare-single} holds for all $(L,\cO_L,\cE_L) \in \fM_{G,\bk}^0$.  


Suppose next that $G = \Sp(2n)$.  In the proof of Lemma~\ref{lem:0-series-total}, we recalled the explicit list of Levi subgroups admitting a $0$-cuspidal pair: each such Levi subgroup is of the form
\[
L_k =\GL(1)^{n - k(k+1)/2} \times \Sp(k(k+1))  \qquad\text{for k such that $0 \le k(k+1) \le 2n$},
\]
and for each $L_k$, there is a unique $0$-cuspidal datum $(L_k,\cO_k,\cE_k)$.  We also saw that $(L_j,\cO_j,\cE_j) \preceq_G (L_k,\cO_k,\cE_k)$ if and only if $j \le k$.  Recall that $N_G(L_k)/L_k$ is isomorphic to the wreath product $(\Z/2\Z) \wr \fS_{n-k(k+1)/2}$, i.e., a Coxeter group of type $C_{n-k(k+1)/2}$.  The irreducible complex representations of $(\Z/2\Z) \wr \fS_{n-k(k+1)/2}$ are parametrized by the set $\Bipart(n-k(k+1)/2)$ of bipartitions of $n-k(k+1)/2$.  By the characteristic-$0$ generalized Springer correspondence, we have
\[
|\fN_{G,\bk}^{\zs(L_k,\cO_k,\cE_k)}| = |\Irr(\C[(\Z/2\Z) \wr \fS_{n-k(k+1)/2}])| 
= |\Bipart(n-k(k+1)/2)|.
\]

We now describe $\fM_{G,\bk}^{\zs(L_k,\cO_k,\cE_k)}$.  The possibilities for cuspidal data are described in the proofs of~\cite[Theorems~7.1 and~7.2]{genspring2}: each cuspidal datum involves a  Levi subgroup of the form
\[
M_\nu = \GL(\nu_1) \times \cdots \times \GL(\nu_s) \times \Sp(k(k+1))
\]
where $\nu = (\nu_1, \cdots, \nu_s)$ is a partition of $n - k(k+1)/2$ in which each $\nu_i$ is a power of $\ell$.  (In~\cite{genspring2}, the set of such partitions was denoted by $\Part(n-k(k+1)/2,\ell)$.)  Such a Levi subgroup supports a unique cuspidal datum: $(M_{\nu}, \cO_\nu \times \cO_k, \ubk \boxtimes \cE_k)$, where $\cO_\nu$ denotes the regular orbit for $\GL(\nu_1) \times \cdots \times \GL(\nu_s)$.  Since $(\cO_\nu,\ubk)$ is part of the principal $0$-series for $\GL(\nu_1) \times \cdots \times \GL(\nu_s)$, we see that in $\fN_{M_\nu,\bk}$, the pair $(\cO_\nu \times \cO_k,\ubk \boxtimes \cE_k)$ belongs to the $0$-series of $(L_k,\cO_k,\cE_k)$. So
\begin{equation} \label{eqn:0-series-description}
\fM_{G,\bk}^{\zs(L_k,\cO_k,\cE_k)}=\{(M_{\nu}, \cO_\nu \times \cO_k, \ubk \boxtimes \cE_k)\,|\,\nu\in\Part(n-k(k+1)/2,\ell)\}.
\end{equation}
For $(M_{\nu}, \cO_\nu \times \cO_k, \ubk \boxtimes \cE_k)\in\fM_{G,\bk}^{\zs(L_k,\cO_k,\cE_k)}$, we have $(L_k,\cO_k,\cE_k) \preceq_G (M_\nu,\cO_\nu \times \cO_k, \ubk \boxtimes \cE_k)$ by Lemma~\ref{lem:0-series-ineq}.  On the other hand, $(L_{k+1},\cO_{k+1},\cE_{k+1}) \not\preceq_G (M_\nu,\cO_\nu \times \cO_k, \ubk \boxtimes \cE_k)$ because $L_{k+1}$ is not conjugate to a Levi subgroup of $M_\nu$.  This shows the second inclusion in~\eqref{eqn:0-series-compare-single}.

Given $\nu \in \Part(n-k(k+1)/2,\ell)$, we have $N_G(M_\nu)/M_\nu \cong (\Z/2\Z) \wr \fS_{\sm(\nu)}$, where $\fS_{\sm(\nu)}$ is a certain product of symmetric groups defined in~\cite[\S5.4]{genspring2}.  The irreducible $\bk$-representations of this group are labelled by a certain combinatorial set of tuples of $\ell$-regular bipartitions, denoted $\uBipart_\ell(\sm(\nu))$.  The generalized Springer correspondence for $G$ gives us that
\[
|\fN_{G,\bk}^{(M_\nu, \cO_\nu \times \cO_k, \ubk \boxtimes \cE_k)}|
= |\Irr(\bk[N_G(M_\nu)/M_\nu])| 
= |\uBipart_\ell(\sm(\nu))|.
\]
It follows that
\begin{multline*}
|B^{(L_k,\cO_k,\cE_k)}| 
= \sum_{\nu \in \Part(n-k(k+1)/2,\ell)} |\fN_{G,\bk}^{(M_\nu, \cO_\nu \times \cO_k, \ubk \boxtimes \cE_k)}|
\\
= \sum_{\nu \in \Part(n-k(k+1)/2,\ell)} |\uBipart_\ell(\sm(\nu))| = |\Bipart(n-k(k+1)/2)|,
\end{multline*}
where the last equality comes from~\cite[Eq.~(7.4)]{genspring2}.  In particular, we have shown that $|\fN_{G,\bk}^{\zs(L_k,\cO_k,\cE_k)}| = |B^{(L_k,\cO_k,\cE_k)}|$.  An immediate consequence is that
$
|\fN_{G,\bk,+}^{\zs(L_k,\cO_k,\cE_k)}| = |B_+^{(L_k,\cO_k,\cE_k)}|
$,
so the first inclusion in~\eqref{eqn:0-series-compare} must be an equality, and hence (by downward induction on $k$) we have the desired first equality in~\eqref{eqn:0-series-restate}.


Finally, suppose that $G = \Spin(n)$.  The proof is similar to the case of $\Sp(2n)$, using the  descriptions of Levi subgroups admitting cuspidal pairs from~\cite[\S8]{genspring2} and explicit formulas for the number of elements in each series.  We omit further details.
\end{proof}

\subsection{Induction and $0$-cuspidal data}

Using Theorem~\ref{thm:0-series} one can prove the following counterpart of Lemma~\ref{lem:ind-series-po}, to be used later.

\begin{cor}
\label{cor:ind-0-series-po}
Let $L \subset M \subset G$ be Levi subgroups, and let $P \subset Q \subset G$ be corresponding parabolic subgroups.  If $(\cO,\cE) \in \fN^{\zs(L,\cO_L,\cE_L)}_{M,\bk}$, then every composition factor of $\Ind_{M\subset Q}^G (\IC(\cO,\cE))$ lies in some $0$-series $\fN^{\zs(L',\cO_{L'},\cE_{L'})}_{G,\bk}$ with $(L',\cO_{L'},\cE_{L'}) \succeq_G (L,\cO_L,\cE_L)$.
\end{cor}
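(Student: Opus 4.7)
The plan is to reduce Corollary~\ref{cor:ind-0-series-po} to Lemma~\ref{lem:ind-series-po} by applying Theorem~\ref{thm:0-series} twice: once inside $M$ to translate the $0$-series hypothesis into an ordinary induction series statement, and once inside $G$ to translate the resulting induction-series conclusion back into a $0$-series statement.

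First, I would apply Theorem~\ref{thm:0-series} for $M$ to the hypothesis $(\cO,\cE) \in \fN_{M,\bk}^{\zs(L,\cO_L,\cE_L)}$. This produces a cuspidal datum $(L_M,\cO_{L_M},\cE_{L_M})$ for $M$ with $(\cO,\cE) \in \fN_{M,\bk}^{(L_M,\cO_{L_M},\cE_{L_M})}$, for which $(L,\cO_L,\cE_L)$ is the unique maximal $0$-cuspidal datum of $M$ satisfying $(L,\cO_L,\cE_L) \preceq_M (L_M,\cO_{L_M},\cE_{L_M})$. Lemma~\ref{lem:ind-series-po}, applied to this refined datum, then tells us that every composition factor of $\Ind_{M\subset Q}^G(\IC(\cO,\cE))$ lies in some induction series $\fN_{G,\bk}^{(N,\cO_N,\cE_N)}$ with $(L_M,\cO_{L_M},\cE_{L_M}) \preceq_G (N,\cO_N,\cE_N)$. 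Applying Theorem~\ref{thm:0-series} for $G$ to $(N,\cO_N,\cE_N)$ gives a $0$-cuspidal datum $(L',\cO_{L'},\cE_{L'})$ of $G$, characterized via Corollary~\ref{cor:0-series-max} as the (unique) maximal $0$-cuspidal datum with $(L',\cO_{L'},\cE_{L'}) \preceq_G (N,\cO_N,\cE_N)$, such that the composition factor lies in $\fN_{G,\bk}^{\zs(L',\cO_{L'},\cE_{L'})}$.

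It remains to check that $(L,\cO_L,\cE_L) \preceq_G (L',\cO_{L'},\cE_{L'})$. The relation $(L,\cO_L,\cE_L) \preceq_M (L_M,\cO_{L_M},\cE_{L_M})$ upgrades by~\eqref{eqn:order-Levi} to the corresponding $G$-inequality, and combining with $(L_M,\cO_{L_M},\cE_{L_M}) \preceq_G (N,\cO_N,\cE_N)$ yields $(L,\cO_L,\cE_L) \preceq_G (N,\cO_N,\cE_N)$. The sole delicate point I foresee is that the conclusion requires interpreting the `unique maximal' $0$-cuspidal datum in Corollary~\ref{cor:0-series-max} as an actual greatest element, so that $(L,\cO_L,\cE_L) \preceq_G (L',\cO_{L'},\cE_{L'})$ follows automatically. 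This is where I would verify that Lemma~\ref{lem:0-series-total} (which makes $\preceq_G$ a total order on $\fM_{G,\bk,\chi}^0$ in the reduction cases) together with the reduction via Lemma~\ref{lem:cogood-reduction} really does upgrade `maximal' to `greatest' for each fixed central character, completing the argument.
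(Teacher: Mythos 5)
Your proposal is correct and follows essentially the same route as the paper: pass from the $0$-series hypothesis to an ordinary induction series in $M$, apply Lemma~\ref{lem:ind-series-po} together with~\eqref{eqn:order-Levi} and transitivity, and then invoke the characterization of $0$-series of cuspidal data from Theorem~\ref{thm:0-series}. The ``delicate point'' you flag resolves itself: since the set of $0$-cuspidal data below a given cuspidal datum is a finite poset, a unique maximal element is automatically the greatest element, so no further appeal to Lemma~\ref{lem:0-series-total} is needed.
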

\begin{proof}
Let $(N,\cO_N,\cE_N)$ be a cuspidal datum for $M$ such that $(\cO,\cE) \in \fN_{M,\bk}^{(N,\cO_N,\cE_N)}$. By Lemma~\ref{lem:0-series-ineq} we have $(L,\cO_L,\cE_L) \preceq_M (N,\cO_N, \cE_N)$. Using~\eqref{eqn:order-Levi} we deduce that $(L,\cO_L,\cE_L) \preceq_G (N,\cO_N, \cE_N)$. Consider now a composition factor $\cF$ of $\Ind_{M\subset Q}^G (\IC(\cO,\cE))$, and let $(K,\cO_K,\cE_K)$ be a cuspidal datum such that the pair associated to $\cF$ belongs to $\fN_{G,\bk}^{(K,\cO_K,\cE_K)}$. By Lemma~\ref{lem:ind-series-po} we have $(N,\cO_N, \cE_N) \preceq_G (K,\cO_K,\cE_K)$, hence \textit{a fortiori} $(L,\cO_L,\cE_L) \preceq_G (K,\cO_K,\cE_K)$. Now if the pair associated to $\cF$ belongs to $\fN_{G,\bk}^{\zs(L',\cO_{L'}, \cE_{L'})}$, then by Theorem~\ref{thm:0-series}, $(L',\cO_{L'}, \cE_{L'})$ is characterized by the property that it is the unique (up to $G$-conjugation) maximal $0$-cuspidal datum which is smaller than $(K,\cO_K,\cE_K)$ for $\preceq_G$. Hence we must have $(L,\cO_L,\cE_L) \preceq_G (L',\cO_{L'}, \cE_{L'})$.
\end{proof}

\section{Cleanness conjecture}
\label{sec:clean}

\subsection{Cleanness}

Following~\cite[Definition~7.7]{charsh2}, we say that a 
simple object $\IC(\cO,\cE)$ in $\Perv_G(\cN_G,\bk)$ is \emph{clean} if its restriction to $\overline{\cO} \smallsetminus \cO$ vanishes.  In characteristic $0$, all simple perverse sheaves associated to cuspidal pairs are clean~\cite[\S 23]{charsh5}. This fact plays a key role in the theory of character sheaves. It is known that this property does not hold in general when $\ell>0$ (for example, see~\cite[Remark 2.5]{genspring1}). However, C.~Mautner conjectured that it holds for \emph{$0$-cuspidal pairs} in rather good characteristic. 

\begin{conj}[Mautner's cleanness conjecture]
\label{conj:clean}
If $\ell$ is rather good for $G$, then for every pair $(\cO,\cE) \in \fN^\zcusp_{G,\bk}$, the simple perverse sheaf $\IC(\cO,\cE)$ is clean.
\end{conj}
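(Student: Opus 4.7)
The plan is to attack the conjecture by first reducing the class of groups and then transferring cleanness from characteristic $0$ via modular reduction. By Lemma~\ref{lem:cogood-reduction}, I can reduce to the case where $G$ is either semisimple of type $A$ or quasi-simple and simply connected (and not of type $A$); in the latter case Proposition~\ref{prop:simply-conn} guarantees $\fN^\zcusp_{G,\bk} = \fN^\cusp_{G,\bk}$, so there is no tension between the two notions of cuspidality. Given $(\cO,\cE) \in \fN^\zcusp_{G,\bk}$, I choose an $\O$-form $\cE^\O$ of the corresponding $\K$-local system $\cE^\K = \theta_G^{-1}(\cO,\cE)$, so that $\IC(\cO,\cE)$ is a composition factor of the modular reduction $\bk \lotimes_\O \IC(\cO,\cE^\O)$. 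By Lusztig's characteristic-$0$ cleanness theorem~\cite{charsh5}, $\IC(\cO,\cE^\K)$ has vanishing stalks on $\overline{\cO} \smallsetminus \cO$. The task is to propagate this vanishing to the modular setting, where a priori $\IC(\cO,\cE^\O)$ could carry $\ell$-torsion along the boundary.

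The argument naturally splits along the lines of Theorem~\ref{thm:clean-intro}. When $\ell \nmid |W|$, the decomposition matrices governing modular reduction of simple perverse sheaves in induction series are trivial, so one expects $\bk \lotimes_\O \IC(\cO,\cE^\O) \cong \IC(\cO,\cE)$, and vanishing of the $\K$-stalks combined with a torsion-freeness argument (using that $\ell$ avoids the orders of all relevant component groups $A_G(x)$ by Lemma~\ref{lem:rgood-defn}\eqref{it:rgood-ag}) should force cleanness of the $\bk$-version. In type $A$, the only possible cuspidal pair lives on the regular orbit of each simply-connected quasi-simple factor $\SL(n)$, which is open in $\cN_G$, making cleanness vacuous.

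The real work lies in the small-rank classical cases $B_4$, $C_3$, $D_5$ and in the exceptional types. Here I anticipate a case-by-case analysis: using the explicit classification of $0$-cuspidal pairs from~\cite[\mgthmcusp]{genspring3} together with the finite tables of nilpotent orbits, one inspects each boundary orbit $\cO' \subset \overline{\cO} \smallsetminus \cO$ and checks vanishing of the stalk of $\IC(\cO,\cE)$ there. Tools that should help include the Fourier--Sato transform (converting a statement about stalks on $\overline{\cO}$ into one about support on a Lusztig stratum, via Corollary~\ref{cor:involution=id}), adjunction arguments using restriction to Levi subgroups where cleanness is available by induction on rank, and for the most difficult residual cases (notably $E_8$ with $\ell = 7$ and $B_4$ with $\ell = 3$, as the paper indicates) explicit computation with GAP Chevie.

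The main obstacle, and the reason the conjecture is not settled uniformly, will be classical groups of large rank. There the combinatorics of cuspidal data is controlled by the modular representation theory of the wreath products $(\Z/2\Z) \wr \fS_m$, and controlling $\ell$-torsion in $\O$-forms of $\IC$ sheaves on boundary orbits becomes intractable from first principles: one lacks a single, uniform mechanism for ruling out spurious boundary composition factors in the modular reduction. This is why Theorem~\ref{thm:clean-intro} only covers $B_4$, $C_3$, $D_5$ among the positive-rank classical cases, rather than all of types $B$, $C$, $D$.
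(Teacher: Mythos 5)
You are evaluating a statement that the paper itself leaves as a conjecture: it is proved only in the special cases listed in Theorem~\ref{thm:clean}, so your proposal can at best be judged as a plan for those cases, and as such it has two concrete errors. First, in type $A$ the $0$-cuspidal pair does sit on the regular orbit of each $\SL(n)$ factor, but openness of that orbit does not make cleanness vacuous: the closure of the regular orbit is all of $\cN_G$, so $\overline{\cO}\smallsetminus\cO$ is the entire rest of the nilpotent cone. The paper's actual argument is by central characters: in type $A$ the $0$-cuspidal pair is the \emph{unique} element of $\fN_{G,\bk}$ with its central character (Proposition~\ref{prop:0-series-A}), and every local system occurring in a cohomology sheaf of $\IC(\cO,\cE)_{|\cO'}$ would have to share that character, forcing all boundary stalks to vanish. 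Second, for $\ell\nmid|W|$ your proposed ``torsion-freeness argument using that $\ell$ avoids the orders of the $A_G(x)$'' does not exist: $\ell$-torsion in the stalks of $\O$-forms of $\IC$ sheaves is precisely what decomposition numbers for perverse sheaves measure, and it is not controlled by component groups --- this is why the modular Springer correspondence is nontrivial even in rather good characteristic. Knowing $\bk\lotimes_\O\IC(\cO,\cE^\O)\cong\IC(\cO,\cE)$ and that the $\K$-stalks vanish on the boundary only tells you the $\O$-stalks there are torsion, and torsion modules have nonzero derived reduction.

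What the paper actually does in the $\ell\nmid|W|$ case (and in $C_3$, $D_5$, $G_2$, $F_4$, $E_6$, $E_7$, and most of $B_4$, $E_8$) is Proposition~\ref{prop:clean-lusztig}: reformulate cleanness as the $\Hom$-vanishing~\eqref{eqn:clean-hom-IC}, observe via Proposition~\ref{prop:unicity-cuspidal} that every other pair with the same central character lies in an induction series of some proper $(L,\cO_L,\cE_L)$, note that $\Ind_{L\subset P}^G(\IC(\cO_L,\cE_L))$ is \emph{semisimple} because $\ell\nmid|N_G(L)/L|$, and conclude by adjunction since ${}'\Res_{L\subset P}^G(\IC(\cO,\cE))=0$ by cuspidality. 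This mechanism is entirely absent from your plan. For the two residual cases ($E_8$ with $\ell=7$ and $B_4$ with $\ell=3$), your ``inspect each boundary orbit'' proposal also misses the key idea: the paper shows that $\Db_G(X,\bk)$, for $X$ the boundary of $\cO$, is generated by direct summands of induced perverse sheaves. Almost all boundary pairs lie in the image of the modular Springer correspondence and belong to defect-$0$ blocks of $\bk[W]$, hence give summands of the Springer sheaf (Lemma~\ref{lem:blocks-defect-0}); the remaining handful are produced by applying block idempotents $\overline{e}_i$ to modular reductions of explicitly chosen induced $\IC$ sheaves (Tables~\ref{tab:e8-clean} and~\ref{tab:b4-clean}). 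Without the $\Hom$-vanishing reformulation, the semisimplicity criterion for induced sheaves, and the block-idempotent technique, your plan does not close even in the cases the paper settles.
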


In other words, this conjecture asserts that for $(\cO,\cE) \in \fN^\zcusp_{G,\bk}$ we have $\IC(\cO,\cE) \cong  j_{\cO!}\cE[\dim \cO]$, where $j_\cO: \cO \hookrightarrow \cN_G$ is the inclusion map. Since the class of $0$-cuspidal perverse sheaves is stable under Verdier duality~\cite[Remark~2.3]{genspring1}, the conjecture is equivalent to the apparently stronger assertion that for any $(\cO,\cE) \in \fN^\zcusp_{G,\bk}$ we have
\begin{equation}
\label{eqn:clean-restate}
\IC(\cO,\cE) \cong j_{\cO*}\cE[\dim \cO] \cong j_{\cO!}\cE[\dim \cO].
\end{equation}

It will be useful to restate the cleanness condition in terms of $\Hom$-vanishing. Let $i: \overline{\cO} \smallsetminus \cO \hookrightarrow \cN_G$ denote the inclusion map. By adjunction, $\IC(\cO,\cE)$ is clean if and only if
\begin{equation}
\label{eqn:clean-hom-general}
\Hom_{\Db_G(\cN_G,\bk)}(\IC(\cO,\cE),i_*\cF)=0\quad\text{for all }\cF\in \Db_G(\overline{\cO} \smallsetminus \cO,\bk).
\end{equation}
It clearly suffices to verify the vanishing in~\eqref{eqn:clean-hom-general} for a set of objects $\cF$ that generate $\Db_G(\overline{\cO} \smallsetminus \cO,\bk)$ as a triangulated category. For instance, $\IC(\cO,\cE)$ is clean if and only if
\begin{equation}
\label{eqn:clean-hom-IC}
\Hom_{\Db_G(\cN_G,\bk)}^k(\IC(\cO,\cE),\IC(\cO',\cE'))=0\quad\text{for all $k \in \Z$ and all }(\cO',\cE')\in\fN_{G,\bk}\text{ with }\cO'\subset\overline{\cO}, \cO'\neq\cO.
\end{equation}
Here, as usual, $\Hom_{\Db_G(\cN_G,\bk)}^k(A,B)$ denotes $\Hom_{\Db_G(\cN_G,\bk)}(A,B[k])$.

\subsection{Verification of the cleanness conjecture in some cases}

In this section and the following one, we will show that the cleanness conjecture holds in the following cases.

\begin{thm}\label{thm:clean}
Conjecture~{\rm \ref{conj:clean}} holds in the following cases:
\begin{enumerate}
\item 
\label{it:conj-clean-easy}
$\ell \nmid |W|$;
\item 
\label{it:conj-clean-A}
$G/Z(G)^\circ$ is semisimple of type $A$;
\item 
\label{it:conj-clean-C3}
$G$ is quasi-simple and simply connected of type $B_4$, $C_3$ or $D_5$;
\item
\label{it:conj-clean-exc}
$G$ is quasi-simple and simply connected of exceptional type.
\end{enumerate}
\end{thm}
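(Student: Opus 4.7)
The plan is to treat the four cases in sequence, using Lusztig's characteristic-zero cleanness~\cite{charsh5} as the starting point and checking its transfer to characteristic $\ell$ via modular reduction. In each case, the task is to verify that the canonical map $j_{\cO!}\cE[\dim\cO]\to\IC(\cO,\cE)$ is an isomorphism, equivalently, that no boundary $\O$-torsion obstructs the descent of characteristic-zero cleanness to the $\O$-form and hence to $\bk$.

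For case~\eqref{it:conj-clean-easy}, when $\ell \nmid |W|$, the $\bk$-decomposition behavior for $G$-equivariant perverse sheaves on $\cN_G$ is trivial in a strong enough sense (closely related to the independence statement of~\cite[\mgindepk]{genspring3}) that the $\O$-level IC sheaf $\IC(\cO,\cE^\O)$ reduces modulo $\ell$ to $\IC(\cO,\cE)$ without introducing boundary torsion. Combining this with Lusztig's identification $\IC(\cO,\cE^\K) \cong j_{\cO!}\cE^\K[\dim\cO]$ over $\K$ and the fact that $j_{\cO!}$ commutes with modular reduction yields $\IC(\cO,\cE) \cong j_{\cO!}\cE[\dim\cO]$.

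For case~\eqref{it:conj-clean-A}, type $A$, Lemma~\ref{lem:cogood-reduction} reduces the problem to a semisimple group of type $A$. As in the proof of Corollary~\ref{cor:comparison-0series-superseries}, all composition factors of $\bk \lotimes_\O \IC(\cO,\cE^\O)$ are cuspidal by~\cite[Proposition~2.22]{genspring1}, and hence coincide with $\IC(\cO,\cE)$ by Proposition~\ref{prop:unicity-cuspidal}; the open-stratum restriction forces multiplicity one, giving $\bk \lotimes_\O \IC(\cO,\cE^\O) \cong \IC(\cO,\cE)$. To upgrade this to cleanness, I would combine the central-character decomposition of $\Perv_G(\cN_G,\bk)$ from Proposition~\ref{prop:0-series-A}\eqref{it:0-ser-sum} with the explicit description of cuspidal pairs in type $A$: each such pair is supported on the regular orbit with a specific character, and the boundary orbits carry no $G$-equivariant local systems of the same central character, ruling out torsion kernels at the $\O$-level.

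For cases~\eqref{it:conj-clean-C3} and~\eqref{it:conj-clean-exc}, the strategy is direct verification of the $\Hom$-vanishing criterion~\eqref{eqn:clean-hom-IC}. By Lemma~\ref{lem:rgood-basic}\eqref{it:rgood-easy}, for each quasi-simple group only finitely many rather good primes divide $|W|$, the remaining primes falling under case~\eqref{it:conj-clean-easy}. For each relevant $(G,\ell)$ with $G$ quasi-simple simply connected of type $B_4$, $C_3$, $D_5$, or exceptional, one uses the classifications of $0$-cuspidal pairs (\cite[\S 15]{lusztig} for exceptional types, \cite[\S\S 7--8]{genspring2} for classical types) to enumerate the pairs $(\cO,\cE)$, and checks $\Hom^k(\IC(\cO,\cE),\IC(\cO',\cE'))=0$ for every $(\cO',\cE')$ with $\cO'\subsetneq\overline{\cO}$. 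The ranks involved are small enough that this can be carried out with help from the Chevie package cited in the acknowledgements. Two recalcitrant cases, namely $(E_8,\ell=7)$ and $(B_4,\ell=3)$, are deferred to Section~\ref{sec:e8-clean}. The main obstacle I anticipate lies in case~\eqref{it:conj-clean-exc}: for $E_6,E_7,E_8,F_4,G_2$ there are several rather good primes dividing $|W|$ and a nontrivial number of $0$-cuspidal pairs per group, and conceptually the recurring subtlety is controlling boundary $\O$-torsion in the IC extension, which Lusztig's characteristic-zero cleanness alone does not provide.
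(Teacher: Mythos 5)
Your argument for case~\eqref{it:conj-clean-A} is essentially the paper's: in type $A$ a $0$-cuspidal pair is the \emph{unique} pair with its central character, so the boundary cohomology sheaves of $\IC(\cO,\cE)$, whose local systems must share that character, vanish outright (no detour through $\O$-torsion is needed). But the engine driving case~\eqref{it:conj-clean-easy} and most of cases~\eqref{it:conj-clean-C3}--\eqref{it:conj-clean-exc} is missing from your proposal. The paper proves a key intermediate statement (Proposition~\ref{prop:clean-lusztig}): if every cuspidal datum $(L,\cO_L,\cE_L)$ with the same central character as the $0$-cuspidal pair satisfies $\ell\nmid|N_G(L)/L|$, then cleanness holds. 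The mechanism is Lusztig's: by unicity of cuspidal pairs with a given central character, every other simple $\IC(\cO',\cE')$ with that character lies in an induction series with $L\neq G$; the hypothesis forces $\Ind_{L\subset P}^G\IC(\cO_L,\cE_L)$ to be \emph{semisimple}, so $\IC(\cO',\cE')$ is a direct summand of it, and then $\Hom^\bullet(\IC(\cO,\cE),\Ind_{L\subset P}^G(-))=0$ by adjunction because $(\cO,\cE)$ is cuspidal. Case~\eqref{it:conj-clean-easy} follows since $N_G(L)/L$ is a subquotient of $W$, and for $C_3$, $D_5$, $E_6$, $E_7$ (and $B_4$ with $\ell>3$, $E_8$ with $\ell>7$, $G_2$, $F_4$) one only has to list the few cuspidal data with the relevant central character and check the orders of the groups $N_G(L)/L$.

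Your substitutes for this mechanism do not work. For case~\eqref{it:conj-clean-easy} you assert that $\ell\nmid|W|$ rules out ``boundary $\O$-torsion'' in $\IC(\cO,\cE^\O)$; this is exactly the content of cleanness and is nowhere justified --- the isomorphism $\bk\lotimes_\O\IC(\cO,\cE^\O)\cong\IC(\cO,\cE)$ (which holds for \emph{all} rather good $\ell$, cf.~\eqref{eqn:mod-reduction-0-cusp}) says nothing about the stalks of $\IC(\cO,\cE^\O)$ on the boundary, and characteristic-$0$ cleanness only tells you those stalks are torsion, not that they vanish. (Note also that the semisimplicity of $\Perv_G(\cN_G,\bk)$ when $\ell\nmid|W|$ cannot be invoked here: its proof in the paper \emph{uses} cleanness.) For cases~\eqref{it:conj-clean-C3}--\eqref{it:conj-clean-exc}, ``direct verification of~\eqref{eqn:clean-hom-IC} with Chevie'' is not an available procedure: it would require knowing the stalks of modular intersection cohomology complexes, which is precisely the hard open computation the conceptual argument is designed to avoid. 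You do correctly isolate $(E_8,\ell=7)$ and $(B_4,\ell=3)$ as the cases needing a separate (block-theoretic) treatment.
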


Of course, by the principle of Lemma~\ref{lem:cogood-reduction}, the conjecture also holds for any reductive group whose root system only contains factors of the types indicated above. The conjecture also holds vacuously if there are no $0$-cuspidal pairs.

\begin{rmk} \label{rmk:bcd-cuspidal}
Recall from~\cite[Introduction]{lusztig} that a quasi-simple and simply connected group of type $B$/$C$/$D$ has a $0$-cuspidal pair only in the following circumstances:
\begin{itemize}
\item $B_n$: when $2n+1$ is either a triangular number or a square;
\item $C_n$: when $n$ is a triangular number;
\item $D_n$: when $2n$ is either a triangular number or a square.
\end{itemize}
So the cases of these types listed in Theorem~\ref{thm:clean} represent the smallest ranks for which the conjecture is not vacuously true, and the next-smallest such ranks would be $B_7$, $C_6$, $D_8$.
\end{rmk} 

\begin{proof}[Proof of Theorem~{\rm \ref{thm:clean}} in case~\eqref{it:conj-clean-A}]
For any $(\cO,\cE) \in \fN_{G,\bk}$, any local system occurring in any cohomology sheaf $\cH^i(\IC(\cO,\cE)_{|\cO'})$ must have the same central character as $\cE$.  In type $A$, if $(\cO,\cE)$ is a $0$-cuspidal pair, then in fact $(\cO,\cE)$ is the unique element of $\fN_{G,\bk}$ with its central character (a special case of Proposition~\ref{prop:0-series-A}(\ref{it:0-ser-char})), so the sheaves $\cH^i(\IC(\cO,\cE)_{|\cO'})$ must vanish for all $\cO' \ne \cO$.  In other words, $\IC(\cO,\cE)$ is clean.
\end{proof}

Another important case is provided by the following proposition.

\begin{prop}
\label{prop:clean-lusztig}
Let $(\cO,\cE)$ be a $0$-cuspidal pair with central character $\chi$. Assume that for any cuspidal datum $(L,\cO_L,\cE_L)$ where $\cE_L$ has central character $\chi$, $\ell$ does not divide $|N_G(L)/L|$. Then $\IC(\cO,\cE)$ is clean.
\end{prop}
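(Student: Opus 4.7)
The plan is to verify the Hom-vanishing criterion~\eqref{eqn:clean-hom-IC}, namely that
\[
\Hom^k_{\Db_G(\cN_G,\bk)}(\IC(\cO,\cE),\IC(\cO',\cE'))=0
\]
for every $k\in\Z$ and every $(\cO',\cE')\in\fN_{G,\bk}$ with $\cO'\subsetneq\overline{\cO}$.

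First, since the stalks of $\IC(\cO,\cE)$ only involve local systems of central character $\chi$, the above Hom-space vanishes automatically unless $\cE'$ has central character $\chi$; so we may restrict attention to such $(\cO',\cE')$. Next, $(\cO,\cE)$ is $\ell$-cuspidal by Lemma~\ref{lem:comparison-cusp} and hence occupies its own induction series. So every $(\cO',\cE')\ne(\cO,\cE)$ of central character $\chi$ lies in some induction series $\fN_{G,\bk}^{(L,\cO_L,\cE_L)}$ with $L\subsetneq G$, and $\cE_L$ automatically has central character $\chi$. By hypothesis, $\ell\nmid|N_G(L)/L|$.

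The essential consequence of this coprimality is that the endomorphism algebra of the induced perverse sheaf $\Ind^G_{L\subset P}\IC(\cO_L,\cE_L)$, which is a (possibly twisted) form of $\bk[N_G(L)/L]$, is semisimple by Maschke's theorem. Hence $\Ind^G_{L\subset P}\IC(\cO_L,\cE_L)$ is itself semisimple in $\Perv_G(\cN_G,\bk)$, and $\IC(\cO',\cE')$ occurs as a direct summand. Thus the goal reduces to showing
\[
\Hom^k_{\Db_G(\cN_G,\bk)}\bigl(\IC(\cO,\cE),\Ind^G_{L\subset P}\IC(\cO_L,\cE_L)\bigr)=0
\]
for every $k$, every proper Levi $L\subsetneq G$, and every cuspidal pair $(\cO_L,\cE_L)$ on $L$ of central character $\chi$.

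By the standard adjunction between $\Ind^G_{L\subset P}$ and the corresponding restriction functor on the nilpotent cone (cf.~\mgmackey), this Hom reduces to a Hom-vanishing from an appropriate restriction of $\IC(\cO,\cE)$ to $\IC(\cO_L,\cE_L)$ on $\cN_L$. This restriction-vanishing is the main obstacle and is really the substance of the cleanness conclusion. I plan to attack it via the Fourier--Sato transform $\bT_\fg$: by the discussion preceding Corollary~\ref{cor:involution=id}, $\bT_\fg$ sends the $0$-cuspidal $\IC(\cO,\cE)$ to the IC-extension of a local system on the Lusztig stratum $Y_{(G,\cO)}=\cO+\fz_G$; combined with the interchange of induction and restriction under $\bT_\fg$, the disjointness of Lusztig strata should yield the desired vanishing.
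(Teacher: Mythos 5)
Your argument tracks the paper's proof almost exactly: reduce via central characters, use the unicity of cuspidal pairs with a given central character (Proposition~\ref{prop:unicity-cuspidal}) to place every other relevant pair in an induction series from a proper Levi, use $\ell \nmid |N_G(L)/L|$ to get semisimplicity of the induced perverse sheaf so that $\IC(\cO',\cE')$ is a direct summand, and then reduce by adjunction to a $\Hom$-vanishing against $\Ind_{L\subset P}^G\IC(\cO_L,\cE_L)$. Up to that point everything is fine (one small citation gap: to rule out $(\cO',\cE')$ being itself cuspidal you need Proposition~\ref{prop:unicity-cuspidal}, not merely the fact that $(\cO,\cE)$ ``occupies its own series'').

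The genuine gap is in your last step, which you leave as a plan rather than a proof --- and you have misidentified where the difficulty lies. After adjunction, what you need is ${}'\Res_{L\subset P}^G(\IC(\cO,\cE))=0$ for every proper Levi $L$. This is \emph{not} ``the substance of the cleanness conclusion'': it is immediate from the cuspidality of $(\cO,\cE)$, which you already established via Lemma~\ref{lem:comparison-cusp}. (In the framework of~\cite{genspring1}, cuspidality of a simple perverse sheaf is equivalent to --- essentially by definition --- the vanishing of ${}'\Res_{L\subset P}^G$ for all proper parabolic/Levi pairs; the ``does not occur as a quotient'' formulation follows by the very adjunction you are using.) Your proposed detour through the Fourier--Sato transform and ``disjointness of Lusztig strata'' is both unnecessary and, as stated, too vague to count as a proof: the closure of the stratum $\cO+\fz_G$ contains smaller strata, so disjointness alone does not give the support estimate you would need. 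The real content of the proposition is the semisimplicity step, which converts $\Hom$-vanishing against the whole induced sheaf into $\Hom$-vanishing against each of its simple constituents; once you see that the restriction-vanishing is just cuspidality, your proof closes in one line.
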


\begin{proof}
The proof is essentially the same as one of Lusztig's arguments in the setting of character sheaves with $\ell=0$ (see the proof of~\cite[Proposition 7.9]{charsh2}), but we will express it in a form which is closer to the proof of~\cite[Proposition 4.2]{rr}. 

We must prove the $\Hom$-vanishing statement~\eqref{eqn:clean-hom-IC} for every pair $(\cO',\cE')\in\fN_{G,\bk}$ with $\cO'\subset\overline{\cO}$ and $\cO'\neq\cO$. In fact we will prove that for \emph{all} pairs $(\cO',\cE') \in \fN_{G,\bk}$ distinct from $(\cO,\cE)$ we have
\begin{equation*}
\Hom_{\Db_G(\cN_G,\bk)}^k(\IC(\cO,\cE),\IC(\cO',\cE'))=0\quad\text{for all $k \in \Z$.}
\end{equation*}
By Lemma~\ref{lem:central-char-1}, we need only consider pairs with the same central character $\chi$ as $(\cO,\cE)$.

By Proposition~\ref{prop:unicity-cuspidal}, a pair $(\cO',\cE')$ with central character $\chi$ and distinct from $(\cO,\cE)$ cannot be cuspidal, so it must belong to the induction series associated to a cuspidal datum $(L,\cO_L,\cE_L)$ where $L\neq G$ and $\cE_L$ has central character $\chi$. 
Since $\ell$ does not divide $|N_G(L)/L|$, the induced perverse sheaf $\Ind_{L \subset P}^G(\IC(\cO_L,\cE_L))$ is semisimple by~\cite[\mgsemisimple]{genspring3}. (Here, as usual, $P$ denotes a parabolic subgroup with Levi factor $L$.) Hence $\IC(\cO',\cE')$ is a direct summand of $\Ind_{L \subset P}^G(\IC(\cO_L,\cE_L))$, and it suffices to prove that
\begin{equation*}
\Hom_{\Db_G(\cN_G,\bk)}^k(\IC(\cO,\cE),\Ind_{L \subset P}^G(\IC(\cO_L,\cE_L)))=0\quad\text{for all $k$.}
\end{equation*}
But this is immediate from adjunction, since ${}'\Res_{L \subset P}^G(\IC(\cO,\cE))=0$ by definition of cuspidality.
\end{proof}

\begin{proof}[Proof of Theorem~{\rm \ref{thm:clean}} in case~\eqref{it:conj-clean-easy} and most of cases~\eqref{it:conj-clean-C3} and~\eqref{it:conj-clean-exc}]
We can now prove the cleanness conjecture in the remaining cases in the theorem, excluding type $E_8$ in characteristic $7$ and type $B_4$ in characteristic $3$.

In case~\eqref{it:conj-clean-easy}, the fact that $\ell \nmid |W|$ implies that for any Levi subgroup $L \subset G$, $\ell$ does not divide the cardinality of the group $N_G(L)/L$, since the latter is a subquotient of $W$ (see, for instance,~\cite[\mgeqnweylisom]{genspring3}). The conjecture then follows from Proposition~\ref{prop:clean-lusztig}.


If $G$ is of type $G_2$ or $F_4$ and $\ell$ is rather good, then $\ell$ does not divide $|W|$, and so the conjecture holds.

If $G$ is simply connected of type $C_3$, resp.~$D_5$, $E_6$, $E_7$, and $\ell$ is rather good, there is one $0$-cuspidal pair for each faithful central character $\chi$, and there are no other cuspidal pairs. Moreover there is (up to conjugacy) only one other cuspidal datum $(L,\cO_L,\cE_L)$ with central character $\chi$, namely the minimal one described in~\cite[\mgmincuspdatum]{genspring3}, in which the Levi subgroup $L$ is of type $A_1$, resp.~$A_1+A_3$, $2A_2$, $(3A_1)''$. The associated group $N_G(L)/L$ is isomorphic to $W(C_2)$, resp.~$W(A_1)$, $W(G_2)$, $W(F_4)$. The rather good prime $\ell$ does not divide the cardinality of this group, so once again the result follows from Proposition~\ref{prop:clean-lusztig}.

Finally, if $G$ is of type $B_4$ and $\ell>3$, or if $G$ is of type $E_8$ and $\ell>7$, then $\ell$ does not divide $|W|$, and we can conclude as before.
\end{proof}

\section{Proof of the cleanness conjecture in types $E_8$ and $B_4$}
\label{sec:e8-clean}

The only two remaining cases in Theorem~\ref{thm:clean} are that in which $G$ is quasi-simple of type $E_8$ and $\ell = 7$, and that in which $G$ is quasi-simple of type $B_4$ and $\ell = 3$.  This section is devoted to the proof of these cases. The key property they share is that, if $(\cO,\cE)$ denotes the unique $0$-cuspidal pair, all (or, in the latter case, almost all) the pairs $(\cO',\cE')\in\fN_{G,\bk}$ with $\cO'\subset\overline{\cO}$ and $\cO'\neq\cO$ belong to the modular Springer correspondence.

\subsection{Image of the Springer correspondence}

We begin with some results that apply to arbitrary connected reductive groups and arbitrary rather good primes.  Let $\fg_{\mathrm{rs}} \subset \fg$ be the open subset consisting of regular semisimple elements.  This set coincides with the Lusztig stratum $Y_{(T,\{0\})}$, where $T$ is a maximal torus in $G$.  Recall that over $\fg_{\mathrm{rs}}$, the Grothendieck--Springer simultaneous resolution of $\fg$ restricts to a Galois covering map with Galois group $W$.  Thus, if $\E$ is any member of the $\ell$-modular triple $(\K, \O, \bk)$, then any $\E[W]$-module $V$ determines an $\E$-local system $\cL_V$ on $\fg_{\mathrm{rs}}$ (see, for instance,~\cite[Eq.~(2.16)]{genspring1}).  We define a functor
\[
\Psi_\E: \Rep(\E[W]) \to \Perv_G(\cN_G,\E)
\qquad\text{by}\qquad
\Psi_\E(V) = (\bT_\fg)^{-1}(\IC(\fg_{\mathrm{rs}}, \cL_V)),
\]
where $\bT_\fg$ is the Fourier--Sato transform on $\fg$.  
By construction,
if $\E$ is a field and $V$ is irreducible, then $\Psi_\E(V)$ is the simple perverse sheaf that corresponds to $V$ under the bijection~\eqref{eqn:genspring-series} in the special case of the `principal' cuspidal datum $(L,\cO_L,\cE_L)=(T,\{0\},\underline{\E})$, in other words under the Springer correspondence. 
We denote by $\Perv_{G}^{\mathrm{Spr}}(\cN_G, \E)$ the essential image of this functor. Since $\Psi_\E$ is fully faithful (because all of the functors $V \mapsto \cL_V$, $\IC(\fg_{\mathrm{rs}}, -)$ and $(\bT_\fg)^{-1}$ are fully faithful), it induces an equivalence $\Rep(\E[W])\cong\Perv_{G}^{\mathrm{Spr}}(\cN_G, \E)$.

\begin{lem}
\label{lem:image-springer}
For $\E\in\{\K, \O, \bk\}$, define $\Perv_{G}^{\mathrm{Spr}}(\cN_G, \E)$ as above.
\begin{enumerate}
\item
\label{it:springer-K-O-k}
Let $\cF \in \Perv_G(\cN_G, \O)$ be torsion-free. Assume that $\K \otimes_\O \cF$ belongs to $\Perv_{G}^{\mathrm{Spr}}(\cN_G, \K)$ and that $\bk \lotimes_\O \cF$ belongs to $\Perv_{G}^{\mathrm{Spr}}(\cN_G, \bk)$. Then $\cF$ belongs to $\Perv_{G}^{\mathrm{Spr}}(\cN_G, \O)$.
\item
\label{it:image-springer-field}
If $\E=\K$ or $\bk$,
an object $\cF$ of $\Perv_{G}(\cN_G, \E)$ belongs to $\Perv_{G}^{\mathrm{Spr}}(\cN_G, \E)$ if and only if it has no subobject or quotient of the form $\IC(\cO,\cE)$ with $(\cO,\cE) \notin \fN_{G,\E}^{(T,\{0\},\underline{\E})}$.
\end{enumerate}
\end{lem}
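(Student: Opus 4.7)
The plan for both parts is to translate the statement across the Fourier--Sato equivalence $\bT_\fg$ and reduce to the standard characterization of intermediate extensions from the open stratum $\fg_{\mathrm{rs}} \subset \fg$. For Part~(2), I will use that $\Psi_\E$ factors as $(\bT_\fg)^{-1} \circ \IC(\fg_{\mathrm{rs}}, -) \circ (V \mapsto \cL_V)$ with the outer two functors equivalences of categories. Thus $\cF$ belongs to $\Perv_G^{\mathrm{Spr}}(\cN_G, \E)$ if and only if $\bT_\fg(\cF)$ is of the form $\IC(\fg_{\mathrm{rs}}, \cL)$ for some $\E$-local system $\cL$, which over a field is equivalent to $\bT_\fg(\cF)$ having no subobject and no quotient supported on $\fg \smallsetminus \fg_{\mathrm{rs}}$. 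To express this condition back on the $\cN_G$ side, I will invoke the description of Fourier transforms of simple perverse sheaves via Lusztig strata recalled in \cite[Lemma~2.1]{genspring2}: simple objects of $\bT_\fg(\Perv_G(\cN_G,\E))$ supported on $\fg \smallsetminus \fg_{\mathrm{rs}}$ correspond under $\bT_\fg^{-1}$ precisely to the $\IC(\cO,\cE)$ with $(\cO,\cE) \notin \fN_{G,\E}^{(T,\{0\},\underline{\E})}$, which gives Part~(2).

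For Part~(1), I will set $\cG := \bT_\fg(\cF)$, which remains torsion-free because Fourier--Sato commutes with modular reduction. Writing $j: \fg_{\mathrm{rs}} \hookrightarrow \fg$ and $\cL_\O := j^* \cG[-\dim \fg]$, the torsion-freeness of $\cG$ ensures that $\cL_\O$ is a local system of finitely generated free $\O$-modules, so $\IC(\fg_{\mathrm{rs}}, \cL_\O)$ is well-defined in Juteau's $\O$-linear formalism. The goal then reduces to showing $\cG \cong \IC(\fg_{\mathrm{rs}}, \cL_\O)$, and by the standard characterization of intermediate extensions (still valid over $\O$) it suffices to rule out subobjects and quotients of $\cG$ supported on the closed subset $Z := \fg \smallsetminus \fg_{\mathrm{rs}}$.

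Ruling out subobjects is straightforward: any subobject of a torsion-free perverse sheaf over $\O$ is torsion-free (since $\ell$ acts injectively on the ambient object, hence on any subobject), so a hypothetical $K \subset \cG$ supported on $Z$ would yield a nonzero $\K \otimes_\O K$ as a subobject of $\K \otimes_\O \cG$ supported on $Z$, contradicting Part~(2) applied over $\K$. The quotient case is where I expect the main obstacle, because a quotient of a torsion-free perverse sheaf need not itself be torsion-free. My plan is to apply $\bk \lotimes_\O -$ to a hypothetical short exact sequence $0 \to K \to \cG \to Q \to 0$ with $Q$ supported on $Z$; thanks to the torsion-freeness of both $K$ and $\cG$, the resulting long exact sequence of perverse cohomology collapses to an exact sequence $0 \to {}^p\cH^{-1}(\bk \lotimes_\O Q) \to \bk \otimes_\O K \to \bk \otimes_\O \cG \to \bk \otimes_\O Q \to 0$, exhibiting $\bk \otimes_\O Q$ as a quotient of $\bk \otimes_\O \cG$ supported on $Z$. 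Part~(2) applied over $\bk$ then forces $\bk \otimes_\O Q = 0$, and a Nakayama-type argument for coherent $\O$-perverse sheaves (applied at a minimal stratum of the support of $Q$) finally yields $Q = 0$, completing the proof.
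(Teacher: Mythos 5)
Your proof of Part~(2) has a genuine gap. You assert that $\cF \in \Perv_G^{\mathrm{Spr}}(\cN_G,\E)$ iff $\bT_\fg(\cF) \cong \IC(\fg_{\mathrm{rs}},\cL)$ for \emph{some} $\E$-local system $\cL$, but the functor $V \mapsto \cL_V$ is only an equivalence onto the subcategory of local systems whose monodromy factors through $W$, not onto all local systems on $\fg_{\mathrm{rs}}$. Consequently, the ``no subobject or quotient supported on $\fg \smallsetminus \fg_{\mathrm{rs}}$'' condition by itself only tells you $\bT_\fg(\cF)$ is the intermediate extension of the perverse sheaf $\bT_\fg(\cF)|_{\fg_{\mathrm{rs}}}$; it does not tell you that this restriction is a shifted local system, let alone one with $W$-monodromy. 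The paper closes this gap by invoking Mautner's lemma (\cite[Lemma~6.1]{mautner}), which says precisely that for \emph{any} $\cF \in \Perv_G(\cN_G,\E)$, the restriction $\bT_\fg(\cF)|_{\fg_{\mathrm{rs}}}[-\dim\fg_{\mathrm{rs}}]$ is automatically a local system with monodromy factoring through $W$. Without this input (or an equivalent argument, e.g.~using that $\bT_\fg(\cF)$ is constructible along the Lusztig stratification and analyzing what $G$-equivariance forces on the open stratum), your proof of Part~(2) does not go through.

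For Part~(1), you take a somewhat different route from the paper. The paper pushes the perverse-cohomological characterization of intermediate extensions ($i^*$ in ${}^p D^{\leq -1}$, $i^!$ in ${}^p D^{\geq 1}$) from $\bk$ up to $\O$, whereas you rule out subobjects and quotients of $\bT_\fg(\cF)$ supported on $\fg\smallsetminus\fg_{\mathrm{rs}}$ directly, using torsion-freeness for subobjects and a Nakayama argument for quotients. This is a reasonable alternative, but note two points: (a) your claim that $j^*\bT_\fg(\cF)[-\dim\fg]$ is an $\O$-free local system is asserted without proof (the paper also glosses this but at least flags it as a deduction from the $\bk$-side); (b) the Nakayama step, ``$\bk\otimes_\O Q = 0 \Rightarrow Q = 0$'' where $\bk\otimes_\O Q = {}^p\cH^0(\bk\lotimes_\O Q)$, needs to be carried out carefully on a minimal stratum since perverse truncation does not commute naively with $\otimes_\O\bk$. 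More significantly, since Part~(1) invokes Part~(2) over $\K$ and over $\bk$, fixing the gap above is a prerequisite; fortunately you only use the ``easy'' direction of Part~(2) (that objects of $\Perv_G^{\mathrm{Spr}}$ have no such subobjects or quotients), which does not need Mautner's lemma, so Part~(1) is salvageable once Part~(2) is repaired.
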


\begin{proof}
\eqref{it:springer-K-O-k} We consider the torsion-free $\O$-perverse sheaf $\cG:=\bT_\fg(\cF)$. Since $\bk \lotimes_\O \cF$ belongs to $\Perv_{G}^{\mathrm{Spr}}(\cN_G, \bk)$, the perverse sheaf $\bk \lotimes_\O \cG$ is the $\IC$-extension of a local system on $\fg_{\mathrm{rs}}$. Therefore, if $i : \fg \smallsetminus \fg_{\mathrm{rs}} \hookrightarrow \fg$ is the inclusion, we have
\[
\bk \lotimes_\O i^*(\cG) \in {}^p \hspace{-1pt} D^{\leq -1}_G(\cN_G, \bk) \qquad \text{and} \qquad \bk \lotimes_\O i^!(\cG) \in {}^p \hspace{-1pt} D^{\geq 1}_G(\cN_G, \bk).
\]
It follows that we also have
\[
i^*(\cG) \in {}^p \hspace{-1pt} D^{\leq -1}_G(\cN_G, \O) \qquad \text{and} \qquad i^!(\cG) \in {}^p \hspace{-1pt} D^{\geq 1}_G(\cN_G, \O),
\]
so $\cG$ is also an $\IC$-extension of a perverse sheaf $\cG_{\mathrm{rs}}$ on $\fg_{\mathrm{rs}}$. From the fact that $\bk \lotimes_\O \cG_{\mathrm{rs}}$ is a local system, one can easily deduce that $\cG_{\mathrm{rs}}$ is an $\O$-free local system. And since the monodromy action on $\K \otimes_\O \cG_{\mathrm{rs}}$ factors through an action $W$ (because $\K \otimes_\O \cF$ belongs to $\Perv_{G}^{\mathrm{Spr}}(\cN_G, \K)$), the same must be true for $\cG_{\mathrm{rs}}$. We deduce that $\cF$ belongs to $\Perv_{G}^{\mathrm{Spr}}(\cN_G, \O)$.

\eqref{it:image-springer-field}  If $(L,\cO_L,\cE_L)$ is any cuspidal datum other than $(T,\{0\},\underline{\E})$, then the corresponding Lusztig stratum $Y_{(L,\cO_L)}$ (see Section~\ref{sec:partialorder} or~\cite[\S 2.1]{genspring2}) is contained in $\fg \smallsetminus \fg_{\mathrm{rs}}$.  By~\cite[Lemma~2.1]{genspring2}, a simple perverse sheaf $\IC(\cO,\cE) \in \Perv_G(\cN_G,\E)$ has $(\cO,\cE) \notin \fN_{G,\bk}^{(T,\{0\},\underline{\E})}$ if and only if $\bT_\fg(\IC(\cO,\cE))$ is supported on $\fg \smallsetminus \fg_{\mathrm{rs}}$.  
Thus, part~\eqref{it:image-springer-field} is equivalent to the assertion that $\cF$ belongs to $\Perv_{G}^{\mathrm{Spr}}(\cN_G, \E)$ if and only if $\bT_\fg(\cF)$ has no subobject or quotient supported on $\fg \smallsetminus \fg_{\mathrm{rs}}$.  For any $\cF \in  \Perv_{G}(\cN_G, \E)$,
we know by~\cite[Lemma~6.1]{mautner} that $\bT_\fg(\cF)_{|\fg_{\mathrm{rs}}}[-\dim \fg_{\mathrm{rs}}]$ is a local system (possibly $0$) with monodromy action factoring through $W$. Hence $\bT_\fg(\cF)$ has no subobject or quotient supported on $\fg \smallsetminus \fg_{\mathrm{rs}}$ if and only if $\bT_\fg(\cF)$ is isomorphic to $\IC(\fg_{\mathrm{rs}}, \cL)$ for some local system $\cL$ on $\fg_{\mathrm{rs}}$ arising from a representation of $W$. This proves the claim. 
\end{proof}

Lemma~\ref{lem:image-springer} has the following consequences, which we will use crucially in \S\S\ref{ss:cleanness-E8}--\ref{ss:cleanness-B4}.

\begin{cor}
\label{cor:image-springer}
Let $X \subset \cN_G$ be a closed union of $G$-orbits.
\begin{enumerate}
\item
\label{it:PervSpr1}
Let $\E=\K$ or $\bk$, and assume that any $(\cO,\cE) \in \fN_{G,\E}$ with $\cO \subset X$ belongs to the principal series $\fN_{G,\E}^{(T,\{0\},\underline{\E})}$. Then any $\cF$ in $\Perv_G(\cN_G,\E)$ supported on $X$ belongs to $\Perv_{G}^{\mathrm{Spr}}(\cN_G, \E)$.
\item
\label{it:PervSpr2}
Assume that for \emph{both} $\K$ and $\bk$, any pair $(\cO,\cE)$ with $\cO \subset X$ belongs to the principal series. Then any torsion-free $\cF$ in $\Perv_G(\cN_G,\O)$ supported on $X$ belongs to $\Perv_{G}^{\mathrm{Spr}}(\cN_G, \O)$.
\end{enumerate}
\end{cor}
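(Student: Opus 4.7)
The proof should be a direct deduction from Lemma~\ref{lem:image-springer}, using the support-preservation properties of simple subquotients and of modular reduction. I expect no serious obstacle here, since the corollary is essentially a packaging of the lemma tailored to the geometric situation where one restricts attention to objects supported on a closed subvariety $X$.

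For part~\eqref{it:PervSpr1}, my plan is to apply the characterization of $\Perv_G^{\mathrm{Spr}}(\cN_G,\E)$ provided by Lemma~\ref{lem:image-springer}\eqref{it:image-springer-field}. Since $X$ is closed and $\cF$ is supported on $X$, any simple subobject or simple quotient of $\cF$ is again supported on $X$, hence is of the form $\IC(\cO,\cE)$ with $\cO \subset X$. By the hypothesis of the corollary, every such pair $(\cO,\cE)$ lies in the principal series $\fN_{G,\E}^{(T,\{0\},\underline{\E})}$. Thus $\cF$ has no simple subobject or quotient outside the principal series, and Lemma~\ref{lem:image-springer}\eqref{it:image-springer-field} gives $\cF \in \Perv_G^{\mathrm{Spr}}(\cN_G,\E)$.

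For part~\eqref{it:PervSpr2}, the idea is to reduce to part~\eqref{it:PervSpr1} by extension of scalars. Since $\cF \in \Perv_G(\cN_G,\O)$ is torsion-free, $\K \otimes_\O \cF$ is a $\K$-perverse sheaf and $\bk \lotimes_\O \cF$ is a $\bk$-perverse sheaf, and both are supported on $X$ because modular reduction and extension of scalars preserve support. The hypothesis ensures that the assumption of part~\eqref{it:PervSpr1} is satisfied for $\E = \K$ and for $\E = \bk$, so applying part~\eqref{it:PervSpr1} twice yields $\K \otimes_\O \cF \in \Perv_G^{\mathrm{Spr}}(\cN_G,\K)$ and $\bk \lotimes_\O \cF \in \Perv_G^{\mathrm{Spr}}(\cN_G,\bk)$. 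Then Lemma~\ref{lem:image-springer}\eqref{it:springer-K-O-k} immediately gives the conclusion $\cF \in \Perv_G^{\mathrm{Spr}}(\cN_G,\O)$.

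Beyond these two short arguments, nothing further is required; the only points to double-check are the compatibility of supports with $\K \otimes_\O (-)$ and $\bk \lotimes_\O (-)$ on torsion-free $\O$-perverse sheaves, which is standard in the formalism of $\ell$-modular systems recalled in \S\ref{sec:notation}.
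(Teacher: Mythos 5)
Your argument is correct and matches the paper's proof: part~\eqref{it:PervSpr1} follows from Lemma~\ref{lem:image-springer}\eqref{it:image-springer-field} because every simple subobject or quotient of $\cF$ is supported on $X$ and hence lies in the principal series, and part~\eqref{it:PervSpr2} then follows from Lemma~\ref{lem:image-springer}\eqref{it:springer-K-O-k} after applying part~\eqref{it:PervSpr1} to $\K \otimes_\O \cF$ and $\bk \lotimes_\O \cF$. You merely spell out the extension-of-scalars step that the paper leaves implicit.
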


\begin{proof}
Part~\eqref{it:PervSpr1} follows from Lemma~\ref{lem:image-springer}\eqref{it:image-springer-field}, since in this setting any simple quotient or subobject of $\cF$ must be supported on $X$, hence must correspond to a pair in the principal series. Then Part~\eqref{it:PervSpr2} is an immediate application of Lemma~\ref{lem:image-springer}\eqref{it:springer-K-O-k}.
\end{proof}

\begin{rmk}
More generally, when $\E=\K$ or $\bk$, Lemma~\ref{lem:image-springer}\eqref{it:image-springer-field} implies that the category $\Perv_G^{\mathrm{Spr}}(\cN_G, \E)$ is stable under extensions. However, in general it is not stable under taking subobjects or quotients.
\end{rmk}

Below we will also use the following result.

\begin{lem}
\label{lem:Psi-mod-red}
Let $V \in \Rep(\O[W])$ be torsion-free, and assume that $\bk \lotimes_\O \Psi_\O(V)$ belongs to $\Perv_G^{\mathrm{Spr}}(\cN_G,\bk)$. Then we have $\bk \lotimes_\O \Psi_\O(V) \cong \Psi_\bk(\bk \otimes_\O V)$.
\end{lem}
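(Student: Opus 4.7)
The plan is to apply the Fourier--Sato transform and then compare the resulting $\IC$-extensions on $\fg_{\mathrm{rs}}$, reducing everything to the statement that the local-system functor $V \mapsto \cL_V$ commutes with modular reduction.

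First, I would set $\cG := \IC(\fg_{\mathrm{rs}}, \cL_V)$, so that $\Psi_\O(V) = (\bT_\fg)^{-1}(\cG)$. Since $\bT_\fg$ is defined in terms of $*$/$!$-pushforward and shifted pullback, it commutes with $\bk \lotimes_\O -$, and therefore
\[
\bT_\fg\bigl( \bk \lotimes_\O \Psi_\O(V) \bigr) \cong \bk \lotimes_\O \cG.
\]
By hypothesis, $\bk \lotimes_\O \Psi_\O(V) \in \Perv_G^{\mathrm{Spr}}(\cN_G,\bk)$, so there exists $U \in \Rep(\bk[W])$ with $\bk \lotimes_\O \Psi_\O(V) \cong \Psi_\bk(U)$; equivalently, $\bk \lotimes_\O \cG \cong \IC(\fg_{\mathrm{rs}}, \cL_U)$. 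The task is then to identify $U$ with $\bk \otimes_\O V$.

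Next I would restrict to the open subset $\fg_{\mathrm{rs}} \subset \fg$. Since $V$ is a free $\O$-module (being finitely generated and torsion-free over the DVR $\O$), the local system $\cL_V$ is $\O$-free, so restriction commutes with derived modular reduction, giving
\[
(\bk \lotimes_\O \cG)_{|\fg_{\mathrm{rs}}}[-\dim\fg_{\mathrm{rs}}] \cong \bk \otimes_\O \cL_V.
\]
On the other hand, the same restriction applied to $\IC(\fg_{\mathrm{rs}}, \cL_U)$ is $\cL_U$. Hence $\cL_U \cong \bk \otimes_\O \cL_V$. Finally, because $\cL_V$ is constructed as the local system on $\fg_{\mathrm{rs}}$ associated to $V$ via the Galois cover $\tilde{\fg}_{\mathrm{rs}} \to \fg_{\mathrm{rs}}$ with group $W$, the functor $V \mapsto \cL_V$ commutes with extension of scalars along $\O \to \bk$; thus $\bk \otimes_\O \cL_V \cong \cL_{\bk \otimes_\O V}$. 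Combining the isomorphisms and using that the functor $V \mapsto \cL_V$ on $\bk[W]$-modules is fully faithful, we obtain $U \cong \bk \otimes_\O V$, and therefore $\bk \lotimes_\O \Psi_\O(V) \cong \Psi_\bk(\bk \otimes_\O V)$.

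The main point requiring care is the compatibility of the local-system functor with modular reduction; this is essentially a formal property of finite étale Galois covers (any coefficient ring works, and pushforward along a finite morphism commutes with $\lotimes$), but it is the load-bearing input. Everything else is routine compatibility of Fourier--Sato transform and open restriction with $\bk \lotimes_\O -$, together with the given assumption that the modular reduction actually lands in $\Perv_G^{\mathrm{Spr}}(\cN_G,\bk)$.
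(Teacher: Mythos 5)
Your proof is correct and follows essentially the same route as the paper's: apply the Fourier--Sato transform (which commutes with modular reduction), use the hypothesis to conclude the result is an $\IC$-extension of a local system on $\fg_{\mathrm{rs}}$, identify that local system by restricting to $\fg_{\mathrm{rs}}$, and use that $V \mapsto \cL_V$ commutes with $\bk \otimes_\O -$. The final full-faithfulness step to extract $U$ is not needed — once you have $\cL_U \cong \cL_{\bk \otimes_\O V}$ you can directly conclude $\IC(\fg_{\mathrm{rs}},\cL_U) \cong \IC(\fg_{\mathrm{rs}},\cL_{\bk\otimes_\O V})$ and apply $(\bT_\fg)^{-1}$ — but this is harmless.
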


\begin{proof}
We have
\[
\bk \lotimes_\O \Psi_\O(V) = \bk \lotimes_\O (\bT_\fg)^{-1}(\IC(\fg_{\mathrm{rs}}, \cL_V)) \cong (\bT_\fg)^{-1}(\bk \lotimes_\O \IC(\fg_{\mathrm{rs}}, \cL_V))
\]
since Fourier transform commutes with modular reduction, see~\cite[Remark~2.23]{genspring1}. Now the assumption that $\bk \lotimes_\O \Psi_\O(V)$ belongs to $\Perv_G^{\mathrm{Spr}}(\cN_G,\bk)$ implies that $\bk \lotimes_\O \IC(\fg_{\mathrm{rs}}, \cL_V)$ is the $\IC$-extension of a local system on $\fg_{\mathrm{rs}}$. This local system must be the appropriate shift of the restriction of this perverse sheaf to $\fg_\mathrm{rs}$, i.e.~$\bk \otimes_\O \cL_V \cong \cL_{\bk \otimes_\O V}$. It follows that we have
\[
\bk \lotimes_\O \Psi_\O(V) \cong (\bT_\fg)^{-1}(\IC(\fg_{\mathrm{rs}}, \cL_{ \bk \otimes_\O V})),
\]
which finishes the proof.
\end{proof}

\subsection{Blocks}

Let us consider the decomposition $1 = \sum_{i=1}^s e_i$ of $1$ as a sum of orthogonal primitive idempotents in the center $Z(\O[W])$ of the group algebra $\O[W]$. This decomposition determines the decompositions of $\Irr(\K[W])$ and $\Irr(\bk[W])$ into $\ell$-blocks, see~\cite[\S 56.B]{cr}: an irreducible representation of $W$ over $\K$, resp.~over $\bk$, belongs to the block $\mathsf{B}_i$ if and only if it is a composition factor of the representation $\K[W] \cdot e_i$, resp.~$\bk[W] \cdot \overline{e}_i$, where $\overline{e}_i \in \bk[W]$ is the image of $e_i$ in $\bk[W]$ (a primitive central idempotent). Equivalently, the irreducible representation $V$ over $\K$, resp.~$\bk$, belongs to $\mathsf{B}_i$ iff $e_i$, resp.~$\overline{e}_i$, acts as the identity on $V$. 

Recall (see~\cite[Proposition~56.31]{cr}) that if $V$ is an irreducible $\K$-representation of $W$ such that $\dim V$ is divisible by the largest power of $\ell$ dividing $|W|$, then $V$ is the unique irreducible $\K$-representation belonging to its block; such blocks are said to be of \emph{defect $0$}. If $\mathsf{B}_i$ is a block of defect $0$, we write $E_i^\K$ for its unique irreducible $\K$-representation.   
If $E_i^\O$ is an $\O$-form of $E_i^\K$, then $E_i^\O$ is a projective $\O[W]$-module, $E_i^{\bk} :=\bk \otimes_\O E_i^\O$ is a simple and projective $\bk[W]$-module, and $E_i^{\bk}$ is the unique irreducible $\bk$-representation belonging to $\mathsf{B}_i$. 

Recall also (see~\cite[\S 56.26]{cr}) that the decomposition matrix is block-diagonal if irreducible representations are ordered by $\ell$-blocks; in particular, if $\mathsf{B}_i$ is a block of defect $0$, with the notation above $E_i^\K$ is the only irreducible $\K$-representation whose modular reduction has $E_i^\bk$ as a composition factor.

\begin{lem}
\label{lem:blocks-defect-0}
Let $\mathsf{B}_i$ be a block of defect $0$, and let $(\cO,\cE^\K)$, resp.~$(\cO', \cE^\bk)$, be the pair corresponding to $E_i^\K$, resp.~$E_i^\bk$, under the Springer correspondence. Then $\theta_G(\cO,\cE^\K)=(\cO', \cE^\bk)$ (in particular, $\cO'=\cO)$, and $\IC(\cO', \cE^\bk)$ is a direct summand of the Springer sheaf $\Psi_\bk(\bk[W])$.
\end{lem}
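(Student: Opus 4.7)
The plan is to realize $\Psi_\O(E_i^\O)$ as a torsion-free $\O$-perverse sheaf sitting as a direct summand of the $\O$-Springer sheaf $\Psi_\O(\O[W])$, and then to identify its modular reduction with $\IC(\cO',\cE^\bk)$ using Lemma~\ref{lem:Psi-mod-red}. Since $\mathsf{B}_i$ has defect $0$, the module $E_i^\O$ is projective over $\O[W]$; more precisely the block $\O[W]\cdot e_i$ is a matrix algebra over $\O$ whose simple column is $E_i^\O$, so $E_i^\O$ is a direct summand of the regular representation $\O[W]$. Since $V\mapsto\cL_V$ is additive, and since $\IC(\fg_{\mathrm{rs}},-)$ preserves direct sums of torsion-free $\O$-local systems, the $\O$-perverse sheaf $\Psi_\O(E_i^\O)$ is then a direct summand of $\Psi_\O(\O[W])$.

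Next I would show that $\bk\lotimes_\O\Psi_\O(\O[W])\cong\Psi_\bk(\bk[W])$: this follows from the commutation of $\bT_\fg$ with modular reduction (as in the proof of Lemma~\ref{lem:Psi-mod-red}), together with the fact that for $\E\in\{\O,\bk\}$ the perverse sheaf $\IC(\fg_{\mathrm{rs}},\cL_{\E[W]})$ is obtained by proper pushforward of a constant sheaf under the (small) Grothendieck--Springer resolution, which commutes with modular reduction. Consequently, $\bk\lotimes_\O\Psi_\O(E_i^\O)$ is a direct summand of the modular Springer sheaf $\Psi_\bk(\bk[W])$, and in particular it lies in $\Perv_G^{\mathrm{Spr}}(\cN_G,\bk)$. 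Lemma~\ref{lem:Psi-mod-red} then identifies it with $\Psi_\bk(\bk\otimes_\O E_i^\O)=\Psi_\bk(E_i^\bk)=\IC(\cO',\cE^\bk)$, simultaneously proving the direct-summand assertion of the lemma.

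For the identity $\theta_G(\cO,\cE^\K)=(\cO',\cE^\bk)$, I would restrict $\Psi_\O(E_i^\O)$ to open orbits. It is torsion-free with $\K$-rationalization $\IC(\cO,\cE^\K)$, so its support is $\overline{\cO}$ and its restriction to $\cO$ is (a shift of) an $\O$-form $\cE^\O$ of $\cE^\K$ of generic rank $\dim_\K E_i^\K>0$. Its modular reduction is $\IC(\cO',\cE^\bk)$, and the nonvanishing of $\bk\otimes_\O\cE^\O$ on $\cO$ forces $\cO'=\cO$. The resulting identification $\bk\otimes_\O\cE^\O\cong\cE^\bk$ is then precisely the definition of $\theta_G$ recalled in \S\ref{ss:0cusp-0series} (via modular reduction of an $\O$-form of the associated $A_G(x)$-representation). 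The most delicate step is verifying $\bk\lotimes_\O\Psi_\O(\O[W])\cong\Psi_\bk(\bk[W])$, where the specific geometric nature of the Springer sheaf is used; the rest is routine bookkeeping.
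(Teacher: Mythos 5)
Your proof is correct, and it reaches the first assertion by a genuinely different route than the paper. For the statement that $\IC(\cO',\cE^\bk)$ is a summand of $\Psi_\bk(\bk[W])$, the paper's argument is the same as the endpoint of yours, but shorter: since $E_i^\bk$ is simple and projective it is a direct summand of $\bk[W]$, and $\Psi_\bk$ is additive, so $\Psi_\bk(E_i^\bk)=\IC(\cO',\cE^\bk)$ splits off $\Psi_\bk(\bk[W])$ directly, with no need to pass through $\O$. For the identity $\theta_G(\cO,\cE^\K)=(\cO',\cE^\bk)$, however, the paper simply invokes Juteau's compatibility theorem between the ordinary and modular Springer correspondences via basic sets (\cite[Theorem~5.3]{juteau}): since $E_i^\K$ is the unique irreducible $\K[W]$-module whose decomposition involves $E_i^\bk$, the basic-set map on the Weyl-group side must send $E_i^\bk$ to $E_i^\K$, and the rather-good hypothesis pins down the basic-set map on the nilpotent side to be $\theta_G$. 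You instead re-derive the needed special case geometrically: you identify $\bk\lotimes_\O\Psi_\O(E_i^\O)$ with $\Psi_\bk(E_i^\bk)$ via Lemma~\ref{lem:Psi-mod-red} (after checking membership in $\Perv_G^{\mathrm{Spr}}(\cN_G,\bk)$ through the smallness of the Grothendieck--Springer map and the commutation of $\bT_\fg$ and proper pushforward with modular reduction), and then read off $\cO'=\cO$ and $\cE^\bk\cong\bk\otimes_\O\cE^\O$ from the restriction to the open orbit of the support, which is exactly the definition of $\theta_G$. Your approach is self-contained and matches the style of argument used later in \S\ref{ss:cleanness-E8} (support control of modular reductions via \cite[Proposition~2.8]{juteau}), at the cost of redoing, in this special case, work that Juteau's theorem packages once and for all. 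Two small points to tidy: the generic rank of $\cE^\O$ is the dimension of the corresponding $A_G(x)$-representation, not $\dim_\K E_i^\K$ (harmless, since all you need is nonvanishing on $\cO$); and you should say a word about why $\Psi_\O(E_i^\O)|_{\cO}$, shifted, is concentrated in a single degree and $\O$-free --- this follows from the fact that its derived reduction $\IC(\cO,\cE^\bk)|_{\cO}$ is, by a Nakayama-type argument on stalks.
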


\begin{proof}
The first assertion follows from~\cite[Theorem~5.3]{juteau}. In fact, as recalled above, $E_i^\K$ is the only irreducible $\K$-representation whose modular reduction has $E_i^\bk$ as a composition factor so that, using the notation of that result, we must have $\beta_S(E_i^\bk)=E_i^\K$. Similarly, the `rather good' assumption forces the choice $\beta_{\cN} ( \theta_G(\cO,\cE^\K))=(\cO, \cE^\K)$.

For the second assertion, we simply remark that
since $E_i^\bk$ is a direct summand of $\bk[W]$, the perverse sheaf $\IC(\cO', \cE^\bk)=\Psi_\bk(E_i^\bk)$ is a direct summand of $\Psi_\bk(\bk[W])$.
\end{proof}

Using the equivalence $\Rep(\E[W])\cong\Perv_{G}^{\mathrm{Spr}}(\cN_G, \E)$ induced by $\Psi_\E$, we obtain an action of the centre $Z(\E[W])$ on $\Perv_{G}^{\mathrm{Spr}}(\cN_G, \E)$. So if $\cF$ belongs to $\Perv_{G}^{\mathrm{Spr}}(\cN_G, \O)$ or to $\Perv_{G}^{\mathrm{Spr}}(\cN_G, \K)$, then it makes sense to consider the object $e_i \cdot \cF$, a direct summand of $\cF$. Similarly, if $\cF$ belongs to $\Perv_{G}^{\mathrm{Spr}}(\cN_G, \bk)$, then it makes sense to consider the direct summand $\overline{e}_i \cdot \cF$ of $\cF$. 

\begin{lem}
\label{lem:compatibility}
If $\cF \in \Perv_G^{\mathrm{Spr}}(\cN_G, \O)$ is torsion-free and if $\bk \lotimes_\O \cF$ belongs to $\Perv_G^{\mathrm{Spr}}(\cN_G, \bk)$, then we have
\[
\overline{e}_i \cdot (\bk \lotimes_\O\cF)\cong \bk\lotimes_\O(e_i\cdot\cF).
\]
\end{lem}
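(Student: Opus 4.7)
The plan is to use the equivalences $\Psi_\O \colon \Rep(\O[W]) \simto \Perv_G^{\mathrm{Spr}}(\cN_G, \O)$ and $\Psi_\bk \colon \Rep(\bk[W]) \simto \Perv_G^{\mathrm{Spr}}(\cN_G, \bk)$ to transport the question to the level of $W$-representations, where it becomes essentially tautological, and then to use Lemma~\ref{lem:Psi-mod-red} to pass back through modular reduction.

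First, I would write $\cF \cong \Psi_\O(V)$ for some torsion-free $V \in \Rep(\O[W])$ (torsion-freeness of $V$ follows because the successive constructions $V \mapsto \cL_V$, $\cL \mapsto \IC(\fg_{\mathrm{rs}},\cL)$ and $(\bT_\fg)^{-1}$ all preserve torsion-freeness). Then, by the very definition of the action of $Z(\O[W])$ on $\Perv_G^{\mathrm{Spr}}(\cN_G, \O)$, we have $e_i \cdot \cF \cong \Psi_\O(e_i V)$, and similarly $\overline{e}_i \cdot (\bk \lotimes_\O \cF) \cong \Psi_\bk(\overline{e}_i \cdot (\bk \otimes_\O V)) = \Psi_\bk(\bk \otimes_\O (e_i V))$, using the hypothesis together with Lemma~\ref{lem:Psi-mod-red} to identify $\bk \lotimes_\O \cF$ with $\Psi_\bk(\bk \otimes_\O V)$. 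Hence the statement to prove reduces to the isomorphism
\[
\bk \lotimes_\O \Psi_\O(e_i V) \cong \Psi_\bk(\bk \otimes_\O (e_i V)),
\]
which is exactly the conclusion of Lemma~\ref{lem:Psi-mod-red} applied to the torsion-free $\O[W]$-module $e_i V$.

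To apply that lemma, it remains to verify that $\bk \lotimes_\O \Psi_\O(e_i V)$ lies in $\Perv_G^{\mathrm{Spr}}(\cN_G, \bk)$. This follows immediately: since $e_i$ is an idempotent in $Z(\O[W])$, the module $e_i V$ is a direct summand of $V$, so $\Psi_\O(e_i V)$ is a direct summand of $\cF$, and hence $\bk \lotimes_\O \Psi_\O(e_i V)$ is a direct summand of $\bk \lotimes_\O \cF$, which by hypothesis lies in $\Perv_G^{\mathrm{Spr}}(\cN_G, \bk)$. But by Lemma~\ref{lem:image-springer}\eqref{it:image-springer-field}, this subcategory is characterised by the absence of certain simple sub- and quotient objects, so it is closed under direct summands, yielding the desired membership. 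No step in this chain appears to be a serious obstacle; the main thing to be careful about is simply that the transport-of-structure definitions of the two centre actions, on the $\O$ and the $\bk$ sides, are compatible with modular reduction, which is handled precisely by Lemma~\ref{lem:Psi-mod-red}.
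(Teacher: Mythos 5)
Your proof is correct and follows essentially the same route as the paper: choose $V$ with $\cF \cong \Psi_\O(V)$, apply Lemma~\ref{lem:Psi-mod-red} once to identify $\bk \lotimes_\O \cF$ with $\Psi_\bk(\bk\otimes_\O V)$, use the transport-of-structure definitions of the centre actions, and apply Lemma~\ref{lem:Psi-mod-red} a second time to the direct summand $e_iV$. The paper's proof is a one-line chain of isomorphisms citing ``Lemma~\ref{lem:Psi-mod-red} (applied twice)'' and leaves implicit the two small checks you carry out explicitly — that $V$ (hence $e_iV$) is torsion-free, and that $\bk\lotimes_\O\Psi_\O(e_iV)$ lands in $\Perv_G^{\mathrm{Spr}}(\cN_G,\bk)$ because it is a direct summand of $\bk\lotimes_\O\cF$ and the characterisation in Lemma~\ref{lem:image-springer}\eqref{it:image-springer-field} passes to direct summands — so if anything your write-up is a bit more careful than the original.
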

 
\begin{proof}
Let $V \in \Rep(\O[W])$ be such that $\cF \cong \Psi_\O(V)$. Then
by Lemma~\ref{lem:Psi-mod-red} (applied twice) we have
\[
\overline{e}_i \cdot (\bk \lotimes_\O\cF) \cong \overline{e}_i \cdot \Psi_\bk(\bk \lotimes_\O V) = \Psi_\bk(  \overline{e}_i \cdot (\bk \lotimes_\O V)) \cong \Psi_\bk( \bk \lotimes_\O (e_i \cdot V)) \cong \bk \lotimes_\O \Psi_\O(e_i \cdot V) = \bk\lotimes_\O(e_i\cdot\cF),
\]
which proves the claim.
\end{proof}

\subsection{Proof of Theorem~\ref{thm:clean} for $G$ of type $E_8$ and $\ell = 7$}
\label{ss:cleanness-E8}

In this proof, for brevity, we will denote nilpotent orbits and Levi subgroups by their Bala--Carter labels, see~\cite{cm} (except for the trivial orbit, denoted $\{0\}$), 
and we will follow the notation of~\cite{juteau} for nontrivial local systems, with a superscript indicating the coefficient ring.  Thus, the unique $0$-cuspidal pair in $\fN_{G,\bk}$ is the pair $(E_8(a_7),11111^\bk)$.  The corresponding simple perverse sheaf is denoted $\IC(E_8(a_7),11111^\bk)$.

Let $X$ be the union of all nilpotent orbits for $G$ that are contained in the closure of $E_8(a_7)$ but different from it.  It suffices to prove the following claim: \textit{The category $\Db_G(X,\bk)$ is generated as a triangulated category by direct summands of perverse sheaves induced from proper Levi subgroups.}  Indeed, the $\Hom$-vanishing statement~\eqref{eqn:clean-hom-general} holds whenever $\cF$ is a direct summand of a perverse sheaf induced from a proper Levi subgroup, by adjunction. In contrast to the proof of Proposition~\ref{prop:clean-lusztig}, the set of direct summands of induced perverse sheaves we will use to generate $\Db_G(X,\bk)$ will not consist entirely of simple perverse sheaves.

We use the determination of the modular Springer correspondence and its decomposition into blocks given in~\cite[\S9.5.4]{juteau}. There are fifty-nine pairs $(\cO,\cE)$ with $\cO \subset X$, all of which are in the principal series $\fN_{G,\E}^{(T,\{0\}, \underline{\E})}$ for both $\E=\K$ and $\E=\bk$. So Corollary~\ref{cor:image-springer} guarantees that any perverse sheaf supported on $X$ is in $\Perv_G^{\mathrm{Spr}}(\cN_G, \E)$ for $\E \in \{\K,\O,\bk\}$, provided $\cF$ is torsion-free in case $\E=\O$.  Among the fifty-nine pairs in $\fN_{G,\bk}$ supported on $X$, forty-five correspond to $\bk[W]$-representations in blocks of defect $0$:
\[
\begin{array}{ccccccc}
(E_7(a_5),\ubk) & (A_4+A_3,\ubk) & (D_5(a_1),\ubk) & (D_4(a_1)+A_1,21^\bk)   &  (2A_2+A_1,\ubk)  \\
(E_7(a_5),21^\bk)  & (D_5,\ubk) & (D_5(a_1),11^\bk) & (D_4(a_1)+A_1,111^\bk) &  (2A_2,\ubk)  \\
(E_7(a_5),111^\bk) & (E_6(a_3),\ubk) &  (2A_3,\ubk)  & (A_3+2A_1,\ubk) & (A_3,\ubk)  \\
(E_6(a_3)+A_1,\ubk)  &  (D_4+A_2,\ubk)  &  (D_4(a_1)+A_2,\ubk)  & (2A_2+2A_1,\ubk) & (A_2+2A_1,\ubk) \\
(E_6(a_3)+A_1,11^\bk) &  (D_4+A_2,11^\bk) &  (D_4(a_1)+A_2,11^\bk) & (D_4,\ubk) & (A_2+A_1,\ubk)  \\
(D_6(a_2),\ubk) & (A_4+A_2+A_1,\ubk)  & (D_4+A_1,\ubk) &  (D_4(a_1),\ubk)  &  (A_2,\ubk)  \\
(D_6(a_2),11^\bk)  & (A_4+A_2,\ubk)  & (A_3+A_2+A_1,\ubk) &  (D_4(a_1),21^\bk) & (A_2,11^\bk)   \\
(D_5(a_1)+A_2,\ubk) & (A_4+2A_1,\ubk) &   (A_4,\ubk)    &   (D_4(a_1),111^\bk)  &  (3A_1,\ubk) \\
(A_5+A_1,\ubk) & (A_4+2A_1,11^\bk) &  (D_4(a_1)+A_1,\ubk) & (A_3+A_1,\ubk)  &  (2A_1,\ubk)
\end{array}
\]
The corresponding simple perverse sheaves $\IC(\cO,\cE)$ are direct summands of the Springer sheaf (which, of course, is induced from a maximal torus) by Lemma~\ref{lem:blocks-defect-0}. 

The remaining fourteen pairs (or more precisely the corresponding $W$-representations) are organized in four blocks as follows:
\begin{equation}\label{eqn:7-blocks}
\begin{aligned}
\mathsf{B}_1 &: (0,\ubk), \ (2A_2,11^\bk), \ (A_4+A_1,\ubk), \ (D_5(a_1)+A_1,\ubk) \\
\mathsf{B}_2 &: (4A_1,\ubk), \ (A_3+A_2,11^\bk), \ (A_5,\ubk) \\
\mathsf{B}_3 &: (A_1,\ubk), \ (A_2+A_1,11^\bk), \ (A_4,11^\bk), \ (E_6(a_3),11^\bk) \\
\mathsf{B}_4 &: (A_2+3A_1,\ubk), \ (A_3+A_2,\ubk), \ (A_4+A_1,11^\bk)
\end{aligned}
\end{equation}
(The blocks $\mathsf{B}_i$ also contain other $W$-representations corresponding to pairs not supported on $X$, which we have not listed.)
For each pair $(\cO,\cE)$ in this list, we will exhibit a direct summand of an induced perverse sheaf which is supported on $\overline{\cO}$ and whose restriction to $\cO$ is $\cE[\dim\cO]$; this will prove the claim.

The calculation relies on knowledge of inductions of $W$-representations over $\K$.
For instance, we have
\[
\mathrm{Ind}_{W(E_7)}^{W(E_8)} \chi_{1,0} \cong
\chi_{1,0} \oplus \chi_{35,2} \oplus \chi_{84,4} \oplus \chi_{8,1} \oplus \chi_{112,3}.
\]
By compatibility of the (ordinary) Springer correspondence with induction (which follows e.g.~from~\cite[Theorem~4.5]{genspring2}), we deduce that
\[
\Ind_{E_7}^{E_8}(\IC(\{0\},\underline{\K})) \cong
\IC(\{0\},\underline{\K}) \oplus \IC(2A_1, \underline{\K}) \oplus \IC(3A_1,\underline{\K}) \oplus \IC(A_1,\underline{\K}) \oplus \IC(A_2, \underline{\K}).
\]
Here, we write $\Ind_{E_7}^{E_8}$ for the functor $\Ind_{L\subset P}^G$ where $L$ is a Levi subgroup of type $E_7$ and $P$ is a parabolic subgroup containing $L$ as a Levi factor.
 
Observe that $\Ind_{E_7}^{E_8}(\IC(\{0\},\underline{\K}))$ is supported on $X$ and hence belongs to $\Perv_G^{\mathrm{Spr}}(\cN_G, \K)$ by Corollary~\ref{cor:image-springer}. Similarly for $\Ind_{E_7}^{E_8}(\IC(\{0\},\underline{\O}))$ and $\Ind_{E_7}^{E_8}(\bk\lotimes_\O  (\IC(\{0\},\underline{\O})))$; since induction commutes with modular reduction~\cite[Remark 2.23]{genspring1}, the latter is a modular reduction of $\Ind_{E_7}^{E_8}(\IC(\{0\},\underline{\K}))$. So it makes sense to apply block idempotents to all these perverse sheaves.

Consulting~\eqref{eqn:7-blocks}, we see that $e_3 \cdot \Ind_{E_7}^{E_8}(\IC(\{0\},\underline{\K}))=\IC(A_1, \underline{\K})$. Therefore $e_3 \cdot \Ind_{E_7}^{E_8}(\IC(\{0\},\underline{\O}))$ is an $\O$-form of $\IC(A_1, \underline{\K})$. Using~\cite[Proposition~2.8]{juteau} and Lemma~\ref{lem:compatibility}, it follows that $\overline{e}_3 \cdot \Ind_{E_7}^{E_8}(\bk\lotimes_\O  (\IC(\{0\},\underline{\O})))$ is supported on the closure of $A_1$ and its restriction to $A_1$ is the appropriate shift of $\ubk$. Thus, $\overline{e}_3 \cdot \Ind_{E_7}^{E_8}(\bk\lotimes_\O  (\IC(\{0\},\underline{\O})))$ is our desired direct summand of an induced perverse sheaf for the pair $(A_1,\ubk)$.

\begin{table}
\[
\begin{array}{|c|c|c|c|}
\hline
\text{Pair} & \K\text{-perverse sheaf on } \cN_{E_7} & \text{Idempotent} \\
\hline \hline
(\{0\}, \ubk) & \IC(\{0\},\underline{\K}) & e_1 \\
\hline
(A_1, \ubk) & \IC(\{0\},\underline{\K}) & e_3 \\
\hline
(2A_2,11^\bk) & \IC(2A_1, \underline{\K}) & e_1 \\
\hline
(A_2+A_1,11^\bk) & \IC(2A_1, \underline{\K}) & e_3 \\
\hline
(A_3+A_2,11^\bk) & \IC(3A_1', \underline{\K}) & e_2 \\
\hline
(A_3+A_2,\ubk) & \IC(3A_1', \underline{\K}) & e_4 \\
\hline
(4A_1,\ubk) & \IC(4A_1, \underline{\K}) & e_2 \\
\hline
(A_2+3A_1,\ubk) & \IC(4A_1, \underline{\K}) & e_4 \\
\hline
(A_4+A_1,\ubk) & \IC(A_2+A_1, \underline{\K}) & e_1 \\
\hline
(A_4, 11^\bk) & \IC(A_2+A_1, \underline{\K}) & e_3 \\
\hline
(A_4+A_1,11^\bk) & \IC(A_2+A_1, \underline{\K}) & e_4 \\
\hline 
(A_5,\ubk) & \IC(2A_2, \underline{\K}) & e_2 \\
\hline
(D_5(a_1)+A_1,\ubk) & \IC(A_3, \underline{\K}) & e_1 \\
\hline
(E_6(a_3),11^\bk) & \IC(A_3, \underline{\K}) & e_3 \\
\hline
\end{array}
\]
\caption{Calculations for type $E_8$ and $\ell = 7$}\label{tab:e8-clean}
\end{table}




Similar calculations apply to each of the other pairs appearing in~\eqref{eqn:7-blocks}.  For each such pair, Table~\ref{tab:e8-clean} lists a $\K$-perverse sheaf on $\cN_{E_7}$ and a block idempotent that can be used to produce an appropriate direct summand of an induced perverse sheaf.

\subsection{Proof of Theorem~\ref{thm:clean} for $G$ of type $B_4$ and $\ell = 3$}
\label{ss:cleanness-B4}

In this proof, we will denote nilpotent orbits by partitions of $9$.  Let $\cE_{531}$ be the local system on the orbit $531$ described in~\cite[Theorem~8.3]{genspring2}.  Then $(531,\cE_{531})$ is the unique $0$-cuspidal pair in $\fN_{G,\bk}$. Since this cuspidal pair has trivial central character, we need only consider pairs with trivial central character; equivalently, we can and will redefine $G$ to be the adjoint group of type $B_4$, namely $\SO(9)$.

Let $X$ be the union of all nilpotent orbits for $G$ that are contained in the closure of $531$ but different from it.  It turns out that for all $x \in X$, the group $A_G(x)$ is either trivial or of order~$2$.  When it is of order~$2$, we will denote the nontrivial irreducible local system on the orbit of $x$ by $\varepsilon^\bk$ or $\varepsilon^\K$.  The trivial local system will still be denoted by $\ubk$ or $\underline{\K}$.  The orbit $441$ is open in $X$; let $X'$ be its complement in $X$.

As in the preceding subsection, it suffices to prove the following claim: \textit{The category $\Db_G(X,\bk)$ is generated as a triangulated category by direct summands of perverse sheaves induced from proper Levi subgroups.} As in~\S\ref{ss:cleanness-E8}, to prove this claim we will exhibit, for any pair $(\cO,\cE)$ with $\cO \subset X$, a direct summand of a perverse sheaf induced from a proper Levi subgroup which is supported on $\overline{\cO}$ and whose restriction to $\cO$ is $\cE[\dim \cO]$.

There are fourteen pairs $(\cO,\cE)$ with $\cO \subset X$, thirteen of which actually satisfy $\cO \subset X'$. The modular Springer correspondence in this case is known from~\cite{jls}. It turns out that all thirteen pairs supported on $X'$ are in the principal series $\fN^{(T,\{0\},\underline{\E})}_{G,\E}$ for both $\E = \K$ and $\E = \bk$.  

The pair $(441,\ubk)$ is \emph{not} in the principal series $\fN^{(T,\{0\},\ubk)}_{G,\bk}$.  In fact, this pair must belong to the series $\fN^{(A_2,\cO_\reg,\ubk)}_{(G,\bk)}$, as that is the only non-principal, non-cuspidal induction series that has trivial central character (see~\cite{genspring2}).  The Levi subgroup $A_2$ belongs to the $3$-Sylow class of $G$ (see~\cite[\mglsylow]{genspring3}), and so by~\cite[\mgsemisimple]{genspring3} the induced perverse sheaf $\Ind_{A_2}^{B_4}(\IC(\cO_\reg,\ubk))$ is semisimple.  In particular, $\IC(441,\ubk)$ is a direct summand of $\Ind_{A_2}^{B_4}(\IC(\cO_\reg,\ubk))$, which takes care of the pair $(441,\ubk)$.

We now turn our attention to the thirteen pairs on $X'$, with a strategy very much like that employed in the preceding subsection.  Six of these thirteen pairs correspond to $\bk[W]$-representations in blocks of defect~$0$:
\[
\begin{array}{cccccc}
(51111, \ubk) &
(51111, \varepsilon^\bk) &
(333, \ubk) &
(33111, \varepsilon^\bk) &
(32211, \ubk) &
(2211111, \ubk)
\end{array}
\]
The corresponding perverse sheaves are direct summands of the Springer sheaf by Lemma~\ref{lem:blocks-defect-0}.  The remaining seven pairs are organized in four $3$-blocks:
\[
\begin{aligned}
\mathsf{B}_1 &: (111111111, \ubk), \ (22221, \ubk) \\
\mathsf{B}_2 &: (3111111,\ubk), \ (33111, \ubk) \\
\mathsf{B}_3 &: (32211, \varepsilon^\bk), \ (522, \ubk) \\
\mathsf{B}_4 &: (3111111, \varepsilon^\bk)
\end{aligned}
\]
(Again, we have omitted pairs in these blocks that are not supported on $X'$.)  For each pair $(\cO,\cE)$ in this list, there exists a direct summand of a perverse sheaf induced from a Levi subgroup of type $B_3$
that is supported on $\overline{\cO}$ and whose restriction to $\cO$ is isomorphic to $\cE[\dim \cO]$.  
Table~\ref{tab:b4-clean} gives a list of $\K$-perverse sheaves on $\cN_{B_3}$ and block idempotents that can be used to produce these objects.

\begin{table}
\[
\begin{array}{|c|c|c|}
\hline
\text{Pair} & \K\text{-perverse sheaf on } \cN_{B_3} & \text{Idempotent} \\
\hline \hline
(111111111, \ubk) & \IC(1111111, \underline{\K}) & e_1 \\
\hline
(22221, \ubk) & \IC(22111,\underline{\K}) & e_1 \\
\hline
(3111111,\ubk) & \IC(1111111, \underline{\K}) & e_2 \\
\hline
(33111, \ubk) & \IC(22111,\underline{\K}) & e_2 \\
\hline
(32211, \varepsilon^\bk) & \IC(31111, \varepsilon^\K) & e_3 \\
\hline
(522, \ubk) & \IC(322, \underline{\K}) & e_3 \\
\hline
(3111111, \varepsilon^\bk) & \IC(31111, \varepsilon^\K) & e_4 \\
\hline
\end{array}
\]
\caption{Calculations for type $B_4$ and $\ell = 3$}\label{tab:b4-clean}
\end{table}

\section{Consequences of the cleanness conjecture}
\label{sec:consequences}

In this section, we prove that the cleanness conjecture implies that $\Perv_G(\cN_G,\bk)$ and $\Db_G(\cN_G,\bk)$ admit direct-sum decompositions indexed by $0$-cuspidal data.  Along the way, we study the behaviour of the induction and restriction functors with respect to $0$-series, and we prove that the notions of `supercuspidal' and `$0$-cuspidal' coincide (assuming the cleanness conjecture). In Section~\ref{ss:semisimplicity}, we give a semisimplicity criterion for $\Perv_G(\cN_G,\bk)$. Apart from that final section, we continue to assume that $\ell$ is rather good for $G$. Note that we do not make Conjecture~\ref{conj:clean} a blanket assumption, but rather we will assume it (or its equivalent form~\eqref{eqn:clean-restate}) where it is needed.


\subsection{$0$-cuspidal pairs and projectivity}

\begin{lem}\label{lem:dist-semis}
Assume that $G$ is semisimple.
If $\cO$ is a distinguished nilpotent orbit, then $\Db_G(\cO,\bk)$ is semisimple.  That is, for any two irreducible $G$-equivariant local systems $\cE, \cE'$ on $\cO$, we have
\[
\Hom^k_{\Db_G(\cO,\bk)}(\cE,\cE') \cong
\begin{cases}
\bk & \text{if $\cE \cong \cE'$ and $k = 0$,} \\
0 & \text{otherwise.}
\end{cases}
\]
\end{lem}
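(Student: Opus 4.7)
My plan is to reduce the computation of the Hom-spaces in $\Db_G(\cO, \bk)$ to representation theory of the finite group $A_G(x)$. Fix a base point $x \in \cO$ and let $H = Z_G(x)$ denote its stabilizer, so $\cO \cong G/H$ and there is a standard equivalence $\Db_G(\cO, \bk) \cong \Db_H(\pt, \bk)$; under it, irreducible $G$-equivariant local systems on $\cO$ correspond to irreducible representations of $A_G(x) = H/H^\circ$, inflated to $H$ along the quotient $H \twoheadrightarrow A_G(x)$. Thus the statement reduces to showing that for all $V_1, V_2 \in \Irr(\bk[A_G(x)])$,
\[
\Hom^k_{\Db_H(\pt, \bk)}(V_1, V_2) \cong
\begin{cases} \bk & \text{if } V_1 \cong V_2 \text{ and } k = 0, \\ 0 & \text{otherwise.} \end{cases}
\]

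Next I would exploit the distinguishedness hypothesis together with the assumption that $G$ is semisimple: these force $H^\circ$ to contain no nontrivial torus (this is precisely the definition of distinguished for a semisimple group), hence $H^\circ$ is a connected unipotent complex algebraic group. Its complex points form a variety isomorphic to an affine space $\C^n$, which is contractible; consequently the classifying space $BH^\circ$ is contractible, and in particular has trivial $\bk$-cohomology.

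I would then invoke the fibration $BH^\circ \to BH \to BA_G(x)$ associated with the extension $1 \to H^\circ \to H \to A_G(x) \to 1$. Contractibility of the fiber, in the form of a Bernstein--Lunts-style argument for equivariant derived categories of a point along a group extension, yields an equivalence $\Db_{A_G(x)}(\pt, \bk) \simto \Db_H(\pt, \bk)$ given by pullback along $H \twoheadrightarrow A_G(x)$. Under this identification, $\Hom^k_{\Db_H(\pt, \bk)}(V_1, V_2) \cong \Ext^k_{\bk[A_G(x)]}(V_1, V_2)$. Since $\ell$ is rather good, Lemma~\ref{lem:rgood-defn}\eqref{it:rgood-ag} guarantees that $\ell \nmid |A_G(x)|$, so by Maschke's theorem $\bk[A_G(x)]$ is semisimple; this makes the $\Ext$ vanish outside degree zero and reduces it to $\Hom_{\bk[A_G(x)]}(V_1, V_2)$, which has the asserted form.

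The one step requiring real care is the equivalence $\Db_{A_G(x)}(\pt, \bk) \simto \Db_H(\pt, \bk)$ coming from contractibility of $BH^\circ$; everything else is definition-chasing or an appeal to Maschke. This equivalence is essentially standard, but it is the only non-formal input to the argument.
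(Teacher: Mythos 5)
Your argument is essentially correct and follows the same overall strategy as the paper's: reduce $\Db_G(\cO,\bk)$ to $\Db_{G_x}(\pt,\bk)$, identify the connected unipotent piece $G_x^\circ$, and land in the representation theory of $A_G(x)$, where Maschke's theorem (using $\ell\nmid|A_G(x)|$, i.e.\ Lemma~\ref{lem:rgood-defn}) and Schur's lemma (using~\eqref{eqn:definitely-big-enough}) finish it off. The one structural difference is how you pass from $G_x$ to $A_G(x)$: you work with the quotient $G_x\twoheadrightarrow A_G(x)$ and assert that inflation $\Db_{A_G(x)}(\pt,\bk)\to\Db_{G_x}(\pt,\bk)$ is an equivalence because $BG_x^\circ$ is contractible, whereas the paper invokes Mostow's theorem to produce a finite subgroup $H\subset G_x$ with $G_x\cong H\ltimes G_x^\circ$ and $H\cong A_G(x)$, and then cites a precise result of Bernstein--Lunts (Theorem~3.7.3) giving full faithfulness of the \emph{restriction} $\Db_{G_x}(\pt,\bk)\to\Db_H(\pt,\bk)$. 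Your route has the aesthetic advantage of avoiding Mostow's splitting theorem entirely; the paper's route has the advantage of reducing the ``non-formal input'' to a single citable theorem, and it only needs full faithfulness rather than an equivalence.

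The step you flagged yourself is indeed the one to be careful with. An equivalence is more than you need: to compute the $\Hom$-spaces you only need the inflation functor to be \emph{fully faithful} on your objects, which is weaker and is in fact implied by the paper's argument (restriction to the Mostow complement inverts inflation up to the iso $H\cong A_G(x)$, so full faithfulness of restriction pushes full faithfulness of inflation). Also, the fibration $BG_x^\circ\to BG_x\to BA_G(x)$ is a topological heuristic; the Bernstein--Lunts equivariant derived category is defined via algebraic approximations to classifying spaces, so the contractible-fiber argument needs to be implemented inside that framework, not just asserted at the level of homotopy types. Neither of these is a genuine gap, but if you want the proof to be self-contained you should replace the informal appeal to ``contractibility of $BG_x^\circ$'' with a precise reference (the cleanest being exactly the Mostow-plus-Bernstein--Lunts combination the paper uses).
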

\begin{proof}
Choose a point $x \in \cO$, and consider its stabilizer $G_x$.  Since $\cO \cong G/G_x$, we have an equivalence of categories $\Db_G(\cO, \bk) \cong \Db_{G_x}(\pt, \bk)$.  Since $\cO$ is distinguished and $G$ is semisimple, the identity component $G_x^\circ$ is a unipotent group. By~\cite[Theorem~6.1]{mostow}, there exists a subgroup $H \subset G_x$ such that multiplication induces an isomorphism $G_x \cong H \ltimes G_x^\circ$; in particular, this implies that $H \cong A_G(x)$. By~\cite[Theorem~3.7.3]{bl}, restriction induces a fully faithful functor $D_{G_x}(\pt, \bk) \to D_H(\pt, \bk)$.
Since $H$ is a finite group, the equivariant derived category $D_H(\pt, \bk)$ is just the derived category of $H$-representations, see~\cite[Theorem~8.3.1]{bl}.  Since $\ell$ does not divide the order of $H \cong A_G(x)$, the category of $H$-representations is semisimple, which implies our claim.
\end{proof}

\begin{prop}\label{prop:zcusp-vanish}
Let $(\cO,\cE) \in \fN^\zcusp_{G,\bk}$, and assume that~\eqref{eqn:clean-restate} holds for $(\cO,\cE)$. Then for all $(\cO',\cE') \in \fN_{G,\bk}$ we have
\begin{equation*}
\Ext^1_{\Perv_G(\cN_G,\bk)}(\IC(\cO,\cE),\IC(\cO',\cE')) = \Ext^1_{\Perv_G(\cN_G,\bk)}(\IC(\cO',\cE'),\IC(\cO,\cE))=0.
\end{equation*}
In particular, $\IC(\cO,\cE)$ is a projective and injective object of $\Perv_G(\cN_G,\bk)$.
\end{prop}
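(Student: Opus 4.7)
The strategy is to use cleanness to push the Ext computations onto the orbit $\cO$ itself, where $\Db_G(\cO,\bk)$ is semisimple by Lemma~\ref{lem:dist-semis}. After reducing to $G$ semisimple (using that $Z(G)^\circ$ acts trivially on $\cN_G$), we observe that $\cO$ is distinguished---this follows from the classification of characteristic-zero cuspidal pairs (transported to $\bk$ via $\theta_G$), which live on distinguished orbits---so Lemma~\ref{lem:dist-semis} does apply to $\Db_G(\cO,\bk)$.

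By cleanness~\eqref{eqn:clean-restate}, $\IC(\cO,\cE) \cong j_{\cO!}\cE[\dim\cO] \cong j_{\cO*}\cE[\dim\cO]$, where $j_\cO:\cO \hookrightarrow \cN_G$ is the inclusion. The adjunctions $j_{\cO!}\dashv j_\cO^!$ and $j_\cO^* \dashv j_{\cO*}$ then yield
\[
\Ext^1(\IC(\cO,\cE),\IC(\cO',\cE')) \cong \Hom^{1-\dim\cO}_{\Db_G(\cO,\bk)}(\cE,\, j_\cO^!\IC(\cO',\cE')),
\]
which, by semisimplicity of $\Db_G(\cO,\bk)$, computes the multiplicity of $\cE$ as a $G$-equivariant summand of a single cohomology sheaf of the costalk $j_\cO^!\IC(\cO',\cE')$.

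The case analysis runs as follows. If $\cO'=\cO$, the computation reduces directly to $\Ext^1_{\Db_G(\cO,\bk)}(\cE,\cE')=0$. If $\cO\not\subset\overline{\cO'}$, the costalk $j_\cO^!\IC(\cO',\cE')$ vanishes and the result is trivial. The main obstacle is the remaining case $\cO\subsetneq\overline{\cO'}$. Here, if $\cE$ and $\cE'$ have distinct central characters the vanishing follows from Lemma~\ref{lem:central-char-1}; otherwise Proposition~\ref{prop:unicity-cuspidal} forces $(\cO',\cE')$ to lie in some induction series $(L,\cO_L,\cE_L)$ with $L\neq G$. Cuspidality of $(\cO,\cE)$ together with the adjunction ${}'\Res_{L\subset P}^G\dashv\Ind_{L\subset P}^G$ gives $\Hom^*(\IC(\cO,\cE),\Ind_{L\subset P}^G\cM)=0$ for every $\cM$; applying this with $\cM=\IC(\cO_L,\cE_L)$ and combining with the long exact sequence attached to $0\to K \to \Ind_{L\subset P}^G\IC(\cO_L,\cE_L)\to\IC(\cO',\cE')\to 0$, together with the costalk control supplied by cleanness and the semisimplicity of $\Db_G(\cO,\bk)$, yields the desired $\Ext^1$ vanishing.

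For the opposite Ext direction I would invoke Verdier duality: $\Ext^1(\IC(\cO',\cE'),\IC(\cO,\cE))$ equals $\Ext^1(\IC(\cO,\cE^\vee),\IC(\cO',(\cE')^\vee))$, and since $(\cO,\cE^\vee)$ is also $0$-cuspidal and clean, the previous argument applies verbatim. Finally, projectivity and injectivity of $\IC(\cO,\cE)$ in $\Perv_G(\cN_G,\bk)$ are immediate corollaries: since $\Perv_G(\cN_G,\bk)$ is a finite-length abelian category (its simples being parametrized by the finite set $\fN_{G,\bk}$), the vanishing of $\Ext^1(\IC(\cO,\cE),S)$ for every simple $S$ implies projectivity by devissage, and dually for injectivity.
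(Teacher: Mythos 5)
Your overall strategy matches the paper's: use cleanness and adjunction to push the $\Hom$ computation down to $\Db_G(\cO,\bk)$, where semisimplicity (Lemma~\ref{lem:dist-semis}) does the work, and handle non-cuspidal $(\cO',\cE')$ via the induction functor and a long exact sequence. The reduction to $G$ semisimple, the observation that $\cO$ is distinguished, the central-character argument, and the final devissage to projectivity/injectivity are all in line with the paper.

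However, the ``main obstacle'' case contains a genuine gap. From the short exact sequence $0 \to K \to \Ind_{L\subset P}^G\IC(\cO_L,\cE_L) \to \IC(\cO',\cE') \to 0$ and the vanishing $\Hom^k(\IC(\cO,\cE),\Ind) = 0$, the long exact sequence gives an isomorphism $\Hom^k(\IC(\cO,\cE),\IC(\cO',\cE')) \cong \Hom^{k+1}(\IC(\cO,\cE),K)$; to conclude you would need control of $\Hom^{k+1}(\IC(\cO,\cE),K)$, but the composition factors of $K$ can include simples in the \emph{same} induction series as $(\cO',\cE')$ --- including $\IC(\cO',\cE')$ itself when it occurs with multiplicity $>1$ --- so the phrase ``together with the costalk control supplied by cleanness and the semisimplicity of $\Db_G(\cO,\bk)$, yields the desired $\Ext^1$ vanishing'' is not a proof. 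The paper closes this loop by (a) replacing $\IC(\cO',\cE')$ with the entire semisimple head $\cF$ of the induced perverse sheaf, so every simple in the series is a summand of $\cF$; (b) running a \emph{downward} induction on the partial order $\preceq_G$ so that Lemma~\ref{lem:ind-series-po} controls the composition factors of $\cF'=\ker(\Ind \twoheadrightarrow \cF)$ (they lie either in the same series, and hence are summands of $\cF$, or in strictly larger ones, covered by the inductive hypothesis); and (c) taking the largest $m$ with $\Hom^m(\IC(\cO,\cE),\cF)\ne 0$ and deriving a contradiction from $\Hom^m(\IC(\cO,\cE),\Ind)=0$ together with $\Hom^{m+1}(\IC(\cO,\cE),\cF')=0$. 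Without this inductive structure your argument does not close. (Also note the paper proves the stronger statement $\Hom^k=0$ for all $k>0$, which is what makes the downward induction and the ``largest $m$'' argument work; restricting attention to $\Ext^1$ from the start is not enough.)
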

\begin{proof}
We prove the vanishing of $\Ext^1_{\Perv_G(\cN_G,\bk)}(\IC(\cO,\cE),\IC(\cO',\cE'))$; the other statement can be proved by similar arguments. As noted in the proof of Lemma~\ref{lem:cogood-reduction}, we have an equivalence $\Perv_{G}(\cN_G,\bk) \cong \Perv_{G/Z(G)^\circ}(\cN_{G/Z(G)^\circ},\bk)$, so we can assume that $G$ is semisimple. In this case, we will prove that
\begin{equation}
\label{eqn:zcusp-vanish}
\Hom^k_{\Db_G(\cN_G,\bk)}(\IC(\cO,\cE),\IC(\cO',\cE')) = 0 \qquad \text{for $k>0$.}
\end{equation}
By~\cite[Remarque~3.1.17(ii)]{bbd}, this will imply the proposition. Note that, by~\cite[Proposition~2.6]{genspring2}, $\cO$ is a distinguished nilpotent orbit. For brevity, we write simply $\Hom^k$ instead of $\Hom^k_{\Db_G(\cN_G,\bk)}$.

If $(\cO',\cE') = (\cO,\cE)$, then by~\eqref{eqn:clean-restate} and adjunction, we have $\Hom^k(\IC(\cO,\cE), \IC(\cO,\cE)) \cong \Hom^k(\cE,\cE)$.  The latter vanishes for $k > 0$ by Lemma~\ref{lem:dist-semis}.

Assume henceforth that $(\cO',\cE') \neq (\cO,\cE)$.  We proceed by downward induction (with respect to $\preceq_G$) on the induction series to which $(\cO',\cE')$ belongs.  The base case is that in which $(\cO',\cE')$ is cuspidal. Since $(\cO,\cE)$ is cuspidal (see Lemma~\ref{lem:comparison-cusp}), $(\cO,\cE)$ and $(\cO',\cE')$ have distinct central characters by Proposition~\ref{prop:unicity-cuspidal}.  Then Lemma~\ref{lem:centralchar-decomp} implies that
$\Hom^k(\IC(\cO,\cE),\IC(\cO',\cE')) 
= 0$
for all $k$, as desired.

Next, consider a series $\fN^{(L,\cO_L,\cE_L)}_{G,\bk}$ with $L \neq G$.  Let $\cF$ denote the head of $\Ind_{L\subset P}^G (\IC(\cO_L,\cE_L))$, and let $\cF'$ denote the kernel of the surjection $\Ind_{L \subset P}^G (\IC(\cO_L,\cE_L)) \twoheadrightarrow \cF$.  Using~\eqref{eqn:clean-restate} and adjunction, we have
\[
\Hom^k(\IC(\cO,\cE), \cF) \cong \Hom^k(\cE[\dim \cO], j_\cO^!\cF),
\]
where as above $j_\cO: \cO \hookrightarrow \cN$ is the inclusion map. By Lemma~\ref{lem:dist-semis}, the latter $\Hom$-group vanishes for all but finitely many values of $k$. Suppose that it is not always zero, and let $m$ be the largest integer such that $\Hom^m(\IC(\cO,\cE), \cF) \ne 0$. Of course, $\cF$ is a semisimple object that does not contain $\IC(\cO,\cE)$ as a summand (by Lemma~\ref{lem:comparison-cusp}), so $\Hom^0(\IC(\cO,\cE), \cF) = 0$.  In other words, we must have $m > 0$.

By Lemma~\ref{lem:ind-series-po}, each composition factor $\cG$ of $\cF'$ is either in the series associated to $(L,\cO_L,\cE_L)$---in which case it is isomorphic to a direct summand of $\cF$---or in a larger series, for which the conclusion of the proposition is already known to hold. In either case, we have that $\Hom^{m+1}(\IC(\cO,\cE),\cG) = 0$, and hence $\Hom^{m+1}(\IC(\cO,\cE),\cF') = 0$. Now consider the long exact sequence
\[
\cdots \to \Hom^m(\IC(\cO,\cE), \Ind_{L \subset P}^G (\IC(\cO_L,\cE_L))) \to \Hom^m(\IC(\cO,\cE), \cF)
\to \Hom^{m+1}(\IC(\cO,\cE), \cF') \to \cdots
\]
The first term vanishes by adjunction and Lemma~\ref{lem:comparison-cusp}, and we have already seen that the last term vanishes. But then the middle term vanishes as well, contradicting our assumption.

Thus, $\Hom^k(\IC(\cO,\cE),\cF) = 0$ for all $k > 0$.  Objects of the form $\IC(\cO',\cE')$ with $(\cO',\cE') \in \fN^{(L,\cO_L,\cE_L)}_{G,\bk}$ are precisely the direct summands of $\cF$, so we conclude that $\Hom^k(\IC(\cO,\cE),\IC(\cO',\cE')) = 0$ for all $k > 0$ and all $(\cO',\cE') \in \fN^{(L,\cO_L,\cE_L)}_{G,\bk}$.
%
\end{proof}

\subsection{Decomposition of $\Perv_G(\cN_G, \bk)$ according to induction $0$-series}

Note that, under our assumption that $\ell$ is rather good for $G$, the category of $G$-equivariant $\bk$-local systems on $\cO$ is semisimple for any $G$-orbit $\cO \subset \cN_G$. Using~\cite[Remark~3 after Theorem~3.2.1]{bgs}, it follows that the category $\Perv_G(\cN_G,\bk)$ has enough projective objects.
By standard arguments this implies that $\Perv_G(\cN_G,\bk)$ is equivalent to the category of finite dimensional modules over some finite dimensional $\bk$-algebra; in particular,
it makes sense to consider projective covers and injective hulls in this category.

\begin{prop}\label{prop:projective}
Assume that 
Conjecture~{\rm \ref{conj:clean}} is true for all Levi subgroups of $G$. If $(\cO,\cE) \in \fN^{\zs(L,\cO_L,\cE_L)}_{G,\bk}$, then every composition factor of the projective cover or the injective hull of $\IC(\cO,\cE)$ is of the form $\IC(\cO',\cE')$ for some $(\cO',\cE') \in \fN^{\zs(L,\cO_L,\cE_L)}_{G,\bk}$.
\end{prop}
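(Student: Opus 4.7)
The plan is to prove the statement for projective covers; the injective-hull version then follows by Verdier duality, exploiting stability of $0$-series under local-system dualisation (verifiable on the $\K$-side via the compatibility of $\bT_\fg$ with Verdier duality, transported to $\bk$ through $\theta_G$; central characters are preserved up to inversion). First I would apply Lemma~\ref{lem:cogood-reduction}(\ref{it:cogood-reduc-sum}) to reduce to the case where $G$ is either semisimple of type $A$ or quasi-simple, simply connected, and not of type $A$. In the type $A$ case, Proposition~\ref{prop:0-series-A}(\ref{it:0-ser-sum}) already provides a block decomposition of $\Perv_G(\cN_G,\bk)$ indexed by $0$-series, from which the conclusion is immediate.

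For the remaining case I would proceed by induction on $\dim G$ (making the proposition available for every proper Levi of $G$), restrict to pairs with a fixed central character $\chi$ via Lemma~\ref{lem:centralchar-decomp}, and arrange the $0$-cuspidal data with central character $\chi$ as a totally ordered chain $(L_1,\cO_1,\cE_1)\prec_G\cdots\prec_G(L_r,\cO_r,\cE_r)$ via Lemma~\ref{lem:0-series-total}. Inside this chain I run a second, downward induction on $i$: prove the proposition for simples in $\fN^{\zs(L_i,\cO_i,\cE_i)}_{G,\bk}$, assuming it for simples in $\fN^{\zs(L_j,\cO_j,\cE_j)}_{G,\bk}$ whenever $j>i$. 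Given $(\cO,\cE)\in\fN^{\zs(L_i,\cO_i,\cE_i)}_{G,\bk}$, Theorem~\ref{thm:0-series} provides $(M,\cO_M,\cE_M)\in\fM^{\zs(L_i,\cO_i,\cE_i)}_{G,\bk}$ such that $\IC(\cO,\cE)$ is a quotient of $\Ind_{M\subset Q}^G\IC(\cO_M,\cE_M)$. If $M=G$, Proposition~\ref{prop:simply-conn} forces $(\cO_M,\cE_M)$ to be $0$-cuspidal, so $L_i=G$, $\fN^{\zs(L_i,\cO_i,\cE_i)}_{G,\bk}=\{(\cO_M,\cE_M)\}$, and Proposition~\ref{prop:zcusp-vanish} (applicable because cleanness is assumed for $G$ itself) shows $\IC(\cO,\cE)$ is already projective. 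Otherwise $M\subsetneq G$, and the induction hypothesis on $\dim G$ produces a projective cover $P_M$ of $\IC(\cO_M,\cE_M)$ in $\Perv_M(\cN_M,\bk)$ whose composition factors all lie in $\fN^{\zs(L_i,\cO_i,\cE_i)}_{M,\bk}$.

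Since $\Ind_{M\subset Q}^G$ is exact and has an exact right adjoint on perverse sheaves (a standard consequence of~\cite[\S2.1]{genspring1}), it preserves projectives, so $\Ind_{M\subset Q}^G P_M$ is projective in $\Perv_G(\cN_G,\bk)$ and surjects onto $\IC(\cO,\cE)$; by Corollary~\ref{cor:ind-0-series-po} applied to each composition factor of $P_M$, its composition factors lie in $\fN^{\zs(L_j,\cO_j,\cE_j)}_{G,\bk}$ for various $j\geq i$. To isolate the index-$i$ part, I would combine the downward hypothesis at each $j>i$ with its Verdier-dual injective-hull form: the injective hull of any simple in $\fN^{\zs(L_j,\cO_j,\cE_j)}_{G,\bk}$ then has composition factors confined to that same $0$-series, which via the long exact sequence for $\Hom(\IC(\cO',\cE'),-)$ applied to this injective hull yields $\Ext^1_{\Perv_G(\cN_G,\bk)}(\IC(\cO',\cE'),\IC(\cO'',\cE''))=0$ for $(\cO',\cE')\in\fN^{\zs(L_i,\cO_i,\cE_i)}_{G,\bk}$ and $(\cO'',\cE'')\in\fN^{\zs(L_j,\cO_j,\cE_j)}_{G,\bk}$ with $j>i$; the opposite direction of $\Ext^1$ vanishes by the symmetric argument using the projective-cover form of the hypothesis.

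These Ext-vanishings split $\Ind_{M\subset Q}^G P_M$ as a direct sum of projectives indexed by the $0$-series $\fN^{\zs(L_j,\cO_j,\cE_j)}_{G,\bk}$ with $j\geq i$; the summand at index $i$ contains $\IC(\cO,\cE)$, hence its projective cover as a direct summand, and has composition factors confined to $\fN^{\zs(L_i,\cO_i,\cE_i)}_{G,\bk}$, completing the induction. The main technical obstacles I anticipate are (i) checking that $\Ind_{M\subset Q}^G$ preserves projective objects of $\Perv_G(\cN_G,\bk)$, and (ii) verifying the compatibility of Verdier duality with the $0$-series partition via $\theta_G$; both should follow formally from the framework of~\cite{genspring1} and the compatibility of Fourier--Sato transform with Verdier duality, but will require some careful bookkeeping.
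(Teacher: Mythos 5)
Your overall strategy is sound and close in spirit to the paper's, but the paper packages the argument more economically and avoids a bookkeeping headache that your route incurs.

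The paper also reduces via Lemma~\ref{lem:cogood-reduction}, handles type $A$ by Proposition~\ref{prop:0-series-A}\eqref{it:0-ser-sum}, and then runs a downward induction on $(L,\cO_L,\cE_L)$ with respect to $\preceq_G$ (your explicit appeal to the total order of Lemma~\ref{lem:0-series-total} is harmless but unnecessary, since a general $\preceq_G$-downward induction suffices). The substantive differences are two. First, the paper treats projective covers and injective hulls \emph{in parallel} inside the same induction, rather than deriving the injective-hull statement from the projective one by Verdier duality. To exclude a composition factor $\IC(\cO',\cE')$ of the projective cover $\mathcal{P}$ lying in a higher $0$-series, the paper simply observes that $[\mathcal{P}:\IC(\cO',\cE')]=[\mathcal{I}(\IC(\cO',\cE')):\IC(\cO,\cE)]$, and the right-hand side vanishes by the inductive hypothesis for the injective hull; no Ext-vanishing or direct-sum splitting is needed. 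Your plan to obtain the injective-hull version from Verdier duality is workable in principle, but duality sends $\cE\mapsto\cE^\vee$ and inverts the central character. Since you have already restricted to a fixed central-character block $\Perv_G(\cN_G,\bk)_\chi$, the dualized projective-cover hypothesis lives in the block for $\chi^{-1}$, so the downward induction must be run simultaneously over the $\chi$- and $\chi^{-1}$-chains (these are matched, order-preservingly, by $\cE_j\mapsto\cE_j^\vee$). This is exactly the ``careful bookkeeping'' you flag; the paper sidesteps it entirely. Second, your derivation of the Ext-vanishing and subsequent splitting of $\Ind_{M\subset Q}^G P_M$ into $0$-series blocks is correct but does more work than necessary: the paper extracts the needed conclusion directly from Corollary~\ref{cor:ind-0-series-po} together with the Cartan-type symmetry above, without forming the direct sum decomposition (that decomposition is instead the subject of the subsequent Theorem~\ref{thm:zseries-decomp}, which uses Proposition~\ref{prop:projective} as an ingredient rather than the reverse). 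In short: your argument should go through once the cross-block duality is tracked carefully, but the paper's simultaneous treatment of projectives and injectives is more efficient and avoids the $\chi\leftrightarrow\chi^{-1}$ complication.
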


\begin{proof}
We proceed by induction on the semisimple rank of $G$.
We can assume $G$ is a semisimple group of the form described in Lemma~\ref{lem:cogood-reduction}.  If $G$ is a product of proper subgroups, then the proposition holds for its factors by induction, and hence for $G$.  Thus, it suffices to treat the cases where $G$ is semisimple of type $A$, or quasi-simple, simply connected, and not of type $A$. 

If $G$ is semisimple of type $A$, then the direct sum decomposition in Proposition~\ref{prop:0-series-A}\eqref{it:0-ser-sum} implies the result.

Assume henceforth that $G$ is quasi-simple, simply connected, and not of type $A$.  We proceed by downward induction with respect to $\preceq_G$ on the triple $(L,\cO_L,\cE_L)$. If $L = G$, so that $(\cO,\cE) \in \fN^\zcusp_{G,\bk}$, then we saw in Proposition~\ref{prop:zcusp-vanish} that $\IC(\cO,\cE)$ is its own projective cover and injective hull. Otherwise, suppose the proposition has already been established for all $(L', \cO_{L'}, \cE_{L'}) \succ_G (L,\cO_L,\cE_L)$.  Let $(\cO,\cE) \in \fN^{\zs(L,\cO_L,\cE_L)}_{G,\bk}$.  Here, $L \neq G$, so by Proposition~\ref{prop:simply-conn}, $(\cO,\cE)$ cannot be cuspidal.

We first claim that the projective cover $\mathcal{P}$ of $\IC(\cO,\cE)$ cannot contain any composition factor lying in a $0$-series $\fN^{\zs(L', \cO_{L'}, \cE_{L'})}_{G,\bk}$ with $(L', \cO_{L'}, \cE_{L'}) \succ_G (L,\cO_L,\cE_L)$.  Indeed, if it contained such a composition factor, say $\IC(\cO',\cE')$, then there would be a nonzero map $\mathcal{P} \to \mathcal{I}'$, where $\mathcal{I}'$ is the injective envelope of $\IC(\cO',\cE')$. But the existence of such a map would imply that $\IC(\cO,\cE)$ occurs as a composition factor in $\mathcal{I}'$, contradicting the fact (known inductively) that all its composition factors lie in $\fN^{\zs(L', \cO_{L'}, \cE_{L'})}_{G,\bk}$.

Since $(\cO,\cE)$ is not cuspidal, it lies in some induction series $\fN^{(M,\cO_M,\cE_M)}_{G,\bk}$ with $M \neq G$. By Theorem~\ref{thm:0-series}, replacing $(M,\cO_M,\cE_M)$ by a $G$-conjugate if necessary, we can assume that $L \subset M$ and that $(\cO_M,\cE_M)$ lies in $\fN^{\zs(L,\cO_L,\cE_L)}_{M,\bk}$. Let $\cF$ be the projective cover of $\IC(\cO_M,\cE_M)$ in $\Perv_M(\cN_M,\bk)$.  By induction, all composition factors of $\cF$ lie in $\fN^{\zs(L,\cO_L,\cE_L)}_{M,\bk}$.  Since $\Ind_{M \subset Q}^G$ has an exact right adjoint, it takes projectives to projectives.  Since $\Ind_{M \subset Q}^G$ is itself exact,
$\Ind_{M \subset Q}^G(\cF)$ is a projective perverse sheaf that has $\IC(\cO,\cE)$ as a quotient.  In particular, the projective cover $\mathcal{P}$ of $\IC(\cO,\cE)$ occurs as a direct summand of $\Ind_{M \subset Q}^G(\cF)$. By Corollary~\ref{cor:ind-0-series-po}, this implies that all composition factors of $\mathcal{P}$ lie in induction $0$-series $\fN^{\zs(L', \cO_{L'}, \cE_{L'})}_{G,\bk}$ with $(L', \cO_{L'}, \cE_{L'}) \succeq_G (L,\cO_L,\cE_L)$.  But we saw above that $(L', \cO_{L'}, \cE_{L'}) \succ_G (L,\cO_L,\cE_L)$ cannot occur, so all composition factors of $\mathcal{P}$ must indeed lie in $\fN^{\zs(L,\cO_L,\cE_L)}_{G,\bk}$.

The argument for injective hulls is similar.
\end{proof}

\begin{thm}\label{thm:zseries-decomp}
Assume that 
Conjecture~{\rm \ref{conj:clean}} is true for all Levi subgroups of $G$. Then we have
\[
\Perv_G(\cN_G,\bk) \cong
\bigoplus_{(L,\cO_L, \cE_L) \in \fM^0_{G,\bk}} \Serre(\fN^{\zs(L,\cO_L,\cE_L)}_{G,\bk}),
\]
where $\Serre(\fN^{\zs(L,\cO_L,\cE_L)}_{G,\bk})$ denotes the Serre subcategory generated by the simple perverse sheaves $\IC(\cO,\cE)$ with $(\cO,\cE) \in \fN^{\zs(L,\cO_L,\cE_L)}_{G,\bk}$.
\end{thm}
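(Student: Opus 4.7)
The plan is to deduce the direct sum decomposition from Proposition~\ref{prop:projective} by a standard block-theoretic argument. Since $\Perv_G(\cN_G,\bk)$ is equivalent to the category of finite dimensional modules over a finite dimensional $\bk$-algebra, it suffices to exhibit a partition of the set of (isomorphism classes of) simple objects such that there are no nontrivial extensions between simples in distinct parts of the partition; the decomposition into Serre subcategories is then automatic.

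First, I would recall that by Theorem~\ref{thm:0-series} (combined with Theorem~\ref{thm:main-intro} or equivalently equation~\eqref{eqn:0-series-equality}), the induction $0$-series $\fN_{G,\bk}^{\zs(L,\cO_L,\cE_L)}$ indexed by $(L,\cO_L,\cE_L) \in \fM^0_{G,\bk}$ partition $\fN_{G,\bk}$. Therefore every simple object $\IC(\cO,\cE)$ of $\Perv_G(\cN_G,\bk)$ lies in exactly one subcategory $\Serre(\fN^{\zs(L,\cO_L,\cE_L)}_{G,\bk})$.

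Next, I would prove that for two non-$G$-conjugate $0$-cuspidal data $(L,\cO_L,\cE_L)$ and $(L',\cO_{L'},\cE_{L'})$, and for pairs $(\cO,\cE) \in \fN^{\zs(L,\cO_L,\cE_L)}_{G,\bk}$ and $(\cO',\cE') \in \fN^{\zs(L',\cO_{L'},\cE_{L'})}_{G,\bk}$, we have
\[
\Ext^1_{\Perv_G(\cN_G,\bk)}\bigl(\IC(\cO,\cE), \IC(\cO',\cE')\bigr) = 0.
\]
To see this, let $\mathcal{P}$ be the projective cover of $\IC(\cO,\cE)$ and consider the short exact sequence $0 \to K \to \mathcal{P} \to \IC(\cO,\cE) \to 0$. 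By Proposition~\ref{prop:projective}, every composition factor of $\mathcal{P}$ lies in $\fN^{\zs(L,\cO_L,\cE_L)}_{G,\bk}$, and hence so does every composition factor of $K$. Since $\IC(\cO',\cE')$ lies in a different $0$-series, $\Hom(K, \IC(\cO',\cE'))=0$, and the long exact sequence obtained by applying $\Hom(-,\IC(\cO',\cE'))$ yields the desired vanishing.

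Finally, given the Ext vanishing between simple objects in distinct series, one argues by induction on the length of $\cF \in \Perv_G(\cN_G,\bk)$ that $\cF$ decomposes canonically as $\bigoplus_{(L,\cO_L,\cE_L)} \cF^{(L,\cO_L,\cE_L)}$, where $\cF^{(L,\cO_L,\cE_L)}$ is the largest subobject all of whose composition factors belong to $\fN^{\zs(L,\cO_L,\cE_L)}_{G,\bk}$; naturality of this decomposition in $\cF$ gives the direct sum of Serre subcategories. No single step is difficult here: the main work has already been done in Proposition~\ref{prop:projective}, whose proof (via downward induction on $\preceq_G$ and the reduction of Lemma~\ref{lem:cogood-reduction}) is the real heart of the matter, and the present theorem is essentially a formal consequence.
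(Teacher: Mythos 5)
Your proposal is correct and follows essentially the same path as the paper: the paper's own proof reduces the decomposition to the $\Ext^1$-vanishing between simple objects in distinct induction $0$-series, and deduces that vanishing ``immediately'' from Proposition~\ref{prop:projective}; you have simply made that immediacy explicit via the projective cover and the long exact sequence, and spelled out the standard passage from $\Ext^1$-vanishing to the Serre-subcategory direct sum.
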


\begin{proof}
Such a decomposition follows from the claim that
\[
\Ext^1_{\Perv_G(\cN_G,\bk)}(\IC(\cO,\cE), \IC(\cO',\cE')) = 0
\]
whenever $(\cO,\cE)$ and $(\cO',\cE')$ belong to different $0$-series.  This $\Ext^1$-vanishing is immediate from the information on projectives (or injectives) provided by Proposition~\ref{prop:projective}.
\end{proof}


\subsection{Some consequences of the decomposition}

We can now refine Corollary~\ref{cor:ind-0-series-po}.

\begin{cor}
\label{cor:ind-0-series-po-new}
Assume that Conjecture~{\rm \ref{conj:clean}} is true for all Levi subgroups of $G$.
Let $L \subset M \subset G$ be Levi subgroups, and let $P \subset Q \subset G$ be corresponding parabolic subgroups.  
\begin{enumerate}
\item 
\label{it:ind-0}
If $(\cO,\cE) \in \fN^{\zs(L,\cO_L,\cE_L)}_{M,\bk}$, then every composition factor of $\Ind_{M\subset Q}^G (\IC(\cO,\cE))$ lies in $\fN^{\zs(L,\cO_L,\cE_L)}_{G,\bk}$.
\item 
\label{it:res-0}
If $(\cO,\cE) \in \fN^{\zs(L,\cO_L,\cE_L)}_{G,\bk}$, then every composition factor of $\Res_{M\subset Q}^G (\IC(\cO,\cE))$ lies in $\fN^{\zs(L,\cO_L,\cE_L)}_{M,\bk}$.
\end{enumerate}
\end{cor}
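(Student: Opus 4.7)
The plan is to establish part~(1) first and then to deduce part~(2) from it via adjunction.

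For part~(2), suppose $\IC(\cO'',\cE'')$ is a composition factor of $\Res^G_{M\subset Q}\IC(\cO,\cE)$ with $(\cO'',\cE'') \in \fN^{\zs(L'',\cO_{L''},\cE_{L''})}_{M,\bk}$, and let $\mathcal{P}''$ denote its projective cover in $\Perv_M(\cN_M,\bk)$. The morphism $\mathcal{P}'' \to \Res^G_{M\subset Q}\IC(\cO,\cE)$ witnessing this composition factor yields, via the adjunction between $\Ind^G_{M\subset Q}$ and $\Res^G_{M\subset Q}$, a nonzero morphism $\Ind^G_{M\subset Q}\mathcal{P}'' \to \IC(\cO,\cE)$. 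Proposition~\ref{prop:projective} forces the composition factors of $\mathcal{P}''$ into $\fN^{\zs(L'',\cO_{L''},\cE_{L''})}_{M,\bk}$, and applying part~(1) to each of them shows that the composition factors of $\Ind^G_{M\subset Q}\mathcal{P}''$ all lie in $\fN^{\zs(L'',\cO_{L''},\cE_{L''})}_{G,\bk}$. Hence $(\cO,\cE)$ belongs to this $0$-series, so $(L,\cO_L,\cE_L) \sim_G (L'',\cO_{L''},\cE_{L''})$; Corollary~\ref{cor:order-0-Levi} upgrades this to $M$-conjugacy, giving $(\cO'',\cE'') \in \fN^{\zs(L,\cO_L,\cE_L)}_{M,\bk}$.

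For part~(1), Corollary~\ref{cor:ind-0-series-po} already guarantees that every composition factor of $\Ind^G_{M\subset Q}\IC(\cO,\cE)$ lies in $\fN^{\zs(L',\cO_{L'},\cE_{L'})}_{G,\bk}$ for some $(L',\cO_{L'},\cE_{L'}) \succeq_G (L,\cO_L,\cE_L)$. Theorem~\ref{thm:zseries-decomp} applied to $G$ produces a block decomposition
\[
\Ind^G_{M\subset Q}\IC(\cO,\cE) \;=\; \bigoplus_{(L',\cO_{L'},\cE_{L'}) \in \fM^0_{G,\bk}} \cF_{(L',\cO_{L'},\cE_{L'})},
\]
in which only summands indexed by data $\succeq_G (L,\cO_L,\cE_L)$ can be nonzero. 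To rule out the strict inequalities, the plan is to lift $(\cO,\cE)$ to characteristic zero through $\theta_M$: choose $\cE^\K$ with $\theta_M(\cO,\cE^\K) = (\cO,\cE)$ and an $\O$-form $\cE^\O$. Then $\K \otimes_\O \Ind^G_{M\subset Q}\IC(\cO,\cE^\O) \cong \Ind^G_{M\subset Q}\IC(\cO,\cE^\K)$ is semisimple with simple summands in the characteristic-zero induction series $\fN^{(L,\cO_L,\cE^\K_L)}_{G,\K}$, while $\bk \lotimes_\O \Ind^G_{M\subset Q}\IC(\cO,\cE^\O) \cong \Ind^G_{M\subset Q}(\bk \lotimes_\O \IC(\cO,\cE^\O))$ contains $\Ind^G_{M\subset Q}\IC(\cO,\cE)$ as a subquotient, since $\IC(\cO,\cE)$ occurs as a composition factor of $\bk \lotimes_\O \IC(\cO,\cE^\O)$. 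It then suffices to show that for each $\K$-summand $\IC(\cO'',\cE''_\K)$, the composition factors of the modular reduction of an $\O$-form all lie in $\fN^{\zs(L,\cO_L,\cE_L)}_{G,\bk}$.

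The main obstacle is precisely this last assertion, namely the block-compatibility of modular reduction with the block decomposition of Theorem~\ref{thm:zseries-decomp}; equivalently, that the decomposition matrix of $G$ is block-diagonal with respect to the partitions of $\fN_{G,\K}$ and $\fN_{G,\bk}$ into characteristic-zero induction series and $0$-series, respectively (matched by $\theta_G$). Under our cleanness hypothesis, I expect this to follow from a comparison with Juteau's block-diagonal decomposition-matrix results~\cite{juteau} applied to the relative Weyl groups $N_G(L)/L$, perhaps by reducing through Theorem~\ref{thm:0-series} to the case of the $0$-cuspidal pair $(\cO_L,\cE_L)$ itself (where \eqref{eqn:mod-reduction-0-cusp} identifies the modular reduction with $\IC(\cO_L,\cE_L)$), but working out a clean verification is the delicate technical point.
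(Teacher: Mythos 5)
The argument for part~(2) via projective covers and the adjunction $(\Ind,\Res)$ is sound and parallel to the paper's (the paper uses the block decomposition of $\Perv_M(\cN_M,\bk)$ from Theorem~\ref{thm:zseries-decomp} to extract a simple \emph{subobject} of $\Res^G_{M\subset Q}\IC(\cO,\cE)$ directly, rather than a map from a projective cover, but the two are essentially equivalent).

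Part~(1), however, has a genuine gap, which you acknowledge: the ``block-compatibility of modular reduction'' step is not proved, and you would be trying to establish a block-diagonality statement for the decomposition matrix that is comparable in difficulty to the corollary itself. The paper sidesteps this entirely by a different mechanism. Write $(\cO,\cE)\in\fN^{(N,\cO_N,\cE_N)}_{M,\bk}$ with $L\subset N\subset M$, and choose parabolics $P\subset R\subset Q$ with $R$ having Levi factor $N$. Then $\IC(\cO,\cE)$ is a quotient of $\Ind^M_{N\subset R\cap M}\IC(\cO_N,\cE_N)$. Now the key observation: if $\Ind^G_{M\subset Q}\IC(\cO,\cE)$ has a composition factor in the block $\Serre(\fN^{\zs(L',\cO_{L'},\cE_{L'})}_{G,\bk})$, then by Theorem~\ref{thm:zseries-decomp} it has a \emph{simple quotient} $\IC(\cO',\cE')$ in that block; by exactness and transitivity of induction, $\IC(\cO',\cE')$ is also a quotient of $\Ind^G_{N\subset R}\IC(\cO_N,\cE_N)$, hence lies in $\fN^{(N,\cO_N,\cE_N)}_{G,\bk}\cap\fN^{\zs(L',\cO_{L'},\cE_{L'})}_{G,\bk}$. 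By Theorem~\ref{thm:0-series}, the $0$-series containing the induction series of $(N,\cO_N,\cE_N)$ is uniquely determined, so \emph{all} nonzero blocks in your decomposition must coincide. This is the step you are missing: it is not enough to know that the relevant data are $\succeq_G(L,\cO_L,\cE_L)$; one must show that only one block can appear at all. Only afterward does one identify that block as the one for $(L,\cO_L,\cE_L)$ -- and here your lifting-to-$\K$ idea is on the right track, but is used for a much weaker purpose: one only needs to exhibit \emph{one} composition factor in $\fN^{\zs(L,\cO_L,\cE_L)}_{G,\bk}$, which is done by looking at the restriction of $\Ind^G_{M\subset Q}\IC(\cO,\cE)$ to the open induced orbit $\cO''=\mathrm{Ind}^G_M\cO$ (where the local system is a nonzero modular reduction of its $\K$-analogue, and semisimplicity of local systems in rather good characteristic lets one match summands against the $\K$-version). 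Without the uniqueness-of-block step, exhibiting one composition factor in the right series does not rule out others, so the proof does not close.
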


\begin{proof}
\eqref{it:ind-0}
Suppose $(\cO,\cE)$ belongs to the series $\fN_{M,\bk}^{(N,\cO_N,\cE_N)}$.  From Theorem~\ref{thm:0-series}, we know that $(L,\cO_L,\cE_L) \preceq_M (N,\cO_N,\cE_N)$, so we may assume that $L \subset N \subset M$.  Let $R \subset G$ be a parabolic subgroup with Levi factor $N$, and such that $P \subset R \subset Q$.  Our assumptions imply that $\IC(\cO,\cE)$ is a quotient of $\Ind_{N \subset R \cap M}^M \IC(\cO_N,\cE_N)$.

Now, let $(L',\cO_{L'}, \cE_{L'})$ be a $0$-cuspidal datum for $G$ such that $\Ind_{M \subset Q}^G \IC(\cO,\cE)$ contains a composition factor in the Serre subcategory $\Serre(\fN^{\zs(L',\cO_{L'},\cE_{L'})}_{G,\bk})$.  By Theorem~\ref{thm:zseries-decomp}, it must also have a simple quotient, say $\IC(\cO',\cE')$, belonging to $\Serre(\fN^{\zs(L',\cO_{L'},\cE_{L'})}_{G,\bk})$.  By exactness and transitivity of induction (see~\cite[Lemma~2.6]{genspring1}), $\IC(\cO',\cE')$ is also a quotient of $\Ind_{N \subset R}^G \IC(\cO_N,\cE_N)$.  In other words, we have
\[
(\cO',\cE') \in \fN_{G,\bk}^{(N,\cO_N,\cE_N)} \cap \fN_{G,\bk}^{\zs(L',\cO_{L'},\cE_{L'})}.
\]
Theorem~\ref{thm:0-series} tells us that $(L',\cO_{L'}, \cE_{L'})$ is the unique maximal $0$-cuspidal datum such that $(L',\cO_{L'},\cE_{L'}) \preceq_G (N,\cO_N,\cE_N)$.  In particular, the $0$-cuspidal datum $(L',\cO_{L'}, \cE_{L'})$ is uniquely determined; all composition factors of $\Ind_{M \subset Q}^G \IC(\cO,\cE)$ belong to the same $0$-series.  

Thus, to finish the proof, it suffices to show that $\Ind_{M \subset Q}^G \IC(\cO,\cE)$ has at least one composition factor in $\fN_{G,\bk}^{\zs(L,\cO_L,\cE_L)}$.  Let $\cE^\K$ be the $\K$-local system on $\cO$ such that $\theta_M(\cO, \cE^\K) = (\cO,\cE)$.  Let $\cO''$ be the induced nilpotent orbit $\mathrm{Ind}_M^G \cO$.  Recall that $\Ind_{M \subset Q}^G \IC(\cO,\cE)$ and $\Ind_{M \subset Q}^G \IC(\cO,\cE^\K)$ are both supported on $\overline{\cO''}$, so that the complexes
\[
\cL := \Ind_{M \subset Q}^G \IC(\cO,\cE)_{|\cO''}[-\dim \cO'']
\qquad\text{and}\qquad
\cL^\K := \Ind_{M \subset Q}^G \IC(\cO,\cE^\K)_{|\cO''}[-\dim \cO''],
\]
are in fact both local systems.  It is easy to see from the proof of~\cite[Corollary~2.15]{genspring1} that these local systems are nonzero, and that $\cL$ is none other than the modular reduction of $\cL^\K$.

Both $\cL$ and $\cL^\K$ are semisimple since $\ell$ is rather good. If $\cE''$ is a simple direct summand of $\cL$, then there is a corresponding direct summand $\cE^\K{}''$ of $\cL^\K$, with $\theta_G(\cO'', \cE^\K{}'') = (\cO'',\cE'')$.  The $\K$-version of the corollary (which is obvious by semisimplicity of all perverse sheaves under consideration) implies that $(\cO'',\cE'') \in \fN_{G,\bk}^{\zs(L,\cO_L,\cE_L)}$.  Since $\IC(\cO'',\cE'')$ is a composition factor of $\Ind_{M \subset Q}^G \IC(\cO,\cE)$, we are done.

\eqref{it:res-0} If $\Res_{M\subset Q}^G (\IC(\cO,\cE))$ has a composition factor in $\fN^{\zs(L',\cO_{L'},\cE_{L'})}_{M,\bk}$, then Theorem~\ref{thm:zseries-decomp} implies that it has a simple subobject $\IC(\cO',\cE')$ in $\fN^{\zs(L',\cO_{L'},\cE_{L'})}_{M,\bk}$.  By adjunction (see~\cite[\S 2.1]{genspring1}), there is a nonzero map
\[
\Ind_{M \subset Q}^G \IC(\cO',\cE') \to \IC(\cO,\cE),
\]
which must be surjective since its codomain is simple.
Part~\eqref{it:ind-0} of the corollary tells us that the first term must lie in $\Serre(\fN^{\zs(L',\cO_{L'},\cE_{L'})}_{G,\bk})$, so that $(L,\cO_L,\cE_L)$ and  $(L',\cO_{L'},\cE_{L'})$ must be $G$-conjugate.  Finally, Corollary~\ref{cor:order-0-Levi} tells us that these two $0$-cuspidal data are also $M$-conjugate.  Thus, every composition factor of $\Res_{M\subset Q}^G (\IC(\cO,\cE))$ lies in $\fN^{\zs(L,\cO_L,\cE_L)}_{M,\bk}$, as desired.
\end{proof}

\begin{prop}
\label{prop:0cusp-supercusp}
Assume that
Conjecture~{\rm \ref{conj:clean}} is true for all Levi subgroups of $G$.
\begin{enumerate}
\item 
\label{it:supercusp-0cusp}
A pair $(\cO,\cE) \in \fN_{G,\bk}$ is supercuspidal if and only if it is $0$-cuspidal.
\item 
\label{it:supercusp-0cusp-series}
Let $(L,\cO_L,\cE_L)$ be a supercuspidal datum. Then we have
\[
\fN^{\super(L,\cO_L,\cE_L)}_{G,\bk} = \fN^{\zs(L,\cO_L,\cE_L)}_{G,\bk}.
\]
\end{enumerate}
\end{prop}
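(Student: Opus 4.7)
The plan is to reduce both parts of the proposition to machinery already in place, namely Proposition~\ref{prop:zcusp-vanish} (projectivity and injectivity of $0$-cuspidal simples under the cleanness assumption) and Corollary~\ref{cor:ind-0-series-po-new}\eqref{it:ind-0} (tracking of composition factors of induced sheaves through induction $0$-series).

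For part~\eqref{it:supercusp-0cusp}, the forward direction is already contained in Lemma~\ref{lem:comparison-cusp}. For the converse, I would take $(\cO,\cE) \in \fN^\zcusp_{G,\bk}$ and observe that it is in particular cuspidal by Lemma~\ref{lem:comparison-cusp}; then by Proposition~\ref{prop:zcusp-vanish} (using the cleanness hypothesis for $G$) the simple object $\IC(\cO,\cE)$ is both projective and injective in $\Perv_G(\cN_G,\bk)$. Suppose for contradiction that $\IC(\cO,\cE)$ arises as a composition factor of some $\Ind_{L \subset P}^G(\cF)$ with $L \subsetneq G$ and $\cF \in \Perv_L(\cN_L,\bk)$. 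Then there exist subobjects $A_1 \subset A_2 \subset \Ind_{L \subset P}^G(\cF)$ with $A_2/A_1 \cong \IC(\cO,\cE)$. Projectivity splits the surjection $A_2 \twoheadrightarrow \IC(\cO,\cE)$, realizing $\IC(\cO,\cE)$ as a subobject of $\Ind_{L \subset P}^G(\cF)$, and injectivity then promotes this embedding to a direct summand, hence in particular to a quotient of $\Ind_{L \subset P}^G(\cF)$, contradicting cuspidality.

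For part~\eqref{it:supercusp-0cusp-series}, the inclusion $\fN^{\zs(L,\cO_L,\cE_L)}_{G,\bk} \subset \fN^{\super(L,\cO_L,\cE_L)}_{G,\bk}$ is Corollary~\ref{cor:comparison-0series-superseries}. For the reverse, I would apply part~\eqref{it:supercusp-0cusp} to upgrade $(L,\cO_L,\cE_L)$ to a $0$-cuspidal datum; the pair $(\cO_L,\cE_L)$ then trivially belongs to its own $0$-series $\fN^{\zs(L,\cO_L,\cE_L)}_{L,\bk}$ in $L$. Corollary~\ref{cor:ind-0-series-po-new}\eqref{it:ind-0}, applied with $M = L$ (permissible since cleanness is assumed for every Levi subgroup of $G$), immediately yields that every composition factor of $\Ind_{L \subset P}^G \IC(\cO_L,\cE_L)$ lies in $\fN^{\zs(L,\cO_L,\cE_L)}_{G,\bk}$, which is exactly the required containment $\fN^{\super(L,\cO_L,\cE_L)}_{G,\bk} \subset \fN^{\zs(L,\cO_L,\cE_L)}_{G,\bk}$.

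I do not anticipate any genuine obstacle once the earlier results are in hand: both parts are short unpackings of definitions. The one point worth flagging is purely categorical, namely that in a finite-length abelian category a projective-and-injective simple object appearing as a composition factor of some $X$ must in fact be a direct summand of $X$ (one use of projectivity, then one of injectivity). Once this is recorded, the distinction between ``composition factor'' (supercuspidality) and ``quotient'' (cuspidality) collapses for $0$-cuspidal simples, and part~\eqref{it:supercusp-0cusp} follows; part~\eqref{it:supercusp-0cusp-series} is then a direct invocation of Corollary~\ref{cor:ind-0-series-po-new}.
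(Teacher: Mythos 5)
Your proposal is correct. Part~\eqref{it:supercusp-0cusp-series} is exactly the paper's argument: the inclusion $\fN^{\zs(L,\cO_L,\cE_L)}_{G,\bk} \subset \fN^{\super(L,\cO_L,\cE_L)}_{G,\bk}$ from Corollary~\ref{cor:comparison-0series-superseries}, and the reverse from Corollary~\ref{cor:ind-0-series-po-new}\eqref{it:ind-0} with $M=L$. Where you genuinely diverge is in the converse direction of part~\eqref{it:supercusp-0cusp}. The paper argues by tracking series: a non-supercuspidal pair is a composition factor of some $\Ind_{L \subset P}^G(\IC(\cO_L,\cE_L))$ with $L \neq G$, and Corollary~\ref{cor:ind-0-series-po-new}\eqref{it:ind-0} places it in a $0$-series $\fN^{\zs(M,\cO_M,\cE_M)}_{G,\bk}$ with $M \neq G$, so it cannot be $0$-cuspidal. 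You instead run the contrapositive through Proposition~\ref{prop:zcusp-vanish}: a $0$-cuspidal simple is projective and injective, hence any occurrence as a composition factor of an induced perverse sheaf upgrades to a direct summand and then to a quotient, contradicting cuspidality. Both are valid; your route for part~\eqref{it:supercusp-0cusp} is slightly more economical in that it bypasses the full $0$-series machinery (Theorem~\ref{thm:zseries-decomp} and Corollary~\ref{cor:ind-0-series-po-new}) and rests only on Proposition~\ref{prop:zcusp-vanish}, though since part~\eqref{it:supercusp-0cusp-series} still needs Corollary~\ref{cor:ind-0-series-po-new} the overall set of prerequisites is unchanged. The one categorical point you flag --- that a projective-and-injective simple occurring as a composition factor of $X$ in a finite-length abelian category is a direct summand of $X$ --- is indeed all that is needed, and it is available here because $\Perv_G(\cN_G,\bk)$ has finite length; note also that your appeal to Proposition~\ref{prop:zcusp-vanish} is legitimate since the blanket hypothesis includes $G$ itself among its Levi subgroups, so \eqref{eqn:clean-restate} holds for the $0$-cuspidal pair in question.
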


\begin{proof}
\eqref{it:supercusp-0cusp}
By Lemma~\ref{lem:comparison-cusp}, a supercuspidal pair is necessarily $0$-cuspidal. On the other hand, if $(\cO,\cE)$ is not supercuspidal, then it is a composition factor of a perverse sheaf of the form $\Ind_{L \subset P}^G(\IC(\cO_L,\cE_L))$ with $L \neq G$. The pair $(\cO_L,\cE_L) \in \fN_{L,\bk}$ belongs to some induction $0$-series $\fN_{L,\bk}^{\zs(M,\cO_M,\cE_M)}$ with $M \subset L$. Then by Corollary~\ref{cor:ind-0-series-po-new}\eqref{it:ind-0} we have $(\cO,\cE) \in \fN_{G,\bk}^{\zs(M,\cO_M,\cE_M)}$, with $M \neq G$, so that this pair is not $0$-cuspidal.

\eqref{it:supercusp-0cusp-series}
The inclusion $\fN^{\zs(L,\cO_L,\cE_L)}_{G,\bk} \subset \fN^{\super(L,\cO_L,\cE_L)}_{G,\bk}$ was proved in
Corollary~\ref{cor:comparison-0series-superseries}. The other inclusion follows from Corollary~\ref{cor:ind-0-series-po-new}\eqref{it:ind-0} (applied to $M=L$).
\end{proof}

\subsection{Decomposition of $\Db_G(\cN_G, \bk)$ according to induction $0$-series}

\begin{thm}
\label{thm:decomposition-D-0series}
Assume that 
Conjecture~{\rm \ref{conj:clean}} is true for all Levi subgroups of $G$. Then we have
\[
\Db_G(\cN_G,\bk) \cong
\bigoplus_{(L,\cO_L,\cE_L) \in \fM^0_{G,\bk}} \Db_G(\cN_G,\bk)^{\zs(L,\cO_L,\cE_L)},
\]
where $\Db_G(\cN_G,\bk)^{\zs(L,\cO_L,\cE_L)}$ is the full triangulated subcategory of $\Db_G(\cN_G,\bk)$ generated by the perverse sheaves $\IC(\cO,\cE)$ with $(\cO,\cE) \in \fN_{G,\bk}^{\zs(L,\cO_L,\cE_L)}$.
\end{thm}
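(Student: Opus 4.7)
The plan is to lift the abelian-category decomposition of Theorem~\ref{thm:zseries-decomp} to the derived category. For each $d = (L, \cO_L, \cE_L) \in \fM^0_{G,\bk}$, I would introduce the auxiliary subcategory
\[
\widetilde{\cT}_d := \bigl\{\cF \in \Db_G(\cN_G,\bk) \,\big|\, {}^p\cH^i(\cF) \in \Serre(\fN^{\zs d}_{G,\bk}) \text{ for all } i \in \Z \bigr\}.
\]
Since perverse cohomology is cohomological and Serre subcategories are closed under subobjects, quotients, and extensions, $\widetilde{\cT}_d$ is a strict full triangulated subcategory. A standard d\'evissage using perverse truncations and composition series shows that $\widetilde{\cT}_d$ coincides with the triangulated subcategory generated by $\{\IC(\cO,\cE) : (\cO,\cE) \in \fN^{\zs d}_{G,\bk}\}$, namely the subcategory $\Db_G(\cN_G,\bk)^{\zs(L,\cO_L,\cE_L)}$ of the statement.

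The two things to check are then: (a) the subcategories $\widetilde{\cT}_d$ are mutually $\Hom$-orthogonal in all degrees; and (b) every object of $\Db_G(\cN_G,\bk)$ decomposes as a direct sum over the $\widetilde{\cT}_d$. For (b), I would proceed by induction on the cohomological amplitude of $\cF$: the base case of a shifted perverse sheaf is Theorem~\ref{thm:zseries-decomp}, and for the inductive step, a perverse truncation triangle writes $\cF$ as an extension of objects of smaller amplitude which decompose inductively; part (a) then guarantees that the extension class splits along the $d$-decomposition of its source and target, giving a compatible decomposition $\cF \cong \bigoplus_d \cF_d$.

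The real work is (a), which by d\'evissage reduces to
\[
\Hom^k_{\Db_G(\cN_G,\bk)}(\IC(\cO, \cE), \IC(\cO', \cE')) = 0 \quad \text{for all } k \in \Z,
\]
whenever $(\cO,\cE) \in \fN^{\zs d}_{G,\bk}$ and $(\cO',\cE') \in \fN^{\zs d'}_{G,\bk}$ with $d \ne d'$. I would prove this by induction on the semisimple rank of $G$, taking the theorem as known for all proper Levi subgroups. If $(\cO,\cE)$ is $0$-cuspidal then the vanishing is exactly what was established (for every $k \neq 0$) inside the proof of Proposition~\ref{prop:zcusp-vanish}; in the remaining case, Theorem~\ref{thm:0-series} exhibits $\IC(\cO,\cE)$ as a quotient of $\Ind_{M\subset Q}^G \IC(\cO_M,\cE_M)$ with $M \subsetneq G$ and $(\cO_M,\cE_M) \in \fN^{\zs d}_{M,\bk}$, and I would apply $\Hom(-, \IC(\cO',\cE'))$ to the associated exact triangle. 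The $\Ind$-term is handled by the adjunction $\Hom_{\Db_G}(\Ind_{M\subset Q}^G(\cdot), -) \cong \Hom_{\Db_M}(\cdot, {}'\Res_{M\subset Q}^G(-))$ together with Corollary~\ref{cor:ind-0-series-po-new}\eqref{it:res-0} (extended to the derived level via perverse cohomology), which forces ${}'\Res_{M\subset Q}^G \IC(\cO',\cE')$ to lie in the analogue of $\widetilde{\cT}_{d'}$ for $M$; the inductive hypothesis for $M$ then kills it. The kernel term is handled by an auxiliary induction on the length of its composition series, whose composition factors all lie in $\fN^{\zs d}_{G,\bk}$ by Corollary~\ref{cor:ind-0-series-po-new}\eqref{it:ind-0}. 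The hard part is precisely ensuring $\Hom$-vanishing uniformly in $k$: the abelian decomposition of Theorem~\ref{thm:zseries-decomp} only gives $\Ext^k_{\Perv_G}$-vanishing, and the natural map $\Ext^k_{\Perv_G} \to \Hom^k_{\Db_G}$ need not be an isomorphism for $k \ge 2$, so the bootstrap through cleanness (Proposition~\ref{prop:zcusp-vanish}) and through proper Levi subgroups is essential.
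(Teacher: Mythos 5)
Your proposal is correct in its essential structure, but it takes a genuinely different route from the paper's own proof, and it is worth drawing the contrast. The paper does \emph{not} induct on the semisimple rank of $G$: instead it fixes the two non-conjugate $0$-cuspidal data $d$ and $d'$, runs a downward induction on the \emph{pair} of induction series containing $(\cO,\cE)$ and $(\cO',\cE')$ with respect to $\preceq_G$, and argues by contradiction via a ``minimal $k$'' choice. To kill the critical term it exhibits $\IC(\cO,\cE)$ as a quotient of $\Ind_{M\subset P}^G(\IC(\cO_M,\cE_M))$ and $\IC(\cO',\cE')$ as a subobject of $\Ind_{M'\subset P'}^G(\IC(\cO_{M'},\cE_{M'}))$, and invokes the Mackey-type Lemma~\ref{lem:Hom-vanishing-Ind} to get $\Hom^k(\Ind_M^G(\cdots),\Ind_{M'}^G(\cdots))=0$ directly; only Corollary~\ref{cor:ind-0-series-po-new}\eqref{it:ind-0} is needed for the kernel and cokernel. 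Your plan replaces this with an induction on the semisimple rank (assuming the full derived decomposition for all proper Levi subgroups), a single short exact sequence on the $\IC(\cO,\cE)$ side, the adjunction between $\Ind$ and parabolic restriction, and Corollary~\ref{cor:ind-0-series-po-new}\eqref{it:res-0} to place the restriction of $\IC(\cO',\cE')$ in the correct $0$-series for $M$; a secondary induction on $k$ together with Corollary~\ref{cor:ind-0-series-po-new}\eqref{it:ind-0} handles the kernel term. Both arguments are sound; yours is arguably more streamlined (one triangle rather than two, no Mackey lemma) at the cost of invoking the theorem for all proper Levi subgroups. Two small remarks: the relevant adjunction here is $\Ind\dashv\Res$, not $\Ind\dashv{}'\Res$ as you wrote (the other adjunction is ${}'\Res\dashv\Ind$, which is what kills $\Hom^k(\IC(\cO,\cE),\Ind_{L\subset P}^G(\cdots))$ in Proposition~\ref{prop:zcusp-vanish}); and you should note that if $L'$ is not $G$-conjugate to a Levi subgroup of $M$ then $\Res_{M\subset Q}^G(\IC(\cO',\cE'))$ simply vanishes, so the appeal to Corollary~\ref{cor:ind-0-series-po-new}\eqref{it:res-0} makes sense in all cases. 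Also, since $\Res$ is exact on perverse sheaves, no passage through perverse cohomology is needed before applying that corollary.
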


For the proof of the theorem we will need the following lemma, whose proof (which does not require Conjecture~{\rm \ref{conj:clean}}) is a variant of the proof of~\cite[\mgdisjoint]{genspring3}.

\begin{lem}
\label{lem:Hom-vanishing-Ind}
If $(L,\cO_L,\cE_L)$ and $(M,\cO_M,\cE_M)$ are non-conjugate cuspidal data, and if $P,Q \subset G$ are parabolic subgroup with respective Levi subgroups $L$ and $M$, then we have
\[
\Hom^k_{\Db_G(\cN_G,\bk)}(\Ind_{L \subset P}^G(\IC(\cO_L,\cE_L)), \Ind_{M \subset Q}^G(\IC(\cO_M,\cE_M)))=0
\]
for any $k \in \Z$.
\end{lem}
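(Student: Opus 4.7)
The strategy is to adapt the argument of \mgdisjoint, reducing the desired vanishing to the ``same Levi, distinct cuspidal pairs'' case and then invoking Proposition~\ref{prop:unicity-cuspidal}. First I would apply the adjunction $(\Ind_{M \subset Q}^G, {}'\Res_{M \subset Q}^G)$ to rewrite
\[
\Hom^k_{\Db_G(\cN_G,\bk)}(\Ind_{M \subset Q}^G \IC(\cO_M,\cE_M), \Ind_{L \subset P}^G \IC(\cO_L,\cE_L)) \cong \Hom^k\bigl(\IC(\cO_M,\cE_M), {}'\Res_{M \subset Q}^G \Ind_{L \subset P}^G \IC(\cO_L,\cE_L)\bigr),
\]
so that the problem becomes the computation of ${}'\Res \Ind$ applied to a cuspidal simple perverse sheaf.

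Next I would invoke the Mackey formula \mgmackey, which provides a filtration of the composition ${}'\Res_{M \subset Q}^G \Ind_{L \subset P}^G$ whose subquotients are indexed by double coset representatives $g \in [Q \backslash G / P]$ and have the form
\[
\Ind_{M \cap {}^g L \subset M \cap {}^g P}^{M} \circ\, g_* \circ\, {}'\Res_{L \cap {}^{g^{-1}} M \subset L \cap {}^{g^{-1}} Q}^{L}.
\]
Since $\Hom^k(\IC(\cO_M,\cE_M),-)$ is cohomological, it suffices to prove that $\Hom^k$ from $\IC(\cO_M,\cE_M)$ into each such subquotient vanishes.

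Fix a double coset representative $g$. If $L \cap {}^{g^{-1}} M \neq L$, then the innermost functor is a restriction of a cuspidal perverse sheaf to a proper Levi of $L$; by the very definition of cuspidality (which is exactly the vanishing of ${}'\Res$ for every proper parabolic), the corresponding subquotient is zero. Suppose then that $L \subseteq {}^{g^{-1}} M$, i.e.\ ${}^g L \subseteq M$. The subquotient becomes, up to an overall shift, $\Ind_{{}^g L \subset {}^g P \cap M}^M (g_* \IC(\cO_L,\cE_L)) = \Ind_{{}^g L \subset {}^g P \cap M}^M \IC(g \cdot \cO_L, g \cdot \cE_L)$. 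Applying adjunction once more gives
\[
\Hom^k\bigl({}'\Res_{{}^g L \subset {}^g P \cap M}^M \IC(\cO_M,\cE_M), \IC(g \cdot \cO_L, g \cdot \cE_L)\bigr).
\]
If ${}^g L \subsetneq M$, cuspidality of $(\cO_M,\cE_M)$ kills this. The only surviving case is ${}^g L = M$, and then $g \cdot (\cO_L,\cE_L)$ and $(\cO_M,\cE_M)$ are both cuspidal pairs on the Levi $M$. Because $(L,\cO_L,\cE_L)$ and $(M,\cO_M,\cE_M)$ are not $G$-conjugate, these two cuspidal pairs on $M$ are distinct; by Proposition~\ref{prop:unicity-cuspidal} they therefore have distinct central characters of $Z(M)/Z(M)^\circ$, and so Lemma~\ref{lem:centralchar-decomp} forces the $\Hom^k$-group to vanish for every $k$.

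The main obstacle will be a bookkeeping one, namely verifying that the Mackey filtration from \mgmackey really has the asserted shape so that the cuspidality of the two pairs gets used in the two different steps above, and checking that the reduction to subquotients is valid for all $\Hom^k$ simultaneously (not merely for morphisms in the perverse heart). Both points are standard once the Mackey formula is in hand; no new geometric input beyond Proposition~\ref{prop:unicity-cuspidal} is required.
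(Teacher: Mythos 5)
Your proof is correct and uses the same essential ingredients as the paper's: adjunction, the Mackey formula from \mgmackey, vanishing of restriction on cuspidal objects, and distinctness of central characters via Proposition~\ref{prop:unicity-cuspidal}. The one organizational difference is that the paper first uses the fact that induction commutes with Verdier duality to reduce to the case where the semisimple rank of $M$ is at most that of $L$, and separately disposes of the conjugate-Levi case by central characters; after that, \emph{every} Mackey term has $M \cap {}^{g_i}L \subsetneq {}^{g_i}L$, so the cuspidality of $(\cO_L,\cE_L)$ alone kills everything. You skip the duality/rank reduction and instead confront the ``bad'' double cosets (those with ${}^gL \subseteq M$) head on: a second adjunction and the cuspidality of $(\cO_M,\cE_M)$ handle ${}^gL \subsetneq M$, and central characters handle ${}^gL = M$. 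Both routes work; the paper's is marginally shorter because the rank reduction means cuspidality is only invoked on the $L$-side.

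Two small points worth tidying. In your second adjunction, passing $\Ind_{{}^gL\subset {}^gP\cap M}^M$ to the other side produces the \emph{left} adjoint $\Res$, not ${}'\Res$ — this is harmless since both restriction functors annihilate a cuspidal simple perverse sheaf when the Levi is proper, but the notation should be corrected. And the final Hom-vanishing between two distinct cuspidal pairs on $M$ with distinct central characters is most directly Lemma~\ref{lem:central-char-1} (the decomposition Lemma~\ref{lem:centralchar-decomp} of course also gives it, but is stated for $G$ acting on $\cN_G$ rather than for a general Levi).
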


\begin{proof}
Using the fact that induction commutes with Verdier duality, it suffices to prove the lemma under the assumption that 
the semisimple rank of $M$ is less than or equal to the semisimple rank of $L$.

First, if $L$ and $M$ are $G$-conjugate, then Proposition~\ref{prop:unicity-cuspidal} implies that $\cE_L$ and $\cE_M$ have distinct central characters, and the vanishing follows from Lemma~\ref{lem:central-char-1}. Now we assume that $L$ and $M$ are not $G$-conjugate.
By adjunction we have
\begin{multline*}
\Hom^k_{\Db_G(\cN_G,\bk)}(\Ind_{L \subset P}^G(\IC(\cO_L,\cE_L)), \Ind_{M \subset Q}^G(\IC(\cO_M,\cE_M)))\cong \\
\Hom^k_{\Db_M(\cN_M,\bk)}({}' \Res_{M \subset Q}^G \circ \Ind_{L \subset P}^G(\IC(\cO_L,\cE_L)), \IC(\cO_M,\cE_M)).
\end{multline*}
Now we apply the `Mackey formula' for perverse sheaves on $\cN_G$, as stated in~\cite[\mgmackey]{genspring3}. With the notation of that theorem, for any $i \in \{1, \cdots, s\}$ the inclusion $M \cap {}^{g_i} L \subset {}^{g_i} L$ is strict: in fact if it is was not we would have ${}^{g_i} L \subset M$, hence ${}^{g_i} L = M$ by our assumption on semisimple ranks, which is impossible. Since $(\cO_L,\cE_L)$ is cuspidal, this implies that all the subquotients in the filtration vanish, hence that ${}' \Res_{M \subset Q}^G \circ \Ind_{L \subset P}^G(\IC(\cO_L,\cE_L))=0$. In turn, this clearly implies the desired vanishing. 
\end{proof}

\begin{proof}[Proof of Theorem~{\rm \ref{thm:decomposition-D-0series}}]
What we have to prove is that if $(L,\cO_L,\cE_L)$ and $(L',\cO_{L'},\cE_{L'})$ are non-conjugate $0$-cuspidal data, then for any $(\cO,\cE) \in \fN^{\zs(L,\cO_L,\cE_L)}_{G,\bk}$ and $(\cO',\cE') \in \fN_{G,\bk}^{\zs(L',\cO_{L'},\cE_{L'})}$ and any $k \in \Z$ we have
\begin{equation}
\label{eqn:vanishing-0series}
\Hom^k_{\Db_G(\cN_G,\bk)}(\IC(\cO,\cE), \IC(\cO',\cE'))=0.
\end{equation}
We will fix the non-conjugate $0$-cuspidal data $(L,\cO_L,\cE_L)$ and $(L',\cO_{L'},\cE_{M'})$, and prove~\eqref{eqn:vanishing-0series} by downward induction on the induction series to which $(\cO,\cE)$ and $(\cO',\cE')$ belong (using in particular Theorem~\ref{thm:0-series}). For brevity, we write simply $\Hom^k$ instead of $\Hom^k_{\Db_G(\cN_G,\bk)}$.

We consider series
$\fN_{G,\bk}^{(M,\cO_M,\cE_M)} \subset \fN_{G,\bk}^{\zs(L,\cO_L,\cE_L)}$ and $\fN_{G,\bk}^{(M',\cO_{M'},\cE_{M'})} \subset \fN_{G,\bk}^{\zs(L',\cO_{L'},\cE_{L'})}$, and assume that property~\eqref{eqn:vanishing-0series} is known for pairs which belong to series $\fN_{G,\bk}^{(N,\cO_N,\cE_N)} \subset \fN_{G,\bk}^{\zs(L,\cO_L,\cE_L)}$ and $\fN_{G,\bk}^{(N',\cO_{N'},\cE_{N'})} \subset \fN_{G,\bk}^{\zs(L',\cO_{L'},\cE_{L'})}$ such that either $(N,\cO_N,\cE_N) \succ_G (M,\cO_M,\cE_M)$ or $(N',\cO_{N'},\cE_{N'}) \succ_G (M',\cO_{M'},\cE_{M'})$ and any $k \in \Z$. (This assumption holds automatically in the case when the cuspidal datum $(M,\cO_M,\cE_M)$, resp.~$(M',\cO_{M'},\cE_{M'})$, is maximal 
with the property that $\fN_{G,\bk}^{(M,\cO_M,\cE_M)} \subset \fN^{\zs(L,\cO_L,\cE_L)}_{G,\bk}$, resp.~$\fN_{G,\bk}^{(M',\cO_{M'},\cE_{M'})} \subset \fN_{G,\bk}^{\zs(L',\cO_{L'},\cE_{L'})}$, which is the initial step of the induction.)
We also assume for a contradiction that~\eqref{eqn:vanishing-0series} does not hold for some $(\cO,\cE) \in \fN_{G,\bk}^{(M,\cO_M,\cE_M)}$, $(\cO',\cE') \in \fN_{G,\bk}^{(M',\cO_{M'},\cE_{M'})}$ and $k \in \Z$, and choose such a triple which satisfies
\[
\Hom^{k-1}(\IC(\widetilde{\cO},\widetilde{\cE}), \IC(\widetilde{\cO}',\widetilde{\cE}'))=0
\]
for any $(\widetilde{\cO},\widetilde{\cE}) \in \fN_{G,\bk}^{(M,\cO_M,\cE_M)}$ and $(\widetilde{\cO}',\widetilde{\cE}') \in \fN_{G,\bk}^{(M',\cO_{M'},\cE_{M'})}$. Choose also parabolic subgroups $P,P' \subset G$ with respective Levi subgroups $M$ and $M'$.
By definition of induction series and \cite[Lemma~2.3]{genspring2}, there exist short exact sequences of perverse sheaves
\begin{equation}
\label{eqn:ses}
\mathrm{Ker} \hookrightarrow \Ind_{M \subset P}^G(\IC(\cO_M,\cE_M)) \twoheadrightarrow \IC(\cO,\cE) \quad \text{and} \quad  \IC(\cO',\cE') \hookrightarrow \Ind_{M' \subset P'}^G(\IC(\cO_{M'},\cE_{M'})) \twoheadrightarrow \mathrm{Coker}.
\end{equation}
Consider the following part of the long exact sequence obtained by applying $\Hom(-, \Ind_{M' \subset P'}^G(\IC(\cO_{M'},\cE_{M'})))$ to the first exact sequence in~\eqref{eqn:ses}:
\begin{multline*}
\Hom^{k-1}(\mathrm{Ker},\Ind_{M' \subset P'}^G(\IC(\cO_{M'},\cE_{M'}))) \to \Hom^k(\IC(\cO,\cE), \Ind_{M' \subset P'}^G(\IC(\cO_{M'},\cE_{M'}))) \\
\to \Hom^k(\Ind_{M \subset P}^G(\IC(\cO_M,\cE_M)), \Ind_{M' \subset P'}^G(\IC(\cO_{M'},\cE_{M'}))).
\end{multline*}
Lemma~\ref{lem:Hom-vanishing-Ind} implies that the third term vanishes. Lemma~\ref{lem:ind-series-po} and Corollary~\ref{cor:ind-0-series-po-new}\eqref{it:ind-0}, together with our assumption and our choice of $k$, imply that
$\Hom^{k-1}(\cF,\cG)=0$
for any composition factor $\cF$ of $\mathrm{Ker}$ and $\cG$ of $\Ind_{M' \subset P'}^G(\IC(\cO_{M'},\cE_{M'}))$, so that
the first term also vanishes. We deduce that
\[
\Hom^k(\IC(\cO,\cE), \Ind_{M' \subset P'}^G(\IC(\cO_{M'},\cE_{M'})))=0.
\]
Now we consider the following part of the long exact sequence obtained by applying $\Hom(\IC(\cO,\cE),-)$ to the second exact sequence in~\eqref{eqn:ses}:
\[
\Hom^{k-1}(\IC(\cO,\cE),\mathrm{Coker}) \to \Hom^k(\IC(\cO,\cE),\IC(\cO',\cE')) \to \Hom^k(\IC(\cO,\cE),\Ind_{M' \subset P'}^G(\IC(\cO_{M'},\cE_{M'}))).
\]
We have seen that the third term vanishes, and the same arguments as above imply that the first term also vanishes. But the middle term is non-zero by assumption, which provides a contradiction and concludes the proof.
\end{proof}

\subsection{Semisimplicity criterion}
\label{ss:semisimplicity}

We conclude with the following semisimplicity criterion. 

\begin{prop}\label{prop:semisimple-crit}
Let $\bk$ be any field satisfying~\eqref{eqn:definitely-big-enough}.
The category $\Perv_G(\cN_G,\bk)$ is semisimple if and only if $\ell \nmid |W|$.
\end{prop}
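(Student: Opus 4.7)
The plan is to prove the two implications separately.

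For the `if' direction, assume $\ell\nmid|W|$. By Lemma~\ref{lem:rgood-basic}\eqref{it:rgood-easy}, $\ell$ is rather good for $G$ and for every Levi subgroup $L$, and since $N_G(L)/L$ is a subquotient of $W$ (cf.~\mgeqnweylisom) we have $\ell\nmid|N_G(L)/L|$ for every such $L$. Hence Theorem~\ref{thm:clean}\eqref{it:conj-clean-easy} gives Conjecture~\ref{conj:clean} for all Levi subgroups, and~\mgsemisimple\ implies that every induction functor $\Ind_{L\subset P}^G$ preserves semisimplicity. I claim that every simple object $\IC(\cO,\cE)\in\Perv_G(\cN_G,\bk)$ is projective, which forces the category to be semisimple. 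Given such a simple, let $(L,\cO_L,\cE_L)$ be the cuspidal datum of its induction series. Choose a $0$-cuspidal datum $(M,\cO_M,\cE_M)$ of $L$ such that $(\cO_L,\cE_L)\in\fN_{L,\bk}^{\zs(M,\cO_M,\cE_M)}$; since cleanness holds for $M$, Proposition~\ref{prop:0cusp-supercusp} shows that $(M,\cO_M,\cE_M)$ is supercuspidal, and Corollary~\ref{cor:comparison-0series-superseries} then realizes $\IC(\cO_L,\cE_L)$ as a composition factor of $\Ind_{M\subset P\cap L}^L\IC(\cO_M,\cE_M)$. By exactness and transitivity of induction, $\IC(\cO,\cE)$ is a composition factor of $\Ind_{M\subset P}^G\IC(\cO_M,\cE_M)$. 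Proposition~\ref{prop:zcusp-vanish} says $\IC(\cO_M,\cE_M)$ is projective in $\Perv_M(\cN_M,\bk)$; the induced sheaf is then projective (induction has an exact right adjoint) and semisimple (by the paragraph's setup), so $\IC(\cO,\cE)$ is a direct summand of a projective object and hence projective.

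For the `only if' direction, suppose $\Perv_G(\cN_G,\bk)$ is semisimple. Consider the Springer sheaf $\Psi_\bk(\bk[W])$ introduced in Section~\ref{sec:e8-clean}; because $\Psi_\bk$ is fully faithful, its endomorphism algebra in $\Perv_G(\cN_G,\bk)$ is isomorphic (up to opposite) to $\bk[W]$. In a semisimple $\bk$-linear abelian category in which every simple object has endomorphism ring $\bk$ (as ensured by~\eqref{eqn:definitely-big-enough}), the endomorphism algebra of every object is a finite product of matrix algebras over $\bk$, and is therefore semisimple. Hence $\bk[W]$ is a semisimple $\bk$-algebra, and Maschke's theorem forces $\ell\nmid|W|$.

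The main obstacle is the assembly of the `if' direction: one must propagate cleanness through every Levi subgroup and chain the cuspidal datum of a general simple perverse sheaf down to a $0$-cuspidal datum in a sub-Levi, all while preserving the composition-factor relationship. Once these ingredients are in place, the combination of projectivity of $0$-cuspidal IC sheaves (Proposition~\ref{prop:zcusp-vanish}), exactness of the right adjoint of induction, and preservation of semisimplicity by induction immediately yields projectivity of every simple. By contrast, the `only if' direction reduces to a brief observation about semisimple algebras once the endomorphism ring of the Springer sheaf is identified.
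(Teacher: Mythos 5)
Your proof is essentially correct and follows the same ingredients as the paper's proof, with some reorganization worth noting.

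For the \emph{if} direction, you show every simple object is projective and deduce semisimplicity; the paper instead proves the $\Ext^1$-vanishing $\Ext^1(\IC(\cO,\cE),\IC(\cO',\cE'))=0$ directly via adjunction from the projectivity of the $0$-cuspidal sheaf. These are the same ideas arranged differently. One inessential complication in your argument: you detour through the cuspidal datum $(L,\cO_L,\cE_L)$ of $\IC(\cO,\cE)$'s induction series before descending to a $0$-cuspidal datum $(M,\cO_M,\cE_M)$ of $L$. The paper goes directly: since the induction $0$-series partition $\fN_{G,\bk}$, one has $(\cO,\cE)\in\fN_{G,\bk}^{\zs(L,\cO_L,\cE_L)}$ for some $0$-cuspidal datum $(L,\cO_L,\cE_L)$ of $G$, and Proposition~\ref{prop:0cusp-supercusp}\eqref{it:supercusp-0cusp-series} immediately realizes $\IC(\cO,\cE)$ as a composition factor of $\Ind_{L\subset P}^G\IC(\cO_L,\cE_L)$. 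Your chain is logically sound, just longer.

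For the \emph{only if} direction, you and the paper use different-looking tools that are really the same underlying fact. You extract from the full faithfulness of $\Psi_\bk$ the isomorphism $\End(\Psi_\bk(\bk[W]))\cong\bk[W]^{\mathrm{op}}$ and observe that a semisimple $\bk$-linear category with $\End$ of simples equal to $\bk$ has only semisimple endomorphism algebras. The paper instead quotes that $\Rep(\bk[W])$ is a Serre quotient of $\Perv_G(\cN_G,\bk)$ (from \cite[Corollary~5.2]{wgasp}), so non-semisimplicity of $\bk[W]\text{-mod}$ forces non-semisimplicity of $\Perv_G$. Both amount to the Springer correspondence for $W$-representations. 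One small caveat in your version: the functor $\Psi_\bk$ and its full faithfulness are developed in Section~\ref{sec:e8-clean} under the standing assumption that $\ell$ is rather good, whereas the \emph{only if} direction must cover arbitrary $\ell$ dividing $|W|$ (which need not be rather good). The construction and the endomorphism-ring computation do hold in that generality, but the cleaner citation for the general statement is \cite{wgasp}, as the paper does. As written, your argument has a small gap at this point that is easily plugged by the appropriate reference.
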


\begin{proof}
If $\ell$ divides $|W|$, then the abelian category of $\bk$-representations of $W$ is not semisimple. Since this category is a quotient of $\Perv_G(\cN_G,\bk)$ by a Serre subcategory (see~\cite[Corollary~5.2]{wgasp}), it follows that $\Perv_G(\cN_G,\bk)$ is not semisimple either.

Now, assume that $\ell$ does not divide $|W|$. Then $\ell$ is rather good by~\cite[\mgeasyrathergood]{genspring3}, and Conjecture~\ref{conj:clean} is true for $G$ and its Levi subgroups by Proposition~\ref{prop:clean-lusztig}. What we have to prove is that
\begin{equation}
\label{eqn:Ext1-vanishing}
\Ext^1_{\Perv_G(\cN_G,\bk)}(\IC(\cO,\cE), \IC(\cO',\cE'))=0
\end{equation}
for all pairs $(\cO,\cE)$ and $(\cO',\cE')$ in $\fN_{G,\bk}$. Let $(L,\cO_L,\cE_L)$, resp.~$(L',\cO_{L'}, \cE_{L'})$ be a $0$-cuspidal datum such that $(\cO,\cE) \in \fN_{G,\bk}^{\zs(L,\cO_L, \cE_L)}$, resp.~$(\cO',\cE') \in \fN_{G,\bk}^{\zs(L',\cO_{L'}, \cE_{L'})}$, and let $P$, resp.~$P'$, be a parabolic subgroup of $G$ with Levi factor $L$, resp.~$L'$. By Proposition~\ref{prop:0cusp-supercusp}, $\IC(\cO,\cE)$ is a composition factor of $\Ind_{L \subset P}^G(\IC(\cO_L,\cE_L))$. Since $\ell \nmid |N_G(L)/L|$ (as follows e.g.~from~\cite[Eq.~(4.1)]{genspring3}), this perverse sheaf is semisimple by~\cite[Remark~7.2]{genspring3}, so that $\IC(\cO,\cE)$ is in fact a direct summand of $\Ind_{L \subset P}^G(\IC(\cO_L,\cE_L))$. Similarly, $\IC(\cO',\cE')$ is a direct summand of $\Ind_{L' \subset P'}^G(\IC(\cO_{L'},\cE_{L'}))$. Hence
to prove~\eqref{eqn:Ext1-vanishing} we only have to prove that
\begin{equation}
\Ext^1_{\Perv_G(\cN_G,\bk)}(\Ind_{L \subset P}^G(\IC(\cO_L,\cE_L)), \Ind_{L' \subset P'}^G(\IC(\cO_{L'},\cE_{L'})))=0.
\end{equation}
Now by Proposition~\ref{prop:zcusp-vanish} the perverse sheaf $\IC(\cO_L,\cE_L)$ is projective in $\Perv_L(\cN_L,\bk)$. Hence
\begin{multline*}
\Ext^1_{\Perv_G(\cN_G,\bk)}(\Ind_{L \subset P}^G(\IC(\cO_L,\cE_L)), \Ind_{L' \subset P'}^G(\IC(\cO_{L'},\cE_{L'}))) \\
\cong \Ext^1_{\Perv_L(\cN_L,\bk)}(\IC(\cO_L,\cE_L), \Res_{L \subset P}^G \circ \Ind_{L' \subset P'}^G(\IC(\cO_{L'},\cE_{L'}))) =0,
\end{multline*}
which finishes the proof.
\end{proof}

\appendix

\section{Central characters}
\label{app:central-char}

In this appendix we collect
some standard facts concerning central characters of objects in an equivariant derived category. Some of these facts are well known and were used in~\cite{genspring2}, but since we could not find a convenient reference for the versions needed in the present paper, we include proofs. See also~\cite[Appendix A]{rr} for related results. 

Let $\F$ be an arbitrary field. Let $H$ be a connected algebraic group, and let $X$ be an $H$-variety. Let $\For : \Db_H(X, \F) \to \Db(X,\F)$ be the forgetful functor. Then if $a,p : H \times X \to X$ are the action and the projection, for any $\cF$ in $\Db_H(X,\F)$ there exists a canonical isomorphism 
\begin{equation}
\label{eqn:isom-equiv}
a^* \For(\cF) \simto p^*\For(\cF).
\end{equation}
Indeed, the inverse image $p^*$ induces an equivalence of categories $\varphi_p : \Db(X,\F) \simto \Db_H(H \times X, \F)$. Hence the object $a^* \cF$ is isomorphic to $\varphi_p(\cG)$ for a unique object $\cG$ of $\Db(X,\F)$. Taking the restriction to $\{1\} \times X$ we see that $\cG=\For(\cF)$, and taking the image under the forgetful functor $\Db_H(H \times X, \F) \to \Db(H \times X,\F)$ we deduce~\eqref{eqn:isom-equiv}.

Now, let $Z \subset H$ be a closed subgroup, and assume that $Z$ acts trivially on $X$.
Again let $\cF$ be an object of $\Db_H(X,\F)$. Taking the restriction of~\eqref{eqn:isom-equiv} to $\{z\} \times X \cong X$ for all $z \in Z$, we obtain a (functorial) action of $Z$ on the object $\For(\cF)$. By standard arguments, this action factors through $Z/Z^\circ$. If $\chi : Z/Z^\circ \to \F^\times$ is a character, we say that $\cF$ \emph{has $Z$-character $\chi$} if $Z$ acts on $\For(\cF)$ via $\chi$. When $Z$ is the center of $H$, we will rather say that $\cF$ has \emph{central character} $\chi$.

\begin{lem}
\label{lem:central-char-1}
Let $\chi, \chi'$ be distinct characters of $Z/Z^\circ$, and let $\cF$, $\cG$ be objects of $\Db_H(X,\F)$. If $\cF$ has $Z$-character $\chi$ and $\cG$ has $Z$-character $\chi'$, then $\Hom^\bullet_{\Db_H(X,\F)}(\cF, \cG)=0$.
\end{lem}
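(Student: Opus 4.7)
The plan is to promote the $Z$-action on $\For(\cF)$ defined in the excerpt, via restriction of~\eqref{eqn:isom-equiv} to $\{z\} \times X$, to a natural endomorphism $\sigma_z$ of the identity functor on $\Db_H(X,\F)$ itself. Once this is done, for any $f \in \Hom_{\Db_H(X,\F)}(\cF, \cG[k])$ naturality yields
\[
\chi'(z)\, f \;=\; \sigma_z(\cG[k]) \circ f \;=\; f \circ \sigma_z(\cF) \;=\; \chi(z)\, f,
\]
so that $(\chi(z)-\chi'(z))\, f = 0$. Since $\chi \neq \chi'$ on the finite group $Z/Z^\circ$, some $z \in Z$ satisfies $\chi(z) \neq \chi'(z)$, forcing $f = 0$ and hence the desired vanishing of $\Hom^\bullet_{\Db_H(X,\F)}(\cF,\cG)$.

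The first step is to verify naturality at the level of the forgetful functor $\For$. Given a morphism $f \colon \cF \to \cG$ in $\Db_H(X,\F)$, the fact that $f$ is by definition compatible with the $H$-equivariance data furnishes a commutative square relating the isomorphisms $\theta_\cF, \theta_\cG$ of~\eqref{eqn:isom-equiv} and the pullbacks $a^*\For(f), p^*\For(f)$. Restricting this square to $\{z\} \times X$, where $a$ and $p$ both restrict to $\mathrm{id}_X$ since $Z$ acts trivially on $X$, produces exactly $\sigma_z(\cG) \circ \For(f) = \For(f) \circ \sigma_z(\cF)$. Combined with the hypotheses $\sigma_z(\cF) = \chi(z) \cdot \mathrm{id}$ and $\sigma_z(\cG[k]) = \chi'(z) \cdot \mathrm{id}$, this already gives $\For(f) = 0$ for a suitably chosen $z$.

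The main obstacle is that the forgetful functor $\For$ is typically not faithful, so $\For(f) = 0$ does not directly yield $f = 0$ in $\Db_H(X,\F)$. To bridge this gap, the construction of $\sigma_z$ must be carried out inside $\Db_H(X, \F)$ rather than merely in $\Db(X,\F)$. For this, I would exploit the observation recorded in the excerpt that the isomorphism $a^*\For(\cF) \simto p^*\For(\cF)$ in fact arises from an isomorphism $a^*\cF \cong \varphi_p(\For(\cF))$ living in $\Db_H(H \times X,\F)$ via the equivalence $\varphi_p$. Pulling this equivariant isomorphism back along the closed inclusion $\{z\} \times X \hookrightarrow H \times X$—an operation that respects the relevant equivariance structures precisely because $Z$ acts trivially on $X$, so that the $H$-equivariance structures restricted to the fiber over $z$ on both $a^*\cF$ and $\varphi_p(\For(\cF))$ coincide with the given structure on $\cF$ (using the cocycle condition on $\theta_\cF$)—produces an automorphism $\tilde\sigma_z(\cF) \colon \cF \to \cF$ in $\Db_H(X,\F)$ whose image under $\For$ is $\sigma_z(\cF)$. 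The naturality argument then applies verbatim within $\Db_H(X,\F)$, and the lemma follows.
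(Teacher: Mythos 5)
Your first step is fine: a morphism $f$ in $\Db_H(X,\F)$ does induce a commutative square with the isomorphisms~\eqref{eqn:isom-equiv}, and restricting to $\{z\}\times X$ gives $\chi'(z)\For(f)=\chi(z)\For(f)$, hence $\For(f)=0$. The genuine gap is exactly at the obstacle you flag yourself, and your proposed fix does not close it. First, the construction of $\tilde\sigma_z(\cF)$ as a morphism in $\Db_H(X,\F)$ is not justified: for the $H$-action on $H\times X$ that makes $\varphi_p$ an equivalence, the subvariety $\{z\}\times X$ is \emph{not} $H$-stable (it is translated to $\{hz\}\times X$), so there is no equivariant restriction functor to it, and "$Z$ acts trivially on $X$" does not repair this. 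Second, and more fundamentally, even if some lift $\tilde\sigma_z(\cF)\in\End_{\Db_H(X,\F)}(\cF)$ existed with $\For(\tilde\sigma_z(\cF))=\chi(z)\,\id_{\For\cF}$, the hypothesis of the lemma only constrains $\For(\cF)$; to conclude $\tilde\sigma_z(\cF)=\chi(z)\,\id_{\cF}$ in the equivariant category you would need $\For$ to be faithful on endomorphisms, which is precisely what you cannot assume (for general objects of $\Db_H(X,\F)$, $\ker(\For)$ on $\Hom^0$ is controlled by $\Hom^{<0}_{\Db(X,\F)}$ and positive-degree equivariant cohomology, and need not vanish). With $\tilde\sigma_z(\cF)=\chi(z)\id+\epsilon_\cF$ for unknown $\epsilon_\cF\in\ker\For$, naturality only yields $(\chi(z)-\chi'(z))f=\epsilon_\cG\circ f-f\circ\epsilon_\cF$, which does not force $f=0$.

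The paper's proof takes a different and more robust route that you should compare with. It invokes the standard spectral sequence with $E_2$-term $\mathsf{H}^\bullet_H(\mathrm{pt})\otimes_\F\Hom^\bullet_{\Db(X,\F)}(\For\cF,\For\cG)$ converging to $\Hom^\bullet_{\Db_H(X,\F)}(\cF,\cG)$; this reduces the claim to the vanishing of the \emph{entire non-equivariant} Hom-space, not just of morphisms in the image of $\For$ (which is all your restriction argument gives). That stronger vanishing is then proved by observing that the $Z$-action on $\Hom^\bullet_{\Db(X,\F)}(\For\cF,\For\cG)$ induced by the scalars $\chi,\chi'$ (so $z$ acts by $(\chi'/\chi)(z)$) extends, via the isomorphisms $\phi_h^\cF$, $\phi_h^\cG$ obtained by restricting~\eqref{eqn:isom-equiv} to $\{h\}\times X$ for arbitrary $h\in H$, to an action of all of $H$; since $H$ is connected this action is trivial, so $(\chi'/\chi)(z)$ acts as the identity for every $z$, forcing the Hom-space to vanish because $\chi\neq\chi'$. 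If you want to salvage your strategy, you would need either this spectral-sequence reduction or some other mechanism that sees all of $\Hom_{\Db(X,\F)}(\For\cF,\For\cG)$, not only $\For$ of equivariant morphisms.
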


\begin{proof}
By standard arguments,
there exists a spectral sequence converging to $\Hom_{\Db_H(X,\F)}^{\bullet}(\cF, \cG)$, and with $E_2$-term
\[
\mathsf{H}^\bullet_H(\mathrm{pt}) \otimes_\F \Hom^\bullet_{\Db(X,\F)}(\For \cF, \For \cG).
\]
Therefore, it is enough to prove that $\Hom_{\Db(X,\F)}^\bullet(\For \cF, \For \cG)=0$. However, the $Z$-actions on $\For \cF$ and $\For \cG$ induce an action on $\Hom_{\Db(X,\F)}^\bullet(\For \cF, \For \cG)$, for which $Z$ acts via the character $\chi'/\chi$. On the other hand this action can be extended to $H$, as follows. For any $h \in H$, restricting isomorphism~\eqref{eqn:isom-equiv} to $\{h\} \times X$ we obtain an isomorphism $\phi^{\cF}_h : \iota_h^* (\For \cF) \simto \For \cF $, where $\iota_h : X \simto X$ is the action of $h$. We also have similar isomorphisms for $\cG$. Then we can define the action of $H$ on $\Hom_{\Db(X,\F)}^\bullet(\For \cF, \For\cG)$ by declaring that $h$ acts via the composition
\begin{multline*}
\Hom^\bullet_{\Db(X,\F)}(\For\cF, \For\cG) \xrightarrow{\iota_{h^{-1}}^*} \Hom^\bullet_{\Db(X,\F)}(\iota_{h^{-1}}^* \For\cF, \iota_{h^{-1}}^*\For\cG) \\
\xrightarrow{\phi_{h^{-1}}^{\cG} \circ (-) \circ (\phi_{h^{-1}}^{\cF})^{-1}} \Hom^\bullet_{\Db(X,\F)}(\For(\cF), \For(\cG)).
\end{multline*}
Since $H$ is connected this action is trivial, and we deduce that the $Z$-action considered above is also trivial. It follows that necessarily $\Hom_{\Db(X,\F)}^\bullet(\For \cF, \For \cG)=0$, which finishes the proof.
\end{proof}

\begin{lem}
\label{lem:central-char-Perv}
Let $\chi$ be a character of $Z/Z^\circ$.
\begin{enumerate}
\item
\label{it:central-char-Perv-1}
If $\cF$ is an object of $\Perv_H(X,\F)$ with $Z$-character $\chi$, then any subquotient of $\cF$ has $Z$-character $\chi$.
\item
\label{it:central-char-Perv-2}
Assume that the characteristic of $\F$ does not divide $|Z/Z^\circ|$. Let  $\cF$, $\cG$ be objects of $\Perv_H(X,\F)$ with $Z$-character $\chi$, and consider an exact sequence
\[
0 \to \cF \to \cH \to \cG \to 0
\]
in $\Perv_H(X,\F)$.
Then $\cH$ has $Z$-character $\chi$.
\end{enumerate}
\end{lem}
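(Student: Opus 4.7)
My plan is to handle the two parts in parallel by exploiting the naturality of the canonical isomorphism~\eqref{eqn:isom-equiv}. For any morphism $f : \cF \to \cF'$ in $\Perv_H(X,\F)$, the isomorphisms $a^* \For(\cF) \simto p^* \For(\cF)$ and $a^* \For(\cF') \simto p^* \For(\cF')$ are compatible with $a^* \For(f)$ and $p^* \For(f)$; restricting to $\{z\} \times X$ for $z \in Z$ gives $\phi_z^{\cF'} \circ \For(f) = \For(f) \circ \phi_z^\cF$. For part~\eqref{it:central-char-Perv-1}, if $\cF'$ is a subobject of $\cF$ with inclusion $i$, then $i$ is mono and $\For$ is exact, so the identity $i \circ \phi_z^{\cF'} = \chi(z) \cdot i$ forces $\phi_z^{\cF'} = \chi(z) \cdot \id$; the quotient case is dual, and iteration handles arbitrary subquotients.

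For part~\eqref{it:central-char-Perv-2}, after applying $\For$ we obtain a short exact sequence
\[
0 \to \For(\cF) \xrightarrow{i} \For(\cH) \xrightarrow{q} \For(\cG) \to 0
\]
in $\Perv(X,\F)$. Set $\psi_z := \phi_z^\cH - \chi(z) \cdot \id$. By the same naturality argument applied to $i$ and $q$, we get $\psi_z \circ i = 0$ and $q \circ \psi_z = 0$, so $\psi_z$ factors uniquely as $\psi_z = i \circ \delta_z \circ q$ for some $\delta_z \in \Hom_{\Perv(X,\F)}(\For(\cG), \For(\cF))$. Next I will use that $z \mapsto \phi_z^\cH$ is a group homomorphism (this expresses the cocycle/associativity condition for the equivariant structure on $\cH$, by standard arguments). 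Expanding $\phi_{zw}^\cH = \phi_z^\cH \circ \phi_w^\cH$ and noting $\psi_z \circ \psi_w = i \delta_z (q \circ i) \delta_w q = 0$, I obtain the cocycle relation $\delta_{zw} = \chi(z)\,\delta_w + \chi(w)\,\delta_z$.

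The key simplification will be the substitution $\epsilon_z := \chi(z)^{-1} \delta_z$, which converts the cocycle relation into the honest additive relation $\epsilon_{zw} = \epsilon_z + \epsilon_w$. Thus $z \mapsto \epsilon_z$ descends to a group homomorphism from the finite group $Z/Z^\circ$ into the $\F$-vector space $\Hom_{\Perv(X,\F)}(\For(\cG), \For(\cF))$. The hypothesis that $\mathrm{char}(\F)$ does not divide $|Z/Z^\circ|$ then kills this homomorphism: for each $z$ one has $|Z/Z^\circ| \cdot \epsilon_z = \epsilon_{z^{|Z/Z^\circ|}} = 0$, and $|Z/Z^\circ|$ is invertible in $\F$. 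Hence $\delta_z = 0$, $\psi_z = 0$, and $\phi_z^\cH = \chi(z)\cdot\id$, as desired.

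The main point of care will be correctly setting up the cocycle and verifying that the twist by $\chi$ produces an additive homomorphism; I also want to be careful that $z \mapsto \phi_z^\cH$ really is multiplicative (which is where the equivariance cocycle enters) and that it factors through $Z/Z^\circ$ before I invoke the characteristic hypothesis. Once these structural points are in place, the coprime-characteristic argument is essentially automatic.
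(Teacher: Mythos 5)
Your proposal is correct and follows essentially the same route as the paper: part (1) via naturality of the equivariant structure under (mono/epi)morphisms, and part (2) by observing that the deviation $\phi_z^{\cH}-\chi(z)\id$ factors through a map $\For(\cG)\to\For(\cF)$ and then killing it with a divisibility argument using $\mathrm{char}(\F)\nmid|Z/Z^\circ|$. The only cosmetic difference is that you derive the full twisted cocycle relation $\delta_{zw}=\chi(z)\delta_w+\chi(w)\delta_z$ and renormalize to an additive homomorphism, whereas the paper specializes directly to powers $z^n$ via the factorization $X^n-1=(1+X+\cdots+X^{n-1})(X-1)$; both yield $\psi_{z^n}=n\psi_z$ and the same conclusion.
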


\begin{proof}
\eqref{it:central-char-Perv-1} is obvious. Let us consider~\eqref{it:central-char-Perv-2}.
For $z \in Z$, let us denote by $\phi_z : \cH \simto \cH$ the action of $z$ on $\cH$. Since $z$ acts on $\cF$ and $\cG$ via $\chi(z)$, the morphism $\chi(z)^{-1} \phi_z - \id_\cH : \cH \to \cH$ factors through a morphism $\psi_z : \cG \to \cF$. For any $n \geq 1$, using the factorization $X^n-1=(1+X + \cdots + X^{n-1})(X-1)$ we obtain that $\psi_{z^n}=n \psi_z$. Since $\psi_{z'}=0$ for all $z'\in Z^\circ$ and the order of $zZ^\circ$ in $Z/Z^\circ$ is invertible in $\F$, it follows that $\psi_z=0$, which proves that $\cH$ has $Z$-character $\chi$.
\end{proof}

\begin{lem}
\label{lem:Perv-trivial-character}
Assume that $Z$ is finite and central, that the characteristic of $\F$ does not divide $|Z|$, and that $\F$ is a splitting field for $Z$. Then the forgetful functor $\Perv_{H/Z}(X,\F) \to \Perv_H(X,\F)$ identifies $\Perv_{H/Z}(X,\F)$ with the full subcategory of $\Perv_H(X,\F)$ consisting of objects with trivial $Z$-character.
\end{lem}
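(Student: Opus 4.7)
The forgetful functor in question is simply pullback $q^* \colon \Perv_{H/Z}(X,\F) \to \Perv_H(X,\F)$ along the quotient map $q \colon H \to H/Z$, which is well-defined because $Z$ is central and acts trivially on $X$, so the $H$-action factors through $H/Z$. The plan is to verify in three steps that $q^*$ is fully faithful, lands in the full subcategory of objects with trivial $Z$-character, and is essentially surjective onto it.

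For the first two steps, I would reason as follows. Given $\cF \in \Perv_{H/Z}(X,\F)$ with equivariance isomorphism $\phi \colon \bar{a}^*\cF \simto \bar{p}^*\cF$ on $(H/Z) \times X$, the equivariance iso for $q^*\cF$ as an $H$-equivariant object is $(q \times \id_X)^*\phi$. Restricting to $\{z\} \times X \subset H \times X$ for $z \in Z$ gives the restriction of $\phi$ to $\{q(z)\} \times X = \{1\} \times X$, which is the identity by the unit axiom of the cocycle. Hence $q^*\cF$ has trivial $Z$-character. Full faithfulness follows because a morphism $q^*\cF \to q^*\cG$ is by definition a morphism of underlying perverse sheaves compatible with the $H$-equivariance isos; since the latter are pulled back along the faithfully flat $q \times \id_X$, such a morphism is automatically compatible with the $(H/Z)$-equivariance isos downstairs, giving the natural bijection with $\Hom_{\Perv_{H/Z}(X,\F)}(\cF,\cG)$.

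The main step is essential surjectivity. Given $\cF \in \Perv_H(X,\F)$ with trivial $Z$-character and equivariance iso $\phi \colon a_H^*\cF \simto p_H^*\cF$, I would descend $\phi$ along the finite étale $Z$-torsor $q \times \id_X \colon H \times X \to (H/Z)\times X$. Since $a_H = \bar{a} \circ (q \times \id_X)$ and $p_H = \bar{p}\circ (q\times\id_X)$, both sides of $\phi$ are canonically pulled back from $(H/Z)\times X$, equipped with their tautological $Z$-equivariant structures. The iso $\phi$ descends iff it is $Z$-equivariant, i.e.\ iff $r_z^*\phi = \phi$ for every $z \in Z$, where $r_z \colon H\times X \to H\times X$ is right translation by $z^{-1}$ on the first factor. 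Using the cocycle identity for $\phi$, one computes that $r_z^*\phi$ differs from $\phi$ by the automorphism of $a_H^*\cF$ induced by the image of $z$ under the $Z$-action on $\For\cF$ defined in the main text; by the trivial $Z$-character hypothesis, this automorphism is the identity, and so $\phi$ descends to an iso $\bar\phi$ on $(H/Z)\times X$. The cocycle condition for $\bar\phi$ follows by the faithful flatness of $(q\times q\times\id_X) \colon H^2 \times X \to (H/Z)^2 \times X$ applied to the cocycle of $\phi$, giving the desired lift of $\cF$ to $\Perv_{H/Z}(X,\F)$.

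The main obstacle I anticipate is the bookkeeping in the third step: translating the triviality of the $Z$-character into the precise $Z$-equivariance condition needed for descent of $\phi$ along the torsor, via the cocycle identity. Once that identification is carried out carefully, the remainder is standard descent, and the invertibility of $|Z|$ and the splitting-field hypothesis are not required for this lemma (they are inherited from the companion Lemma~\ref{lem:central-char-Perv} where they play an essential role).
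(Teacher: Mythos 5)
Your proof is correct, and it takes a genuinely different route from the paper's. For essential surjectivity, the paper computes
\[
\Hom_{\Db(H\times X,\F)}(a^*\cF,p^*\cF) \cong \Hom_{\Db((H/Z)\times X,\F)}((a')^*\cF,(p')^*\cF\otimes\xi_*\underline\F_{H\times X}),
\]
decomposes the local system $\xi_*\underline\F_{H\times X}\cong\bigoplus_\chi\cL_\chi$ into rank-$1$ summands indexed by characters of $Z$ --- this is where the hypotheses ``$\mathrm{char}\,\F\nmid|Z|$'' and ``$\F$ a splitting field for $Z$'' enter --- and then uses Lemma~\ref{lem:central-char-1} to kill the summands with $\chi\neq 1$. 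Your argument instead descends the equivariance isomorphism $\phi$ directly along the finite \'etale $Z$-torsor $q\times\id_X\colon H\times X\to(H/Z)\times X$, using the cocycle identity to show $r_z^*\phi=\phi$ precisely when the $Z$-character is trivial. Since morphisms of perverse sheaves satisfy \'etale descent --- concretely, the unit $\underline\F_S\to(q\times\id)_*\underline\F_{H\times X}$ identifies $\underline\F_S$ with the $Z$-invariants of the regular local system regardless of whether $|Z|$ is invertible, and tensoring with a local system is exact so $(f_*f^*B)^Z\cong B$ always --- your argument is self-contained and, as you observe, establishes the lemma without the hypotheses on $|Z|$ and the splitting field. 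Those hypotheses are thus an artifact of the route the paper chose (they are cheaply available in the paper's applications and make the character decomposition clean), not intrinsic to this statement; your remark to that effect is correct. One small point worth noting explicitly: the descended $\bar\phi$ is automatically an isomorphism because $(q\times\id)^*$ is conservative, and the unit axiom for $\bar\phi$ follows from that of $\phi$ by restricting to $\{e\}\times X$, which lies over $\{\bar e\}\times X$.
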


\begin{proof}
Since the forgetful functors $\Perv_{H/Z}(X,\F) \to \Perv(X,\F)$ and $\Perv_{H}(X,\F) \to \Perv(X,\F)$ are fully faithful, our functor $\Perv_{H/Z}(X,\F) \to \Perv_H(X,\F)$ is also fully faithful. Clearly, all the objects in its essential image have trivial $Z$-character. Conversely, let $\cF$ be an object of $\Perv_H(X,\F)$ with trivial $Z$-character. For simplicity, we denote similarly its image in $\Perv(X,\F)$. Let $a,p : H \times X \to X$ be the action and the projection respectively, and let $a',p' : H/Z \times X \to X$ be the similar morphisms for $H/Z$. Let also $\xi :  H \times X \to H/Z \times X$ be the projection, so that $a=a'\circ\xi$ and $p=p' \circ \xi$. To show that $\cF$ is $H/Z$-equivariant, we have to show that the objects $(a')^* \cF$ and $(p')^* \cF$ of $\Db(H/Z \times X, \F)$ are isomorphic. Consider the space
\begin{multline*}
\Hom_{\Db(H \times X, \F)}(a^* \cF, p^* \cF) \cong \Hom_{\Db(H \times X, \F)}(\xi^* (a')^* \cF, \xi^* (p')^* \cF) \\
\cong \Hom_{\Db(H/Z \times X, \F)}((a')^* \cF, \xi_* \xi^* (p')^* \cF) \cong \Hom_{\Db(H/Z \times X, \F)}((a')^* \cF, (p')^* \cF \otimes \xi_* \underline{\F}_{H \times X}).
\end{multline*}
Our assumptions on $\F$ imply that we have a decomposition $\xi_* \underline{\F}_{H \times X} \cong \bigoplus_\chi \cL_\chi$, where $\chi$ runs over the characters $Z \to \F^\times$, and each $\cL_\chi$ is a rank-$1$ $H$-equivariant local system with $Z$-character $\chi$. We deduce an isomorphism
\[
\Hom_{\Db(H \times X, \F)}(a^* \cF, p^* \cF) \cong \bigoplus_\chi \Hom_{\Db(H/Z \times X, \F)}((a')^* \cF, (p')^* \cF \otimes \cL_\chi).
\]
Now, since $a'$ is an $H$-equivariant morphism (where $H$ acts on $H/Z \times X$ via its action on the first factor), $(a')^* \cF$, considered as an object of $\Db_H(H/Z \times X, \F)$, has trivial $Z$-character, and it is clear that $(p')^* \cF$ also has trivial $Z$-character. We deduce that
\[
\Hom_{\Db(H/Z \times X, \F)}((a')^* \cF, (p')^* \cF \otimes \cL_\chi) =0 \qquad \text{if $\chi\neq1$}
\]
(see the proof of Lemma~\ref{lem:central-char-1}), and then that
\[
\Hom_{\Db(H \times X, \F)}(a^* \cF, p^* \cF) \cong \Hom_{\Db(H/Z \times X, \F)}((a')^* \cF, (p')^* \cF \otimes \cL_1) = \Hom_{\Db(H/Z \times X, \F)}((a')^* \cF, (p')^* \cF).
\]
Since $\cF$ is $H$-equivariant, there exists an isomorphism $a^* \cF \simto p^* \cF$. The image of this morphism under the isomorphism constructed above provides a morphism $(a')^* \cF \to (p')^* \cF$, which is easily shown to be an isomorphism. Thus, $\cF$ is $H/Z$-equivariant.
\end{proof}

Finally, we consider the setting of the body of the paper, namely the case of the $G$-action on the nilpotent cone $\cN_G$, with $Z=Z(G)$, and with coefficient field $\bk$ satisfying~\eqref{eqn:definitely-big-enough}. In particular, since $Z(G) / Z(G)^\circ$ is isomorphic to $A_G(x)$ if $x$ is a regular nilpotent element, this implies that all the irreducible $\bk$-representations of $Z(G) / Z(G)^\circ$ are characters. For any $\chi \in \Irr(\bk[Z(G)/Z(G)^\circ])$, we denote by $\Perv_G(\cN_G,\bk)_\chi$ the full subcategory of $\Perv_G(\cN_G,\bk)$ whose objects have central character $\chi$. We also denote by
$\Db_G(\cN_G,\bk)_\chi$ the full subcategory of $\Db_G(\cN_G,\bk)$ whose objects are the complexes $\cF$ such that ${}^p \hspace{-1pt} \cH^n(\cF)$ belongs to $\Perv_G(\cN_G,\bk)_\chi$ for any $n \in \Z$. It follows from Lemma~\ref{lem:central-char-Perv} that $\Perv_G(\cN_G,\bk)_\chi$ is a Serre subcategory of $\Perv_G(\cN_G,\bk)$, and that $\Db_G(\cN_G,\bk)_\chi$ is a triangulated subcategory of $\Db_G(\cN_G,\bk)$.

If $\cF$ is a simple object in $\Perv_G(\cN_G,\bk)$, then $\End(\cF)=\bk$ under our assumption~\eqref{eqn:definitely-big-enough}, so that $\cF$ has a central character. This central character can be described more explicitly as follows (see~\cite[\S 5.1]{genspring2}).
Let $(\cO,\cE) \in \fN_{G, \bk}$, and let $x \in \cO$. Then $\cE$ corresponds to an absolutely irreducible representation $V$ of $A_G(x)$. Consider the composition $Z(G) \to G_x \to A_G(x)$. This morphism is trivial on $Z(G)^\circ$, and its image is central in $A_G(x)$. Hence, by Schur's lemma, $Z(G)$ acts on $V$ via a character $\chi$ of $Z(G)/Z(G)^\circ$. Then $\IC(\cO,\cE)$ has central character $\chi$. 

\begin{lem}\label{lem:centralchar-decomp}
Suppose that $\ell \nmid |Z(G)/Z(G)^\circ|$.  Then we have
\[
\Perv_G(\cN_G,\bk) = \bigoplus_{\chi \in \Irr(\bk[Z(G)/Z(G)^\circ])} \Perv_G(\cN_G,\bk)_\chi
\quad\text{and}\quad
\Db_G(\cN_G,\bk) = \bigoplus_{\chi \in \Irr(\bk[Z(G)/Z(G)^\circ])} \Db_G(\cN_G,\bk)_\chi.
\]
\end{lem}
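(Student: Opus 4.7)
Set $Z := Z(G)/Z(G)^\circ$; under \eqref{eqn:definitely-big-enough}, $\Irr(\bk[Z])$ is just the set of characters $\chi : Z \to \bk^\times$. The strategy relies on three ingredients already in hand: every simple object of $\Perv_G(\cN_G,\bk)$ admits a central character (as recalled just before the statement); Lemma~\ref{lem:central-char-1}, which gives $\Hom^k_{\Db_G(\cN_G,\bk)}(\cF, \cG) = 0$ for all $k$ whenever $\cF,\cG$ have distinct central characters; and Lemma~\ref{lem:central-char-Perv}\eqref{it:central-char-Perv-2}, which, using $\ell \nmid |Z|$, makes each $\Perv_G(\cN_G,\bk)_\chi$ a Serre subcategory.

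For the perverse decomposition, I will argue by induction on length. Given $\cF \in \Perv_G(\cN_G,\bk)$, pick a simple subobject $\cS \subset \cF$ with central character $\chi_0$, set $\cQ := \cF/\cS$, and decompose $\cQ = \bigoplus_\chi \cQ_\chi$ by induction. For each $\chi \neq \chi_0$, the preimage $\widetilde{\cQ}_\chi$ of $\cQ_\chi$ in $\cF$ fits into a short exact sequence
\[
0 \to \cS \to \widetilde{\cQ}_\chi \to \cQ_\chi \to 0,
\]
which splits by Lemma~\ref{lem:central-char-1} applied in degree~$1$ (using $\Ext^1_{\Perv} \cong \Hom^1_{\Db}$); so each such $\cQ_\chi$ is a direct summand of $\cF$. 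The complementary summand is an extension of $\cQ_{\chi_0}$ by $\cS$, hence lies in $\Perv_G(\cN_G,\bk)_{\chi_0}$ by the third input above. Orthogonality of the summands across different characters then follows from Lemma~\ref{lem:central-char-1} in degree~$0$.

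For the derived-category decomposition, I will induct on the perverse amplitude $a(\cF) := \max\{n : {}^p\cH^n(\cF) \neq 0\} - \min\{n : {}^p\cH^n(\cF) \neq 0\}$ (with $a(0) = -\infty$). When $a(\cF) \leq 0$, $\cF$ is (up to shift) a perverse sheaf, and the conclusion reduces to the previous paragraph. For the inductive step, let $n = \max\{m : {}^p\cH^m(\cF) \neq 0\}$ and consider the truncation triangle
\[
{}^p\tau^{<n}\cF \to \cF \to {}^p\cH^n(\cF)[-n] \xrightarrow{+1}.
\]
By induction, the left-hand term splits along $\chi$, and by the perverse decomposition so does the right-hand term. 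The connecting morphism ${}^p\cH^n(\cF)[-n] \to ({}^p\tau^{<n}\cF)[1]$ is block-diagonal with respect to this joint splitting, since its off-diagonal components live in cross-character $\Hom^1$ groups that vanish by Lemma~\ref{lem:central-char-1}. The standard fact that a direct sum of distinguished triangles is distinguished then propagates the decomposition to $\cF$, with orthogonality of summands again from Lemma~\ref{lem:central-char-1} in arbitrary degree. The mild obstacle in this last step is the bookkeeping required to match character summands across the triangle and to verify that the resulting pieces have perverse cohomology concentrated in the intended $\Perv_G(\cN_G,\bk)_\chi$; but both are immediate from the $\Perv$-decomposition together with the cross-character $\Hom^\bullet$-vanishing.
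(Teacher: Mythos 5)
Your proof is correct and follows essentially the same route as the paper, which deduces both decompositions from Lemma~\ref{lem:central-char-1} (cross-character $\Hom^\bullet$-vanishing), the existence of central characters for simple objects, Lemma~\ref{lem:central-char-Perv} for stability under extensions, and an induction on the number of nonzero perverse cohomology objects for the derived category; your version merely spells out the standard block-decomposition inductions that the paper leaves implicit. The only step deserving one more line is the claim that each lifted $\mathcal{Q}_\chi$ is a direct summand of $\cF$ itself (rather than just of $\widetilde{\mathcal{Q}}_\chi$): this follows because the extension of $\cF/\cF'$ by $\cF' := \bigoplus_{\chi\neq\chi_0}\mathcal{Q}_\chi$ splits by one further application of the cross-character $\Ext^1$-vanishing.
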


\begin{proof}
Let us first consider the category $\Perv_G(\cN_G,\bk)$.
Lemma~\ref{lem:central-char-1} implies that any morphism and any extension between objects with distinct central characters is trivial. Since any simple object has a central character, we deduce the decomposition as stated.

To prove the decomposition for the category $\Db_G(\cN_G,\bk)$, since this category is generated by $\Perv_G(\cN_G,\bk)$ (as a triangulated subcategory), it suffices to prove that if $\chi \neq \chi'$, for $\cF$ in $\Db_G(\cN_G,\bk)_\chi$ and $\cG$ in $\Db_G(\cN_G,\bk)_{\chi'}$ we have $\Hom(\cF,\cG)=0$. This follows again from Lemma~\ref{lem:central-char-1}, using an induction on the number of nonzero perverse cohomology objects of $\cF$ and $\cG$.
\end{proof}


\end{document}